\def\@tocline#1#2#3#4#5#6#7{\relax
  \ifnum #1>\c@tocdepth 
  \else
    \par \addpenalty\@secpenalty\addvspace{#2}%
    \begingroup \hyphenpenalty\@M
    \@ifempty{#4}{%
      \@tempdima\csname r@tocindent\number#1\endcsname\relax
    }{%
      \@tempdima#4\relax
    }%
    \parindent\z@ \leftskip#3\relax \advance\leftskip\@tempdima\relax
    \rightskip\@pnumwidth plus4em \parfillskip-\@pnumwidth
    #5\leavevmode\hskip-\@tempdima
      \ifcase #1
       \or\or \hskip 1em \or \hskip 2em \else \hskip 3em \fi%
      #6\nobreak\relax
    \hfill\hbox to\@pnumwidth{\@tocpagenum{#7}}\par
    \nobreak
    \endgroup
  \fi}
\pgfplotsset{compat=1.11}
\newcommand{\EMPTY}[1]{}
\theoremstyle{plain}
\newtheorem{Theorem}{Theorem}[section]
\newtheorem{theorem}{Theorem}[section]
\newtheorem{Lemma}[Theorem]{Lemma}
\newtheorem{lem}[Theorem]{Lemma}
\newtheorem{prop}[Theorem]{Proposition}
\newtheorem{Proposition}[Theorem]{Proposition}
\newtheorem{Corollary}[Theorem]{Corollary}
\theoremstyle{definition}
\newtheorem{Definition}[Theorem]{Definition}
\newtheorem{defn}[Theorem]{Definition}
\newtheorem{Remark}[Theorem]{Remark}
\newtheorem{rmk}[Theorem]{Remark}
\newtheorem*{Conjecture}{Conjecture}
\newtheorem*{Notation}{Notation}
\numberwithin{equation}{section}
\newcommand{\vphi}{\varphi}
\newcommand{\pl}{\partial}
\newcommand{\na}{\nabla}
\newcommand{\lt}{\left}
\newcommand{\rt}{\right}
\newcommand{\rw}{\rightarrow}
\newcommand{\R}{\mathbb{R}}
\renewcommand{\tilde}{\widetilde}
\begin{document}

\pagestyle{fancy}
\fancyhf{} 
\renewcommand{\headrulewidth}{0pt}
\fancyhf[EHC]{B. Harvie and Y.-K. Wang}
\fancyhf[OHC]{Geometry and Uniqueness of Static Black Holes}
\fancyhf[FC]{\thepage}

\title{On the Geometry and Uniqueness of Asymptotically Locally Hyperbolic Static Vacuum Black Holes}
\author{Brian Harvie and Ye-Kai Wang}

\begin{abstract}
We establish several geometric characterizations and rigidity results for $3$-dimensional asymptotically locally hyperbolic (ALH) static spaces with horizon boundary. Notably, a $3$-dimensional ALH static space with toroidal infinity and strictly non-spherical horizons is isometric to a toroidal Kottler metric. Furthermore, we show that the surface gravity of a static horizon with spherical infinity is bounded below by $\sqrt{3}$, with equality achieved only by the critical AdS-Schwarzschild metric. Consequently, Poincar\'e-Einstein fillings of $S^{2} \times S^{1}(\lambda)$ arising from these spaces have length parameter $\lambda \leq (\sqrt{3})^{-1}$, which supports a recent conjecture of Chang-Yang-Zhang \cite{CYZ25}. Finally, static horizons with hyperbolic infinity and non-negative Chru\'sciel-Herzlich mass obey the reverse Riemannian Penrose inequality. In conjunction with work of Ge-Wang-Wu-Xia \cite{GWWX13}, we use this fact to obtain uniqueness of static ALH graphs with hyperbolic infinities.

These results follow from a generalization of the Minkowski inequality in AdS-Schwarzschild space due to Brendle-Hung-Wang \cite{BHW12}. Using optimal coefficients for the sub-static Heintze-Karcher inequality from \cite{FP22}, we construct a new monotone quantity under inverse mean curvature flow (IMCF) in static spaces with negative cosmological constant. Another fundamental tool developed in this paper is a regularity theorem for IMCF in ALH manifolds. Specifically, we prove that a weak solution of IMCF in an ALH $3$-manifold with horizon boundary is eventually smooth. This extends the regularity theorem for a spherical infinity due to Shi-Zhu \cite{SZ21}.

\end{abstract}
\maketitle

\tableofcontents

\section{Introduction}
The classification of stationary black hole solutions to the Einstein field equations, broadly referred to as the ``no-hair conjecture", remains a challenging and fundamental line of research in general relativity. This problem is more tractable for black hole spacetimes which are static, but even under this stronger assumption the existing results are mostly limited to black holes with zero or positive cosmological constant. The main aim of this paper is to establish geometric properties and uniqueness theorems for the time slices of isolated static vacuum black holes with a negative cosmological constant.

In this setting, the model solutions are the \textit{Kottler metrics}. Given a closed oriented surface $(\widehat{\Sigma}^{2},\widehat{g})$ with constant Gauss curvature $\widehat{k} \in \{ -1, 0, 1 \}$, a (nondegenerate) Kottler metric with mass $m$ is a warped product Riemannian manifold $(M^{3},g_{m,\widehat{k}})$ over $(\widehat{\Sigma}^{2},\widehat{g})$ equipped with a potential function $V_{m,\widehat{k}}$. Explicitly,
\begin{eqnarray} \label{ads_schwarzschild}
    M^{3}&=& \widehat{\Sigma}^{2} \times (\rho_{m},\infty), \hspace{0.5cm} g_{m,\widehat{k}}= \left(\widehat{k} + \rho^{2} - \frac{2m}{\rho}\right)^{-1} d\rho^{2} + \rho^{2} \widehat{g}, \hspace{0.5cm} V_{m,\widehat{k}}(\rho)= \sqrt{\widehat{k} + \rho^{2} - \frac{2m}{\rho}}, \nonumber \\
    & & \text{where} \hspace{0.4cm} m \in (m_{\text{crit}},+\infty) \hspace{1cm} \text{for} \hspace{1cm} m_{\text{crit}} = \begin{cases} 0 & \widehat{k}=0,1 \\
    -\frac{1}{3\sqrt{3}} & \widehat{k}=-1\end{cases}.
\end{eqnarray}
Here, $\rho_{m}$ is the largest positive zero of $V_{m,\widehat{k}}$, which always exists for mass $m > m_{\text{crit}}$. The metrics \eqref{ads_schwarzschild} with $\widehat{k}=+1$ are often referred to as \textit{anti-de Sitter Schwarzschild metrics} (AdS-Schwarzschild in short) in the literature. The system \eqref{ads_schwarzschild} is a $\{ \tau= \text{const} \}$ slice of Kottler spacetime $(L^{4},\widetilde{g}) = (M^{3} \times \mathbb{R}, - V_{m,\kappa}^{2}(\rho) d\tau^{2} + g_{m})$, which is a solution to the Einstein equations $\widetilde{\text{Ric}}=-3\widetilde{g}$ with cosmological constant $\Lambda=-3$ and contains a horizon at $\{ \rho=\rho_{m}\}$. In general, a time slice of an isolated static vacuum black hole, possibly with multiple horizons, is modeled by an \textit{asymptotically locally hyperbolic static system}.

\begin{defn} \label{alh_static}
An \textbf{asymptotically locally hyperbolic (ALH) static system} $(M^{3},g,V)$ consists of a connected, non-compact Riemannian manifold $(M^{3},g)$ equipped with a positive potential function $V \in C^{\infty}(M^{3})$ which satisfy the following:

\begin{enumerate}
    \item The metric and potential solve the static vacuum Einstein equations
    \begin{eqnarray} \label{static_equations}
    D^2 V &=& V \text{Ric} + \left( 3V \right)g, \\
    \Delta_{g} V&=& 3 V, \nonumber
\end{eqnarray}
on $M^{3}$. Here, $\text{Ric}$, $D^2 V$, $\Delta_{g} V$ are the Ricci tensor, Hessian tensor of $V$, and Laplacian of $V$, respectively.
\item There exists a closed oriented surface $(\widehat{\Sigma}^{2},\widehat{g})$ with constant Gauss curvature $\widehat{k} \in \{ 1, 0, -1 \}$, a compact set $K \subset \subset M^{3}$, and a diffeomorphism $x: \widehat{\Sigma}^{2} \times (\rho_{0},\infty) \rightarrow M^{3} \setminus K$ such that the pull-backs of the metric and potential under $x$ have the asymptotic expansions
\begin{eqnarray}\label{ALH_intro}
g&=& \left( \widehat{k} + \rho^{2} \right)^{-1} d\rho^{2} + \rho^{2} \widehat{g} + q, \notag \\
& & \text{where} \hspace{0.4cm} |q|_{\bar g} +  |\bar\na q|_{\bar g} +  |\bar\na^2 q|_{\bar g} + |\bar\na^3 q|_{\bar g} = O(\rho^{-\alpha}) \hspace{0.2cm} \text{for some} \hspace{0.2cm} \alpha > 2, \label{ah1}\hspace{0.4cm} \text{and} \\
V&=& \sqrt{\widehat{k} + \rho^{2}} + o_{1}(\rho^{-1}). \label{ah2}
\end{eqnarray}
Here, $\overline{g}$ is the constant curvature metric $\left( \widehat{k} + \rho^{2} \right)^{-1} d\rho^{2} + \rho^{2} \widehat{g}$ on $\widehat{\Sigma}^{2} \times (\rho_{0},\infty)$.

\item $M^{3}$ has compact $C^{\infty}$ boundary $\partial M= \bigsqcup_{j=1}^{J} \partial_{j} M$ consisting of connected components $\partial_{j} M$, $V$ extends smoothly to $0$ on $\partial M$, and $\partial M= \{ V = 0\}$ is a regular level set of $V$. 

\end{enumerate}

\end{defn}
We highlight that item (3) precludes degenerate horizons, which are not addressed in this paper. Our results center around a geometric inequality for outer-minimizing surfaces $\Sigma^{2} \subset (M^{3},g,V)$ relating the total mean curvature of $\Sigma$ with horizon data and with the infinity $(\widehat{\Sigma},\widehat{g})$ of $(M^{3},g,V)$. 

\begin{Theorem}[Minkowski Inequality for ALH Static Systems] \label{minkowski}
Let $(M^{3},g,V)$ be an ALH static system, and assume also that the Euler characteristics satisfy \begin{align}\label{topological assumption for genus control in IMCF}
    \min_{j \in \{ 1, \dots, J \}} \chi(\partial_{j} M) \leq \chi(\widehat{\Sigma}). \tag{T1}
\end{align} Let $\Omega \subset M^{3}$ be a domain, i.e. a connected and bounded open set with $C^{\infty}$ boundary $\partial \Omega$, such that 
\begin{equation*}
\partial \Omega = \partial_{1} M \bigsqcup \dots \bigsqcup \partial_{K} M \bigsqcup \Sigma, \hspace{1cm} 1 \leq K \leq J,
\end{equation*}
 for some connected, outer-minimizing surface $\Sigma^{2} \subset (M^{3},g)$. Suppose also that \begin{align}\label{topological assumption for monotonicity}
    \max_{j \in \{ K+1,\dots, J \}} \chi(\partial_{j}M) \leq 0. \tag{T2}
 \end{align} Then we have the inequality 
 
    \begin{equation} \label{minkowski2}
    \frac{1}{2w_{2}} \int_{\Sigma} VH d\sigma - \frac{3}{w_{2}} \int_{\Omega} V d\mbox{vol} + 2 \sum_{j=1}^{K}  \frac{ 2\pi \chi(\partial_{j} M)}{3|\partial_{j} M| + 2 \pi \chi(\partial_{j} M)} \kappa_{j} \left( \frac{|\partial_{j} M|}{w_{2}} \right) \geq \widehat{k} \left( \frac{|\Sigma|}{w_{2}} \right)^{\frac{1}{2}}
\end{equation}
on $\Sigma$. Here, $H$ is the outward mean curvature of $\Sigma^{2} \subset (M^{3},g)$, $w_{2}$ is the area of $(\widehat{\Sigma}^{2},\widehat{g})$, and $\kappa_{j}= \frac{\partial V}{\partial \nu}|_{\partial_{j} M}$ is the surface gravity of the horizon $\partial_{j} M$. Moreover, equality holds in \eqref{minkowski2} if and only if $J=1$, $(M^{3},g,V)$ is isometric to Kottler space \eqref{ads_schwarzschild}, and $\Sigma= \widehat{\Sigma} \times \{ \rho \}$ is a slice.

\end{Theorem}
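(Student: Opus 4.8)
The plan is to run weak inverse mean curvature flow (IMCF) $\{\Sigma_t\}_{t \geq 0}$ starting from $\Sigma_0 = \Sigma$, letting $\Omega_t$ denote the region bounded by $\Sigma_t$ together with the horizons $\partial_1 M, \dots, \partial_K M$ (so $\Omega_0 = \Omega$), and to introduce the normalized quantity
\[
\mathcal{Q}(t) = \left( \frac{|\Sigma_t|}{w_2} \right)^{-\frac12} \left[ \frac{1}{2w_2} \int_{\Sigma_t} VH \, d\sigma - \frac{3}{w_2} \int_{\Omega_t} V \, d\Omega + 2 \sum_{j=1}^K \frac{2\pi \chi(\partial_j M)}{3|\partial_j M| + 2\pi \chi(\partial_j M)} \kappa_j \frac{|\partial_j M|}{w_2} \right],
\]
which is a generalization of the Brendle--Hung--Wang \cite{BHW12} monotone quantity and is designed so that $\mathcal{Q}(0)$ equals $(|\Sigma|/w_2)^{-1/2}$ times the left-hand side of \eqref{minkowski2}. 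I would establish two facts: that $\mathcal{Q}$ is non-increasing in $t$, and that $\lim_{t \to \infty} \mathcal{Q}(t) = \widehat{k}$. Together these give $\mathcal{Q}(0) \geq \widehat{k}$, which is exactly \eqref{minkowski2} after multiplying through by $(|\Sigma|/w_2)^{1/2}$.

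For monotonicity along the smooth flow, I would differentiate using $\partial_t (d\sigma) = d\sigma$ and the IMCF evolution of $H$, feeding in the static equations $\Delta_g V = 3V$ and $\nabla^2 V = V \Ric + 3Vg$ through the surface identity $\Delta_\Sigma V = -V\Ric(\nu,\nu) - H V_\nu$ (here $V_\nu = \partial_\nu V$). After integrating $V \Delta_\Sigma (1/H)$ by parts the Ricci terms cancel and one is left with
\[
\frac{d}{dt} \int_{\Sigma_t} VH \, d\sigma = \int_{\Sigma_t} \left( 2 V_\nu + \frac{VH}{2} - \frac{V |\mathring{A}|^2}{H} \right) d\sigma,
\]
where $\mathring{A}$ is the trace-free second fundamental form. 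Combining this with the divergence identity $\int_{\Sigma_t} V_\nu \, d\sigma = 3 \int_{\Omega_t} V \, d\Omega + \sum_{j=1}^K \kappa_j |\partial_j M|$ (from $\Delta_g V = 3V$), the first variation $\frac{d}{dt} \int_{\Omega_t} V = \int_{\Sigma_t} \frac{V}{H} d\sigma$, the exponential area growth $|\Sigma_t| = |\Sigma_0| e^t$, and the sub-static Heintze--Karcher inequality of \cite{FP22} with its optimal horizon coefficients,
\[
\int_{\Sigma_t} \frac{V}{H} \, d\sigma \geq \frac32 \int_{\Omega_t} V \, d\Omega + \sum_{j=1}^K \frac{|\partial_j M|^2}{3|\partial_j M| + 2\pi \chi(\partial_j M)} \kappa_j,
\]
the inequality $\mathcal{Q}'(t) \leq 0$ reduces to the purely algebraic fact that $|\partial_j M| - 3\,\frac{|\partial_j M|^2}{3|\partial_j M| + 2\pi \chi(\partial_j M)}$ collapses to $\frac{2\pi \chi(\partial_j M)|\partial_j M|}{3|\partial_j M| + 2\pi \chi(\partial_j M)}$, which is precisely the coefficient appearing in \eqref{minkowski2}; the discarded term $-\frac12 \int V|\mathring{A}|^2/H \leq 0$ is the slack. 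As a preliminary I would record that the horizons are totally geodesic with $R_g \equiv -6$, so that by Gauss--Bonnet and the Gauss equation $3|\partial_j M| + 2\pi \chi(\partial_j M) = -\int_{\partial_j M} \Ric(\nu,\nu)$, which is positive for nondegenerate horizons, making every denominator legitimate.

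The limit $\lim_{t \to \infty} \mathcal{Q}(t) = \widehat{k}$ I would read off from the ALH asymptotics \eqref{ah1}--\eqref{ah2}: on a large slice $\widehat{\Sigma} \times \{\rho\}$ one has $VH = 2\rho + 2\widehat{k}\rho^{-1} + o(\rho^{-1})$, $|\Sigma_t| = w_2 \rho^2 (1 + o(1))$, and $\int_{\Omega_t} V = \tfrac13 w_2 \rho^3 (1 + o(1))$, so the leading $\rho^3$ contributions of $\frac12 \int VH$ and $3 \int_{\Omega_t} V$ cancel, leaving $\widehat{k} \rho\, w_2$, while the constant horizon term is annihilated by the factor $(|\Sigma_t|/w_2)^{-1/2}$. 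The input that makes this rigorous is the regularity theorem extending Shi--Zhu \cite{SZ21}, which guarantees the weak flow is eventually smooth and converges to such slices; here the topological hypotheses \eqref{topological assumption for genus control in IMCF} and \eqref{topological assumption for monotonicity}, together with Gauss--Bonnet, control the genus of $\Sigma_t$ and ensure that when the weak flow jumps or engulfs the interior horizons $\partial_{K+1} M, \dots, \partial_J M$, the quantity $\mathcal{Q}$ does not increase. This weak-flow bookkeeping across jumps, rather than the smooth computation above, is where I expect the main obstacle to lie.

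Finally, for the rigidity statement, equality in \eqref{minkowski2} forces $\mathcal{Q} \equiv \widehat{k}$ and hence $\mathcal{Q}' \equiv 0$ on $[0,\infty)$. This drives the slack term to zero, so $|\mathring{A}| \equiv 0$ and every $\Sigma_t$ is totally umbilic, it saturates the Heintze--Karcher inequality, and it excludes jumps, so the flow is a smooth umbilic foliation. Standard warped-product rigidity then identifies $(M^3, g, V)$ with the Kottler solution \eqref{ads_schwarzschild} and $\Sigma$ with a slice $\widehat{\Sigma} \times \{\rho\}$, while the absence of additional horizons forces $J = 1$.
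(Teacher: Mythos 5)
Your proposal follows essentially the same route as the paper: your $\mathcal{Q}(t)$ is (up to the constant factor $2w_{2}^{1/2}$) exactly the paper's monotone quantity, monotonicity comes from the same Fogagnolo--Pinamonti Heintze--Karcher constants and the same algebraic collapse of the horizon coefficients, the weak-flow jump bookkeeping and the eventual-smoothness regularity theorem play the same roles, and rigidity is extracted from saturation of the Heintze--Karcher inequality. The only step where you are lighter than the paper is the evaluation of $\liminf_{t\to\infty}\mathcal{Q}(t)$: since $\Sigma_{t}$ is only asymptotically graphical rather than an exact slice, gradient correction terms survive at the same order $e^{t/2}$ as the $\widehat{k}$-term and must be absorbed via the sharp inequality of \cite{BHW12} when $\widehat{k}=1$ (applied through a finite cover of $\widehat{\Sigma}$) or H\"older when $\widehat{k}\le 0$, as carried out in Section 6 of the paper.
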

\begin{Remark} \label{HK_constants}
The denominator in the left-hand side of \eqref{minkowski2} is necessarily well-defined and positive for any ALH static system, see Section 3. Furthermore, the surface gravity $\kappa_{j}$ in \eqref{minkowski2} is necessarily constant over the horizon $\partial_{j} M$ as a consequence of the static equations, see Section 2.
\end{Remark}
\begin{Remark}
\cite{GSWW99} gives an inequality that restricts horizon topologies in terms of the topology of $\widehat{\Sigma}$. The conditions (T1)-(T2) taken together with the inequality from \cite{GSWW99} actually impose strong topological constraints on $\partial_{j}M$. We will not assume these stronger constraints for our main results because the proofs do not require them. However, we do include an explanation of the relationship between the genus inequality from \cite{GSWW99} and assumptions (T1)-(T2) in Figure \ref{topology} for the curious reader.

\end{Remark}
Brendle-Hung-Wang \cite{BHW12} proved the original version of this inequality for star-shaped hypersurfaces in $n$-dimensional AdS-Schwarzschild space, and Ge-Wang-Wu-Xia \cite{GWWX13} extended this to star-shaped hypersurfaces in Kottler space with $\widehat{k} \leq 0$. \eqref{minkowski2} is referred to as a Minkowski inequality because it provides a lower bound on the mean curvature integral of $\Sigma$ like the classical Minkowski inequality for convex hypersurfaces in $\mathbb{R}^{n}$.

\subsection{The Minkowski Inequality and Black Hole Uniqueness}
The horizon $\partial_{1} M$ of \\$(M^3,g,V)$ is a natural choice for the surface $\Sigma$ in Theorem \ref{minkowski}, in which case $K=1$ and $\Omega=\varnothing$. Indeed, $\partial_{1} M$ automatically satisfies the outer-minimizing requirement of the theorem (see Section 2), although we also need that the remaining horizons are non-spherical. Since the first two integrals in \eqref{minkowski2} vanish if $\Sigma=\partial_{1} M$, inequality \eqref{minkowski2} reduces to a lower bound on the product of the Euler characteristic and surface gravity of $\partial_{1} M$. After some algebra, this may be expressed as

\begin{equation} \label{surface_grav_ineq}
    \chi(\partial_{1} M) \kappa_{1} \geq \frac{1}{2} \left( 3\chi(\widehat{\Sigma}) \left( \frac{|\partial_{1} M|}{w_{2}} \right)^{\frac{1}{2}} + \widehat{k}\chi(\partial_{1}M) \left( \frac{|\partial_{1} M|}{w_{2}} \right)^{-\frac{1}{2}}  \right).
\end{equation}
We emphasize that the sign of the right-hand side equals the sign of $\widehat{k}$ in view of Remark \ref{HK_constants}. The conditions on $\partial_{1} M$ which imply saturation in \eqref{surface_grav_ineq} and hence serve as uniqueness conditions for Kottler are significantly different for spherical, toroidal, and hyperbolic infinities. First, we address the case of a spherical infinity.  There are a myriad of proofs for the uniqueness of the Schwarzschild metric, c.f. \cite{I67}, \cite{R78}, \cite{BM87} \cite{CCLP24}, \cite{R21}, and uniqueness theorems for de Sitter-Schwarzschild were recently provided in \cite{BM18}, \cite{BM20}, \cite{BCM23}. On the other hand, uniqueness of AdS-Schwarzschild remains an outstanding problem. Applying \eqref{minkowski2} to a spherical infinity, we achieve the following characterization of AdS-Schwarzschild.

\begin{Theorem}[Geometry and Uniqueness of Static Black Holes with Spherical Infinity] \label{bh_uniqueness_spherical}
Let $(M^{3},g,V)$ be an ALH static system with $\widehat{\Sigma}=S^{2}$, and suppose also that $\chi(\partial_{j}M) \leq 0$ for each $j \in \{ 2, \dots, J \}$. Then:
\begin{enumerate}
\item $\partial_{1} M$ is homeomorphic to $S^{2}$.
\item $\kappa_{1} \in [\sqrt{3},\infty)$, where $\kappa_{1}= \frac{\partial V}{\partial \nu}|_{\partial_{1} M}$.
\item If $\kappa_{1} = \sqrt{3}$, then $J=1$ and $(M^{3},g,V)$ is isometric to AdS-Schwarzschild space with mass $m=\frac{2}{3\sqrt{3}}$.
\item If $\kappa_{1} > \sqrt{3}$, then

\begin{equation} \label{area_bounds}
4\pi \rho_{-}(\kappa_{1})^{2} \leq |\partial_{1} M| \leq 4\pi \rho_{+}(\kappa_{1})^{2}
\end{equation}
for 
\begin{eqnarray}
\rho_{-}(\kappa_{1}) &=& \frac{1}{3} \left( \kappa_{1} - \sqrt{\kappa_{1}^{2} -3} \right), \label{s_minus} \\
\rho_{+}(\kappa_{1}) &=& \frac{1}{3} \left( \kappa_{1} + \sqrt{\kappa_{1}^{2}-3} \right). \label{s_plus}
\end{eqnarray}
Furthermore, if $|\partial_{1} M|=4\pi \rho_{-}(\kappa_{1})^{2}$ or $|\partial_{1} M| = 4\pi \rho_{+}(\kappa_{1})^{2}$, then $J=1$ and $(M^{3},g,V)$ is isometric to AdS-Schwarzschild space with respective mass parameter

\begin{eqnarray}
m_{-}(\kappa_{1}) &=& \frac{\rho_{-}(\kappa_{1})}{2} \left( 1 + \rho_{-}(\kappa_{1})^{2} \right), \\
m_{+}(\kappa) &=& \frac{\rho_{+}(\kappa_{1})}{2} \left( 1 + \rho_{+}(\kappa_{1})^{2} \right).
\end{eqnarray}
\end{enumerate}
\end{Theorem}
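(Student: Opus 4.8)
The plan is to feed the horizon $\partial_1 M$ into the reduced Minkowski inequality \eqref{surface_grav_ineq} and then extract all four conclusions by elementary one-variable analysis. First I would check that Theorem \ref{minkowski} applies with $\Sigma = \partial_1 M$, $K=1$, and $\Omega = \varnothing$: hypothesis \eqref{topological assumption for genus control in IMCF} is automatic since $\chi(\widehat\Sigma) = \chi(S^2) = 2$ dominates the Euler characteristic of any closed orientable surface, hypothesis \eqref{topological assumption for monotonicity} is exactly the standing assumption $\chi(\partial_j M) \leq 0$ for $j \geq 2$, and $\partial_1 M$ is connected and outer-minimizing as recorded in Section 2. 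Specializing \eqref{surface_grav_ineq} to $\widehat k = 1$ and introducing the areal radius $t := (|\partial_1 M|/w_2)^{1/2}$ with $w_2 = 4\pi$, the right-hand side factors as $\tfrac12 t^{-1}\bigl(3|\partial_1 M| + 2\pi\chi(\partial_1 M)\bigr)$, which is strictly positive by Remark \ref{HK_constants}.

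For part (1), this positivity together with $\kappa_1 > 0$ (the horizon is non-degenerate by item (3) of Definition \ref{alh_static}) forces $2\pi\chi(\partial_1 M)\kappa_1 > 0$, hence $\chi(\partial_1 M) > 0$; since a closed orientable surface has even Euler characteristic at most $2$, this gives $\chi(\partial_1 M) = 2$ and $\partial_1 M \cong S^2$. Substituting $\chi(\partial_1 M) = 2$ collapses \eqref{surface_grav_ineq} to the clean scalar inequality $2\kappa_1 \geq 3t + t^{-1}$. Part (2) is then immediate from the elementary bound $3t + t^{-1} \geq 2\sqrt3$ (with equality iff $t = 1/\sqrt3$).

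For part (4), I would rewrite $2\kappa_1 \geq 3t + t^{-1}$ as the quadratic constraint $3t^2 - 2\kappa_1 t + 1 \leq 0$, whose admissible interval is bounded by the roots $\tfrac13(\kappa_1 \pm \sqrt{\kappa_1^2 - 3}) = \rho_\pm(\kappa_1)$; translating back through $|\partial_1 M| = 4\pi t^2$ yields the area bounds \eqref{area_bounds}. The rigidity statements in parts (3) and (4) all come from the same mechanism: the boundary cases $\kappa_1 = \sqrt3$ and $|\partial_1 M| = 4\pi\rho_\pm(\kappa_1)^2$ each force equality in $2\kappa_1 \geq 3t + t^{-1}$, hence in \eqref{minkowski2}, so the equality clause of Theorem \ref{minkowski} identifies $(M^3,g,V)$ with ADS-Schwarzschild \eqref{ads_schwarzschild} and $\partial_1 M$ with a slice $\{\rho = \rho_m\}$. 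Computing the surface gravity of \eqref{ads_schwarzschild} gives $\kappa = VV'|_{\rho_m} = (3\rho_m^2 + 1)/(2\rho_m)$, i.e. $3\rho_m^2 - 2\kappa\rho_m + 1 = 0$---exactly the quadratic above with $t = \rho_m$---so the horizon radii are $\rho_m = \rho_\pm(\kappa_1)$ and the masses follow from $m = \rho_m(1 + \rho_m^2)/2$, recovering $m = \tfrac{2}{3\sqrt3}$ at $\kappa_1 = \sqrt3$ (where $\rho_-(\kappa_1) = \rho_+(\kappa_1) = 1/\sqrt3$) and $m_\pm(\kappa_1)$ at the two area extremes.

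The entire argument rests on Theorem \ref{minkowski}, so no real analytic obstacle remains here; the proof is essentially bookkeeping. The one point warranting care is conceptual rather than technical: surface gravity is not monotone in the horizon radius but attains a minimum $\sqrt3$ at $\rho_m = 1/\sqrt3$, which is precisely why a prescribed $\kappa_1 > \sqrt3$ is realized by two distinct ADS-Schwarzschild horizons whose areas bracket $|\partial_1 M|$, collapsing to a single value exactly at the threshold $\kappa_1 = \sqrt3$. I would make sure the rigidity discussion keeps these two branches straight when matching areas to the masses $m_\pm(\kappa_1)$.
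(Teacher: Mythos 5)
Your proposal is correct and follows essentially the same route as the paper: apply the reduced inequality \eqref{surface_grav_ineq} to $\partial_{1}M$, use the positivity of the right-hand side (Remark \ref{HK_constants}) to force $\chi(\partial_{1}M)=2$, reduce to $2\kappa_{1}\geq 3t+t^{-1}$ with $t=(|\partial_{1}M|/4\pi)^{1/2}$, and invoke the rigidity clause of Theorem \ref{minkowski} in the equality cases. Your added bookkeeping identifying $\rho_{\pm}(\kappa_{1})$ as the roots of $3\rho_{m}^{2}-2\kappa\rho_{m}+1=0$ and recovering the masses $m_{\pm}$ is consistent with the paper's (terser) treatment.
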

\begin{Remark}
In this paper, we have fixed the cosmological constant $\Lambda=-3$, but Theorem \ref{bh_uniqueness_spherical} applies for any negative $\Lambda$. AdS-Schwarzschild metrics for a general cosmological constant are given by

\begin{equation*}
    M^{3}= S^{2} \times (\rho_{m},\infty), \hspace{0.5cm} g_{m}= \left(1- \frac{\Lambda}{3} \rho^{2} - \frac{2m}{\rho}\right)^{-1} d\rho^{2} + \rho^{2} g_{S^{2}}, \hspace{0.5cm} V_{m}(s)= \sqrt{1 -\frac{\Lambda}{3} \rho^{2} - \frac{2m}{\rho}}. 
\end{equation*}
Assuming that $(M^{3},g,V)$ is asymptotic to hyperbolic space with curvature $\frac{1}{3} \Lambda$, we instead obtain that $\kappa \in [ \sqrt{-\Lambda},\infty )$. $\eqref{s_minus}-\eqref{s_plus}$ are then given by
\begin{eqnarray*}
\rho_{-}(\kappa) &=& \frac{1}{-\Lambda} \left( \kappa - \sqrt{\kappa^{2} + \Lambda} \right), \\
\rho_{+}(\kappa) &=& \frac{1}{-\Lambda} \left( \kappa + \sqrt{\kappa^{2} + \Lambda} \right),
\end{eqnarray*}
and the corresponding mass parameters are
\begin{eqnarray*}
m_{-}(\kappa) &=& \frac{\rho_{-}(\kappa)}{2} \left( 1 - \frac{\Lambda}{3} \rho_{-}(\kappa)^{2} \right), \\
m_{+}(\kappa) &=& \frac{\rho_{+}(\kappa)}{2} \left( 1 - \frac{\Lambda}{3} \rho_{+}(\kappa)^{2} \right).
\end{eqnarray*}
In particular, if $\kappa=\sqrt{-\Lambda}$ then $(M^{3},g)$ isometric to the Schwarzschild metric with mass $m= \frac{2}{3\sqrt{-\Lambda}}$.
\end{Remark}
To further interpret this theorem, we provide a graph of the surface gravity $\kappa$ of an AdS-Schwarzschild black hole as a function of its Schwarzschild radius in Figure \ref{surface_grav}. We see from this graph that $\kappa \geq \sqrt{3}$ within the AdS-Schwarzschild family for any $m >0$. Item (3) in Theorem \ref{bh_uniqueness_spherical} may therefore be interpreted as a uniqueness theorem for ``critical" AdS-Schwarzschild space. Also from this graph, we see for any $\kappa > \sqrt{3}$ that there are \textit{two} different AdS-Schwarzschild black holes with surface gravity $\kappa$. This is a distinct feature of the AdS-Schwarzschild family compared to other Kottler metrics. and it renders comparison techniques of an ALH static $(M^{3},g,V)$ to a ``reference" Kottler solution with a given surface gravity, used to great effect in \cite{CS01}, \cite{LN13}, \cite{B22}, an ill-defined concept when $\widehat{\Sigma}=S^{2}$. For any $\kappa> \sqrt{3}$, the area radii of the two AdS-Schwarzschild metrics with surface gravity $\kappa$ are given by \eqref{s_minus}-\eqref{s_plus} . So altogether Theorem \ref{bh_uniqueness_spherical} states that the horizon area of $\partial_{1} M$ lies between the horizon areas of the two AdS-Schwarzschild metrics of the same surface gravity, with equality triggering uniqueness.

\begin{figure}
\centering
\textbf{Surface Gravity and Schwarzschild Radii of Kottler Black Holes}
\vspace{0.2cm}

\begin{tikzpicture}
\begin{scope}[xshift=90, domain=0.15:2.5]
\draw[line width=2pt, color=blue]    plot (\x,{0.5*( (3)*(\x) + (1)/(\x))})             node[anchor=west] {$\kappa = \frac{1}{2} \left( 3 \rho_{m} + \frac{1}{\rho_{m}} \right)$};
  \draw[ultra thick,->] (-0.2,0) -- (5,0) node[right] {$\rho_{m}$};
  \draw[ultra thick,->] (0,-1) -- (0,3.8) node[above] {$\kappa$};
  \draw (0,1.73) node[anchor=east] {$\sqrt{3}$};
  \draw (0,1.73) node{$-$};
  \draw (3,5) node{$\widehat{k}=+1$ (Anti-de Sitter Schwarzschild)};
  \end{scope}
  
\begin{scope}[yshift=-200, xshift=90, domain=0:2.5]
 \draw[line width=2pt, color=blue]    plot (\x,{1.5*( \x)})             node[anchor=south] {$\kappa = \frac{3}{2} \rho_{m}$};
  \draw[ultra thick,->] (-0.2,0) -- (5,0) node[right] {$\rho_{m}$};
  \draw[ultra thick,->] (0,-1.2) -- (0,4.2) node[above] {$\kappa$};
  \draw (3,5) node{$\widehat{k}=0$ (Toroidal Kottler) };
\end{scope}

\begin{scope}[xshift=90, yshift=-400, domain=0.577:2.6]
\draw[line width=2pt, color=blue]    plot (\x,{0.5*( (-1)/(\x) + 3*\x)})  node[right] {$\kappa = \frac{1}{2} \left( 3\rho_{m} - \frac{1}{\rho_{m}} \right)$};
  \draw[ultra thick,->] (-0.2,0) -- (5,0) node[right] {$\rho_{m}$};
  \draw[ultra thick,->] (0,-1.2) -- (0,4.2) node[above] {$\kappa$};
  \draw (0.577,0) node {$|$};
  \draw (0.577,-0.6) node {$\frac{1}{\sqrt{3}}$};
  \draw (3,5) node{$\widehat{k}=-1$ (Hyperbolic Kottler)};
  \end{scope}
\end{tikzpicture}
\caption{Graphs of the surface gravity of Kottler black holes as functions of their Schwarzschild radii. The distinguishing feature of the AdS-Schwarzschild black holes is that for any $\kappa > \sqrt{3}$, there are two distinct solutions with surface gravity $\kappa$. Theorem \ref{bh_uniqueness_spherical} shows that the area radius of a general static black hole with spherical infinity lies between the Schwarzschild radii of these solutions.}
\label{surface_grav}
\end{figure}

Theorem \ref{bh_uniqueness_spherical} is of further interest from the viewpoint of conformal geometry, specifically to the characterization of Poincar\'e-Einstein fillings. Let $S^1(\lambda)$ denote the circle with length $2\pi\lambda$. Hawking-Page \cite{HP83} observed that for any ALH static system $(M^{3},g,V)$, the metric 
\begin{equation} \label{einstein_product}
    \mathrm g =V^{2}d\theta^{2} + g, \hspace{0.5cm} \theta \in S^{1}(\lambda),
\end{equation}
is a $4$-dimensional Einstein metric, $\text{Ric}(\mathrm g)=-3 \mathrm g$, with conformal infinity
\begin{equation} \label{conf_rep}
(\widehat{\Sigma} \times S^{1}(\lambda), d\theta^2 + \hat g).
\end{equation} 
 In general, the metric \eqref{einstein_product} has conical singularity at $\{ V=0 \}$. However, if all horizons of $(M^{3},g,V)$ have the same surface gravity $\kappa$, then $\bar g$ is smooth precisely for the length parameter $\lambda=\kappa^{-1}$, see Theorem \ref{Hawking-Page} for the explanation. So altogether, $\bar g$ gives rise to a \textit{Poincar\'e-Einstein filling} of \eqref{conf_rep} for $\lambda=\kappa^{-1}$.

In a recent breakthrough, Chang-Yang-Zhang \cite{CYZ25} showed that $\mathbb{H}^{3} \times S^{1}(\lambda)$ is the only Poincar\'e-Einstein filling of \eqref{conf_rep} if $\lambda$ is sufficiently large, and conjectured (\cite{CYZ25}, Question 6.2) that this uniqueness should hold for all $\lambda > (\sqrt{3})^{-1}$. The threshold comes from AdS-Schwarzschild, which yield a family of fillings with $\lambda \le (\sqrt{3})^{-1}$ through \eqref{einstein_product}. The following immediate implication of Theorem \ref{bh_uniqueness_spherical} provides evidence in support of this conjecture, as it implies that the ALH static systems considered here are not counterexamples:

\begin{Corollary}
Let $(M^{3},g,V)$ be an ALH static system with $\widehat{\Sigma}=S^{2}$, $\chi(\partial_{j}M) \leq 0$ for $j \in \{ 2,\dots,J \}$, and $\kappa_{j}=\kappa$ for each $j \in \{ 1,\dots,J \}$. Then the metric \eqref{einstein_product} is smooth only if $\lambda=\kappa^{-1} \leq \sqrt{3}$. In particular, $(M^{3},g,V)$ does not yield a Poincar\'e-Einstein filling of \eqref{conf_rep} for any $\lambda > (\sqrt{3})^{-1}$.
\end{Corollary}

Next we address the case of toroidal infinity. Before addressing black hole uniqueness, we first highlight that there is a complete solution to the static equations \eqref{static_equations} with the asymptotics \eqref{ah1}-\eqref{ah2} and infinity $\widehat{\Sigma}=T^{2}$ which is not a warped product. It is called the \textit{Horowitz-Myers geon},
\begin{equation} \label{horowitz_myers}
    M^{3}=[1,\infty) \times T^{2}, \hspace{1cm} g= \left(\rho^{2} - \frac{1}{\rho} \right)^{-1}d\rho^{2} +\left( \rho^{2} - \frac{1}{\rho} \right) d\theta_{1}^{2} + \rho^{2}d\theta_{2}, \hspace{1cm} V(r)=\rho^{2} - \frac{1}{\rho},
\end{equation}
and it served as the basis for the Horowitz-Myers conjecture. Roughly speaking, Horowitz-Myers \cite{HM98} conjectured that a complete ALH Riemannian manifold $(M^{3},g)$ with toroidal infinity has mass bounded below by that of \eqref{horowitz_myers}. Galloway-Surya-Woolgar \cite{GSW02}, \cite{GSW03} established static uniqueness theorems for the Horowitz-Myers geon under suitable asymptotic conditions, and recently Brendle-Hung \cite{BH24}, \cite{BH25} gave a resolution of the conjecture itself. For initial data sets with toroidal infinity and horizon boundary, a static black hole uniqueness theorem might provide a useful hint about potential positive energy theorems. When $\widehat{\Sigma}=T^{2}$, it is straightforward to see that the Minkowski inequality \eqref{minkowski2} is saturated if each horizon $(M^{3},g,V)$ has a non-positive Euler characteristic.
\begin{Theorem}[Uniqueness of Static Black Holes with Toroidal Infinity] \label{toroidal_uniqueness}
Let $(M^{3},g,V)$ be an ALH static system with $\widehat{\Sigma}=T^{2}$. Suppose also that $\partial M= \bigsqcup_{j=1}^{J} \partial_{j} M$ does not contain a topological $2$-sphere. Then $J=1$ and $(M^{3},g,V)$ is isometric to Kottler space \eqref{ads_schwarzschild} with $\widehat{k}=0$ and $m >0$.
\end{Theorem}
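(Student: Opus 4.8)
The plan is to apply the Minkowski inequality (Theorem~\ref{minkowski}) directly to the horizon $\Sigma = \partial_1 M$, exactly along the reduction that produces \eqref{surface_grav_ineq}, and then to read off rigidity from the equality case. First, since $\widehat{\Sigma} = T^2$ we have $\widehat{k} = 0$ and $\chi(\widehat{\Sigma}) = 0$. The hypothesis that $\partial M$ contains no topological $2$-sphere means every component $\partial_j M$ is a closed orientable surface of genus at least one, so $\chi(\partial_j M) \leq 0$ for all $j$. In particular $\min_j \chi(\partial_j M) \leq 0 = \chi(\widehat{\Sigma})$, which is assumption \eqref{topological assumption for genus control in IMCF}; and taking $K = 1$, $\Omega = \varnothing$ so that the index set in \eqref{topological assumption for monotonicity} is $\{2, \dots, J\}$, we also have $\max_{j \geq 2} \chi(\partial_j M) \leq 0$, which is assumption \eqref{topological assumption for monotonicity}. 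Since $\partial_1 M$ is connected and, as recalled in Section~2, automatically outer-minimizing, all hypotheses of Theorem~\ref{minkowski} are met.

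Next I would specialize \eqref{minkowski2} to $\Sigma = \partial_1 M$. Because $V \equiv 0$ on the horizon $\partial_1 M = \{V = 0\}$ the total mean curvature term vanishes, and $\Omega = \varnothing$ kills the bulk term, so \eqref{minkowski2} collapses to \eqref{surface_grav_ineq} with $\widehat{k} = 0$; equivalently,
\begin{equation*}
2 \cdot \frac{2\pi \chi(\partial_1 M)}{3|\partial_1 M| + 2\pi \chi(\partial_1 M)} \, \kappa_1 \left( \frac{|\partial_1 M|}{w_2} \right) \geq 0.
\end{equation*}
By Remark~\ref{HK_constants} the denominator $3|\partial_1 M| + 2\pi\chi(\partial_1 M)$ is strictly positive, while $\kappa_1 = \frac{\partial V}{\partial \nu}|_{\partial_1 M} > 0$ because $V > 0$ in the interior and $\partial_1 M$ is a regular level set of $V$. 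Hence every factor on the left other than $\chi(\partial_1 M)$ is positive, and the displayed inequality forces $\chi(\partial_1 M) \geq 0$. Combined with $\chi(\partial_1 M) \leq 0$ this gives $\chi(\partial_1 M) = 0$, so $\partial_1 M$ is a torus and, crucially, the left-hand side equals the right-hand side: equality holds in \eqref{minkowski2}.

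Finally I would invoke the equality characterization of Theorem~\ref{minkowski}: equality in \eqref{minkowski2} forces $J = 1$ and $(M^3, g, V)$ to be isometric to a Kottler space \eqref{ads_schwarzschild}, necessarily with $\widehat{k} = 0$ to match the toroidal infinity, and hence with $m > m_{\text{crit}} = 0$, i.e.\ $m > 0$. This is precisely the claimed conclusion. I do not expect a genuine obstacle here: all of the analytic difficulty is already packaged into Theorem~\ref{minkowski}, and the toroidal case is especially favorable because $\widehat{k} = 0$ makes the right-hand side of \eqref{surface_grav_ineq} vanish, turning the inequality into a pure sign statement on $\chi(\partial_1 M)$. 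The only points requiring care are the positivity of the Heintze--Karcher denominator (Remark~\ref{HK_constants}) and of $\kappa_1$, both standard, together with the observation that the equality case of Theorem~\ref{minkowski} delivers the full isometry rather than merely the topological conclusion $\chi(\partial_1 M) = 0$.
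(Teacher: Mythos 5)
Your proposal is correct and follows essentially the same route as the paper: apply Theorem \ref{minkowski} with $\Sigma=\partial_{1}M$, $K=1$, $\Omega=\varnothing$ so that the inequality collapses to \eqref{surface_grav_ineq}, observe that $\widehat{k}=0$ makes the right-hand side vanish while $\chi(\partial_{1}M)\leq 0$ makes the left-hand side non-positive, and conclude saturation and hence rigidity. The paper states this in one line ("the left-hand side is non-positive and the right-hand side is zero, immediately implying saturation"); your version spells out the sign bookkeeping (positivity of the Heintze--Karcher denominator and of $\kappa_{1}$) but is the same argument.
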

\begin{Remark}
In \cite{ACD02}, Theorems 3.4 and 3.8, Anderson-Chru\'{s}ciel-Delay also give static uniqueness theorems for the AdS-Schwarzschild and toroidal Kottler metrics. However, these theorems utilize results from \cite{A08}, and currently sufficient conditions to ensure these results hold, particularly those in Section 7 of \cite{A08}, are not known.
\end{Remark}
Theorem \ref{toroidal_uniqueness} hints at the possibility of a Penrose inequality for asymptotically locally hyperbolic manifolds with toroidal infinity for suitable horizon topologies.

Lastly, we consider the case of a hyperbolic infinity, and here \eqref{surface_grav_ineq} carries significant implications for the total mass of $(M^{3},g,V)$. In \cite{CH03}, Chru\'{s}ciel and Herzlich introduced a coordinate-invariant quantity for asymptotically locally hyperbolic Riemannian manifolds which is a natural analogue of the ADM mass for asymptotically flat manifolds.

\begin{Definition}[Mass of ALH Manifolds] \label{wang_mass}
Let $(M^{3},g)$ be an asymptotically locally hyperbolic manifold--that is, a connected, non-compact Riemannian manifold admitting the asymptotic expansion \eqref{ah1} for decay exponent $\alpha > \frac{3}{2}$. Assume also that the scalar curvature satisfies $R_g + 6 \in L^{1}(M^{3},g)$. The \textbf{Chru\'{s}ciel-Herzlich mass}\footnote{We take the term from B. Michel \cite{M11}.} of $(M^{3},g)$ is the quantity

\begin{equation}
m= \lim_{\rho \rw \infty} \frac{1}{4w_{2}} \int_{\widehat{\Sigma}^{2} \times \{ \rho \}} [ f \left(\text{div}_{\overline{g}} q - d(\text{Tr}_{\overline{g}} q)\right)(\overline{\nu}) + (\text{Tr}_{\overline{g}} q) df (\overline{\nu}) - q (\bar D f, \overline{\nu})] d\sigma_{\overline{g}} ,
\end{equation}
where $\widehat{\Sigma}^{2} \times \{ \rho \}$ is a slice of the expansion \eqref{ah1}, $\overline{g}$ is the metric $(\widehat{k} + \rho^{2}) d\rho^{2} + \rho^{2} \widehat{g}$ on $\widehat{\Sigma}^{2} \times (\rho_{0},\infty)$, $\bar D$ is the Levi-Civita connection of $\bar g$, $f$ is the function $\sqrt{\widehat{k} + \rho^{2}}$ on $\widehat{\Sigma}^{2} \times (\rho_{0},\infty)$, and $q=g - \overline{g}$.   
\end{Definition}
In Kottler space, the Chru\'sciel-Herzlich mass coincides with the mass parameter $m$, and this is related to the Schwarzschild radius by
\begin{equation*}
    m= \frac{1}{2} \left( \widehat{k} \rho_{m} + \rho_{m}^{3} \right).
\end{equation*}
It is conjectured (e.g. \cite{GWWX13}, Conjecture 1) that these metrics minimize mass for a given horizon area.
\begin{Conjecture}
Let $(M^{3},g)$ be an ALH manifold with decay exponent $\alpha > \frac{3}{2}$. Suppose that $(M^{3},g)$ has compact boundary $\partial M = \bigsqcup_{j=1}^{J} \partial_{j} M$ consisting of outermost minimal surfaces $\partial_{j} M$ and that the function $R_{g} + 6$ is non-negative and $L^{1}$ integrable on $(M^{3},g)$. Then

\begin{equation} \label{RPI}
   m \geq \frac{1}{2}\left( \widehat{k} \left( \frac{|\partial_{1} M|}{w_{2}} \right)^{\frac{1}{2}} + \left( \frac{|\partial_{1}M|}{w_{2}} \right)^{\frac{3}{2}} \right),
\end{equation}
where $m$ is the Chru\'{s}ciel-Herzlich mass of $(M^{3},g)$ and $\widehat{k}$, $w_{2}$ are the curvature and area of the infinity $(\widehat{\Sigma}^{2},\widehat{g})$. 
\end{Conjecture}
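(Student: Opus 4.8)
The plan is to prove \eqref{RPI} by Geroch monotonicity along inverse mean curvature flow (IMCF), adapting the Huisken--Ilmanen proof of the Riemannian Penrose inequality to the asymptotically locally hyperbolic setting. The central object is the hyperbolic Hawking-type mass
\[
m_{H}(\Sigma)=\frac{|\Sigma|^{1/2}}{2w_{2}^{3/2}}\left(\widehat{k}\,w_{2}+|\Sigma|-\frac{1}{4}\int_{\Sigma}H^{2}\,d\sigma\right),
\]
normalized so that it agrees with the mass parameter on Kottler slices. First I would observe that at the minimal boundary $\partial_{1}M$ one has $H\equiv 0$, so $m_{H}(\partial_{1}M)$ equals exactly the right-hand side of \eqref{RPI}; and that, using the asymptotic expansion \eqref{ah1} together with the integrability $R_{g}+6\in L^{1}$, the quantity $m_{H}(\widehat{\Sigma}\times\{\rho\})$ converges to the Chru\'{s}ciel--Herzlich mass $m$ as $\rho\to\infty$. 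Thus \eqref{RPI} reduces to the monotonicity $\frac{d}{dt}m_{H}(\Sigma_{t})\geq 0$ along a flow sweeping from $\partial_{1}M$ out to infinity.

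Second, I would run the weak IMCF of Huisken--Ilmanen with initial surface $\partial_{1}M$ and, invoking the regularity theorem for IMCF in ALH $3$-manifolds established in this paper (extending Shi--Zhu \cite{SZ21}), show that the weak solution is eventually smooth and that the leaves $\Sigma_{t}$ become graphical over, and asymptotic to, the coordinate slices $\widehat{\Sigma}\times\{\rho(t)\}$. This regularity is what guarantees both that the flow reaches infinity and that the limit identification $\lim_{t\to\infty}m_{H}(\Sigma_{t})=m$ is legitimate. The monotonicity itself proceeds as follows: for smooth IMCF (where $|\Sigma_{t}|=|\partial_{1}M|\,e^{t}$), combining the evolution equation for $\int_{\Sigma}H^{2}$, the Gauss equation, and the Gauss--Bonnet theorem yields, after the cancellations forced by the normalization,
\[
\frac{d}{dt}m_{H}(\Sigma_{t})=\frac{|\Sigma_{t}|^{1/2}}{2w_{2}^{3/2}}\left[\frac{1}{2}\int_{\Sigma_{t}}\frac{|\nabla H|^{2}}{H^{2}}+\frac{1}{4}\int_{\Sigma_{t}}|\mathring{A}|^{2}+\frac{1}{4}\int_{\Sigma_{t}}(R_{g}+6)+\pi\big(\chi(\widehat{\Sigma})-\chi(\Sigma_{t})\big)\right].
\]
The first three terms are manifestly nonnegative, the third precisely by the hypothesis $R_{g}+6\geq 0$; the sign of the last term requires the topological control $\chi(\Sigma_{t})\leq\chi(\widehat{\Sigma})$ on every leaf, where one uses $\widehat{k}\,w_{2}=2\pi\chi(\widehat{\Sigma})$ by Gauss--Bonnet on the infinity. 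At jump times, where the weak flow replaces $\Sigma_{t}$ by the boundary of its strictly outer-minimizing hull, $m_{H}$ does not decrease because the new portions are minimal and the Euler characteristic of each component is controlled as in Huisken--Ilmanen. Combining the monotonicity with the two endpoint evaluations then gives $m=\lim_{t\to\infty}m_{H}(\Sigma_{t})\geq m_{H}(\partial_{1}M)=\text{RHS of }\eqref{RPI}$.

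The main obstacle is the topological control $\chi(\Sigma_{t})\leq\chi(\widehat{\Sigma})$ along the weak flow, particularly when $\widehat{k}\leq 0$. In the spherical case $\widehat{k}=1$ this reads $\chi(\Sigma_{t})\leq 2$, which is automatic for connected leaves and is exactly the content of the Huisken--Ilmanen genus estimate. But for toroidal or higher-genus infinities one instead needs a \emph{lower} bound on the genus of every leaf, namely $\chi(\Sigma_{t})\leq\chi(\widehat{\Sigma})\leq 0$, and the weak IMCF can a priori shed genus at jump times or when components merge or pinch. Ruling out that the evolving surfaces simplify topologically below the genus of $\widehat{\Sigma}$ is precisely the difficulty that keeps \eqref{RPI} open in these cases, and it is the step on which any complete proof along these lines would stand or fall. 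A secondary, purely analytic difficulty is ensuring that the weak flow exists globally and exhausts $M^{3}$ with the sharp asymptotics needed to identify $\lim_{t\to\infty}m_{H}$ with the Chru\'{s}ciel--Herzlich mass, for which the slower-than-asymptotically-flat decay of the ALH metric makes the boundary analysis delicate.
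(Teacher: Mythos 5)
This statement is labeled as a \emph{Conjecture} in the paper: the authors do not prove it, and they explicitly record that it is known only for $m\leq 0$ (Lee--Neves) and for ALH graphs (Ge--Wang--Wu--Xia). So there is no proof in the paper to compare your proposal against; what matters is whether your sketch could close the conjecture, and it cannot. You candidly flag a gap, but you misdiagnose where the fatal one is. The topological control $\chi(\Sigma_{t})\leq\chi(\widehat{\Sigma})$ is in fact \emph{not} the main obstruction: for surfaces enclosing the outermost minimal boundary and homologous to $\widehat{\Sigma}$, this genus lower bound is exactly Lemma 3.3 of Lee--Neves (a Meeks--Simon--Yau argument), and it is invoked in Section 5 of this paper; the outermost hypothesis in the conjecture is what makes it available. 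Jump times do not shed genus below that of $\widehat{\Sigma}$ for the same homological reason.

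The step on which your argument actually collapses is the endpoint evaluation at infinity. You assert that the regularity theorem ``guarantees\dots that the limit identification $\lim_{t\to\infty}m_{H}(\Sigma_{t})=m$ is legitimate.'' It does not, and this is precisely why the conjecture is open. In asymptotically (locally) hyperbolic manifolds the three terms $\widehat{k}\,w_{2}+|\Sigma_{t}|-\tfrac{1}{4}\int_{\Sigma_{t}}H^{2}$ each diverge, and the mass emerges only from a cancellation at order $|\Sigma_{t}|^{-1/2}$; the $C^{1,\alpha}$ and $L^{2}$-type convergence of the leaves to coordinate slices furnished by the flow is not strong enough to control the error at that order. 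This is Neves's ``insufficient convergence'' obstruction: one cannot in general bound $\liminf_{t\to\infty}m_{H}(\Sigma_{t})$ above by the Chru\'{s}ciel--Herzlich mass along IMCF, even for smooth star-shaped flows. Lee--Neves obtain the case $m\leq 0$ only by exploiting a sign that sidesteps this comparison, and Ge--Wang--Wu--Xia avoid IMCF entirely in the graphical case. Your monotonicity formula and the evaluation $m_{H}(\partial_{1}M)=\text{RHS of \eqref{RPI}}$ are fine; the proof fails at $t=\infty$, not at the genus estimate.
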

So far, this conjecture has been established for $(M^{3},g)$ with mass $m \leq 0$ by Lee-Neves \cite{LN13} \footnote{Actually, Lee-Neves prove \eqref{RPI} for the mass introduced by Wang in \cite{W01}. This requires the existence of a mass aspect function for $(M^{3},g)$, and therefore the asymptotic assumptions are stronger than those in \eqref{ah1}.} and for asymptotically locally hyperbolic graphs by Ge-Wang-Wu-Xia \cite{GWWX13}. In this paper, we find an explicit connection between \eqref{RPI} and the uniqueness of hyperbolic Kottler metrics with nonnegative mass. Note that uniqueness theorems for nonpositive mass were previously established by Lee-Neves \cite{LN13} and later Borghini \cite{B22}. 

\begin{Theorem}[Reverse Penrose Inequality for Static Black Holes with Hyperbolic Infinity] \label{Reverse Penrose Inequality}
Let $(M^{3},g,V)$ be an ALH static system with $\widehat{k}=-1$ and connected boundary $\partial M$ satisfying $\chi(\partial M) \leq \chi(\widehat{\Sigma})$. Suppose that $(M^{3},g)$ has Chru\'{s}ciel-Herzlich mass $m \geq 0$. Then

\begin{equation} \label{reverse_penrose}
    m \leq \frac{1}{2} \left( - \left( \frac{|\partial M|}{w_{2}} \right)^{\frac{1}{2}} + \left( \frac{|\partial M|}{w_{2}} \right)^{\frac{3}{2}} \right).
\end{equation}
Moreover, equality is achieved if and only if $(M^{3},g)$ is isometric to Kottler space \eqref{ads_schwarzschild} with $\widehat{k}=-1$ and $m \geq 0$. 
\end{Theorem}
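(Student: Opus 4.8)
The plan is to apply Theorem \ref{minkowski} twice—once on the horizon and once on large coordinate spheres—and then merge the two outputs through a Komar-type divergence identity. Since $\partial M$ is connected we have $J=1$ and $\partial M=\partial_1 M$; the hypothesis $\chi(\partial M)\le\chi(\widehat{\Sigma})$ is exactly assumption (T1), while (T2) is vacuous, so Theorem \ref{minkowski} is available for every admissible $\Sigma$. Write $a:=|\partial M|/w_2$, $\kappa:=\kappa_1=|\nabla V|$ on $\partial M$, and $b:=2\pi\chi(\partial M)/w_2$. Because $\widehat{k}=-1$, Gauss--Bonnet gives $w_2=-2\pi\chi(\widehat{\Sigma})$, so the topological hypothesis becomes $b\le-1$, and Remark \ref{HK_constants} reads $3a+b>0$. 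First, taking $\Sigma=\partial_1 M$ (so $K=1$, $\Omega=\varnothing$, and $\partial_1 M$ is automatically outer-minimizing) in Theorem \ref{minkowski} produces \eqref{surface_grav_ineq}, which with $\widehat{k}=-1$ reads $b\kappa\ge-\tfrac12(3a^{1/2}+ba^{-1/2})$; dividing by $b<0$ and using $-1/b\le1$ yields the surface-gravity bound
\begin{equation*}
\kappa\le\tfrac12\bigl(3a^{1/2}-a^{-1/2}\bigr).\tag{i}
\end{equation*}

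Second, I apply Theorem \ref{minkowski} with $\Sigma=\Sigma_\rho:=\widehat{\Sigma}\times\{\rho\}$ and $\Omega=\Omega_\rho$ the region between $\partial M$ and $\Sigma_\rho$ (large slices are connected and outer-minimizing, so again $K=1$). To eliminate the bulk term I integrate the static equation $\Delta_g V=3V$ over $\Omega_\rho$; since $V=0$ and $|\nabla V|=\kappa$ on $\partial M$, the divergence theorem gives the exact identity $\int_{\Sigma_\rho}\partial_\nu V\,d\sigma=3\int_{\Omega_\rho}V\,d\Omega+\kappa|\partial M|$. Substituting this for $\tfrac{3}{w_2}\int_{\Omega_\rho}V\,d\Omega$ in \eqref{minkowski2} and letting $\rho\to\infty$, the decisive analytic input is the asymptotic mass formula
\begin{equation*}
\lim_{\rho\to\infty}\left[\frac{1}{2w_2}\int_{\Sigma_\rho}VH\,d\sigma-\frac{1}{w_2}\int_{\Sigma_\rho}\partial_\nu V\,d\sigma+\Bigl(\frac{|\Sigma_\rho|}{w_2}\Bigr)^{1/2}\right]=-3m,
\end{equation*}
which I would establish by inserting the expansions \eqref{ah1}--\eqref{ah2}, cancelling the divergent $O(\rho^3)$ and $O(\rho)$ parts, and matching the surviving finite term against Definition \ref{wang_mass}; the coefficient $-3$ is fixed by the exact Kottler computation, where \eqref{minkowski2} is an equality on every slice. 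Granting this, the limit of \eqref{minkowski2} becomes $-3m+\kappa a\,\tfrac{3(a+b)}{3a+b}\ge0$, i.e.
\begin{equation*}
m\le\kappa a\,\frac{a+b}{3a+b}.\tag{ii}
\end{equation*}

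Now I combine (i) and (ii). Since $m\ge0$ while $\kappa a>0$ and $3a+b>0$, inequality (ii) forces $a+b\ge0$; hence $\tfrac{a+b}{3a+b}\ge0$ and (i) may be inserted into (ii):
\begin{equation*}
m\le\tfrac12\bigl(3a^{1/2}-a^{-1/2}\bigr)\,a\,\frac{a+b}{3a+b}=\frac{a^{1/2}}{2}\,(3a-1)\,\frac{a+b}{3a+b}.
\end{equation*}
The elementary inequality $(3a-1)(a+b)\le(a-1)(3a+b)$ holds because its difference equals $-2a(b+1)\ge0$ (as $b\le-1$), giving $m\le\tfrac12(a^{3/2}-a^{1/2})$, which is exactly \eqref{reverse_penrose}. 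This is where the assumption $m\ge0$ enters essentially: it is precisely what rules out the regime $a+b<0$, in which (ii) would otherwise carry the wrong sign. For rigidity, equality forces $b=-1$ (equality in $-2a(b+1)\ge0$), so $\chi(\partial M)=\chi(\widehat{\Sigma})$, together with equality in (ii), i.e. saturation of \eqref{minkowski2} in the limit; the equality case of Theorem \ref{minkowski}—equivalently, constancy of the monotone IMCF quantity behind it—then identifies $(M^3,g,V)$ with hyperbolic Kottler space of mass $m\ge0$, and conversely Kottler saturates every step.

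The step I expect to be the main obstacle is the asymptotic mass formula above. Under the hypothesized decay $\alpha>2$ the two flux integrals each diverge like $\rho^3$, so one must verify both that the renormalized combination converges and that its value is $-3$ times the Chru\'{s}ciel--Herzlich mass of Definition \ref{wang_mass}; concretely, one must check that the geometric flux $\tfrac{1}{2w_2}\int_{\Sigma_\rho}VH-\tfrac{1}{w_2}\int_{\Sigma_\rho}\partial_\nu V$ selects the same angular mode of the mass aspect as Definition \ref{wang_mass}. The static equations are indispensable here, since they constrain the subleading expansion of $(g,V)$ and preclude slower-decaying contributions that would otherwise spoil the limit; everything else reduces to two applications of Theorem \ref{minkowski}, the divergence identity, and the elementary algebra above.
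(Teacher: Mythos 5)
Your overall architecture matches the paper's: a surface-gravity upper bound from \eqref{surface_grav_ineq} applied to the horizon, a mass upper bound of the form $m\le \kappa a\,\tfrac{a+b}{3a+b}$ obtained from the geometry of large coordinate slices, and then exactly the algebra you perform (your inequality (ii) is precisely Lemma \ref{baby reverse Penrose} for $n=3$, $J=1$, and your observation that $m\ge 0$ forces $a+b\ge 0$ is the paper's $|\pl M|+2\pi\chi(\pl M)\ge 0$). The final elementary manipulation and the rigidity discussion are correct.

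The genuine gap is the step you yourself flag: the asymptotic mass formula $\lim_{\rho\to\infty}\bigl[\tfrac{1}{2w_2}\int_{\Sigma_\rho}VH-\tfrac{1}{w_2}\int_{\Sigma_\rho}\pl_\nu V+(\tfrac{|\Sigma_\rho|}{w_2})^{1/2}\bigr]=-3m$. Writing $\tfrac12 VH = 2\tfrac{V}{H}+\tfrac{V}{2H}(H^2-4)$, your claim splits into the paper's statement $\tfrac{1}{w_2}\int_{\Sigma_\rho}\bigl(2\tfrac{V}{H}-\pl_\nu V\bigr)\to -m$ \emph{plus} the additional claim $\tfrac{1}{2w_2}\int_{\Sigma_\rho}\tfrac{V}{H}(H^2-4)+(\tfrac{|\Sigma_\rho|}{w_2})^{1/2}\to -2m$. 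The latter is essentially convergence of a Hawking-mass-type quantity $\rho\int_{\Sigma_\rho}(H^2-H_{\bar g}^2)$ on coordinate slices to a multiple of the Chru\'sciel--Herzlich mass. Under the decay $\alpha>2$ of Definition \ref{alh_static} the correction $H-H_{\bar g}=O(\rho^{-\alpha})$ only gives $\int_{\Sigma_\rho}\rho(H-H_{\bar g})\,d\sigma = O(\rho^{3-\alpha})$, which for $2<\alpha<3$ need not even converge, let alone select the correct multiple of the mass aspect; this is exactly the phenomenon the paper points to in the footnote on Lee--Neves, whose mass requires stronger asymptotics. Invoking staticity to upgrade the expansion is a substantive extra theorem, not a routine insertion of \eqref{ah1}--\eqref{ah2}. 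The paper circumvents this entirely by applying the Heintze--Karcher inequality (not the Minkowski inequality) directly to the slices $\widehat\Sigma\times\{\rho\}$: the exact algebraic identity in Lemma \ref{baby reverse Penrose} reduces $(n-1)\tfrac{V}{H}-\pl_\nu V$, via the static equations, to $\tfrac{1}{n-1}V(\mbox{Ric}(\nu,\nu)+n-1)$ plus manifestly convergent errors, and the limit of that Ricci flux is Herzlich's formula (Proposition 8.1), valid under the stated decay. A secondary, smaller point: your application of Theorem \ref{minkowski} to $\Sigma_\rho$ also presumes that large coordinate slices are outer-minimizing, which is true but unproven in your sketch; the Heintze--Karcher route needs only mean-convexity of the slices.
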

Theorem \ref{Reverse Penrose Inequality} is a black hole analogue to Theorem 1.3(iii) in \cite{CS01}, which roughly states that static solitons with hyperbolic infinity, if they exist, must have mass $m < m_{\text{crit}}= -\frac{1}{3\sqrt{3}}$. Additional ``reverse positive mass inequalities" for static spaces have also been found in \cite{GW15}, \cite{W05}, \cite{AD98}. As a clear consequence of Theorem \ref{Reverse Penrose Inequality}, static uniqueness for a single black hole with $\widehat{k}=-1$ and $m \geq 0$ holds within any regime where inequality \eqref{RPI} holds. 

As we alluded to, one such regime is ALH graphs. The idea to study Penrose inequalities for graphical hypersurfaces in a reference space goes back to Lamm \cite{L10}. This approach was later applied in the asymptotically hyperbolic setting by Dahl-Gicquaud-Sakovich \cite{DGS12} and de Lima-Girao \cite{DG15}, and in the asymptotically locally hyperbolic setting by Ge-Wang-Wu-Xia \cite{GWWX13}. Theorem 1.5 in \cite{GWWX13} states that inequality \eqref{RPI} holds if $(M^{3},g)$ can be isometrically embedded as a suitable graph in $4$-dimensional Kottler spacetime with Riemannian signature. The methods in this paper actually allow us to relax the condition on this embedding when $n=3$. Specifically, we replace the assumption from \cite{GWWX13} that the boundary of $M^{3}$ is star-shaped within a time slice of the product with an outer-minimizing assumption. 
\begin{Corollary}[Static Black Hole Uniqueness for ALH Graphs] \label{alh_graph}
Let $(M^{3},g,V)$ be an ALH static system with $\widehat{k}=-1$, $\partial M=\partial_{1} M$, and $m \geq 0$. Suppose that $(M^{3},g)$ arises as a hypersurfaces in the Riemannian Kottler manifold
\begin{eqnarray} \label{riem_kottler}
   & & ( \mathbb{R} \times (\rho_{m_{0}},\infty) \times \widehat{\Sigma}^{2}, V_{m_{0},-1}(\rho)^{2} d\tau^{2} + g_{m_{0},-1}), \hspace{1cm} \text{where} \hspace{0.5cm} m_{0} > m_{\text{crit}},
\end{eqnarray}
 such that
\begin{enumerate}
    \item $\langle \eta, \frac{\partial}{\partial \tau} \rangle > 0$ on $M^{3}$, where $\eta$ is the unit normal of $M^{3}$ in \eqref{riem_kottler},
    \item $\partial M$ lies in a slice $\{ \tau = \tau_{0} \}$ of \eqref{riem_kottler}, and $\overline{M^{3}}$ intersects this slice orthogonally, AND
    \item $\partial M=\Sigma \subset (\{ \tau= \tau_{0} \},g_{m_{0},\widehat{k}})$ is an outer-minimizing surface enclosing the horizon.
\end{enumerate}
Then $(M^{3},g,V)$ is isometric to a Kottler metric \eqref{ads_schwarzschild} with $\widehat{k}=-1$ and $m \geq 0$.
\end{Corollary}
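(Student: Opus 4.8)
The strategy is to sandwich the mass $m$ between the forward Penrose inequality \eqref{RPI} and the reverse Penrose inequality \eqref{reverse_penrose}. When $\widehat{k}=-1$ both bounds share the right-hand side $\tfrac12\big(-(|\partial M|/w_{2})^{1/2}+(|\partial M|/w_{2})^{3/2}\big)$, so establishing \eqref{RPI} for $(M^{3},g)$ together with Theorem \ref{Reverse Penrose Inequality} forces equality, whereupon the rigidity clause of Theorem \ref{Reverse Penrose Inequality} identifies $(M^{3},g,V)$ with a hyperbolic Kottler metric \eqref{ads_schwarzschild}. I would first record a simplification available for any static system: tracing the Hessian equation in \eqref{static_equations} gives $\Delta_{g}V = V R_{g}+9V$, which against $\Delta_{g}V=3V$ yields $R_{g}\equiv -6$. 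Hence $R_{g}+6\equiv 0$, so the scalar-curvature hypothesis of the Penrose conjecture is met and every bulk integral of $R_{g}+6$ occurring below vanishes identically.

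To prove \eqref{RPI} I would run the graphical scheme of Ge-Wang-Wu-Xia \cite{GWWX13}. Condition (1), $\langle \eta, \tfrac{\partial}{\partial\tau}\rangle>0$, makes the projection of $M^{3}$ onto the slice $\{\tau=\tau_{0}\}$ a diffeomorphism, so $M^{3}$ is realized as a graph $\{\tau=f\}$ over the region of $(\{\tau=\tau_{0}\},g_{m_{0},-1})$ exterior to $\Sigma=\partial M$. Integrating the divergence identity attached to the static potential $V_{m_{0},-1}$ over this graph, the term at infinity reproduces the Chru\'{s}ciel-Herzlich mass $m$, the bulk term is a nonnegative multiple of $\int (R_{g}+6)$ and thus vanishes by the previous step, and condition (2)---orthogonal intersection along $\partial M\subset\{\tau=\tau_{0}\}$---guarantees that the graph closes up vertically at the boundary, so the horizon boundary term collapses to the total mean curvature of $\Sigma$ measured inside the reference slice. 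In particular $|\partial M|$ equals the area of $\Sigma$ in $(\{\tau=\tau_{0}\},g_{m_{0},-1})$.

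The essential improvement over \cite{GWWX13} lies in bounding this boundary term. They assume $\Sigma$ star-shaped in order to apply the Brendle-Hung-Wang Minkowski inequality, whereas condition (3) only furnishes that $\Sigma$ is outer-minimizing. Instead I would apply Theorem \ref{minkowski} to the reference space $(\{\tau=\tau_{0}\},g_{m_{0},-1})$, which is itself an ALH static system with the single horizon $\widehat{\Sigma}\times\{\rho_{m_{0}}\}$: assumption (T1) reads $\chi(\widehat{\Sigma})\le\chi(\widehat{\Sigma})$ and holds trivially, (T2) is vacuous, and the outer-minimizing $\Sigma$ together with the region $\Omega$ it bounds with the horizon is an admissible configuration. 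The resulting lower bound for $\tfrac{1}{2w_{2}}\int_{\Sigma} V H\,d\sigma$ feeds through the graph mass formula to yield exactly \eqref{RPI} for $(M^{3},g)$.

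Finally, since $m\ge 0$, the forward inequality \eqref{RPI} and the reverse inequality of Theorem \ref{Reverse Penrose Inequality} can hold simultaneously only at equality, and the rigidity clause then gives the claim; here one also checks the topological hypothesis $\chi(\partial M)\le\chi(\widehat{\Sigma})$ of Theorem \ref{Reverse Penrose Inequality}, which holds because the outer-minimizing $\Sigma$ is homologous to the reference horizon $\widehat{\Sigma}\times\{\rho_{m_{0}}\}$ and so cannot have genus below that of $\widehat{\Sigma}$. I expect the main obstacle to be precisely the bookkeeping at the boundary: verifying that the orthogonal-intersection condition (2) produces a boundary term of exactly the form controlled by the left-hand side of \eqref{minkowski2}, so that the outer-minimizing Minkowski inequality of Theorem \ref{minkowski} can be substituted cleanly for the star-shaped inequality used in \cite{GWWX13}.
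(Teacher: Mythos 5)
Your overall architecture is the paper's: sandwich $m$ between a graphical Penrose inequality and the reverse Penrose inequality of Theorem \ref{Reverse Penrose Inequality}, note that both sides coincide when $\widehat{k}=-1$, and invoke rigidity. The observation $R_{g}\equiv -6$ and the reduction of the mass formula to a boundary term $\frac{1}{4w_{2}}\int_{\Sigma}VH\,d\sigma$ are also correct. But there is a genuine gap at the step where you claim that Theorem \ref{minkowski}, applied to the reference slice, ``feeds through the graph mass formula to yield exactly \eqref{RPI}.'' Theorem \ref{minkowski} bounds from below the \emph{combination}
\begin{equation*}
\frac{1}{2w_{2}}\int_{\Sigma}VH\,d\sigma-\frac{3}{w_{2}}\int_{\Omega}V\,d\Omega+(\text{horizon terms})\;\geq\;\widehat{k}\left(\frac{|\Sigma|}{w_{2}}\right)^{\frac12},
\end{equation*}
so isolating $\int_{\Sigma}VH\,d\sigma$ leaves you with a lower bound of the form $\widehat{k}(|\Sigma|/w_{2})^{1/2}+\frac{3}{w_{2}}\int_{\Omega}V\,d\Omega-(\cdots)$. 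The graphical mass formula, however, requires the right-hand side to contain $(|\Sigma|/w_{2})^{3/2}$, and there is no a priori inequality converting the weighted volume $\frac{3}{w_{2}}\int_{\Omega}V\,d\Omega$ into $(|\Sigma|/w_{2})^{3/2}$ minus the correct horizon contribution for a general outer-minimizing $\Sigma$. That conversion is precisely the content of the paper's separate Areal Minkowski Inequality (Theorem \ref{area_minkowski}), which is not a corollary of Theorem \ref{minkowski}: it is proved with a \emph{different} monotone quantity $P(t)$ (the De Lima--Girao/Ge--Wang--Wu--Xia quantity with $|\Sigma_{t}|^{3/2}$ in place of the bulk integral), a case analysis according to whether the area--volume inequality \eqref{area_volume_ineq} is ever attained along the flow, and the additional asymptotic input of Proposition \ref{tildeQ2}. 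Without Theorem \ref{area_minkowski} (or an equivalent substitute), your chain of inequalities does not close.

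A secondary, smaller issue: your justification of $\chi(\partial M)\leq\chi(\widehat{\Sigma})$ via ``homologous surfaces cannot have smaller genus'' is not a valid argument in general (homology does not constrain genus). The needed fact is that the graph hypothesis forces $\partial M$ to enclose the reference Kottler horizon $\{\rho=\rho_{m_{0}}\}$, whose Euler characteristic equals $\chi(\widehat{\Sigma})$, and one then appeals to the genus constraint for surfaces in the exterior region; this is how the paper states it. If you repair the main step by proving (or citing) the areal version of the Minkowski inequality for outer-minimizing surfaces, the rest of your argument goes through as in the paper.
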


\subsection{Further Directions}
We now discuss possible extensions of the main theorems in this paper, beginning with the prospect of a stronger black hole uniqueness theorem for spherical infinity. One potential approach along these lines might involve an additional Minkowski inequality to the one in Theorem \ref{minkowski}. In \cite{HW24a}, the authors gave a new proof of Israel's theorem \cite{I67} on the uniqueness of Schwarzschild space based on a Minkowski inequality for the level sets of the static potential. Namely, we proved that if $\Sigma=\{ V = s \}$ is an outer-minimizing level set of a static potential $V$ on an asymptotically flat $3$-manifold $(M^{3},g)$ \footnote{The level-set inequality holds in all dimensions, but we focus on $n=3$.}, then

\begin{equation} \label{level_set_minkowski}
\frac{1}{2} \int_{\{ V=s \}} H d\sigma \geq s \left( \frac{|\{ V = s \} |}{4\pi} \right)^{\frac{1}{2}}.
\end{equation}
Taking \eqref{level_set_minkowski} over near-horizon level sets in an asymptotically flat static $(M^{3},g,V)$ implies an upper bound on surface gravity $\kappa$, and in turn the rigidity statement from \cite{HW24a} forces rotational symmetry of $(M^{3},g)$. If \eqref{level_set_minkowski} also holds on level sets of an asymptotically hyperbolic static potential (note that this is trivially true in AdS-Schwarzschild), the resulting upper bound on $\kappa$ would imply saturation in \eqref{surface_grav_ineq} for $\widehat{\Sigma}=S^{2}$. In other words, given inequality \eqref{minkowski2}, static uniqueness is in fact \textit{equivalent} to inequality \eqref{level_set_minkowski} over near-horizon level sets. 

In the asymptotically flat setting, the proof of inequality \eqref{level_set_minkowski} utilizes a conformal symmetry of the $\Lambda=0$ static vacuum equations, see Corollary 8.2 in \cite{HW24a}. Unfortunately, no analogous conformal symmetry exists for the static equations with $\Lambda < 0$, and so the proof method completely breaks down in the asymptotically hyperbolic setting. Therefore, either an alternative approach to proving \eqref{level_set_minkowski} or an explicit counterexample would both be very useful. 

Woolgar (\cite{W17}, p 20-21 in the published version) suggests that there may exist static configurations of multiple horizons with spherical topology and a toroidal infinity. So for $\widehat{\Sigma}=T^{2}$, it is possible that the topological condition in Theorem \ref{toroidal_uniqueness} is not removable. Finally, for Theorem \ref{Reverse Penrose Inequality}, the relation to static uniqueness makes a full realization of the Riemannian Penrose inequality even more desirable. On the other hand the nature of the inequality \eqref{reverse_penrose} suggests more subtle and complicated behavior for mass when the infinity is hyperbolic. Another possible partial result would be to establish \eqref{RPI} for small perturbations of Kottler space. Previously, Ambrozio \cite{A15} proved the Riemannian Penrose inequality with $\widehat{k}=+1$ for perturbations of AdS-Schwarzschild.

Lastly, a Minkowski inequality for higher-dimensional ALH static spaces is also an intriguing prospect. The difficulty here lies in finding suitable extensions of the arguments in Section 5 to higher dimensions. We include a more detailed discussion in Remark \ref{high_dimension}.
\subsection{Methods and Structure of the Paper}
Section 2 reviews elementary properties of static horizons and the asymptotic profile of ALH metrics.  

Section 3 introduces the quantity
\begin{equation*}
Q(t) = (|\Sigma_{t}|)^{-\frac{1}{2}} \left( \int_{\Sigma_{t}} VH d\sigma - 6 \int_{\Omega_{t}} V d\mbox{vol} + 4 \sum_{j=1}^{K} \frac{2 \pi \chi(\partial_{j} M)}{3|\partial_{j}M| + 2\pi \chi(\partial_{j}M)} \kappa_{j} |\partial_{j} M| \right),
\end{equation*}
which is shown to be monotone under the smooth inverse mean curvature flow $\Sigma_{t}$ of a surface $\Sigma$ that encloses horizons $\partial_{1} M, \dots, \partial_{K} M, \dots, \partial_{L} M$ as long as the horizons $\partial_{K+1} M, \dots, \partial_{L} M$ are not homeomorphic to $S^{2}$. This follows from the \textit{Heintze-Karcher inequality}, which says that for each $\Sigma_{t}$,

\begin{equation*}
    \int_{\Sigma_{t}} \frac{V}{H} d\sigma \geq \frac{3}{2} \int_{\Omega_{t}} V d\mbox{vol} + \sum_{j=1}^{L} c_{j} \int_{\partial_{j} M} \frac{\partial V}{\partial \nu} d\sigma.
\end{equation*}
For constants $c_{j}$ determined by the data of $\partial_{j} M$. Fogagnolo-Pinamonti \cite{FP22} determined optimal values for $c_{j}$, and amazingly their constants are determined only by the area and horizon topology in $3$-dimensional static vacuum spaces. We also see from these constants that the horizons $\partial_{K+1} M, \dots, \partial_{L} M$ make negligible contributions to the variation of $Q(t)$ given $\chi(\partial_{j} M) \leq 0$.

In Section 4, we extend the analysis in Section 3 to the weak solutions of IMCF introduced by Huisken-Ilmanen \cite{HI99}, showing in particular that $Q(t)$ remains monotone under the weak flow. A weak solution of IMCF includes ``jump times" wherein parts of the flow surface $\Sigma_{t}$ are replaced by minimal surfaces. Examining each term in $Q(t)$ makes it clear that this quantity should decrease across a jump. To rigorously justify this, we need a version of the Heintze-Karcher inequality that applies to almost every $\Sigma_{t}$ and an explicit growth formula for the total mean curvature. The former may be obtained via an approximation by mean-convex mean curvature flow as in \cite{H24}, whereas the latter is well-known for static spaces, see \cite{W17}, \cite{M18} \cite{HW24a}. It is also important to consider the presence of horizons $\partial_{K+1} M, \dots, \partial_{L} M$ not enclosed by the initial data $\Sigma_{t}$. Here, we apply the procedure described in \cite{HI99} of restarting weak IMCF as a flow surface $\Sigma_{t}$ approaches one of these horizons. This process causes $\Sigma_{t}$ to enclose additional horizons as time increases, and to preserve monotonicity we need that $\partial_{K+1} M, \dots, \partial_{L} M$ are non-spherical as in the previous section.

Section 5 addresses the regularity of IMCF in asymptotically locally hyperbolic 3-manifolds. We follow the analysis of Lee-Neves. In particular, the assumption $\min_{j \in \{ 1, \dots, J \}}\chi(\partial_{j} M) \leq \chi(\widehat{\Sigma})$ allows the use of Meek-Simon-Yau. Our new observation is that the $L^{2+\epsilon}$-norm of the mean curvature decays along a sequence of $\Sigma_{t_i}$. Together with an area bound, Allard's regularity then implies that $\Sigma_{t_i}$ eventually becomes star-shaped. For star-shaped solution of IMCF, standard arguments that works in all dimensions show that the solution becomes smooth. The regularity result was first demonstrated for IMCF in asymptotically hyperbolic 3-manifolds by Shi-Zhu \cite{SZ21}. We minimize the use of Hawking mass in the analysis so that the decay assumption is weaker and the additional decay of scalar curvature is not needed.  

In Section 6, we analyze the asymptotic behavior of smooth graphical solutions to IMCF, similar to Brendle-Hung-Wang and Ge-Wang-Wu-Xia, in order to evaluate the limit of $Q(t)$ and complete the proof of Theorem \ref{minkowski}.  

In Section 7, we prove the rigidity statement of Theorem \ref{minkowski}-- namely, equality is achieved in \eqref{minkowski} for $K=1$ if and only if $(M^{3},g, V)$ is isometric to Kottler space with $m > m_{\text{crit}}$. Recently, Borghini-Fogagnolo-Pinamonti \cite{BFP24} gave an equality statement for the Heintze-Karcher inequality, showing that if $\Omega^{n} \subset (M^{n},g)$ is bounded by a single horizon then $(\Omega^{n},g)$ splits as a warped product over $\partial M$. From here, it is easy to show that if $(M^{n},g,V)$ is static with $\Lambda=-n$ then $(\Omega^{n},g)$ is a piece $(\widehat{\Sigma}^{n-1} \times (\rho_{m},\rho_{0}), \widehat{g}_{m,\widehat{k}})$ of a Kottler metric, although in higher dimensions the base metric $\widehat{g}$ need only be Einstein. The equality statement in Theorem \ref{minkowski} comes more or less as a corollary of this, since $\Sigma_{t}$ must saturate the Heintze-Karcher inequality when $\Sigma$ achieves equality in \eqref{minkowski2}.

Finally in Section 8, we address the applications of Theorem \ref{minkowski} to black hole uniqueness. Theorems \ref{bh_uniqueness_spherical} and \ref{toroidal_uniqueness} immediately follow from the Minkowski inequality, but to prove Theorem \ref{Reverse Penrose Inequality}, we first need an upper bound on the Wang-Chru\'{s}ciel-Herzlich mass $m$ of ALH static systems in terms of the surface gravities $\kappa_{j}$ of the horizons $\partial_{j} M$, which arises from taking the Heintze-Karcher inequality over slices
$\widehat{\Sigma} \times \{ \rho \}$. Combining with the lower bound of surface gravity, the reverse Penrose inequality follows. To obtain Corollary \ref{alh_graph}, we extend the areal Minkowski inequality (Theorem 1.4 in \cite{GWWX13}) to outer-minimizing surfaces in $3$-dimensional Kottler space. In fact, we obtain a version of the areal Minkowski inequality for a general ALH static system through the same inverse mean curvature flow methods as before. This gives a lower bound on the Chru\'{s}ciel-Herzlich mass of ALH graphs in Einstein manifolds.

\begin{Notation}
    We write $g(X,Y) = \langle X,Y \rangle$ and let $D$ be the Levi-Civita connection of the ambient metric $g$. For hypersurface $\Sigma$ or $\Sigma_t$, let $\nu$ be its (outward) unit normal and  $\na$ (resp. $\Delta_\Sigma$) be the Levi-Civita connection (resp. Laplacian) of the induced metric, all with respect to $g$. The volume form in the integrals are denoted by $d\mbox{vol}$ for domains and $d\sigma$ for hypersurfaces respectively.
\end{Notation}

\textbf{Acknowledgements} The authors are grateful to Mattia Fogagnolo for comments on the rigidity statement of the Heintze-Karcher inequality and to Piotr Chru\'{s}ciel for clarifying what is currently known about black hole uniqueness with $\Lambda < 0$. We also thank the mathematics departments at Columbia University and National Yang Ming Chiao Tung University for continued support. The second author is supported by Ms. Wei-Lun Ko and NSTC grants 112-2115-M-A49-009-MY2 and 114-2115-M-A49-016-MY5.


\section{Some Preliminaries on ALH Static Systems}
\subsection{Geometry of Static Horizons}
The geometric conditions that the static equations impose over the zero set of $V$ are crucial in the construction of a monotone quantity under inverse mean curvature flow. Therefore, we review some well-known facts about static horizons which the experienced reader may skip.

\begin{Proposition}\label{no area-minimizing in interior}
Let $(M^{3},g,V)$ be an ALH static system. Then for each $j \in \{ 1, \dots, J \}$:

\begin{enumerate}
\item $\partial_{j} M$ is totally geodesic in $(M^{3},g)$.
\item $| d V |_{g}$ is equal to a constant $\kappa_{j} > 0$ on $\partial_{j} M$, known as the surface gravity of $\partial_{j} M$.
\item  $\partial_{j} M$ is outer-minimizing in $(M^{3},g)$-- that is, $|\Sigma| \geq |\partial_{j}M|$ for any surface $\Sigma$ enclosing $\partial_{j} M$. In fact, no closed area-minimizing surfaces lie in the interior of $(M^{3},g)$.
\end{enumerate}
\end{Proposition}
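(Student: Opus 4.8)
The plan is to read (1) and (2) straight off the static system \eqref{static_equations}, and to reduce (3) to the non-existence of closed stable minimal surfaces in the interior $\{V>0\}$.

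\emph{Parts (1) and (2).} Fix a component $\partial_j M$ and let $\nu = \nabla V/|\nabla V|$ be the unit normal into $M$, which is well defined near $\partial_j M$ because $\{V=0\}$ is a regular level set. The point is that the right-hand side of the first equation of \eqref{static_equations} is pointwise a multiple of $V$, so $\nabla^2 V = V\Ric + 3Vg$ vanishes identically along $\{V=0\}$. For $X,Y$ tangent to $\partial_j M$, using $V\equiv 0$ there,
\begin{equation*}
\nabla^2 V(X,Y) = -(\nabla_X Y)V = |\nabla V|\,h(X,Y),
\end{equation*}
where $h$ is the second fundamental form; since the left side vanishes and $|\nabla V|>0$, we get $h\equiv 0$, which is (1). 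For (2), I would differentiate $|\nabla V|^2$ tangentially: for $X$ tangent to $\partial_j M$,
\begin{equation*}
X\big(|\nabla V|^2\big) = 2\,\nabla^2 V(\nabla V,X) = 2|\nabla V|\,\nabla^2 V(\nu,X) = 0,
\end{equation*}
again because $\nabla^2 V$ vanishes on $\partial_j M$. Thus $|\nabla V|$ is constant on the connected surface $\partial_j M$, equal to some $\kappa_j$, and $\kappa_j>0$ since $dV\neq 0$ on a regular level set.

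\emph{Part (3).} Tracing the first equation of \eqref{static_equations} and inserting $\Delta_g V = 3V$ gives $3V = VR + 9V$, so the scalar curvature is the constant $R\equiv -6$. I claim there is no closed two-sided stable minimal surface in the interior $\{V>0\}$; as area-minimizing surfaces are in particular stable, this yields the second assertion of (3). Let $S\subset\{V>0\}$ be such a surface. Restricting $V$ and using $H=0$,
\begin{equation*}
\Delta_S V = \Delta_g V - \nabla^2 V(\nu,\nu) = 3V - \big(V\Ric(\nu,\nu)+3V\big) = -V\,\Ric(\nu,\nu),
\end{equation*}
so the positive function $V$ solves $\big(-\Delta_S - \Ric(\nu,\nu)\big)V = 0$; hence the principal eigenvalue of $-\Delta_S - \Ric(\nu,\nu)$ is zero. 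The stability operator $-\Delta_S - \Ric(\nu,\nu) - |A|^2$ has nonnegative principal eigenvalue, yet it is obtained from the previous operator by subtracting $|A|^2\ge 0$ from its potential. Monotonicity of the principal eigenvalue then forces both eigenvalues to vanish and $|A|\equiv 0$, so every such $S$ is totally geodesic.

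To convert this infinitesimal rigidity into a genuine contradiction, I would argue that equality in the stability estimate, together with the static equations, produces a local isometric splitting of a neighborhood of $S$ as a product over $S$; since solutions of the elliptic system \eqref{static_equations} are real-analytic, the splitting would extend to all of $M$, contradicting the fact that $M$ has a single asymptotically locally hyperbolic end modeled on $\widehat\Sigma$. This passage from $|A|\equiv 0$ to an actual contradiction is the step I expect to be the main obstacle. With interior minimal surfaces excluded, outer-minimizing follows by a standard comparison: given $\Sigma$ enclosing $\partial_j M$, minimize area among surfaces homologous to $\partial_j M$ in the region outside it. By \eqref{ah1}--\eqref{ah2} the coordinate slices $\widehat\Sigma\times\{\rho\}$ are strictly mean-convex for large $\rho$ and act as barriers at infinity, while the minimal horizon is a barrier from inside, so a smooth embedded minimizer exists; by the previous paragraph it cannot lie in the interior, hence it must be $\partial_j M$, giving $|\partial_j M|\le|\Sigma|$.
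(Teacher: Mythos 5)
Your parts (1) and (2) are correct and essentially identical to the paper's argument: both rest on the observation that $\nabla^2 V$ vanishes identically on $\{V=0\}$ because the right-hand side of \eqref{static_equations} is a multiple of $V$, and then read off $A\equiv 0$ and the constancy of $\partial V/\partial\nu$ from the tangential and mixed components of the Hessian.

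Part (3) is where the proposal diverges and where it has a genuine gap — one you have correctly identified yourself. The eigenvalue comparison is fine as far as it goes: $R\equiv -6$, the restriction $\Delta_S V=-V\Ric(\nu,\nu)$ on a closed minimal $S\subset\{V>0\}$, and the positivity of $V$ force $\lambda_1(-\Delta_S-\Ric(\nu,\nu))=0$, so stability and monotonicity of $\lambda_1$ give $|A|\equiv 0$. But this is only infinitesimal rigidity. Knowing that one leaf is totally geodesic and that the Jacobi-type operator has a zero first eigenvalue does \emph{not} by itself produce a local isometric product splitting of a neighborhood of $S$; to get that, one must construct a foliation near $S$ (say by constant mean curvature leaves via the implicit function theorem), show the mean curvature of the leaves has a sign, and then invoke the \emph{area-minimizing} property (not mere stability) to derive a contradiction — the standard but nontrivial Cai--Galloway/Bray--Brendle--Neves style argument. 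Appealing to real-analyticity of solutions of \eqref{static_equations} does not bridge this, because there is no splitting yet to continue. So as written, the reduction to "no stable minimal surface in the interior" is not closed.

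The paper takes a shorter route that avoids stability theory entirely. It lets $\Sigma$ be the least-area surface enclosing $\partial_j M$; if $\Sigma\neq\partial_j M$, the strong maximum principle makes $\Sigma$ a closed minimal surface disjoint from the horizon, and one flows it outward by $\partial_s X = V\nu$. The static equations give the pointwise identity $\frac{d}{ds}\bigl(\frac{H}{V}\bigr)=-|A|^2$ along this flow, so $H\le 0$ for $s>0$ and the first variation $|\Sigma_s|-|\Sigma|=\int_0^s\!\int_{\Sigma_s}HV\,d\sigma\,ds\le 0$ produces an enclosing surface properly outside $\Sigma$ with no more area, contradicting the minimizing property. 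This yields both the outer-minimizing statement and the nonexistence of interior closed area-minimizing surfaces in one stroke, with no foliation or splitting analysis. If you want to salvage your approach, you would need to carry out the CMC-foliation deformation argument in full; otherwise I would adopt the $V$-flow identity, which is the mechanism the paper (following Brendle's Heintze--Karcher work) is built around anyway.
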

\begin{proof}
Since $V =0$ on $\partial_{j} M$, we have that $\Delta_g V(x) =0$ and hence $\nabla^{2} V(x)= 0$ at each $x \in \partial_{j} M$ by equation \eqref{static_equations}. The ambient and intrinsic Hessians on $\partial_{j} M$ are related by

\begin{equation*}
    D^{2} V = \nabla^{2}_{\partial_{j} M } V + \frac{\partial V}{\partial \nu} A,
\end{equation*}
where $A$ denotes the second fundamental form of $\partial_{j} M \subset (M^{3},g)$. Since $\partial M$ is a regular level set of $V$, we have that $\frac{\partial V}{\partial \nu} > 0$ and hence $A \equiv 0$ on $\partial_{j} M$. For item 2, we have

\begin{equation*}
    0 = D^{2} V(X, \nu) = X\left( \frac{\partial V}{\partial \nu} \right) - A(X,\nabla_{\partial_{j} M} V) = X \left( \frac{\partial V}{\partial \nu} \right) 
\end{equation*}
for any smooth tangent vector field $X \in \Gamma(T(\partial_{j} M))$ on $\partial_{j} M$, and thus the normal derivative is constant on $\partial_{j} M$. Lastly we address item 3. Let $\Sigma \subset (M^{3},g)$ be the minimizing hull of $\partial_{j} M$, i.e. the least-area surface enclosing $\partial_{j} M$, and suppose $\Sigma \neq \partial_{j} M$. It follows from the strong maximum principle that $\Sigma$ is disjoint from $\partial_{j} M$. Let $X: \Sigma \times (0,\epsilon) \rightarrow (M^{3},g)$ be a one-parameter family of immersions satisfying 

\begin{equation} \label{V_flow}
\frac{\partial}{\partial s} X (x,s) = V \nu(x,s). \\ 
\end{equation}
Such immersions are obtained by taking the normal exponential map of $\Sigma$ with respect to the metric $V^{-2} g$. One may calculate under \eqref{V_flow}, c.f. Proposition 3.2 in \cite{B13}, that

\begin{equation*}
\frac{d}{ds} \left( \frac{H}{V} \right) = - |A|^{2}.
\end{equation*}
Since $H(x) = 0$ for each $x \in \Sigma$, we get $H \leq 0$ on $\Sigma_{s}=X(\Sigma,s)$. So from the variation formula for the area,

\begin{equation*}
|\Sigma_{s}| - |\Sigma| = \int_{0}^{s} \int_{\Sigma_{s}} HV d\sigma ds \leq 0.
\end{equation*}
On the other hand, $\Sigma_{s}$ properly encloses $\Sigma$, contradicting the area-minimizing property of $\Sigma$. Altogether, we conclude that $\Sigma = \partial_{j} M$. Note that this argument rules the existence of any closed area-minimizing surface in $(M^{3},g)$.
\end{proof}

\subsection{Geometry of the Exterior Region}\label{subsection geometry of the exterior region}
Theorem \ref{minkowski} requires a clear asymptotic profile for the geometry of ALH $(M^{3},g,V)$, which we address in this subsection.

Recall the definition of asymptotically locally hyperbolic metric \eqref{ah1} in the introduction. We remark that  the sectional curvature of $g$ is $-1 + O(\rho^{-\alpha})$. It is convenient to work with the coordinate $r$ defined by 
\begin{align}\label{drhodr}
\frac{d\rho}{dr} = \sqrt{\rho^2 + \widehat k},
\end{align}
which can be solved explicitly
\begin{align*}
\rho &= 
\begin{cases} \sinh r,  &\widehat k=1\\
 \frac{e^r}{2},  &\widehat k=0\\
\cosh r,  &\widehat k=-1
\end{cases}
\end{align*}
If we set $\rho = \lambda(r)$, then
\begin{align}
\bar g = dr^2 + \lambda^2(r) \widehat g,
\end{align}
and since $\rho = \frac{1}{2} e^r + O(e^{-r})$, the decay assumption \eqref{ah1} becomes
\begin{align}\label{AH_r}
|q|_{\bar g} + |\bar\na q|_{\bar g} +  |\bar\na^2 q|_{\bar g} + |\bar\na^3 q|_{\bar g} = O(e^{-\alpha r}).
\end{align} 

The advantage of coordinate $r$ is that $\pl_r$ is asymptotically a unit vector.

\begin{rmk}
Let $f_i$ be a local frame of $\widehat\Sigma$ and define the frame $\varepsilon_0 = \pl_r, \varepsilon_i = \lambda^{-1} f_i$ on $(1,\infty) \times \widehat\Sigma$. Then \eqref{AH_r} is equivalent to
\begin{align*}
|q_{\alpha\beta}|, |\varepsilon_\gamma (q_{\alpha\beta})|, |\varepsilon_{\delta} (\varepsilon_{\gamma} (q_{\alpha\beta}))|, |\varepsilon_{\eta} (\varepsilon_{\delta} (\varepsilon_{\gamma} (q_{\alpha\beta})))| = O(e^{-\alpha r})  
\end{align*}
for $0 \le \alpha,\beta,\gamma,\delta,\eta \le 2$ where $q_{\alpha\beta} = q(\varepsilon_\alpha,\varepsilon_\beta)$. See (1.5) of \cite{MTX17} for example. 
\end{rmk}
Except in Subsection \ref{subsection star-shaped eventually}, the asymptotic analysis in this paper works in all dimensions. So we begin with a reformulation of the asymptotics. Let $(\widehat\Sigma, \widehat g)$ be an $(n-1)$-dimensional Riemannian manifold. Consider the metrics $\bar g = dr^2 + \lambda^2(r) \widehat g$ and $g = \bar g+ q$ defined on $(1,\infty) \times \widehat\Sigma$. Two cases are considered in this paper:
\begin{enumerate}
\item[(a)] $\lambda(r) = e^r$ and $q = O_2(e^{-kr})$ for $0< k \le 2$.
\item[(b)] $(\widehat{\Sigma},\widehat{g})$ is a space form with sectional curvature $\widehat k \in \{ 1,0,-1\}$, $\lambda(r)$ is defined in \eqref{drhodr} and $q = O_2(e^{-k r})$ for $k > 2$.
\end{enumerate}
In both cases, we have $\lambda''=\lambda$ and the sectional curvature of $g$ is $-1 + O(e^{-kr})$. 

\begin{rmk}\label{important remark}
We mostly work with asymptotics (b). The exception is Section \ref{subsection_star-shaped} where the analysis works for weak asymptotics (a). The only place that requires the decay of the third derivatives of $q$ and $\widehat g$ being a space form is Proposition \ref{asymp_C2}: The evolution equation of $h_i^j$ involves the curvature tensor $\mbox{Riem}(g)$ and its covariant derivative\footnote{In other words, most of the analysis works assuming the decay of two derivatives of $q$ and $(\widehat\Sigma, \widehat g)$ is Einstein.}.
\end{rmk}

We first present two results on the asymptotic geometry of $(1,\infty) \times \widehat\Sigma$ and then compute the evolution equation of $\langle \pl_r,\nu \rangle$ for IMCF.
\begin{prop}\label{DYrZ}
For tangent vectors $Y$ and $Z$ on $(1,\infty) \times \widehat\Sigma$ we have
\[ \langle D_Y \pl_r, Z \rangle =  \frac{\lambda'}{\lambda} \lt( \langle Y,Z \rangle - Y(r) \langle \pl_r, Z \rangle \rt) + \varepsilon(Y,Z) \]
for some $(0,2)$-tensor $\varepsilon = O_1(e^{-k r})$.
\end{prop}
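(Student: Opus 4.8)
The plan is to compare $D$ with the Levi--Civita connection $\bar D$ of the warped product $\bar g = dr^2 + \lambda^2(r)\widehat g$, for which the stated identity holds exactly, and then to absorb all discrepancies into the error tensor $\varepsilon$. First I would record the warped-product connection: $\bar D_{\pl_r}\pl_r = 0$ and $\bar D_X \pl_r = \frac{\lambda'}{\lambda} X$ for $X$ tangent to the fiber $\widehat\Sigma$. Decomposing $Y = Y(r)\pl_r + Y^\top$ and $Z = Z(r)\pl_r + Z^\top$ into radial and fiber-tangent parts gives $\bar D_Y\pl_r = \frac{\lambda'}{\lambda} Y^\top$, and hence
\[ \langle \bar D_Y \pl_r, Z\rangle_{\bar g} = \frac{\lambda'}{\lambda}\langle Y^\top, Z^\top\rangle_{\bar g} = \frac{\lambda'}{\lambda}\left( \langle Y, Z\rangle_{\bar g} - Y(r)\,\langle \pl_r, Z\rangle_{\bar g}\right), \]
using $\langle \pl_r, Z\rangle_{\bar g} = Z(r)$. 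This is precisely the asserted main term, computed in the background metric.

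Next I would account for the two differences between the background and the actual data. Writing $q = g - \bar g$, the difference of the two connections is the symmetric tensor $T = D - \bar D$ determined by $2\,g(T(X,W),U) = (\bar D_X q)(W,U) + (\bar D_W q)(X,U) - (\bar D_U q)(X,W)$, so that
\[ \langle D_Y\pl_r, Z\rangle_g = \langle \bar D_Y\pl_r, Z\rangle_{\bar g} + q(\bar D_Y\pl_r, Z) + g(T(Y,\pl_r), Z). \]
Moreover, converting the background inner products in the main term to $g$-inner products via $\langle\cdot,\cdot\rangle_{\bar g} = \langle\cdot,\cdot\rangle_g - q(\cdot,\cdot)$ produces the further correction $-\frac{\lambda'}{\lambda}\left(q(Y,Z) - Y(r)\,q(\pl_r, Z)\right)$. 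I would then define $\varepsilon$ to be the sum of these three correction terms.

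It remains to verify $\varepsilon = O_1(e^{-kr})$. Since $\frac{\lambda'}{\lambda}$ is bounded (indeed $\to 1$, with $\lambda'' = \lambda$ keeping its derivative under control) and $q = O_2(e^{-kr})$ measured in the $\bar g$-orthonormal frame $\{\varepsilon_\alpha\}$ of the Remark, the two inner-product corrections together with $q(\bar D_Y\pl_r, Z)$ are $O_2$, while $g(T(Y,\pl_r),Z)$ is a universal linear combination of the components of $\bar D q$, hence $O_1$. Evaluating on the frame $\{\varepsilon_\alpha\}$ then shows that each component of $\varepsilon$ is $O(e^{-kr})$.

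The only genuinely technical point -- the hard part -- is the first-derivative half of the $O_1$ claim: differentiating $g(T(Y,\pl_r),Z)$ once brings in $\bar D^2 q$, so the estimate $\bar D\varepsilon = O(e^{-kr})$ uses the full $O_2$ hypothesis on $q$ together with the boundedness of $\frac{\lambda'}{\lambda}$ and of its derivative. The bookkeeping of which quantities are measured in the frame $\{\varepsilon_\alpha\}$ versus the coordinate frame, and the fact that $\pl_r$ is only asymptotically a unit vector when $\widehat k \neq 0$, is the one place where care is needed; otherwise the argument is routine.
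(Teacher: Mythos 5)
Your argument is correct and is essentially the paper's proof: an exact identity in the background metric (which the paper packages as the statement that $\lambda\partial_r$ is a conformal Killing field for $\bar g$ with factor $\lambda'$ --- equivalent to your warped-product computation of $\bar D_Y\partial_r = \frac{\lambda'}{\lambda}Y^{\top}$) combined with the estimate $\Gamma - \bar\Gamma = O(e^{-kr})$ coming from $q = O_1(e^{-kr})$, with the extra derivative of $q$ supplying the $O_1$ decay of $\varepsilon$. One trivial slip in your closing remark: $\partial_r$ is \emph{exactly} unit for $\bar g$ for every $\widehat k$ and only asymptotically unit for $g = \bar g + q$; this does not affect the argument.
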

\begin{proof}
First of all, $\lambda \pl_r$ is a conformal Killing vector field for $\bar g$ with conformal factor $\lambda'$. 
Note that $q = O_1(e^{-kr})$ implies the difference of Christoffel symbols $\Gamma - \bar \Gamma =O(e^{-kr})$ and we get $\lambda' \langle Y,Z \rangle + O(e^{(1-k)r}) = \langle D_Y (\lambda \pl_r),Z \rangle = \lambda \langle D_Y \pl_r,Z \rangle + \lambda' Y(r) \langle \pl_r,Z \rangle$. 
\end{proof}

\begin{prop}\label{hessian_r}
 We have $D^2 r = \frac{\lambda'}{\lambda}\lt( g - dr^2 \rt)+ O(e^{-kr})$ and $\Delta_g r = (n-1)\frac{\lambda'}{\lambda} + O(e^{-kr})$.
\end{prop}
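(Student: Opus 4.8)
The plan is to compute $\mbox{Hess}_g r$ from the standard identity $\mbox{Hess}_g r(Y,Z) = \langle \na_Y \na r, Z \rangle$ and then reduce everything to Proposition \ref{DYrZ}, which already records the leading behavior of $\langle D_Y \pl_r, Z\rangle$ for the Levi-Civita connection $D = \na$ of $g$. The only genuinely new ingredient needed is to relate the $g$-gradient $\na r$ to the coordinate field $\pl_r$.

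First I would show $\na r = \pl_r + W$ with $W = O_1(e^{-kr})$. Working in the frame $\varepsilon_0 = \pl_r$, $\varepsilon_i = \lambda^{-1} f_i$, one has $dr = \varepsilon^0$ exactly and $g_{\alpha\beta} = \delta_{\alpha\beta} + q_{\alpha\beta}$ with $q = O_2(e^{-kr})$. Inverting gives $g^{0\beta} = \delta^{0\beta} + O(e^{-kr})$, so $\na r = g^{0\beta}\varepsilon_\beta = \pl_r + W$; since $q = O_2$, the first frame derivatives of $W$ are controlled, so $W = O_1(e^{-kr})$. Next, $\na_Y \na r = \na_Y \pl_r + \na_Y W$, and for $Y$ a frame vector the second term is $O(e^{-kr})$: the Christoffel symbols of $g$ in this frame equal $\bar\Gamma + O(e^{-kr})$, and $\bar\Gamma$ involves only $\lambda'/\lambda$, which is bounded on $(1,\infty)$, so differentiating $W$ against $\na$ does not degrade the decay. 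Hence $\mbox{Hess}_g r(Y,Z) = \langle D_Y \pl_r, Z\rangle + O(e^{-kr})$ on frame vectors, and Proposition \ref{DYrZ} yields
\[ \mbox{Hess}_g r(Y,Z) = \frac{\lambda'}{\lambda}\left(\langle Y,Z\rangle - Y(r)\langle \pl_r, Z\rangle\right) + O(e^{-kr}). \]

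To reach the stated form I would then use $Y(r) = \langle \na r, Y\rangle = \langle \pl_r, Y\rangle + O(e^{-kr})$, and likewise for $Z$, to replace $Y(r)\langle \pl_r, Z\rangle$ by $Y(r)Z(r) = (dr^2)(Y,Z)$ up to an $O(e^{-kr})$ error; boundedness of $\lambda'/\lambda$ keeps every such error in the class $O(e^{-kr})$. This gives $\mbox{Hess}_g r = \frac{\lambda'}{\lambda}(g - dr^2) + O(e^{-kr})$ as tensors. Tracing with $g$ and using $\tr_g g = n$ together with $\tr_g(dr^2) = |dr|_g^2 = g^{00} = 1 + O(e^{-kr})$ then produces $\Delta_g r = (n-1)\frac{\lambda'}{\lambda} + O(e^{-kr})$.

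The computation is essentially mechanical once Proposition \ref{DYrZ} is available; the only point requiring care — and the natural place for an error — is the bookkeeping of the decaying terms, namely verifying that multiplying an $O(e^{-kr})$ quantity by $\lambda'/\lambda$, or covariantly differentiating $W$ against the connection of $g$, keeps one within the same decay class. This is guaranteed by the uniform boundedness of $\lambda'/\lambda$ on $(1,\infty)$ (it equals $1$, $\coth r$, or $\tanh r$) and by $\Gamma - \bar\Gamma = O(e^{-kr})$, both of which hold in cases (a) and (b).
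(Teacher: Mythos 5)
Your argument is correct. The only point that needed care — that $\na r - \pl_r = O_1(e^{-kr})$ and that covariant differentiation of this remainder against the $g$-connection stays in the same decay class — is handled properly, since in the orthonormal-type frame $\varepsilon_0=\pl_r$, $\varepsilon_i=\lambda^{-1}f_i$ the background Christoffel symbols are bounded and $\Gamma-\bar\Gamma=O(e^{-kr})$; the replacement of $Y(r)\langle\pl_r,Z\rangle$ by $(dr\otimes dr)(Y,Z)$ and the trace computation are likewise fine.

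Your route differs from the paper's. The paper does not pass through Proposition \ref{DYrZ} at all: it computes $D^2_{\bar g}r$ directly in the coordinates $(\pl_r,\pl_{\theta^i})$, where the only nonzero component is $D^2_{\bar g}r(\pl_{\theta^i},\pl_{\theta^j})=\lambda\lambda'\widehat g_{ij}=\frac{\lambda'}{\lambda}(\bar g - dr^2)(\pl_{\theta^i},\pl_{\theta^j})$, and then perturbs using $\Gamma-\bar\Gamma=O(e^{-kr})$ together with $g=\bar g+O(e^{-kr})$. That is a two-line computation with no need to control the gradient $\na r$. Your derivation instead leverages the already-established Proposition \ref{DYrZ} via the identity $\mathrm{Hess}_g r(Y,Z)=\langle\na_Y\na r,Z\rangle$, at the cost of the extra (correctly executed) step of comparing $\na r$ with $\pl_r$. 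The two proofs rest on the same underlying facts — the warped-product structure of $\bar g$ and the $O(e^{-kr})$ Christoffel comparison — so neither buys more generality; the paper's is slightly shorter, while yours makes the logical dependence on Proposition \ref{DYrZ} explicit and avoids writing down any background Christoffel symbols.
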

\begin{proof}
Let $\bar D$ be the Levi-Civita connection with respect to $\bar g$ and compute
\begin{align*}
\bar D^2 r (\pl_r,\pl_r) &= 0 \\
\bar D^2 r (\pl_r,\pl_{\theta^i}) &=0\\
\bar D^2 r (\pl_{\theta^i},\pl_{\theta^j}) &= \lambda\lambda' \widehat g_{ij} = \frac{\lambda'}{\lambda}(\bar g - dr^2)(\pl_{\theta^i},\pl_{\theta^j}) 
\end{align*}
to get $\bar D^2 r = \frac{\lambda'}{\lambda} (\bar g - dr^2)$. The assertion follows from $\Gamma - \bar \Gamma = O(e^{-kr})$.
\end{proof}

\begin{prop}\label{evo}
The evolution equation of $\langle \pl_r,\nu \rangle$ along IMCF is given by
\begin{align*}
\pl_t \langle \pl_r,\nu \rangle = \frac{1}{H^2} \Big( &\Delta_\Sigma \langle \pl_r ,\nu \rangle + 2 \frac{\lambda'}{\lambda} \na r \cdot \na \langle \pl_r,\nu \rangle \\
&\quad +(n-1) \lt( \frac{\lambda'}{\lambda}\rt)^2 \langle \pl_r,\nu \rangle - 2 \frac{\lambda'}{\lambda} H \frac{\pl r}{\pl\nu}\langle \pl_r,\nu \rangle + |A|^2 \langle \pl_r,\nu\rangle \\
&\quad - |\na r|^2 \langle \pl_r,\nu \rangle + (\frac{\lambda'}{\lambda})^2 \lt( 1 - (\frac{\pl r}{\pl\nu})^2 \rt) \langle \pl_r,\nu \rangle\\
&\quad + A * O(e^{-kr}) + O(e^{-kr}) \Big) 
\end{align*}
\end{prop}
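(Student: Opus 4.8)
The plan is to regard $u:=\langle\partial_r,\nu\rangle$ as a function on the flowing hypersurface and to split $\partial_t u$ into the contribution from the motion of the foot point and from the rotation of the unit normal. Along IMCF one has $\partial_t X = H^{-1}\nu$ together with the standard normal evolution $\partial_t\nu=-\nabla_\Sigma(H^{-1})=H^{-2}\nabla_\Sigma H$, so that
\[
\partial_t u = \frac{1}{H}\langle D_\nu\partial_r,\nu\rangle + \frac{1}{H^2}\langle\partial_r,\nabla_\Sigma H\rangle .
\]
The first term is evaluated directly from Proposition \ref{DYrZ} with $Y=Z=\nu$, producing $\frac{1}{H}(\frac{\lambda'}{\lambda}(1-\frac{\partial r}{\partial\nu}\,u)+O(e^{-kr}))$. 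The second term equals $\frac{1}{H^2}\langle(\partial_r)^\top,\nabla_\Sigma H\rangle=\frac{1}{H^2}(\nabla r\cdot\nabla H+O(e^{-kr})|\nabla H|)$, using $(\partial_r)^\top=\nabla_\Sigma r+O(e^{-kr})$. This gradient-of-$H$ term does not appear in the asserted formula, so the crux of the argument is that it must be produced with the opposite sign by $\Delta_\Sigma u$ and cancel.

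To compute $\Delta_\Sigma u$ I would pass to the closed conformal field $X=\lambda\partial_r$, which by the proof of Proposition \ref{DYrZ} satisfies $D_YX=\lambda'Y+O(e^{-kr})$, and work with its support function $\sigma:=\langle X,\nu\rangle=\lambda u$ and tangential part $X^\top$. Differentiating and using the Weingarten relation gives $\nabla_i\sigma=(AX^\top)_i+O(e^{-kr})$; taking a further divergence and invoking the contracted Codazzi identity $\nabla^ih_{ij}=\nabla_jH+O(e^{-kr})$—the correction being the ambient term $\mathrm{Ric}(\nu,\cdot)$, which is $O(e^{-kr})$ since the model is Einstein—yields
\[
\Delta_\Sigma\sigma=\langle X^\top,\nabla_\Sigma H\rangle+\lambda'H-\sigma|A|^2+A*O(e^{-kr})+O(e^{-kr}).
\]
I then convert back through the identity $\Delta_\Sigma(\lambda u)=\lambda\,\Delta_\Sigma u+2\lambda'\,\nabla r\cdot\nabla u+\lambda'(\Delta_\Sigma r)\,u+\lambda''|\nabla r|^2u$, in which the final coefficient is $\lambda''=\lambda$, and substitute the value of $\Delta_\Sigma r$ from Proposition \ref{hessian_r}. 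This expresses $\Delta_\Sigma u$ as a $\nabla r\cdot\nabla H$ term plus terms of zeroth order in derivatives of $H$, built from $\frac{\lambda'}{\lambda}$, $(\frac{\lambda'}{\lambda})^2$ (the factor $(n-1)$ coming from $\Delta_g r=(n-1)\frac{\lambda'}{\lambda}$), $|A|^2$, $\frac{\partial r}{\partial\nu}$, and $|\nabla r|^2$.

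Assembling both computations, the $\nabla r\cdot\nabla H$ term from $\partial_t\nu$ cancels exactly against the one generated by the Codazzi step, which is precisely why $\partial_t u$ closes up as a genuine parabolic equation for $u$ with no surviving gradient of the curvature; the warping terms then collapse into the displayed algebraic combination through repeated use of $(\lambda'/\lambda)'=1-(\lambda'/\lambda)^2$, again a restatement of $\lambda''=\lambda$. I expect the main obstacle to be not the algebra but the disciplined propagation of the $O(e^{-kr})$ errors: each use of Propositions \ref{DYrZ} and \ref{hessian_r}, of $(\partial_r)^\top=\nabla_\Sigma r+O(e^{-kr})$, and of Codazzi injects an error, and one must check that these aggregate only into $A*O(e^{-kr})+O(e^{-kr})$. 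The most delicate point is the residual $O(e^{-kr})|\nabla H|$ arising from the normal evolution: it is not of the allowed form on its own, so one must verify that the matching $\nabla H$-errors on the two sides cancel together with the principal $\nabla r\cdot\nabla H$ terms, leaving only terms with at most linear dependence on $A$.
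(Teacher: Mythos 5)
Your proposal is correct and follows essentially the same route as the paper: the same splitting of $\pl_t \langle \pl_r,\nu\rangle$ into $\frac{1}{H}\langle D_\nu \pl_r,\nu\rangle$ plus $-\langle \pl_r,\na H^{-1}\rangle$, and the same Weingarten--Codazzi computation of $\Delta_\Sigma \langle \pl_r,\nu\rangle$ via Propositions \ref{DYrZ} and \ref{hessian_r}, with the $\langle \pl_r,\na H\rangle$ terms cancelling exactly (your worry about a residual $O(e^{-kr})|\na H|$ disappears once one keeps $\langle \pl_r,\na H\rangle$ intact on both sides instead of replacing $(\pl_r)^{T}$ by $\na_\Sigma r$). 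Routing the Laplacian computation through the support function $\lambda\langle \pl_r,\nu\rangle$ of the field $\lambda\pl_r$ is only a cosmetic repackaging, since Proposition \ref{DYrZ} is itself derived from the conformal Killing property of $\lambda\pl_r$.
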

\begin{proof}
By \eqref{DYrZ}, we have
\begin{align}\label{nabla_Drnu}
\pl_i \langle \pl_r,\nu \rangle = -\frac{\lambda'}{\lambda} \pl_i r \langle \pl_r,\nu \rangle + \varepsilon(\pl_i,\nu) + h_i^j \langle \pl_r,\pl_j \rangle.
\end{align}
By Proposition \ref{DYrZ} and Codazzi equations, we have 
\begin{align*}
\Delta_\Sigma \langle \pl_r,\nu \rangle &= -\frac{\lambda'}{\lambda} \Delta_\Sigma r \langle \pl_r,\nu \rangle + \lt( (\frac{\lambda'}{\lambda})^2 -1 \rt) |\na r|^2 \langle \pl_r,\nu \rangle  - \frac{\lambda'}{\lambda} \na r \cdot \na \langle \pl_r,\nu \rangle \\
&\quad +  \gamma^{ij} D_{\pl_j} \varepsilon(\pl_i,\nu) - H \varepsilon(\nu,\nu) + h^{ij} \varepsilon(\pl_i,\pl_j) \\
&\quad + \langle \pl_r,\na H \rangle + R(\pl_r^T,\nu)  + \frac{\lambda'}{\lambda} H  - \frac{\lambda'}{\lambda} h^{ij} \pl_i r \langle \pl_r,\pl_j \rangle + h^{ij}\varepsilon(\pl_i,\pl_j)  - |A|^2 \langle \pl_r,\nu \rangle
\end{align*}
where superscript $T$ stands for the tangential projection to $\Sigma_t$.

By Proposition \ref{hessian_r}, \[ \Delta_\Sigma r = \Delta_g r - D^2 r(\nu,\nu) - H \frac{\pl r}{\pl\nu} = \frac{\lambda'}{\lambda} \lt(  n-1 + (\frac{\pl r}{\pl\nu})^2 - 1 \rt) -H \frac{\pl r}{\pl\nu} + O(e^{-kr}). \]
By \eqref{nabla_Drnu}, \[ \na r \cdot \na \langle \pl_r,\nu \rangle = - \frac{\lambda'}{\lambda} |\na r|^2 \langle \pl_r,\nu \rangle + \varepsilon(\na r,\nu) + h^{ij} \pl_i r \langle \pl_r,\pl_j \rangle.\]
Moreover, note that $A * \varepsilon$ terms are $A * O(e^{-kr})$, pure $\varepsilon$ terms are $O(e^{-kr})$, and $R(\pl_r^T,\nu) = O(e^{-kr})$. We thus arrive at
\begin{align*}
\Delta_\Sigma \langle \pl_r,\nu \rangle &= \frac{\lambda'}{\lambda} H+ \langle \pl_r,\na H \rangle + \frac{\lambda'}{\lambda} H \frac{\pl r}{\pl\nu} \langle \pl_r,\nu \rangle \\
&\quad -(n-1) (\frac{\lambda'}{\lambda})^2 \langle \pl_r,\nu \rangle - |A|^2 \langle \pl_r,\nu \rangle\\
&\quad - |\na r|^2 \langle \pl_r,\nu \rangle + (\frac{\lambda'}{\lambda})^2 \lt( 1 - (\frac{\pl r}{\pl\nu})^2 \rt) \langle \pl_r,\nu \rangle \\
&\quad -2 \frac{\lambda'}{\lambda} \na r \cdot \na \langle \pl_r,\nu \rangle + A * O(e^{-kr}) + O(e^{-kr}).
\end{align*}
The assertion follows from
\begin{align*}
\pl_t \langle \pl_r,\nu \rangle &= \frac{1}{H} \lt( \frac{\lambda'}{\lambda} - \frac{\lambda'}{\lambda} \frac{\pl r}{\pl\nu} \langle \pl_r,\nu \rangle +  \varepsilon(\nu,\nu) \rt) - \langle \pl_r, \na H^{-1} \rangle.
\end{align*}  
\end{proof}

\section{The Heintze-Karcher Inequality and the Inverse Mean Curvature Flow}
As a first step to Theorem \ref{minkowski}, we introduce a monotone quantity for the inverse mean curvature flow in static spaces with nondegenerate horizon boundary and with cosmological constant $\Lambda=-3$. For static metrics with $\Lambda=0$, monotonicity of the analogous quantity is a purely local phenomenon--see \cite{W18}, \cite{M18} and the Appendix of \cite{HW24a}. By contrast, monotonicity for $\Lambda$-negative static metrics depends on a Heintze-Karcher inequality which only applies in sub-static manifolds that are either complete or bounded by horizons. We focus on the case of a horizon boundary-- for classifications of complete static spaces, see \cite{BGH84}, \cite{Q04}, \cite{W05}, \cite{HJM19}, \cite{GSW03}, \cite{W17}.

\begin{Theorem}[\cite{FP22}, Theorem 3.5]
Let $(M^{n},g,V)$ be a sub-static Riemannian manifold. Suppose that $(M^{n},g)$ has compact, non-empty $C^{\infty}$ boundary $\partial M= \bigsqcup_{j=1}^{J} \partial_{j} M$, $V$ smoothly extends to $0$ on $\partial M$, and $\partial M= \{ V = 0 \}$ is a regular level set of $V$. Let $\Omega \subset (M^{n},g)$ be a smooth domain with boundary $\partial \Omega= \partial_{1} M \bigsqcup \dots \bigsqcup \partial_{L} M \bigsqcup \Sigma$ for some $1 \leq L \leq J$ and some mean-convex hypersurface $\Sigma^{n-1}$. Then we have the inequality 
\begin{equation} \label{heintze_karcher}
    \int_{\Sigma} \frac{V}{H} d\sigma \geq \frac{n}{n-1} \int_{\Omega} V d\mbox{vol} + \sum_{j=1}^{L} c_{j} \int_{\partial_{j} M} \frac{\partial V}{\partial \nu} d\sigma
\end{equation}
on the hypersurface $\Sigma$, where $c_{j}$ is the constant
\begin{equation} \label{c_j}
    c_{j}= \frac{ \int_{\partial_{j} M} \frac{\partial V}{\partial \nu} d\sigma}{\int_{\partial_{j} M} \frac{\partial V}{\partial \nu} \left( \frac{\Delta_g V}{V} - \frac{D^{2}V(\nu,\nu)}{V} \right) d\sigma} > 0.
\end{equation}
\end{Theorem}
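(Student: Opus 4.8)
The plan is to prove \eqref{heintze_karcher} by the weighted normal-exponential flow method, i.e.\ the sub-static analogue of the Montiel--Ros argument, with the key monotonicity already foreshadowed by the computation in Proposition \ref{no area-minimizing in interior}. The central object is the \emph{sub-static tensor}
\[
\mathcal{T} := V\,\text{Ric} - \nabla^2 V + (\Delta V)\,g,
\]
which is non-negative by the sub-static hypothesis and vanishes identically in the static vacuum case \eqref{static_equations}. I would flow the mean-convex hypersurface $\Sigma$ inward toward the horizons with normal speed $V$, that is, solve $\partial_s X = V\nu$ as in \eqref{V_flow}. Equivalently, this is the \emph{unit-speed} normal exponential flow for the conformal metric $\widetilde g = V^{-2} g$, since $|V\nu|_{\widetilde g}=1$, so the whole estimate becomes a Heintze--Karcher comparison for $\widetilde g$ whose relevant curvature condition is encoded by $\mathcal{T}\ge 0$. (An alternative route is the sub-static Reilly formula applied to a weighted torsion function, but the flow makes the roles of $\mathcal{T}$ and of the horizons most transparent.)

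First I would record the evolution of the ratio $H/V$ along the flow. A direct computation, generalizing the identity $\frac{d}{ds}(H/V)=-|A|^2$ cited from \cite{B13}, gives
\[
\frac{d}{ds}\Big(\frac{H}{V}\Big) = -|A|^2 - \frac{1}{V}\,\mathcal{T}(\nu,\nu) \;\leq\; -\frac{H^2}{n-1},
\]
where the final inequality uses $\mathcal{T}(\nu,\nu)\ge 0$ together with the Cauchy--Schwarz bound $|A|^2 \geq H^2/(n-1)$. This is the crux: the sub-static condition supplies exactly the sign needed, and in the static vacuum case the correction $\mathcal{T}(\nu,\nu)/V$ disappears, recovering the purely local identity. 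Integrating this Riccati-type differential inequality along each flow line shows $H/V$ stays positive until the flow line terminates, either at the cut locus or asymptotically at a horizon.

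Next I would run the Ros Jacobian argument. Parametrizing $\Omega$ (away from a measure-zero cut locus, the absence of \emph{interior} closed minimal surfaces being guaranteed by Proposition \ref{no area-minimizing in interior}(3)) by the flow, I would express $\int_\Omega V\,d\Omega$ as an integral of the weighted Jacobian along flow lines emanating from $\Sigma$, bound that Jacobian using the Riccati inequality for $H/V$, and integrate in $s$. The leading contribution of the unobstructed flow lines yields the bound $\int_\Omega V\,d\Omega \le \frac{n-1}{n}\int_\Sigma \frac{V}{H}\,d\sigma$, which rearranges to the factor $\frac{n}{n-1}$ in \eqref{heintze_karcher}; the defect produced by those flow lines that instead reach a horizon $\partial_j M$ is what yields the boundary terms $\sum_j c_j \int_{\partial_j M}\frac{\partial V}{\partial\nu}\,d\sigma$.

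The main obstacle is precisely this horizon analysis. Because $V\to 0$ on $\partial_j M$, the flow $\partial_s X = V\nu$ decelerates and reaches the horizon only as $s\to\infty$, so the focusing behaviour must be analyzed asymptotically rather than at a finite cut time. Identifying the sharp constant $c_j$ amounts to computing the limiting rate at which the weighted Jacobian collapses onto $\partial_j M$, which is governed by the second-order behaviour of $V$ transverse and tangent to the horizon; this is exactly what $\frac{\Delta V}{V}-\frac{\nabla^2 V(\nu,\nu)}{V}$ in \eqref{c_j} measures. Its limit at $\partial_j M$ is finite and positive precisely because $\{V=0\}$ is a regular, totally geodesic level set (Proposition \ref{no area-minimizing in interior}(1)), so that $\Delta V-\nabla^2 V(\nu,\nu)$ vanishes to the same order as $V$; this finiteness and positivity is what underlies $c_j>0$. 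Finally, for the equality case I would trace back through the two inequalities: saturation forces $|A|^2=H^2/(n-1)$ (totally umbilic flow surfaces) together with $\mathcal{T}(\nu,\nu)\equiv 0$, which force the warped-product structure and hence, in the static setting, a Kottler metric.
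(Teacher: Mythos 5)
You should first be aware that the paper does not prove this statement at all: it is quoted directly from Fogagnolo--Pinamonti \cite{FP22} (their Theorem 3.5), with only a historical discussion of its lineage (Heintze--Karcher \cite{HK78}, Brendle \cite{B13}, Wang--Wang \cite{WW17}, Li--Xia \cite{LX19}). The only computation the authors carry out themselves is the specialization \eqref{c_j2} of the constant $c_{j}$ to static vacuum $3$-manifolds. So there is no in-paper proof to compare against, and your proposal has to stand on its own as a proof of the cited result.

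Judged on that basis, it has a genuine gap. The skeleton you describe --- the flow with speed $V$, the Riccati inequality for $H/V$ driven by $\mathcal{T}(\nu,\nu)\ge 0$, and the weighted Montiel--Ros Jacobian comparison producing the factor $\frac{n}{n-1}$ --- is the correct and standard one, and already yields the interior part of \eqref{heintze_karcher} as in \cite{B13} and \cite{LX19}. But the step you defer as ``the main obstacle'' is precisely the content that distinguishes the theorem as stated: the identification of the \emph{optimal} constants \eqref{c_j}. That requires an actual quantitative computation of the defect contributed by flow lines collapsing asymptotically onto a horizon, showing it equals exactly $c_{j}\int_{\partial_{j}M}\frac{\partial V}{\partial\nu}\,d\sigma$ with $c_{j}$ given by \eqref{c_j}; nothing in your sketch pins down this constant, and asserting that ``the defect yields the boundary terms'' assumes the conclusion. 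Two further points need repair. First, the finiteness and positivity of the limit of $\frac{\Delta V-\nabla^{2}V(\nu,\nu)}{V}$ on $\{V=0\}$ is justified by appeal to Proposition \ref{no area-minimizing in interior}, which is proved only for solutions of \eqref{static_equations}, not for the general sub-static manifolds of the present statement. Second, there is a direction inconsistency: \eqref{V_flow} is the \emph{outward} flow, and your displayed inequality $\frac{d}{ds}(H/V)\le -H^{2}/(n-1)$ holds in that direction, whereas the Montiel--Ros covering of $\Omega$ requires the inward flow, for which the inequality reverses to $\frac{d}{ds}(H/V)\ge |A|^{2}+\mathcal{T}(\nu,\nu)/V\ge H^{2}/(n-1)$; it is the resulting finite-time blow-up of $H/V$, not its positivity, that controls the focal time. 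As written, the proposal is a faithful roadmap to the known proofs rather than a proof of the optimal-constant statement.
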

Heintze and Karcher \cite{HK78} originally established this inequality for mean-convex domains of $\mathbb{R}^{n}$. Later, Brendle extended the Heintze-Karcher inequality to sub-static warped product manifolds in his breakthrough work \cite{B13}. Brendle's version of the Heintze-Karcher inequality was subsequently generalized to static manifolds with a single horizon by Wang-Wang in \cite{WW17} and to sub-static manifolds with multiple horizons by Li-Xia in \cite{LX19}. In \cite{FP22}, Fogagnolo and Pinamonti improved the constants of Li-Xia's inequality and relaxed the assumption from \cite{WW17} on the boundary components-- in particular, the constant $c_{j}$ in \eqref{c_j} is well-defined and positive whenever each $\partial_{j} M$ is homologous to a mean-convex hypersurface. When $n=3$ and $(M^{3},g,V)$ solves the static equations \eqref{static_equations}, the constant \eqref{c_j} reduces to

\begin{eqnarray}\label{c_j2}
   c_{j}&=& \frac{ \int_{\partial_{j} M} \frac{\partial V}{\partial \nu} d\sigma}{\int_{\partial_{j} M} \frac{\partial V}{\partial \nu} \left( \frac{\Delta_g V}{V} - \frac{D^{2}V(\nu,\nu)}{V} \right) d\sigma} \nonumber \\
   &=& \frac{\kappa_{j}|\partial_{j} M|}{\kappa_{j} \int_{\partial_{j} M} -\text{Ric}(\nu,\nu) d\sigma} = \frac{|\partial_{j} M|}{\int_{\partial_{j} M} \left( K_{\partial_{j} M} - \frac{1}{2} R_{g} \right) d\sigma} \label{c_j2} \\
   &=& \frac{|\partial_{j} M|}{3|\partial_{j} M| + 2\pi \chi(\partial_{j} M)}. \nonumber
\end{eqnarray}
In \cite{BHW12}, Brendle-Hung-Wang discovered an interaction between the Heintze-Karcher inequality and inverse mean curvature flow (IMCF) in AdS-Schwarzschild space. Recall that a one-parameter family of $C^{\infty}$ mean-convex immersions $X: \Sigma^{n-1} \times [0,T) \rightarrow (M^{n},g)$ move by IMCF if

\begin{equation} \label{IMCF}
\frac{\partial}{\partial t} X(p,t) = \frac{1}{H} \nu (x,t) \hspace{2cm} (x,t) \in \Sigma^{n-1} \times [0,T),
\end{equation}
where $\nu$ is the outer unit normal and $H > 0$ the mean curvature of $\Sigma_{t}=X_{t}(\Sigma)$. The first variation for the bulk integral $\int_{\Omega_{t}} V d\mbox{vol}$ under IMCF is given by the left-hand side of \eqref{heintze_karcher}, and a monotone quantity under IMCF may be deduced from this. Because the constant \eqref{c_j2} is determined by the area and topology of the horizons, the corresponding monotone quantity under IMCF in a general static background is determined by horizon area, topology, and surface gravity.
\begin{Proposition} \label{monotonicity}
Let $(M^{3},g,V)$ be a solution of the static vacuum equations \eqref{static_equations} with compact boundary $\partial M= \partial_{1} M \bigsqcup \partial_{2} M \bigsqcup \dots \bigsqcup \partial_{J} M$. Suppose that $V$ smoothly extends to $0$ on $\partial M$. Let $\Omega \subset (M^{3},g)$ be a smooth domain in $(M^{3},g)$ such that

\begin{equation} \label{boundary_assumption}
\partial \Omega= \partial_{1} M \bigsqcup \dots \bigsqcup \partial_{K} M \dots \bigsqcup \partial_{L} M \bigsqcup \Sigma, \hspace{2cm} 1\leq K \leq L,    
\end{equation}
for some connected mean-convex surface $\Sigma$. Now suppose that $\chi(\partial_{j}M) \leq 0$ for each $j \in \{ K+1, \dots, L \}$. Then the quantity
\begin{equation} \label{monotone_quantity}
Q(t)= \left( |\Sigma_{t}| \right)^{-\frac{1}{2}} \left( \int_{\Sigma_{t}} VH d\sigma - 6 \int_{\Omega_{t}} V d\mbox{vol} + 4 \sum_{j=1}^{K}  \frac{2 \pi \chi (\partial_{j} M)}{3|\partial_{j} M| +2\pi \chi(\partial_{j} M)} \kappa_{j} |\partial_{j} M|  \right)    
\end{equation}
is monotonically non-increasing along the IMCF $\Sigma_{t} \subset (M^{3},g)$ with initial condition $\Sigma_{0}=\Sigma$. Furthermore, if $Q(t)$ is constant along IMCF then each $\Sigma_{t}$ is totally umbilical.
\end{Proposition} 
In the next section, we will show monotonicity of the functional $Q(t)$ in \eqref{monotone_quantity} under weak solutions of IMCF starting from a domain $\Omega_{0}$ bounded by horizons $\partial_{1} M, \dots \partial_{K} M$. If there are additional horizons $\partial_{K+1} M, \dots \partial_{L} M$ exterior to $\Omega_{0}$, then following the approaches in \cite{HI99} and \cite{W18} we construct ``jumps" wherein weak IMCF is restarted from domains $\Omega'$ containing $\Omega_{0}$ and bounded by these additional horizons. If the horizons $\partial_{K+1} M, \dots, \partial_{L} M$ each have non-positive Euler characteristic, then their contributions to the variation formula for \eqref{monotone_quantity} are negligible, implying that $Q(t)$ retains monotonicity after the jump times.
\begin{proof}
Invoking the evolution equations for the mean curvature and volume element,
\begin{eqnarray*}
\frac{\partial}{\partial t} H &=& \frac{1}{H^{2}} \Delta_{\Sigma_{t}} H - \frac{2}{H^{3}} |\nabla H|^{2} - (|A|^{2} + \text{Ric}(\nu,\nu))\frac{1}{H}, \\
\frac{\partial}{\partial t} (d\sigma) &=& d\sigma,
\end{eqnarray*}
c.f. Theorem 3.2 in \cite{HI99} with $f=H^{-1}$, we compute
\begin{eqnarray}
    \frac{d}{d t} \int_{\Sigma_{t}} V H d\sigma &=& \int_{\Sigma_{t}} \left( \frac{\partial V}{\partial t} \right) H + V \left( \frac{\partial H}{\partial t} \right) + VH d\sigma \nonumber \\
    &=& \int_{\Sigma_{t}} \frac{\partial V}{\partial \nu} - V \Delta_{\Sigma_{t}} \frac{1}{H} - \left( |A|^{2} + \text{Ric}(\nu,\nu) \right) \frac{V}{H} + VH d\sigma \nonumber \\
    &=& \int_{\Sigma_{t}} 2 \frac{\partial V}{\partial \nu} - \frac{1}{H} \left( \Delta_{\Sigma_{t}} V + H \frac{\partial V}{\partial \nu} + V\text{Ric}(\nu,\nu) \right) - \frac{V}{H}|A|^{2} + VH d\sigma \label{integral_VH} \\
    &\leq& 2 \int_{\Sigma_{t}} \frac{\partial V}{\partial \nu} d\sigma + \frac{1}{2} \int_{\Sigma_{t}} VH d\sigma. \nonumber
\end{eqnarray}
In the last line, we used the identity $\Delta_{\Sigma_{t}} V + H \frac{\partial V}{\partial \nu} + V\text{Ric}(\nu,\nu)=0$ implied by the static equations and the inequality $H^{2} \leq 2 |A|^{2}$. Now, by the divergence theorem

\begin{equation*}
    \int_{\Sigma_{t}} \frac{\partial V}{\partial \nu} d\sigma = \int_{\Omega_{t}} \Delta_g V d\mbox{vol} + \sum_{j=1}^{L} \int_{\partial_{j} M} \frac{\partial V}{\partial \nu} d\sigma = 3 \int_{\Omega_{t}} V d\mbox{vol} + \sum_{j=1}^{L} \kappa_{j} |\partial_{j} M|,
\end{equation*}
where $\kappa_{j}= \frac{\partial V}{\partial \nu}|_{\partial_{j} M}$. Thus \eqref{integral_VH} becomes

\begin{equation*}
    \frac{d}{dt} \int_{\Sigma_{t}} VH d\sigma \leq \frac{1}{2} \int_{\Sigma_{t}} VH d\sigma + 6 \int_{\Omega_{t}} V d\mbox{vol} + 2 \sum_{j=1}^{L} \kappa_{j} |\partial_{j} M|.
\end{equation*}
For the bulk term, we apply the Heintze-Karcher inequality \eqref{heintze_karcher} with $c_{j}$ from \eqref{c_j2} to $\Sigma_{t}=\partial \Omega_{t}$ and find

\begin{equation*}
    \frac{d}{dt} \left( -6 \int_{\Omega_{t}} V d\mbox{vol} \right) = -6 \int_{\Sigma_{t}} \frac{V}{H} d\sigma \leq -9 \int_{\Omega_{t}} V d\mbox{vol} - 6 \sum_{j=1}^{L} \frac{|\partial_{j} M|}{3|\partial_{j} M| + 2\pi \chi(\partial_{j} M)} \kappa_{j} |\partial_{j} M|. 
\end{equation*}
Combining these, we obtain
\begin{eqnarray}
    & & \frac{d}{dt} \left( \int_{\Sigma_{t}} VH d\sigma - 6 \int_{\Omega_{t}} V d\mbox{vol} +  4 \sum_{j=1}^{K} \frac{2 \pi \chi (\partial_{j} M)}{3|\partial_{j} M| +2\pi \chi(\partial_{j} M)} \kappa_{j} |\partial_{j} M|  \right) \nonumber \\
    &\leq & \frac{1}{2} \int_{\Sigma_{t}} VH d\sigma - 3 \int_{\Omega_{t}} V d\mbox{vol} + 2 \sum_{j=1}^{L} \kappa_{j} |\partial_{j} M| - 6 \sum_{j=1}^{L} \frac{|\partial_{j} M|}{3|\partial_{j} M| + 2\pi \chi(\partial_{j} M)} \kappa_{j} |\partial_{j} M|. \nonumber \\
    &=& \frac{1}{2} \left( \int_{\Sigma_{t}} VH d\sigma - 6 \int_{\Omega_{t}} V d\mbox{vol} + 4 \sum_{j=1}^{L} \frac{2 \pi \chi (\partial_{j} M)}{3|\partial_{j} M| +2\pi \chi(\partial_{j} M)} \kappa_{j} |\partial_{j} M|  \right) \label{Q_variation}
\end{eqnarray}
Applying the assumption $\chi(\partial_{j} M) \leq 0$ for $j \in \{ K+1, \dots, L\}$ to the last line, we get

\begin{eqnarray*}
     & & \frac{d}{dt} \left( \int_{\Sigma_{t}} VH d\sigma - 6 \int_{\Omega_{t}} V d\mbox{vol} +  4 \sum_{j=1}^{K} \frac{2 \pi \chi (\partial_{j} M)}{3|\partial_{j} M| +2\pi \chi(\partial_{j} M)} \kappa_{j} |\partial_{j} M|  \right) \\
    &\leq& \frac{1}{2} \left( \int_{\Sigma_{t}} VH d\sigma - 6 \int_{\Omega_{t}} V d\mbox{vol} +  4 \sum_{j=1}^{K} \frac{2 \pi \chi (\partial_{j} M)}{3|\partial_{j} M| +2\pi \chi(\partial_{j} M)} \kappa_{j} |\partial_{j} M|  \right)
\end{eqnarray*}
Finally by the area growth formula $|\Sigma_{t}|=e^{t} |\Sigma_{0}|$ under IMCF, we altogether find that

\begin{equation*}
    \frac{d}{dt} Q(t) \leq 0.
\end{equation*}
Also note that equality is achieved in \eqref{integral_VH} only if $|A|^{2}=2 H^{2}$.
\end{proof}

\section{Monotonicity for Weak Inverse Mean Curvature Flow}
To obtain the Minkowski inequality for outer-minimizing surfaces, we must prove the functional $Q(t)$ introduced in the previous section is monotone under proper weak solutions of IMCF. Our monotonicity argument here is based off approaches of Huisken-Ilmanen \cite{HI99}, of Wei in \cite{W18}, and of the first author in \cite{H24}.

\subsection{The Heintze-Karcher Inequality for Weak Mean Curvature}
Before introducing weak IMCF, we first show that the Heintze-Karcher inequality \eqref{heintze_karcher} extends to $C^{1,\alpha}$ hypersurfaces with bounded non-negative, and integrable weak mean curvature. This allows one to estimate $\int_{\Omega_{t}} V d\mbox{vol}$ under weak IMCF. Our approach is approximation by mean-convex mean curvature flow. This approximation has been used in \cite{HI08} for $M=\mathbb{R}^{n}$ and in \cite{Z16} for more general backgrounds.

\begin{Theorem}[Approximation by Mean-Convex Mean Curvature Flow, \cite{Z16} Lemma 4.4] \label{mcf_approximation}
Let $(M^{n},g)$ be a Riemannian manifold, and let $\Sigma_{0}=X_{0}(\Sigma^{n-1}) \subset M^{n}$ be an immersion of class $C^{1}$ with bounded and non-negative measurable weak mean curvature $H$. Then $\Sigma_{0}$ is of class $C^{1,\alpha} \cap W^{2,p}$, and there exists a solution $X: \Sigma^{n-1} \times (0,\epsilon_{0}) \rightarrow (M^{n},g)$ to mean curvature flow (MCF),

\begin{eqnarray} \label{mcf}
    \frac{\partial}{\partial \epsilon} X(x,\epsilon) = -H \nu(x,\epsilon), \hspace{2cm} (x,\epsilon) \in \Sigma^{n-1} \times (0, \epsilon_{0}),
\end{eqnarray}
converging to $\Sigma_{0}$ locally in $C^{1,\alpha} \cap W^{2,p}$, $\alpha \in (0,1), p \in (1,\infty)$ and with  $H(x,\epsilon) \rightarrow H(x)$ strongly in $L^{p}$ for $p \in [1,\infty)$ as $\epsilon \rightarrow 0$. Furthermore, for every $\epsilon \in (0,\epsilon_{0})$ we have $\min_{\Sigma_{\epsilon}} H > 0$.
\end{Theorem}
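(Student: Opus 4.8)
The plan is to separate the statement into three essentially independent pieces: the \emph{a priori} regularity of the initial surface $\Sigma_0$, the short-time existence and instantaneous smoothing of the flow issuing from this rough datum, and the strict positivity of the mean curvature for $\epsilon>0$. For the regularity I would argue locally. Since $\Sigma_0$ is $C^1$, near any point it is the graph of a function $u$ over a piece of its tangent plane, and the hypothesis that $\Sigma_0$ have weak mean curvature $H$ means precisely that $u$ solves, in the weak sense, the quasilinear prescribed-mean-curvature equation
\[
\DIV\!\Big( \frac{\na u}{\sqrt{1+|\na u|^2}} \Big) = H\circ X_0 + (\text{lower-order terms from }g).
\]
Because $u\in C^1$ the slope $|\na u|$ is bounded, so this equation is \emph{uniformly} elliptic with continuous leading coefficients, and its right-hand side lies in $L^\infty\subset L^p$ for every $p$. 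Interior Calder\'on--Zygmund $W^{2,p}$ estimates then upgrade $u$ to $W^{2,p}_{\loc}$ for all finite $p$, and Morrey embedding gives $u\in C^{1,\alpha}$ for every $\alpha\in(0,1)$. Hence $\Sigma_0\in C^{1,\alpha}\cap W^{2,p}$.

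For existence I would use approximation. Choose smooth immersions $X_0^{(i)}\to X_0$ in $C^{1,\alpha}\cap W^{2,p}$ (for instance by mollifying the local graph functions and patching with a partition of unity). Classical parabolic theory yields, for each $i$, a smooth MCF $X^{(i)}$ with $X^{(i)}(\cdot,0)=X_0^{(i)}$. The real work is to obtain estimates \emph{uniform in} $i$ on a fixed interval $(0,\epsilon_0)$: a uniform graph-slope (gradient) bound, which keeps the evolution graphical and the operator uniformly parabolic, together with uniform interior parabolic $W^{2,p}$ and Schauder estimates. Graphical MCF is a quasilinear parabolic equation whose Lipschitz norm on short time is controlled by that of the initial graph, so the slope bound persists and interior parabolic regularity bootstraps the solution to $C^\infty$ for every $\epsilon>0$. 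Passing to the limit $i\to\infty$ produces the solution $X:\Sigma\times(0,\epsilon_0)\to M$, smooth for positive times.

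The convergence as $\epsilon\to0$ comes from the very same uniform estimates, which give continuity of the flow in $C^{1,\alpha}\cap W^{2,p}$ down to $\epsilon=0$; thus $\Sigma_\epsilon\to\Sigma_0$ in that topology, and since $H=g^{ij}A_{ij}$ depends on the second derivatives of the position, the $W^{2,p}$ convergence forces $H(\cdot,\epsilon)\to H(\cdot)$ strongly in $L^p$ for every $p\in[1,\infty)$. For the strict positivity I would invoke the evolution equation of the mean curvature along the smooth flow,
\[
\pl_\epsilon H = \Delta_{\Sigma_\epsilon}H + \big(|A|^2+\text{Ric}(\nu,\nu)\big)H,
\]
a linear parabolic equation for $H$ with bounded zeroth-order coefficient $c=|A|^2+\text{Ric}(\nu,\nu)$ for $\epsilon>0$. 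Comparison with the stationary solution $H\equiv0$ shows that $H\ge0$ is preserved, and the \emph{strong} maximum principle---which is insensitive to the sign of $c$, as one sees after the substitution $w=e^{-C\epsilon}H$ with $C=\sup|c|$---then forces $H>0$ everywhere for $\epsilon>0$, so $\min_{\Sigma_\epsilon}H>0$ by compactness. The lone exception is $H\equiv0$, i.e. $\Sigma_0$ minimal, which is excluded in the intended application where $\Sigma_0$ carries strictly positive total mean curvature.

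The main obstacle is the existence step: producing a short-time MCF from merely $C^{1,\alpha}\cap W^{2,p}$ initial data \emph{together with} the quantitative convergence back to $\Sigma_0$. Everything rests on a slope bound uniform in the smoothing parameter on a definite time interval; once that is in hand, the uniform parabolic estimates and the limiting procedure are routine, and the $L^p$ convergence of $H$ follows. By comparison, the positivity assertion is conceptually the one ingredient beyond textbook MCF theory, but given the smooth flow it reduces to a single application of the strong maximum principle.
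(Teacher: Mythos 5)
The paper does not prove this statement at all: it is imported verbatim as Lemma 4.4 of \cite{Z16}, so there is no internal proof to compare against. Judged on its own, your outline follows the standard (and presumably the cited) route --- local graph representation plus Calder\'on--Zygmund for the initial regularity, smoothing of the data plus uniform parabolic estimates for short-time existence, and the strong maximum principle for positivity of $H$ --- and the architecture is sound.

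The one step that is genuinely under-justified is the convergence back to $\Sigma_{0}$. You write that ``the very same uniform estimates give continuity of the flow in $C^{1,\alpha}\cap W^{2,p}$ down to $\epsilon=0$,'' but the estimates you actually describe are \emph{interior-in-time} parabolic estimates: they are uniform in the smoothing parameter $i$ on any $[\epsilon_{1},\epsilon_{0}]$ with $\epsilon_{1}>0$, they degenerate as $\epsilon\to 0$, and they say nothing about the trace of the solution at $\epsilon=0$ in the $W^{2,p}$ topology. What the convergence statement (and hence the strong $L^{p}$ convergence of $H$, which lives at the level of second derivatives) requires is an \emph{initial-value} estimate for the quasilinear graph equation with $W^{2,p}$ data --- e.g.\ maximal $L^{p}$-regularity, or a direct energy/barrier argument giving $u(\cdot,\epsilon)\to u_{0}$ in $W^{2,p}$ --- uniform in the mollification. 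That is precisely the nontrivial content of the cited lemma, and it does not follow from the slope bound plus interior Schauder theory alone. Separately, you are right that the positivity conclusion as literally stated fails for minimal $\Sigma_{0}$ (MCF is then stationary with $H\equiv 0$); the hypothesis $H\not\equiv 0$ is implicit and is satisfied in the paper's application, where the level sets of weak IMCF have $H=|\nabla u|>0$ almost everywhere.
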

Closely following the arguments in Section 4 of \cite{H24}, we study the reciprocal mean curvature $H^{-1}_{\epsilon}$ of $\Sigma_{\epsilon}$ under MCF as $\epsilon \searrow 0$. $|| H_{\epsilon}^{-1}||_{L^{1}(\Sigma_{\epsilon})}$ is monotonically decreasing in $\epsilon$ under MCF, and this suggests $L^{1}$ convergence as $\epsilon \searrow 0$. However, we must handle the case where $H^{-1}$ is integrable and $\text{essinf}_{\Sigma_{0}} H = 0$ delicately.
\begin{Proposition} \label{approx_liminf}
Let $\Sigma_{0}$, $\Sigma_{\epsilon}$ be as in Theorem \ref{mcf_approximation}, and suppose that the reciporical weak mean curvature $H(x,0)^{-1}$ of $\Sigma_{0}$ is $L^{1}$ integrable. Then there is a subsequence $\epsilon_{j} \searrow 0$ of times such that the inverse mean curvature $H(x,\epsilon)^{-1}$ of $\Sigma_{\epsilon_{j}}$ converges to $H(x,\epsilon)^{-1}$ strongly in $L^{1}$ as $\epsilon_{j} \searrow 0$.
\end{Proposition}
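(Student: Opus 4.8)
The plan is to first extract a.e.\ pointwise convergence of the reciprocal mean curvatures from the $L^{p}$ convergence supplied by Theorem \ref{mcf_approximation}, then to prove convergence of the total integrals $\int_{\Sigma_{\epsilon}} H^{-1}\,d\sigma$ by pairing Fatou's lemma with the monotonicity of $\|H^{-1}\|_{L^{1}}$, and finally to upgrade pointwise convergence plus convergence of masses to strong $L^{1}$ convergence via Scheff\'{e}'s lemma. For the pointwise step, Theorem \ref{mcf_approximation} gives $H(\cdot,\epsilon)\to H(\cdot,0)$ strongly in $L^{p}(\Sigma^{n-1})$, so after passing to a subsequence $\epsilon_{j}\searrow 0$ we may assume $H(x,\epsilon_{j})\to H(x,0)$ for a.e.\ $x$. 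Since $H(\cdot,0)^{-1}\in L^{1}$, the set $\{H(\cdot,0)=0\}$ is null---otherwise $H(\cdot,0)^{-1}=+\infty$ on a set of positive measure, contradicting integrability---so $H(\cdot,0)>0$ a.e., and by continuity of $t\mapsto t^{-1}$ away from the origin we obtain $H(x,\epsilon_{j})^{-1}\to H(x,0)^{-1}$ for a.e.\ $x$.

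To control the total mass I would pass to a fixed measure space. Parametrizing all quantities over $\Sigma^{n-1}$ by the immersions $X_{\epsilon}$, the area elements take the form $d\sigma_{\epsilon}=\rho_{\epsilon}\,d\mu$ for a fixed smooth measure $d\mu$, where $\rho_{\epsilon}\to\rho_{0}$ uniformly (since $X_{\epsilon}\to X_{0}$ in $C^{1,\alpha}$) and $\rho_{\epsilon}$ is bounded above and below by positive constants for small $\epsilon$ (since $X_{0}$ is a $C^{1}$ immersion of the closed manifold $\Sigma^{n-1}$). Writing $v_{\epsilon}:=H(\cdot,\epsilon)^{-1}\rho_{\epsilon}\ge 0$, we have $v_{\epsilon_{j}}\to v_{0}:=H(\cdot,0)^{-1}\rho_{0}$ a.e.\ and $\int_{\Sigma^{n-1}}v_{\epsilon}\,d\mu=\int_{\Sigma_{\epsilon}}H^{-1}\,d\sigma$. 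Fatou's lemma yields $\int v_{0}\,d\mu\le\liminf_{j}\int v_{\epsilon_{j}}\,d\mu$, while the monotone decay of $\epsilon\mapsto\|H^{-1}\|_{L^{1}(\Sigma_{\epsilon})}$ recorded above (following \cite{H24}) gives the matching upper bound $\int_{\Sigma_{\epsilon}}H^{-1}\,d\sigma\le\int_{\Sigma_{0}}H^{-1}\,d\sigma=\int v_{0}\,d\mu$ for all small $\epsilon>0$. Together these force $\int v_{\epsilon_{j}}\,d\mu\to\int v_{0}\,d\mu$.

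With a.e.\ convergence $v_{\epsilon_{j}}\to v_{0}$ and $\int v_{\epsilon_{j}}\,d\mu\to\int v_{0}\,d\mu<\infty$ in hand, Scheff\'{e}'s lemma---dominating the negative part $(v_{\epsilon_{j}}-v_{0})^{-}\le v_{0}\in L^{1}$ by dominated convergence and controlling the positive part through the convergence of total masses---gives $v_{\epsilon_{j}}\to v_{0}$ in $L^{1}(\Sigma^{n-1},d\mu)$. Because $\rho_{\epsilon_{j}}\to\rho_{0}$ uniformly with $\rho_{0}$ bounded below and $\int H_{\epsilon_{j}}^{-1}\,d\mu$ uniformly bounded, a routine splitting $\int|H_{\epsilon_{j}}^{-1}-H_{0}^{-1}|\rho_{0}\,d\mu\le\int|v_{\epsilon_{j}}-v_{0}|\,d\mu+\|\rho_{0}-\rho_{\epsilon_{j}}\|_{\infty}\int H_{\epsilon_{j}}^{-1}\,d\mu$ then converts this into $\int|H_{\epsilon_{j}}^{-1}-H_{0}^{-1}|\,d\sigma_{0}\to 0$, which is the assertion. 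I expect the crux to be the convergence of the total masses, and specifically the upper bound $\limsup_{j}\int_{\Sigma_{\epsilon_{j}}}H^{-1}\,d\sigma\le\int_{\Sigma_{0}}H_{0}^{-1}\,d\sigma$: since $t\mapsto t^{-1}$ is unbounded near the origin and $\text{essinf}_{\Sigma_{0}}H=0$ is permitted, a.e.\ convergence together with the $L^{p}$ bounds alone does \emph{not} force $L^{1}$ convergence of the reciprocals, because mass could in principle concentrate on the shrinking region where $H$ is small. It is precisely the monotone decay of $\|H^{-1}\|_{L^{1}}$ along mean curvature flow that excludes such concentration and closes the argument.
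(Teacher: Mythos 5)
Your overall architecture (a.e.\ convergence of $H_{\epsilon_j}^{-1}$ from the $L^p$ convergence, Fatou for the lower bound on the masses, an upper bound on $\int_{\Sigma_\epsilon}H^{-1}$ from monotonicity, and Scheff\'e to upgrade to $L^1$ convergence) is the same as the paper's, and your bookkeeping with the area elements $\rho_\epsilon$ is a correct and slightly more explicit version of the paper's terse final sentence. However, there is a genuine gap at exactly the step you yourself identify as the crux: you take the upper bound $\int_{\Sigma_{\epsilon}}H^{-1}\,d\sigma\le\int_{\Sigma_{0}}H_0^{-1}\,d\sigma$ for all small $\epsilon>0$ as an already ``recorded'' consequence of monotone decay of $\|H^{-1}\|_{L^1}$ along MCF. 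Two things are wrong with this. First, in a general Riemannian background the raw quantity $\int_{\Sigma_\epsilon}H^{-1}$ is not monotone: the variation contains $-\int H^{-1}\mathrm{Ric}(\nu,\nu)\,d\sigma$, which has no sign when $\mathrm{Ric}$ can be negative (as it is here, where $\mathrm{Ric}\approx -2g$); one must instead work with $e^{-\lambda\epsilon}H$ for $\lambda$ a local lower Ricci bound, which is why the paper's final inequality carries the harmless but necessary factor $e^{\lambda\epsilon}$. Second, and more seriously, monotonicity of a quantity on the open interval $\epsilon\in(0,\epsilon_0)$, where the flow is smooth and $H>0$, does not by itself compare anything to the value at $\epsilon=0$, where $\Sigma_0$ is only $C^{1,\alpha}\cap W^{2,p}$ and the weak mean curvature may vanish on a set of measure zero. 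The monotone limit $\lim_{\epsilon\searrow 0}\int_{\Sigma_\epsilon}H^{-1}$ exists as a supremum but could a priori exceed $\int_{\Sigma_0}H_0^{-1}$, precisely by concentration of $H^{-1}$ where $H_0$ is small --- the scenario you correctly flag and then dismiss by fiat.

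The paper closes this gap with a truncation that you would need to reproduce: for each $k\in\mathbb{N}$ it considers $\psi_k(\epsilon)=\int_{\Sigma_\epsilon}\bigl(e^{-\lambda\epsilon}H+k^{-1}\bigr)^{-1}d\sigma$ as in \eqref{beta}. Because $e^{-\lambda\epsilon}H$ is a supersolution of the heat-type equation satisfied by $H$ under MCF, each $\psi_k$ is non-increasing in $\epsilon$; and because the integrand is bounded by $k$, the $L^p$ convergence $H_\epsilon\to H_0$ from Theorem \ref{mcf_approximation} lets one pass to the limit $\psi_k(\epsilon_j)\to\psi_k(0)$ by dominated convergence along a subsequence, \emph{including at the endpoint} $\epsilon=0$. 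Combining these gives \eqref{regularized_inverse}, and only then does monotone convergence in $k$ yield $\int_{\Sigma_\epsilon}H^{-1}\le e^{\lambda\epsilon}\int_{\Sigma_0}H_0^{-1}$, hence the $\limsup$ bound. Once you have that bound, the rest of your argument (Fatou, Scheff\'e, the uniform control of $\rho_{\epsilon_j}$) goes through as written.
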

\begin{proof}
Let $U$ be a small neighborhood of $\Sigma_{0} \subset M^{n}$, and define

\begin{equation*}
    \lambda = \min_{x \in \overline{U}}  \big\{ \text{Ric}_{g} (E, E) | E \in T_{x}M^{n} \hspace{0.2cm} \text{and} \hspace{0.2cm} || E||_{g}=1 \big\}.
\end{equation*}
For each $k \in \mathbb{N}$, we consider the quantity

\begin{equation} \label{beta}
    \psi_{k}(\epsilon) =\int_{\Sigma_{\epsilon}} \frac{1}{e^{-\lambda\epsilon} H + k^{-1}} d\sigma.
\end{equation}
under \eqref{mcf}. Since $\min_{\Sigma_{\epsilon}} H > 0$, the variation formula for mean curvature under MCF in $U$ yields

\begin{eqnarray*}
    \frac{\partial}{\partial \epsilon} H &=& \Delta_{\Sigma_{\epsilon}} H + \left( |h|^{2} + \text{Ric}(\nu,\nu) \right) H \label{mcf_variation}  \\
    &\geq & \Delta_{\Sigma_{\epsilon}} H+ \left( |h|^{2} + \lambda \right) H, \nonumber
\end{eqnarray*}
and so

\begin{equation*}
    \frac{\partial}{\partial \epsilon} \left( e^{-\lambda \epsilon} H \right) \geq \Delta_{\Sigma_{\epsilon}} e^{-\lambda \epsilon} H.
\end{equation*}
Using this, we compute the evolution of \eqref{beta} under MCF:
\begin{eqnarray} \label{decreasing}
   \frac{\partial}{\partial \epsilon} \psi_{k}(\epsilon) &=& \int_{\Sigma_{\epsilon}} -\frac{1}{\left( e^{-\lambda \epsilon} H + k^{-1} \right)^{2}} \frac{\partial}{\partial \epsilon} \left( e^{-\lambda \epsilon} H \right) - \frac{1}{e^{-\lambda \epsilon} H + k^{-1}} H^{2} d\sigma \nonumber \\
   &\leq& -\int_{\Sigma_{\epsilon}} \frac{2e^{-2 \lambda \epsilon}}{\left( e^{-\lambda \epsilon} H + k^{-1} \right)^{3}} |\nabla H|^{2} d\sigma \leq 0,
\end{eqnarray}
and so \eqref{beta} is monotonically decreasing in $\epsilon$. By the $L^{p}$ convergence $H(x,\epsilon) \rightarrow H(x,0)$ and the bound $\psi_{k}(\epsilon) \leq k$, we have that
\begin{equation*}
   \psi_{k}(\epsilon_{j}) \rightarrow  \psi_{k}(0)
\end{equation*}
along a subsequence of times $\epsilon_{j}$ converging to $0$ by the dominated convergence theorem. Combining this with \eqref{decreasing} yields

\begin{equation} \label{regularized_inverse}
    \int_{\Sigma_{\epsilon}} \frac{1}{e^{-\lambda \epsilon} H + k^{-1}} d\sigma \leq  \int_{\Sigma_{0}} \frac{1}{H + k^{-1}} d\sigma, \hspace{1cm} k \in \mathbb{N}, \hspace{1cm} \epsilon \in (0,\epsilon_{0}).
\end{equation}
Now, for a fixed $\epsilon \in (0,\epsilon_{0})$ we take the $k \searrow \infty$ limit of each side of \eqref{regularized_inverse}. By monotone convergence, we get

\begin{equation*}
    \int_{\Sigma_{\epsilon}} \frac{1}{H} d\sigma \leq e^{\lambda \epsilon} \int_{\Sigma_{0}} \frac{1}{H} d\sigma.
\end{equation*}
Hence

\begin{eqnarray*}
    \limsup_{\epsilon \searrow 0} \int_{\Sigma_{\epsilon}} \frac{1}{H} d\sigma \leq \int_{\Sigma_{0}} \frac{1}{H} d\sigma,
\end{eqnarray*}
and once again by $L^{p}$ convergence of $H_{\epsilon}$ we have that that $H_{\epsilon_{j}}^{-1} \rightarrow H^{-1}$ in $L^{1}$ along a subsequence $\epsilon_{j}$.

\end{proof}

\begin{Theorem}[Heintze-Karcher Inequality for Weak Mean Curvature] \label{brendle_weak}
Let $(M^{n},g,V)$ be a sub-static Riemannian manifold. Suppose that $(M^{n},g)$ has compact, non-empty $C^{\infty}$ boundary $\partial M= \bigsqcup_{j=1}^{J} \partial_{j} M$, $V$ smoothly extends to $0$ on $\partial M$, and $\partial M= \{ V = 0 \}$ is a regular level set of $V$. Let $\Omega \subset (M^{n},g)$ be a smooth domain with boundary $\partial \Omega= \partial_{1} M \bigsqcup \dots \bigsqcup \partial_{L} M \bigsqcup \Sigma$ for some $1 \leq L \leq J$ and some $C^{1}$ hypersurface $\Sigma^{n-1}$ with bounded, non-negative, and integrable weak mean curvature $H$. Then we have the inequality 

\begin{equation*}
    \int_{\Sigma} \frac{V}{H} d \sigma \geq \frac{n}{n-1} \int_{\Omega} V d\mbox{vol} + \sum_{j=1}^{L} c_{j} \int_{\partial_{j} M} \frac{\partial V}{\partial \nu} d\sigma,
\end{equation*}
for the constants $c_{j}$ defined in \eqref{c_j}.
\end{Theorem}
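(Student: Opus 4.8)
The strategy is to approximate $\Sigma$ by smooth strictly mean-convex hypersurfaces, apply the classical inequality \eqref{heintze_karcher} to each approximant, and then pass to the limit; the entire difficulty is concentrated in the left-hand side and has already been prepared by Proposition \ref{approx_liminf}. First I would invoke Theorem \ref{mcf_approximation}: since $\Sigma$ is $C^1$ with bounded, non-negative weak mean curvature, there is a mean curvature flow $X \colon \Sigma^{n-1} \times (0,\epsilon_0) \to (M^n,g)$ with $\Sigma_\epsilon := X(\Sigma^{n-1},\epsilon) \to \Sigma$ in $C^{1,\alpha} \cap W^{2,p}$ and $\min_{\Sigma_\epsilon} H > 0$ for every $\epsilon \in (0,\epsilon_0)$. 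Because $\Sigma$ is compact and sits at positive distance from the horizons, for all small $\epsilon$ the surface $\Sigma_\epsilon$ remains embedded, stays within a fixed neighborhood of $\Sigma$, and bounds a smooth domain $\Omega_\epsilon$ with $\partial \Omega_\epsilon = \partial_1 M \sqcup \cdots \sqcup \partial_L M \sqcup \Sigma_\epsilon$. Each $\Sigma_\epsilon$ is thus an admissible mean-convex hypersurface for \eqref{heintze_karcher}, giving
\begin{equation*}
\int_{\Sigma_\epsilon} \frac{V}{H}\, d\sigma \;\geq\; \frac{n}{n-1} \int_{\Omega_\epsilon} V\, d\Omega + \sum_{j=1}^{L} c_j \int_{\partial_j M} \frac{\partial V}{\partial \nu}\, d\sigma ,
\end{equation*}
where, crucially, the constants $c_j$ of \eqref{c_j} depend only on the fixed horizons $\partial_j M$ and so are independent of $\epsilon$.

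Next I would send $\epsilon \searrow 0$ term by term. The horizon sum is constant in $\epsilon$. For the bulk term, the $C^1$ convergence of $\Sigma_\epsilon$ gives $|\Omega_\epsilon \triangle \Omega| \to 0$, whence $\int_{\Omega_\epsilon} V\, d\Omega \to \int_\Omega V\, d\Omega$ by boundedness and continuity of $V$ near $\Sigma$. The subtle term is the left-hand side: since only $H \geq 0$ with $H^{-1} \in L^1$ is assumed, one may have $\text{essinf}_\Sigma H = 0$, so $V/H$ can be unbounded and mere $L^p$ convergence of $H_\epsilon$ is useless for controlling $H_\epsilon^{-1}$. This is exactly the role of Proposition \ref{approx_liminf}: along a subsequence $\epsilon_j \searrow 0$ one has $H_{\epsilon_j}^{-1} \to H^{-1}$ strongly in $L^1$. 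Multiplying by the bounded continuous factor $V$ (which converges uniformly along the immersions) and accounting for the area elements, which converge by the $C^{1,\alpha}$ convergence, yields $\int_{\Sigma_{\epsilon_j}} V H^{-1}\, d\sigma \to \int_\Sigma V H^{-1}\, d\mu$. Passing the displayed inequality to the limit along $\epsilon_j$ then gives the assertion.

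The main obstacle is precisely this left-hand-side limit, because inverting the mean curvature is discontinuous where $H$ degenerates, and weak or $L^p$ convergence of $H_\epsilon$ alone cannot see this. The resolution, already carried out in Proposition \ref{approx_liminf}, is the monotonicity under mean curvature flow of the regularized integrals $\psi_k$ from \eqref{beta}, which produces the one-sided bound $\limsup_{\epsilon \searrow 0} \int_{\Sigma_\epsilon} H^{-1}\, d\sigma \leq \int_\Sigma H^{-1}\, d\mu$ and, together with a.e.\ convergence, the strong $L^1$ convergence of $H^{-1}$ along a subsequence. Since $V$ is merely a bounded continuous weight, this strong $L^1$ convergence is exactly the input needed to pass $\int_{\Sigma_\epsilon} V H^{-1}$ to the limit; once it is granted, the remaining limits (the unweighted bulk integral and the fixed horizon term) are routine. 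I would also record explicitly that the convergences of the bulk and horizon terms hold for the whole family, so the only place a subsequence is genuinely used is in securing the $L^1$ control of $H^{-1}$ on the left.
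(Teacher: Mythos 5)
Your proposal is correct and follows essentially the same route as the paper: approximate by the mean-convex mean curvature flow of Theorem \ref{mcf_approximation}, apply \eqref{heintze_karcher} to each smooth mean-convex $\Sigma_{\epsilon_j}$, and pass to the limit using the subsequential $L^1$ convergence of $H^{-1}$ from Proposition \ref{approx_liminf} together with the $C^0$ convergence of the surfaces for the bulk term. The only detail the paper records that you omit is the one-line dispatch of the case where $H^{-1}$ fails to be integrable (so that $\int_\Sigma V/H = +\infty$ and the inequality is trivial), which is needed before invoking Proposition \ref{approx_liminf}.
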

\begin{proof}
 We assume $H^{-1}$ is integrable, since otherwise the statement is trivial. Let $X: \Sigma \times (0,\epsilon_{0}) \rightarrow (M^{n},g)$ be MCF from Theorem \ref{mcf_approximation}, and let $\Omega_{\epsilon} \subset (M^{n},g)$ be such that $\partial \Omega_{\epsilon}= \partial_{1} M \bigsqcup \dots \bigsqcup \partial_{K} M \bigsqcup \Sigma_{\epsilon}$. We consider the subsequence $\epsilon_{j} \searrow 0$ from Proposition \ref{approx_liminf}. By $C^{0}$ convergence of $\Sigma_{\epsilon_{j}}$ and $L^{1}$ convergence of $H_{\epsilon_{j}}^{-1}$, we have

\begin{eqnarray*}
   \lim_{j} \int_{\Omega_{\epsilon_{j}}} V d\mbox{vol} &=& \int_{\Omega_{0}} V d\mbox{vol}, \\
   \lim_{j} \int_{\Sigma_{\epsilon_{j}}} \frac{V}{H} d\sigma &=& \int_{\Sigma_{0}} \frac{V}{H} d\sigma.
\end{eqnarray*}
Since $\Sigma_{\epsilon}$ is smooth with $H>0$, we apply the inequality \eqref{heintze_karcher} over $\Sigma_{\epsilon_{j}}$ to make the conclusion. 
\end{proof}

\subsection{Monotonicity}
We are now ready to prove that the quantity $Q(t)$ remains monotone under weak IMCF, and we begin with a brief review of weak solution theory. Let $(M^{n},g)$ be a complete Riemannian manifold, and $\Omega^{n} \subset (M^{n},g)$ a smooth domain. A \textit{weak IMCF with initial condition $\Omega_{0}$} in $(M^{n},g)$ is a function $u \in C^{0,1}_{\text{loc}}(M^{n})$ such that

\begin{equation} \label{IC}
\{ u < 0 \} = \Omega_{0},
\end{equation}
and for any $t > 0$ the sub-level-sets $\Omega_{t}=\{ u < t\} \subset M^{n}$ satisfy
\begin{equation} \label{variational}
J_{u}\left( \Omega_{t} \right) \leq J_{u}(F) \hspace{1cm} \forall F \supset \Omega_{0},
\end{equation}
where $F$ is a domain of finite perimeter $|\partial^{*} F|$ in $M$ and $J_{u}$ is the functional
\begin{equation} \label{J_u}
    J_{u} \left( F \right) = |\partial^{*} F| - \int_{F} |D u| d\mbox{vol}.
\end{equation}
The relationship between the variational formulation \eqref{variational} and classical IMCF \eqref{IMCF} is as follows: for smooth $u$ with non-vanishing gradient, \eqref{IC}-\eqref{variational} are equivalent to the degenerate elliptic Dirichlet problem

\begin{eqnarray} \label{elliptic}
    \text{div} \left( \frac{D u}{|D u| } \right) &=& |D u|, \hspace{1cm} \text{in} \hspace{1cm} \left( M^{n} \setminus \overline{\Omega}_{0}, g \right), \\
    u &=& 0, \hspace{1.5cm} \text{on} \hspace{1cm} \Sigma_{0}=\partial \Omega_{0}, \nonumber
\end{eqnarray}
and the level sets $\Sigma_{t}= \{ u = t \}$ of solutions to \eqref{elliptic} are readily checked to form a solution to smooth IMCF \eqref{IMCF}. Weak  IMCF has been extremely effective in solving geometric problems on non-compact manifolds, as it was famously introduced by Huisken and Ilmanen in \cite{HI99} to prove the Riemannian Penrose inequality for asymptotically flat manifolds with non-negative scalar curvature.

Lemma 3.1 in \cite{LN13} confirms that there exists a unique weak IMCF $u \in C^{0,1}_{\text{loc}} (M^{n})$ with compact level sets for any smooth domain $\Omega_{0}$ within an asymptotically locally hyperbolic manifold $(M^{3},g)$ (note that by choosing an arbitrary fill-in of $\partial M$, we may take $(M^{3},g)$ to be complete). We summarize the key a priori properties of this solution established \cite{HI99} which are relevant to the monotonicity argument.

\begin{Theorem}[Properties of weak IMCF] \label{properties}
Let $(M^{3},g)$ be an asymptotically locally hyperbolic Riemannian manifold, let $\Omega_{0} \subset (M^{3},g)$ be a bounded domain with $\partial \Omega_{0}=\Sigma_{0}$, and let $u \in C^{0,1}_{\text{loc}}(M^{3})$ be the proper weak IMCF with initial condition $\Omega_{0}$. Then the following hold, with the corresponding reference from \cite{HI99} provided:

\begin{enumerate}[label= (\roman*)]
    \item (Theorem 1.3, Regularity) For each $t >0$, the sets $\Sigma_{t}= \partial \{ u < t \}$ and $\Sigma^{+}_{t}=\partial \{ u > t \}$ are outer-minimizing, resp. strictly outer-minimizing, $C^{1,\alpha}$ hypersurfaces.
    \item ((1.12) and Lemma 5.1, Mean-convexity) For a.e. $t >0$, $\Sigma_{t}$ and $\Sigma^{+}_{t}$ have essentially bounded weak mean curvature $H_{\Sigma_{t}} (x) = |D u(x)| >0$ for $\mathcal{H}^{2}$ a.e. $x \in \Sigma_{t}$.
    \item (Lemma 4.2, Connectedness) If $\Sigma_{0}$ is a connected hypersurface, then $\Sigma_{t}$ is a connected hypersurface for each $t >0$.
    \item ((1.10), Convergence) For each $t>0$, $\alpha \in (0,1)$, we have the convergences

    \begin{equation} \label{convergence}
        \Sigma^{+}_{s} \rightarrow \Sigma_{t} \hspace{1cm} \text{as} \hspace{1cm} s \nearrow t, \hspace{2cm} \Sigma_{s} \rightarrow \Sigma^{+}_{t} \hspace{1cm} \text{as} \hspace{1cm} s \searrow t
    \end{equation}
    in $C^{1,\alpha}$.

    \item (Lemma 5.6, Area Growth) The $\mathcal{H}^{2}$ measures of $\Sigma_{t}$ and $\Sigma^{+}_{t}$ are given by

    \begin{equation} \label{area_growth}
        |\Sigma_{t}| = |\Sigma^{+}_{t}|= e^{t}|\Sigma^{+}_{0}|.
    \end{equation}
    In particular, if $\Sigma_{0}=\partial M$ is outer-minimizing then $|\Sigma_{t}| = e^{t} |\Sigma_{0}|$.
  
\end{enumerate}
\end{Theorem}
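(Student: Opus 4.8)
The plan is to observe that the existence of a proper weak solution is the only genuinely global input, supplied by the ALH structure, whereas the five listed properties are all local or intrinsic consequences of the Huisken--Ilmanen theory whose proofs are insensitive to the behavior at infinity.

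First I would invoke Lemma 3.1 of \cite{LN13}: after choosing an arbitrary smooth fill-in of $\partial M$ so that $(M^{3},g)$ becomes complete, this lemma furnishes a unique proper weak IMCF $u \in C^{0,1}_{\text{loc}}(M^{3})$ with compact level sets and initial condition $\Omega_{0}$. This is the one step where the asymptotically locally hyperbolic geometry genuinely enters, ensuring that the level sets $\Sigma_{t}$ neither escape to infinity nor fail to be compact.

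With properness in hand, each property is then matched to its source in \cite{HI99}. For (i), the variational inequality \eqref{variational} is precisely the assertion that each $\Omega_{t}=\{u<t\}$ is a minimizing hull, so the $C^{1,\alpha}$ regularity follows from the local regularity theory for minimizers of the perimeter-type functional $J_{u}$ in \eqref{J_u}, as in Theorem 1.3 of \cite{HI99}; the strictly outer-minimizing property of $\Sigma_{t}^{+}$ comes from the strict minimization enjoyed by $\{u>t\}$. For (ii), the weak Euler--Lagrange equation for the hull yields $H=|\nabla u|$ a.e., with essential boundedness and positivity for a.e.\ $t$ coming from (1.12) and Lemma 5.1 of \cite{HI99}. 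For (iii), connectedness of $\Sigma_{t}$ from that of $\Sigma_{0}$ is Lemma 4.2, a topological argument relying only on outer-minimization. For (iv) and (v), the $C^{1,\alpha}$ convergences in \eqref{convergence} and the exponential area growth $|\Sigma_{t}|=e^{t}|\Sigma_{0}^{+}|$ are (1.10) and Lemma 5.6, both flowing from the first-variation structure of the flow.

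The point requiring the most care --- rather than a deep obstacle --- is to confirm that each cited result of \cite{HI99} is either proved for a general complete Riemannian background or uses only the local geometry in a neighborhood of the evolving surface, so that substituting an ALH ambient metric for an asymptotically flat one alters nothing in the arguments. Once properness is secured via \cite{LN13}, the remaining verification is a routine transcription, and no new estimates specific to the hyperbolic infinity are needed.
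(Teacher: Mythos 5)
Your proposal matches the paper exactly: the paper gives no separate proof of this theorem, instead securing existence and properness via Lemma 3.1 of \cite{LN13} (after an arbitrary fill-in of $\partial M$) and then quoting each property directly from the cited locations in \cite{HI99}, precisely as you do. Your added remark that the Huisken--Ilmanen arguments are local and insensitive to the replacement of an asymptotically flat background by an ALH one is the correct (and implicitly assumed) justification.
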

We call $t_{0} \in [0,\infty)$ a ``jump time" if $\Sigma_{t_{0}} \neq \Sigma_{t_{0}}^{+}$. Intuitively, the reason that the quantity $Q(t)$ in \eqref{monotone_quantity} remains monotone under the weak flow is because $H_{\Sigma^{+}_{t_{0}} \setminus \Sigma_{t_{0}}} \equiv 0$ as a consequence of the outer-minimizing property, and so the first term in \eqref{monotone_quantity} decreases during a jump. On the other hand, the bulk integral $\int_{\Omega_{t}} V d\mbox{vol}$ clearly increases across jumps, while the area remains the same by item (v). The growth of the first term of \eqref{monotone_quantity} in static manifolds is already understood rigorously, c.f. \cite{W18}, \cite{M18}, \cite{HW24a}, and \cite{H24a}.

\begin{Lemma}[Growth of the Total Mean Curvature] \label{mc}
Let $(M^{3},g,V)$ be an ALH static system, and let $\Omega_{0} \subset (M^{3},g)$ be a smooth domain with $\partial \Omega_{0}= \partial_{1} M \bigsqcup \dots \bigsqcup \partial_{L} M \bigsqcup \Sigma_{0}$ for some smooth surface $\Sigma_{0}$ and some $L \leq J$. Taking fill-ins $W_{L+1}, \dots W_{J}$ of the boundary components $\partial_{L+1} M, \dots, \partial_{J} M$, consider the proper weak solution $u \in C^{0,1}_{\text{loc}}(M^{3})$ of IMCF with initial condition $\Omega_{0}$. Let $t_{0}= \sup\{ t | \Sigma_{t} \cap \left(\bigsqcup_{j=L+1}^{J} W_{j} \right) = \varnothing \}$. Then for any $0 < t_{1} < t_{2} \leq t_{0}$ we have
\begin{equation} \label{key_inequality}
    \int_{\Sigma_{t_{2}}} VH d\sigma \leq \int_{\Sigma_{t_{1}}} VH d\sigma + \int_{t_{1}}^{t_{2}} \int_{\Sigma_{s}} \left( \frac{1}{2} VH + 2 \frac{\partial V}{\partial \nu} \right) d\sigma ds .
\end{equation}
\end{Lemma}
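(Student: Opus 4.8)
The plan is to promote the pointwise differential inequality already obtained inside the proof of Proposition~\ref{monotonicity} to the weak flow, treating separately the intervals on which the flow evolves continuously and the jump times of Huisken--Ilmanen. Recall that the computation leading to \eqref{integral_VH}, combined with the static identity $\Delta_{\Sigma_t} V + H\frac{\partial V}{\partial \nu} + V\,\text{Ric}(\nu,\nu)=0$ and the surface pinching $|A|^2 \geq \tfrac12 H^2$, gives for a \emph{smooth} mean-convex IMCF the bound
\begin{equation*}
\frac{d}{dt}\int_{\Sigma_t} V H\, d\sigma \;\leq\; \int_{\Sigma_t}\left( \tfrac{1}{2} V H + 2\frac{\partial V}{\partial \nu} \right) d\sigma,
\end{equation*}
and integrating over $[t_1,t_2]$ reproduces \eqref{key_inequality} in the smooth category. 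Hence the entire content of the lemma is to justify this estimate for the weak solution $u$, whose level sets are only $C^{1,\alpha}$ with weak mean curvature $H=|\nabla u|$, and to show that the jumps can only help.

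First I would dispose of the jumps. At a jump time $\tau$ the surface $\Sigma_\tau = \partial\{u<\tau\}$ is replaced by its strictly outer-minimizing hull $\Sigma_\tau^+=\partial\{u>\tau\}$, as recorded in Theorem~\ref{properties}(i),(iv). By the minimizing-hull structure $\Sigma_\tau^+$ agrees with $\Sigma_\tau$ away from the jump region; the newly exposed portion $\Sigma_\tau^+ \setminus \Sigma_\tau$ is minimal, so $H\equiv 0$ there, while the discarded portion $\Sigma_\tau \setminus \Sigma_\tau^+$ lies in the interior, where $V>0$ and $H\geq 0$ by mean-convexity. Splitting the integral over the common part and the jump region therefore gives
\begin{equation*}
\int_{\Sigma_\tau^+} V H\, d\sigma \;=\; \int_{\Sigma_\tau^+ \cap \Sigma_\tau} V H\, d\sigma \;\leq\; \int_{\Sigma_\tau} V H\, d\sigma,
\end{equation*}
so $\int_{\Sigma_t} VH\,d\sigma$ does not increase across $\tau$. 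This is precisely the direction compatible with \eqref{key_inequality}, so the jump contributions are nonpositive and may be discarded.

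On the complementary set of times, where the flow runs continuously, I would recover the smooth inequality by approximation, following Huisken--Ilmanen \cite{HI99} and the schemes of \cite{W18} and \cite{H24}: one regularizes $u$ by smooth solutions of the elliptically regularized equation (equivalently, smooth IMCF of the associated graphs in a product), applies the smooth differential inequality to the approximating flows, and passes to the limit using the $C^{1,\alpha}$ convergence of Theorem~\ref{properties}(iv) and the area growth \eqref{area_growth}. Because $t_2 \leq t_0$, the flow never meets the fill-ins $W_{L+1},\dots,W_J$, so throughout $[t_1,t_2]$ the boundary terms involve only $\partial_1 M,\dots,\partial_L M$ and no additional horizon enters, keeping the divergence-theorem bookkeeping for $\int_{\Omega_t}\Delta V$ clean. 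Assembling the continuous increments with the nonpositive jump contributions then yields \eqref{key_inequality}.

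The main obstacle is exactly this limit passage. The total mean curvature is a second-order quantity in a flow that is only $C^{1,\alpha}$, so $\frac{d}{dt}\int_{\Sigma_t} VH$ has no classical meaning at a general time, and one must control $\int_{\Sigma_t} V H\, d\sigma$ under the weak convergence of the approximating surfaces without losing the inequality. The delicate point is to secure the correct semicontinuity of $\int VH$ across the approximation, so that the smooth estimate survives in the limit and the limiting jump structure contributes with the favorable sign identified above; this is where the regularity theory of \cite{HI99} and the mean-curvature control developed in \cite{H24} are essential.
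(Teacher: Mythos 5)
Your proposal follows essentially the same route as the paper: reduce to the smooth differential inequality $\frac{d}{dt}\int_{\Sigma_t}VH\,d\sigma \le \int_{\Sigma_t}(\tfrac12 VH + 2\tfrac{\partial V}{\partial\nu})\,d\sigma$ already derived in Proposition \ref{monotonicity}, observe that jumps only decrease $\int VH$ since the exposed portions are minimal, and promote the estimate to the weak flow via elliptic regularization. The paper discharges the limit passage you flag as ``the main obstacle'' by citing Proposition 2.2 and Theorem 2.3 of \cite{H24a} (whose proofs use only sub-staticity), together with the Lee--Neves subsolution to set up the regularized Dirichlet problem, so your deferral to the same circle of references is consistent with the paper's own argument.
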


\begin{proof}
Since $(M^{3},g)$ is asymptotically locally hyperbolic, there exists a sub-solution to IMCF with initial condition $\{ \rho=r\}$ for $r$ sufficiently large, see Lemma 3.1 in \cite{LN13}. Therefore, weak IMCF $u \in C^{0,1}_{\text{loc}}(M^{3})$ arises as the $C^{0}_{\text{loc}}$ limit of a subsequence of solutions $u_{\epsilon_{i}}$ to the regularized problem
\begin{eqnarray*}
\sqrt{|D u_{\epsilon}|^{2} + \epsilon^{2}} &=& \text{div} \left( \frac{D u_{\epsilon}}{\sqrt{|D u_{\epsilon}|^{2} + \epsilon^{2}}} \right) \hspace{2cm} \text{in} \hspace{1cm} F_{N}, \\
u &=& 0 \hspace{5.2cm} \text{on} \hspace{1cm} \Sigma_{0}, \\ 
u &=& N \hspace{5.1cm} \text{on} \hspace{1cm} \partial F_{N} \setminus \Sigma_{0},
\end{eqnarray*}
where $F_{N}= \{ v < N \}$ is defined via this sub-solution. Thus, we may directly apply Proposition 2.2 and Theorem 2.3 from \cite{H24a}, for example, as the proofs only use the sub-staticity condition

\begin{equation*}
V \text{Ric} - D^{2} V + (\Delta_g V) g \geq 0
\end{equation*}
for the potential function $V$.
\end{proof}

The Heintze-Karcher inequality for weak mean curvature accounts for the volumetric term $\int_{\Omega_{t}} V d\mbox{vol}$ under the weak IMCF.

\begin{Proposition}[Growth of the Bulk Integral] \label{bulk}
Let $(M^{3},g,V)$, $\Omega_{0}$, $u$, and $t_{0}$ be as in the previous lemma. Then for every $0 \leq t_{1} < t_{2} \leq t_{0}$, we have for the static potential $V$ that

\begin{equation} \label{volume_growth}
   \int_{\Omega_{t_{2}}} V d\mbox{vol}  \geq \int_{\Omega_{t_{1}}} V d\mbox{vol} + \int_{t_{1}}^{t_{2}} \left( \frac{3}{2} \int_{\Omega_{s}} V d\mbox{vol}  + \sum_{j=1}^{L} \frac{|\partial_{j} M|}{3|\partial_{j} M| + 2\pi \chi(\partial_{j} M)} \kappa_{j} |\partial_{j} M| \right) ds
\end{equation}
where $\kappa_{j}=\frac{\partial V}{\partial \nu}|_{\partial_{j} M}$.
\end{Proposition}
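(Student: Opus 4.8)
The plan is to reduce the statement to the weak Heintze--Karcher inequality of Theorem~\ref{brendle_weak} via the coarea formula, using the positivity of $V$ to discard the jump contributions. I would set $\Phi(t) = \int_{\Omega_t} V\, d\Omega$. Since $\Omega_t = \{u < t\}$ increases with $t$ and $V > 0$, the function $\Phi$ is finite and nondecreasing, and because $\Omega_{t_2} \setminus \Omega_{t_1} = \{t_1 \le u < t_2\}$ we have
\[ \Phi(t_2) - \Phi(t_1) = \int_{\{t_1 \le u < t_2\}} V\, d\Omega. \]
The only places where the weak flow departs from the smooth picture are the jump regions, i.e. the ``fat'' portions of the level sets on which $u$ is locally constant and $|\nabla u| = 0$; across these the bulk integral merely increases, which is the favorable direction for a lower bound.

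To make this precise I would decompose $\{t_1 \le u < t_2\} = A \sqcup B$ with $A = \{t_1 \le u < t_2,\ |\nabla u| > 0\}$ and $B = \{t_1 \le u < t_2,\ |\nabla u| = 0\}$. Since $V > 0$, dropping $B$ gives $\Phi(t_2) - \Phi(t_1) \ge \int_A V\, d\Omega$. Applying the coarea formula for the Lipschitz function $u$ to the nonnegative integrand $V|\nabla u|^{-1}$ restricted to $A$ then yields
\[ \int_A V\, d\Omega = \int_{t_1}^{t_2} \int_{\{u=s\} \cap \{|\nabla u| > 0\}} \frac{V}{|\nabla u|}\, d\mathcal{H}^{2}\, ds. \]
By Theorem~\ref{properties}(i)--(ii), for a.e.\ $s$ the level set $\Sigma_s = \partial\{u<s\}$ is a $C^{1,\alpha}$ hypersurface with weak mean curvature $H = |\nabla u| > 0$ holding $\mathcal{H}^2$-a.e.\ on $\Sigma_s$, and for such non-jump times $s$ (jump times being at most countable) the slice $\{u=s\}$ coincides with $\Sigma_s$ up to an $\mathcal{H}^2$-null set. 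Hence the inner integral equals $\int_{\Sigma_s} \tfrac{V}{H}\, d\sigma$ for a.e.\ $s$, and
\[ \Phi(t_2) - \Phi(t_1) \ge \int_{t_1}^{t_2} \int_{\Sigma_s} \frac{V}{H}\, d\sigma\, ds. \]

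It remains to bound the slice integral from below. For each $s \le t_0$ the definition of $t_0$ guarantees that $\Sigma_s$ has not reached any fill-in, so $\Omega_s$ is a domain with $\partial \Omega_s = \partial_1 M \sqcup \dots \sqcup \partial_L M \sqcup \Sigma_s$, with smooth horizon components and $C^{1,\alpha}$ outer boundary $\Sigma_s$ of bounded, nonnegative, integrable weak mean curvature. Thus Theorem~\ref{brendle_weak} applies and, using $\int_{\partial_j M} \tfrac{\partial V}{\partial \nu}\, d\sigma = \kappa_j |\partial_j M|$ together with the value of $c_j$ from \eqref{c_j2}, gives for a.e.\ $s \in (t_1, t_2)$
\[ \int_{\Sigma_s} \frac{V}{H}\, d\sigma \ge \frac{3}{2} \int_{\Omega_s} V\, d\Omega + \sum_{j=1}^L \frac{|\partial_j M|}{3|\partial_j M| + 2\pi \chi(\partial_j M)} \kappa_j |\partial_j M|. \]
Substituting this into the previous display yields exactly \eqref{volume_growth}.

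The hard part will be the rigorous handling of the jump regions and the coarea step, rather than any genuinely new estimate. One must confirm that the fat level sets carry nonnegative $V$-mass (immediate from $V > 0$) and that for a.e.\ $s$ the weak level set $\Sigma_s$ is regular enough, with $|\nabla u| > 0$ almost everywhere, to identify the coarea slices with the flow surfaces and to invoke Theorem~\ref{brendle_weak}; both are furnished directly by the a priori properties in Theorem~\ref{properties}. The essential structural point is that positivity of $V$ forces every jump contribution onto the correct side of the inequality, so the favorable direction of the bound is preserved throughout the weak flow.
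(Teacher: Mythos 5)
Your proposal is correct and follows essentially the same route as the paper: the authors likewise define the integrand to vanish on $\{\nabla u = 0\}$ (your set $B$), apply the Lipschitz coarea formula, identify $\{u=s\}$ with $\Sigma_s$ and $|\nabla u|$ with the weak mean curvature via Theorem \ref{properties}(i)--(ii) for a.e.\ $s$, and conclude with the weak Heintze--Karcher inequality of Theorem \ref{brendle_weak} using the $n=3$ constants from \eqref{c_j2}. No substantive difference.
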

\begin{proof}
Since $u \in C^{0,1}_{\text{loc}}(M^{3})$, $D u(x)$ is defined for 
 $\mathcal{H}^{3}$ a.e. $x \in M^{3} \setminus \Omega_{0}$. From this, we define the measurable function $h: M^{3} \setminus \Omega_{0} \rightarrow \mathbb{R}$ by

\begin{equation*}
    h(x) = \begin{cases} 0 & D u(x) =0, \\
   V(x)|D u|^{-1} & D u(x) \neq 0.
   \end{cases} 
\end{equation*}
Applying the co-area formula for Lipschitz functions from Theorem 2.7.3 and Remark 2.7.4 in \cite{S18}, we obtain

\begin{eqnarray*}
    \int_{\Omega_{t_{2}}} V d\mbox{vol} - \int_{\Omega_{t_{1}}} V d\mbox{vol}  \geq \int_{\Omega_{t_{2}} \setminus \Omega_{t_{1}}} h |D u| d\mbox{vol} = \int_{t_{1}}^{t_{2}} \int_{\Sigma_{s}} h d\sigma ds = \int_{t_{1}}^{t_{2}} \int_{\Sigma_{s}} \frac{V}{H} d\sigma ds.
\end{eqnarray*}
Here, we used the fact that $\Sigma_{t} = \Sigma^{+}_{t} = \{ u = t\}$ for a.e. $t \in (0,\infty)$. Likewise, the last equation follows from property (ii) in Theorem \ref{properties}. Now, applying the Heintze-Karcher inequality for weak mean curvature with  the $n=3$ constant $c_{j}$ from \eqref{c_j2}, we get

\begin{eqnarray*}
    \int_{\Omega_{t_{2}}} V d\mbox{vol} - \int_{\Omega_{t_{1}}} V d\mbox{vol} &\geq& \int_{t_{1}}^{t_{2}} \left( \frac{3}{2} \int_{\Omega_{s}} V d\mbox{vol} + \sum_{j=1}^{L} c_{j} \int_{\partial_{j} M} \frac{\partial V}{\partial \nu} \right) d\sigma ds \\
    &=& \int_{t_{1}}^{t_{2}} \left( \frac{3}{2} \int_{\Omega_{s}} V d\mbox{vol}  + \sum_{j=1}^{L} \frac{|\partial_{j} M|}{3|\partial_{j} M| + 2\pi \chi(\partial_{j} M)} \kappa_{j} |\partial_{j} M| \right) ds.
\end{eqnarray*}

\end{proof}

\begin{Theorem}
Let $(M^{3},g,V)$ be an ALH static system, and let $\Omega_{0} \subset (M^{3},g)$ be a smooth domain with
\begin{equation*}
    \partial \Omega_{0}=\partial_{1} M \bigsqcup \dots \bigsqcup \partial_{K} M \bigsqcup \dots \bigsqcup \partial_{L} M \bigsqcup \Sigma_{0}, \hspace{2cm} 1 \leq K \leq L \leq J.
\end{equation*}
for some outer-minimizing surface $\Sigma_{0}$. Suppose also that the components $\partial_{j} M$ have genus at least $1$ for $j \in \{ K+1, \dots, L \}$. Taking fill-ins, consider the proper weak solution $u \in C^{0,1}_{\text{loc}}(M^{3})$ of IMCF with initial condition $\Omega_{0}$. Then the quantity

\begin{equation*}
Q(t)= \left( |\Sigma_{t}| \right)^{-\frac{1}{2}} \left( \int_{\Sigma_{t}} VH d\sigma - 6 \int_{\Omega_{t}} V d\mbox{vol} + 4 \sum_{j=1}^{K} \frac{2\pi \chi(\partial_{j} M)}{3|\partial_{j} M| + 2\pi \chi(\partial_{j} M)} \kappa_{j} |\partial_{j} M| \right)
\end{equation*}
is monotonically non-increasing under weak IMCF with initial condition $\Omega_{0}$ for each $t \in [0,t_{0})$, where $t_{0}$ is defined in Lemma \ref{mc}.
\end{Theorem}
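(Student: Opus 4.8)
The plan is to derive the monotonicity of $Q$ entirely from the two integral estimates already in hand—Lemma \ref{mc} for the total mean curvature and Proposition \ref{bulk} for the bulk integral—so that the delicate behavior at jump times is fully absorbed into those lemmas and never revisited. Writing
$$F(t) = \int_{\Sigma_t} VH\,d\sigma - 6\int_{\Omega_t} V\,d\Omega + 4\sum_{j=1}^{K}\frac{2\pi\chi(\partial_j M)}{3|\partial_j M| + 2\pi\chi(\partial_j M)}\kappa_j|\partial_j M|,$$
so that $Q(t) = |\Sigma_t|^{-1/2}F(t)$ and the displayed sum is a constant independent of $t$, the task reduces to establishing the single integral inequality $F(t_2) - F(t_1) \le \tfrac12\int_{t_1}^{t_2}F(s)\,ds$ for all $0 < t_1 \le t_2 < t_0$, and then feeding this into an integrating-factor argument against the area growth $|\Sigma_t| = e^t|\Sigma_0|$.

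First I would record that $\Sigma_0 = \partial\Omega_0$ is a union of horizons, hence outer-minimizing by Proposition \ref{no area-minimizing in interior}, so Theorem \ref{properties}(v) gives $|\Sigma_t| = e^t|\Sigma_0|$ throughout $[0,t_0)$. Next, for a.e. $s \in (0,t_0)$ the surface $\Sigma_s = \Sigma_s^+$ is $C^{1,\alpha}$ with essentially bounded weak mean curvature by Theorem \ref{properties} (i) and (ii), and by the very definition of $t_0$ the finite-perimeter region $\Omega_s$ it bounds encloses exactly the horizons $\partial_1 M,\dots,\partial_L M$; the divergence theorem together with $\Delta V = 3V$ then gives $\int_{\Sigma_s}\tfrac{\partial V}{\partial\nu}\,d\sigma = 3\int_{\Omega_s}V\,d\Omega + \sum_{j=1}^{L}\kappa_j|\partial_j M|$. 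Substituting this into Lemma \ref{mc} and adding $-6$ times the estimate of Proposition \ref{bulk}, the constant horizon sum cancels in the difference $F(t_2)-F(t_1)$ and the two integrands combine, exactly as in the computation \eqref{Q_variation} of Proposition \ref{monotonicity}, into
$$\tfrac12\Big(\int_{\Sigma_s}VH\,d\sigma - 6\int_{\Omega_s}V\,d\Omega + 4\sum_{j=1}^{L}\frac{2\pi\chi(\partial_j M)}{3|\partial_j M| + 2\pi\chi(\partial_j M)}\kappa_j|\partial_j M|\Big) = \tfrac12 F(s) + 2\sum_{j=K+1}^{L}\frac{2\pi\chi(\partial_j M)}{3|\partial_j M| + 2\pi\chi(\partial_j M)}\kappa_j|\partial_j M|.$$

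It then remains to discard the exterior-horizon sum and conclude. Since each $\partial_j M$ with $j \in \{K+1,\dots,L\}$ has genus at least $1$, we have $\chi(\partial_j M) \le 0$, while the factor $\tfrac{\kappa_j|\partial_j M|}{3|\partial_j M| + 2\pi\chi(\partial_j M)}$ is strictly positive by Remark \ref{HK_constants}; hence every term of that sum is non-positive and the combined inequality collapses to $F(t_2) - F(t_1) \le \tfrac12\int_{t_1}^{t_2}F(s)\,ds$. Holding $t_1$ fixed and applying the integral form of Gronwall's inequality yields $F(t) \le F(t_1)e^{(t-t_1)/2}$ for every $t \in [t_1,t_0)$, which after division by $|\Sigma_t|^{1/2} = e^{t/2}|\Sigma_0|^{1/2}$ reads $Q(t) \le Q(t_1)$; as $t_1$ is arbitrary, $Q$ is non-increasing on $[0,t_0)$. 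I expect the genuine difficulty to lie not in this assembly but upstream, in the validity of the two growth formulas \emph{across} the jump times $\Sigma_t \neq \Sigma_t^+$, where portions of the flow surface are replaced by minimal pieces. This is precisely why Lemma \ref{mc} and Proposition \ref{bulk} are already phrased as integral inequalities over all of $[0,t_0)$: the former, via the static growth formula of \cite{H24a}, absorbs the downward jump of $\int_{\Sigma_t}VH\,d\sigma$, and the latter, via the co-area formula, absorbs the upward jump of $\int_{\Omega_t}V\,d\Omega$. Consequently the only thing to check at a jump is that no term of $F$ is spoiled, which is immediate since the constant horizon sum is untouched and the area is continuous by \eqref{area_growth}.
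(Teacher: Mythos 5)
Your proposal is correct and follows essentially the same route as the paper: combine Lemma \ref{mc} with $-6$ times Proposition \ref{bulk}, use the divergence theorem and the identity $2-6c_j = 2\cdot\frac{2\pi\chi(\partial_j M)}{3|\partial_j M|+2\pi\chi(\partial_j M)}$ to recognize the integrand as $\tfrac12 F(s)$ plus a non-positive contribution from the horizons $\partial_{K+1}M,\dots,\partial_L M$, then conclude by Gronwall and the area growth $|\Sigma_t| = e^t|\Sigma_0|$. The algebra and the treatment of the $t_1=0$ endpoint via outer-minimizing initial data both match the paper's argument.
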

\begin{proof}
For each $t \in (0,t_{0})$ we have that $\partial \Omega_{t}= \Sigma_{t} \bigsqcup \partial_{1} M \bigsqcup \dots \bigsqcup \partial_{K} M$. After applying the divergence theorem, \eqref{key_inequality} becomes

\begin{eqnarray*}
    \int_{\Sigma_{t_{2}}} VH d\sigma &\leq& \int_{\Sigma_{t_{1}}} VH d\sigma + \int_{t_{1}}^{t_{2}} \int_{\Sigma_{s}} \left( \frac{1}{2} VH + 2 \frac{\partial V}{\partial \nu} \right) d\sigma ds \\
    &=& \int_{\Sigma_{t_{1}}} VH d\sigma + \int_{t_{1}}^{t_{2}} \left(  \frac{1}{2} \int_{\Sigma_{s}} VH d\sigma + 6 \int_{\Omega_{s}} V d\mbox{vol}  + 2 \sum_{j=1}^{L} \kappa_{j} |\partial_{j} M| \right) ds.
\end{eqnarray*}
Combining this with the lower bound on $\int_{\Omega_{t}} V d\mbox{vol}$ from \eqref{volume_growth}, we find for the quantity

\begin{equation*}
    q(t)= \int_{\Sigma_{t}} VH d\sigma - 6 \int_{\Omega_{t}} V d\mbox{vol}  + 4 \sum_{j=1}^{K} \frac{2\pi \chi(\partial_{j} M)}{3|\partial_{j} M| + 2\pi \chi(\partial_{j} M)} \kappa_{j} |\partial_{j} M|
\end{equation*}
that

\begin{eqnarray} \label{q}
    q(t_{2}) - q(t_{1}) &\leq& \int_{t_{1}}^{t_{2}}\left( \frac{1}{2} \int_{\Sigma_{s}} V H d\sigma + 6 \int_{\Omega_{s}} V d\mbox{vol}   + 2 \sum_{j=1}^{L} \kappa_{j} |\partial_{j} M| \right) ds \nonumber \\
    & & - \int_{t_{1}}^{t_{2}} \left( 9 \int_{\Omega_{s}} V d\mbox{vol} + 6 \sum_{j=1}^{L} \frac{|\partial_{j} M|}{3|\partial_{j} M| + 2\pi \chi(\partial_{j} M)} \kappa_{j} |\partial_{j} M| \right) ds \nonumber \\
    &=& \int_{t_{1}}^{t_{2}} \left( \frac{1}{2} \int_{\Sigma_{s}} VH d\sigma -3 \int_{\Omega_{s}} V d\mbox{vol} + 2 \sum_{j=1}^{L} \frac{2\pi \chi(\partial_{j} M)}{3|\partial_{j} M| + 2\pi \chi(\partial_{j} M) } \kappa_{j}|\partial_{j}M| \right) ds.  \nonumber \\
    &\leq& \frac{1}{2} \int_{t_{1}}^{t_{2}} q(s) ds.
\end{eqnarray}
for any $0 < t_{1} < t_{2} \leq t_{0}$, where we have used $\chi(\partial_{j}M) \leq 0$ for $j \in \{ K+1, \cdots, L \}$. Therefore, Gronwall's Lemma applied to $Q(t)=|\Sigma_{t}|^{-\frac{1}{2}} q(t)$ that $Q(t_{2})= |\Sigma_{t_{2}}|^{-\frac{1}{2}} q(t_{2}) \leq Q(t_{1})$ for $t_{2} > t_{1} >0$. If $\Sigma_{0}$ is outer-minimizing, then by Theorem \ref{properties} (v) we may also take $t_{1}=0$.
\end{proof}
To conclude the section, we introduce artificially constructed jumps if the initial data does not enclose all horizons of $(M^{3},g)$ as in \cite{HI99}.  
\begin{Theorem}[Monotonicity for Weak IMCF with Obstacles]\label{monotonicity obstacle}
Let \\$(M^{3},g,V)$ be an ALH static system. Let $\Omega_{0} \subset (M^{3},g)$ be a domain such that

\begin{equation*}
    \partial \Omega_{0} = \partial_{1} M \bigsqcup \dots \bigsqcup \partial_{K} M \bigsqcup \Sigma, \hspace{2cm} 1 \leq K \leq J,
\end{equation*}
for some outer-minimizing surface $\Sigma$. Then there exists a flow of domains $\{ \Omega_{t} \}_{t \in (0,\infty)}$ in $(M^{3},g)$ satisfying the following:

\begin{enumerate}
    \item $\partial \Omega_{t} = \partial_{1} M \bigsqcup \dots \bigsqcup \partial_{L} M \bigsqcup \Sigma_{t}$ for $K \leq L \leq J$ and for some connected, outer-minimizing surface $\Sigma_{t}$.
    \item $\{ \Omega_{t} \}_{t \in [0,\infty)}$ solves weak IMCF except possibly at a finite number of times $t_{1}, t_{2}, \dots t_{M}$.
    \item For each of the times $t_{1}, \dots, t_{M}$, we have for 
    \begin{equation*}
     Q(t)= \left( |\Sigma_{t}| \right)^{-\frac{1}{2}} \left( \int_{\Sigma_{t}} VH d\sigma - 6 \int_{\Omega_{t}} V d\mbox{vol} + 4 \sum_{j=1}^{K} \frac{2\pi \chi(\partial_{j} M)}{3|\partial_{j} M| + 2\pi \chi(\partial_{j} M)} \kappa_{j} |\partial_{j} M| \right)
    \end{equation*}
    that $\liminf_{t \nearrow t_{i}} Q(t) \geq \limsup_{t \searrow t_{i}} Q(t)$.
\end{enumerate}
As a consequence, if the boundary components $\partial_{K+1} M, \dots, \partial_{J} M$ satisfy $\chi(\partial_{j} M) \leq 0$, then $Q(t)$ is monotonically non-increasing along this flow. 
\end{Theorem}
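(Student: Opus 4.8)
The plan is to construct the flow by the restart (``jumping over obstacles'') procedure of Huisken--Ilmanen \cite{HI99}, adapted to static spaces as in Wei \cite{W18}, and then to reduce the monotonicity of $Q$ to two independent facts: monotonicity on the open time-intervals between restarts, which is essentially the content of the preceding theorem, and the one-sided inequality at each restart asserted in item (3). I would begin with the proper weak IMCF $u \in C^{0,1}_{\text{loc}}(M^{3})$ with initial condition $\Omega_{0}$ produced by Lemma 3.1 of \cite{LN13}, after filling in the un-enclosed horizons $\partial_{K+1}M, \dots, \partial_{J}M$ by regions $W_{K+1}, \dots, W_{J}$. By Lemma \ref{mc} this is a genuine weak IMCF (with possibly its own interior jump times, which are already part of a weak solution) up to the first time $t_{1} := \sup\{\, t : \Sigma_{t} \cap (\bigsqcup_{j} W_{j}) = \varnothing \,\}$ at which a flow surface reaches one of the fill-ins.

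At $t_{1}$ I would restart: letting $\partial_{j_{1}}M$ be a horizon just reached, I replace $\Omega_{t_{1}}$ by the strictly minimizing hull $\Omega_{t_{1}}^{+}$ of $\Omega_{t_{1}}$ in the manifold obtained by deleting $W_{j_{1}}$, so that $\partial_{j_{1}}M$ becomes an interior boundary component and $\partial \Omega_{t_{1}}^{+} = \partial_{1}M \sqcup \dots \sqcup \partial_{K}M \sqcup \partial_{j_{1}}M \sqcup \Sigma_{t_{1}}^{+}$ for a connected outer-minimizing surface $\Sigma_{t_{1}}^{+}$, connectedness being preserved as in Theorem \ref{properties}(iii). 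I then continue the weak IMCF from $\Omega_{t_{1}}^{+}$ and iterate. Because each restart encloses at least one previously un-enclosed horizon and there are only finitely many horizons, the procedure terminates after $M \leq J - K$ restarts $t_{1} < \dots < t_{M}$, yielding a flow of domains satisfying items (1)--(2). On each open interval $(t_{i}, t_{i+1})$ the flow meets no further obstacle, so Lemma \ref{mc}, Proposition \ref{bulk} and the Gronwall argument of the preceding theorem apply verbatim and give $Q(t') \leq Q(t)$ for $t_{i} < t < t' < t_{i+1}$, provided $\chi(\partial_{j}M) \leq 0$ for every enclosed horizon not among $\partial_{1}M, \dots, \partial_{K}M$.

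It remains to establish item (3). Write $\Sigma^{\pm} = \Sigma_{t_{i}}^{\pm}$ and $\Omega^{\pm} = \Omega_{t_{i}}^{\pm}$ for the surfaces and domains immediately before and after a restart, and recall that $q(t) = \int_{\Sigma_{t}} VH\, d\sigma - 6\int_{\Omega_{t}} V\, d\Omega + 4\sum_{j=1}^{K} c_{j}' \kappa_{j} |\partial_{j}M|$ with $c_{j}' = \frac{2\pi \chi(\partial_{j}M)}{3|\partial_{j}M| + 2\pi \chi(\partial_{j}M)}$, so that $Q = |\Sigma_{t}|^{-\frac{1}{2}} q$. The third term is a fixed constant involving only the original horizons $\partial_{1}M, \dots, \partial_{K}M$, hence is unchanged across the restart; this is precisely why no sign condition on $\chi$ enters item (3). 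For the first term, $\Sigma^{+}$ coincides with $\Sigma^{-}$ away from the region where the minimizing hull bridges over $\partial_{j_{i}}M$, and on that region $\Sigma^{+}$ is minimal; since the deleted part of $\Sigma^{-}$ carries $H \geq 0$ and $V > 0$, one obtains $\int_{\Sigma^{+}} VH\, d\sigma \leq \int_{\Sigma^{-}} VH\, d\sigma$. For the second term, $\Omega^{-} \subseteq \Omega^{+}$ because $\Sigma^{+}$ encloses $\Sigma^{-}$ while the newly created hole bounded by $\partial_{j_{i}}M$ lay outside $\Omega^{-}$; as $V > 0$ this yields $-6\int_{\Omega^{+}} V\, d\Omega \leq -6\int_{\Omega^{-}} V\, d\Omega$. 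Hence $q^{+} \leq q^{-}$. Finally, the outer-minimizing property of both $\Sigma^{-}$ and the totally geodesic horizon $\partial_{j_{i}}M$ (Proposition \ref{no area-minimizing in interior}) forces the minimizing hull to preserve area, $|\Sigma^{+}| = |\Sigma^{-}|$, exactly as for the interior jumps of weak IMCF. Therefore $\limsup_{t \searrow t_{i}} Q(t) = |\Sigma^{+}|^{-\frac{1}{2}} q^{+} \leq |\Sigma^{-}|^{-\frac{1}{2}} q^{-} = \liminf_{t \nearrow t_{i}} Q(t)$, which is item (3). Combining this with the interval monotonicity above and the finiteness of the restart set gives the stated global monotonicity of $Q$ once $\chi(\partial_{j}M) \leq 0$ for $j \in \{K+1, \dots, J\}$.

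The main obstacle I anticipate is the rigorous bookkeeping at the restart, specifically justifying $|\Sigma^{+}| = |\Sigma^{-}|$ together with the decomposition of $\Sigma^{+}$ into the old surface and minimal bridging pieces in the presence of the horizon. This relies on the structure theory of minimizing hulls from \cite{HI99} (that the free boundary of a minimizing hull is minimal, and that enclosing an outer-minimizing, minimal obstacle does not change area) and on checking that the restarted flow is again proper with compact, connected level sets, so that Lemma \ref{mc} and Proposition \ref{bulk} remain applicable on the next interval.
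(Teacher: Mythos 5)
Your construction and the term-by-term comparison at restarts follow the same route as the paper (the Huisken--Ilmanen Section 6 restart procedure, with the three comparisons $\int_{\Sigma^{+}}VH\,d\sigma \leq \int_{\Sigma^{-}}VH\,d\sigma$ from minimality of the bridging pieces, $-\int_{\Omega^{+}}V\,d\Omega \leq -\int_{\Omega^{-}}V\,d\Omega$ from nesting, and the constancy of the horizon term), and the interval monotonicity between restarts is handled exactly as in the paper via Lemma \ref{mc}, Proposition \ref{bulk}, and Gronwall. However, your area claim $|\Sigma^{+}| = |\Sigma^{-}|$ at a restart is false, and the justification offered for it does not work. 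At a restart, $\Sigma^{+}$ is the outer boundary of the minimizing hull of $\Omega^{-}$ \emph{together with} the newly reached horizon $\partial_{j_i}M$; the hull must bulge outward to absorb the fill-in region behind $\partial_{j_i}M$, so the outer boundary area strictly increases in general (think of the hull of a large ball and a small ball tangent to it from outside). The analogy with the interior jumps of weak IMCF breaks down precisely because there one takes the minimizing hull of $\Omega_{t}$ alone, with no new obstacle, and area preservation comes from the exponential growth formula; the fact that $\partial_{j_i}M$ is totally geodesic and outer-minimizing does not rescue the equality, since $\Sigma^{+}$ is homologous to $\Sigma^{-}\sqcup\partial_{j_i}M$ rather than to $\Sigma^{-}$, and the two available bounds only give $|\Sigma^{-}| \leq |\Sigma^{+}| \leq |\Sigma^{-}| + |\partial_{j_i}M|$.

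What is true, and what the paper uses, is the one-sided inequality $|\Sigma^{+}| \geq |\Sigma^{-}|$, which follows from the outer-minimizing property of the weak IMCF level set $\Sigma^{-}$ (Theorem \ref{properties}(i)) applied to the larger competitor $\Omega^{+}$, and this is the direction needed: it gives $|\Sigma^{+}|^{-\frac{1}{2}} \leq |\Sigma^{-}|^{-\frac{1}{2}}$, which combined with $q^{+} \leq q^{-}$ yields $Q^{+} \leq Q^{-}$ (at least when the bracketed quantity is nonnegative, a sign point the paper also passes over). So the defect is local and repairable by replacing your equality with this inequality, but as written the step "the minimizing hull preserves area" is an incorrect assertion rather than a proof.
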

\begin{proof}
We apply the construction described in \cite{HI99}, Section 6, see also Section 4.2 of \cite{W17}. In particular, at the jump times $t_{i}$, $\Sigma_{t_{i}}$ is replaced by the minimizing hull $\Sigma_{t_{i}}'$ of $\Sigma_{t_{i}}$ and at least one of the horizons $\partial_{k+1} M, \dots, \partial_{L} M$. Note that $\Sigma_{t_{i}}'$ is connected and outer-minimizing and hence the resulting domain $\Omega_{t_{i}}$ satisfies item (1). Moreover, by the outer-minimizing properties $\Sigma_{t_{i}}$ of $\Sigma_{t_{i}}'$ and the nesting $\Omega_{t_{i}} \subset \Omega_{t_{i}}'$, we have

\begin{eqnarray*}
    \int_{\Sigma_{t_{i}}} VH d\sigma &\geq& \int_{\Sigma_{t_{i}}'} VH d\sigma, \\
    -\int_{\Omega_{t_{i}}} V d\mbox{vol} &\geq& - \int_{\Omega_{t_{i}}'} V d\mbox{vol}, \\
    |\Sigma_{t_{i}}|^{-\frac{1}{2}} &\geq& |\Sigma_{t_{i}}'|^{-\frac{1}{2}}.
\end{eqnarray*}
Taking the weak IMCF $\Sigma_{t}$ of $\Sigma_{t_{i}}'$, we obtain item (3) by the convergences described in Theorem \ref{properties} (iv). Finally, given that $\partial \Omega_{t}= \Sigma_{t} \bigsqcup \partial_{1} M \bigsqcup \dots \bigsqcup \partial_{K} M \dots \bigsqcup \partial_{L} M$, we have that $Q(t)$ is monotonically non-increasing on $(t_{i}, t_{i+1})$ given the topological assumption according to the previous theorem.
\end{proof}

\section{Regularity of IMCF in Asymptotically Locally Hyperbolic $3$-Manifolds}\label{section regularity}
Lee-Neves \cite[Section 3]{LN13} proved that the weak solutions to IMCF exist for all time in $n$-dimensional ALH\footnote{In this section, the static potential $V$ plays no role.} manifolds for $C^{2}$ initial data $\Sigma_{0}$. A consequence of their construction is that $\Sigma_t$ lies in the exterior region for sufficiently large $t$ and 
\begin{align}\label{C^0_rho}
c_1 e^{\frac{t}{n-1}} \le \rho \le c_2 e^{\frac{t}{n-1}}
\end{align}
on $\Sigma_t$ for some positive constants $c_1$ and $c_2$. All the estimates in this section will depend on $c_1$ and $c_2$.

The goal of this section is to show that weak solutions in 3-dimensional ALH manifold eventually become smooth. This is proved in Shi-Zhu \cite{SZ21} when $(\widehat\Sigma,\widehat g)$ is the unit sphere $S^2$ under stronger decay conditions for $q$.

Item (3) of Proposition \ref{no area-minimizing in interior} and assumption (T1) imply that assumptions (1), (2) of Theorem \ref{main_regularity} hold.

\begin{Theorem}\label{main_regularity}
Let $(M,g)$ be a 3-dimensional ALH manifold, namely that it satisfies \eqref{ah1} of Definition \ref{alh_static}. If $\widehat\Sigma \neq S^2$, assume in addition 
\begin{enumerate}
\item $\pl M = \cup_{j=1}^J \pl_j M$ consists of compact minimal surfaces and $M$ contains no other closed area-minimizing surfaces, AND
\item  The genus of some boundary component, say $\pl_1 M$, is greater or equal to the genus $\mathfrak{g}$ of $\widehat\Sigma$.
\end{enumerate}

  Let $\{ \Sigma_t\}_{t \ge 0}$ be a weak solution to IMCF starting from a smooth surface $\Sigma_0$ in $M$. Then $\Sigma_t$ is a smooth solution to \eqref{IMCF} for $t$ sufficiently large. 
\end{Theorem}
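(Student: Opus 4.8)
The plan is to follow the Lee--Neves scheme in three stages: first pin down the topology of the weak flow surfaces $\Sigma_t$ for large $t$, then extract enough curvature decay to force some $\Sigma_{t_i}$ to be star-shaped, and finally invoke the a priori theory for star-shaped IMCF to propagate smoothness forward in time. Throughout I would use the confinement estimate \eqref{C^0_rho}, which places $\Sigma_t$ in the exterior region with $\rho \sim e^{t/2}$, so that the ambient geometry along $\Sigma_t$ is uniformly close to the model warped product $\bar g$. The first step is to show that for $t$ large $\Sigma_t$ is connected with $\chi(\Sigma_t)=\chi(\widehat\Sigma)$. Connectedness and the outer-minimizing $C^{1,\alpha}$ regularity come from Theorem \ref{properties}(i),(iii). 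Since $M$ contains no closed area-minimizing surface in its interior (Proposition \ref{no area-minimizing in interior} and assumption (1)), the minimizing-hull replacements occurring at jump times are controlled, and Meeks--Simon--Yau bounds the genus of $\Sigma_t$; assumption (2) (equivalently (T1)) is what makes this genus bound compatible with the homology class of a coordinate slice $\widehat\Sigma\times\{\rho\}$, yielding $\chi(\Sigma_t)\le\chi(\widehat\Sigma)$, while the minimal-genus lower bound in that class forces equality for $t$ large.

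Next I would exploit the Geroch-type monotonicity of a Hawking mass along the weak flow. Combining Gauss--Bonnet, the Gauss equation, and the ALH curvature estimate $K_{\mathrm{amb}}=-1+O(e^{-kr})$ gives
\[ \tfrac14\int_{\Sigma_t}H^2\,d\sigma = 2\pi\chi(\Sigma_t) + |\Sigma_t| + \tfrac12\int_{\Sigma_t}|\mathring A|^2\,d\sigma + \int_{\Sigma_t}O(e^{-kr})\,d\sigma, \]
so that monotonicity of the mass, together with the now-fixed value of $\chi(\Sigma_t)$, bounds $\tfrac14\int_{\Sigma_t}H^2 - |\Sigma_t|$ and hence the total tracefree curvature $\int_{\Sigma_t}|\mathring A|^2$ uniformly in $t$. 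Dividing by the area $|\Sigma_t|=e^t|\Sigma_0|$ shows the normalized Willmore energy tends to $0$. The decisive new point is to upgrade this $L^2$ smallness to genuine $L^{2+\epsilon}$ decay of the (conformally rescaled) mean curvature along a sequence $t_i\to\infty$, interpolating the $L^2$ bound against the weak decay of $q$ in \eqref{AH_r} without assuming any additional decay of the scalar curvature. This upgrade is the crux of the argument.

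With the $L^{2+\epsilon}$ smallness in hand, I would pass to the conformally compactified metric $\hat g=\rho^{-2}g$, under which the coordinate slices are asymptotically minimal, the $\hat g$-areas converge to $w_2$, and the rescaled mean curvature is small in $L^{2+\epsilon}$ along $t_i$. Allard's regularity theorem then represents $\Sigma_{t_i}$ locally as $C^{1,\alpha}$ graphs of small norm over $\widehat\Sigma\times\{\rho\}$; connectedness and the fixed value $\chi(\Sigma_{t_i})=\chi(\widehat\Sigma)$ patch these local graphs into a global graph over $\widehat\Sigma$. In particular $\langle\pl_r,\nu\rangle$ is close to $1$, so $\Sigma_{t_i}$ is \emph{star-shaped} and strictly mean-convex.

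Finally, restarting the smooth flow \eqref{IMCF} from the star-shaped surface $\Sigma_{t_i}$, I would run the standard star-shaped a priori estimates, which work in all dimensions. The evolution equation of $\langle\pl_r,\nu\rangle$ from Proposition \ref{evo}, combined with the maximum principle and the confinement \eqref{C^0_rho}, keeps $\langle\pl_r,\nu\rangle$ bounded below, preserving star-shapedness and yielding a gradient estimate for the graph function; the usual curvature and higher-order parabolic estimates then produce a smooth solution of \eqref{IMCF} for all $t\ge t_i$. By uniqueness of weak solutions this smooth flow coincides with $\Sigma_t$, so $\Sigma_t$ is smooth for $t$ sufficiently large. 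Besides the $L^{2+\epsilon}$ upgrade, the subtlest bookkeeping is the topological control in the first step, since the genus must be tracked across the finitely many jumps supplied by Theorem \ref{monotonicity obstacle}.
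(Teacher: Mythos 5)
Your outline tracks the paper's strategy (Lee--Neves topology control, Hawking mass, Allard, then star-shaped a priori estimates), but it has a genuine gap at precisely the step you yourself flag as ``the crux,'' and the preparation you make for that step is not strong enough to close it. You only establish a \emph{uniform} bound on $\int_{\Sigma_t}|\mathring A|^2$ (and that the normalized Willmore energy tends to $0$), and then assert that this can be ``interpolated against the weak decay of $q$'' to give $L^{2+\epsilon}$ smallness of the conformal mean curvature. That interpolation fails without a rate: by \eqref{EC_comparison} and conformal invariance one gets $\int_{\tilde\Sigma_{t_i}}\tilde H^2\le 2\int_{\Sigma_{t_i}}|\mathring A|^2+O(e^{-t_i/4})$, and the only pointwise information available is $\tilde H=O(s^{-1})=O(e^{t/2})$ (from $H=s\tilde H+2\tilde\nu(s)$ and the uniform bound on $H$), so $\int\tilde H^{2+\epsilon}\le(\sup\tilde H)^{\epsilon}\int\tilde H^2\le Ce^{\epsilon t_i/2}\int_{\Sigma_{t_i}}|\mathring A|^2$. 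A bound, or even an unquantified $o(1)$, on $\int|\mathring A|^2$ cannot beat the factor $e^{\epsilon t_i/2}$. What is actually needed, and what the paper proves, is the \emph{exponential} decay $\int_{\Sigma_{t_i}}|\mathring A|^2=O(e^{(1-\alpha/2)t_i})$ with $\alpha>2$; this comes from combining the Hawking mass lower bound $m_H(\Sigma_t)\ge -O(e^{(3-\alpha)t/2})$ (Geroch monotonicity plus $\hat\chi\ge\chi(\Sigma_t)$ and the scalar curvature decay inherited from \eqref{AH_r}) with the identity expressing $m_H(\Sigma_{t_i})$ in terms of $-\int|\mathring A|^2$ once $\chi(\Sigma_{t_i})=\hat\chi$ is known from Proposition \ref{EC}. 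Only then does $\int\tilde H^{2+\epsilon}=O(e^{(1+\epsilon/2-\alpha/2)t_i})\to 0$ for $0<\epsilon<\alpha-2$, which is the hypothesis Allard's theorem needs.

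A second, smaller but still real, gap is in the final restart. The surface $\Sigma_{t_i}$ produced by the weak flow is only $C^{1,\alpha}$ with weak mean curvature that is nonnegative and essentially bounded; it need not be bounded below by a positive constant, so the equation \eqref{IMCF} is degenerate at the initial time and ``the usual curvature and higher-order parabolic estimates'' do not apply. The paper resolves this by mollifying with mean curvature flow (Theorem \ref{mcf_approximation}) to get smooth strictly mean-convex surfaces $\Sigma_{t_i,\tau}$, and then proving the instantaneous positivity estimate $H\ge C\min\{(t-T)^{1/2},1\}$ for smooth star-shaped IMCF via a Stampacchia-type iteration on $v=(t-t_0)^{1/2}\bigl(H\langle X,\nu\rangle^2\bigr)^{-1}$, an estimate that is uniform in the mollification parameter and therefore survives the limit $\tau\to 0$. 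Star-shapedness alone (Proposition \ref{starshape}) does not give you a positive lower bound on $H$, and without one the subsequent second fundamental form and Krylov estimates have no uniform parabolicity to work with. Your remaining steps (Meeks--Simon--Yau topology control, the use of Lemma \ref{L2graphical} together with Allard graphs to force $\langle\pl_r,\nu\rangle$ close to $1$, and the appeal to uniqueness of the weak flow) are consistent with the paper.
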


\begin{Remark}
Assumption (1) can be found in Lemma 3.3 of \cite{LN13}. Note the different meaning of their ``exterior region". Assumption (2) can be found in Theorem 1.1 of \cite{LN13}. They enable one to compare the Euler characteristic of $\widehat\Sigma$ and $\Sigma_t$ in \eqref{Lem3.3}. Note that \eqref{Lem3.3} automatically holds when $\widehat\Sigma = S^2$. 
\end{Remark}

\subsection{Weak Solutions in 3 Dimension become Star-Shaped  Eventually}\label{subsection star-shaped eventually}
We first recall the setup in Lee-Neves. Consider a conformal compactification $\tilde g = \rho^{-2} g$ of the exterior region of $(M,g)$. Set $s = \rho^{-1}$ and we have
\begin{align}\label{tilde g}
\tilde g = ds^2 + \widehat g + \tilde q
\end{align}
on $(0,s_1) \times \widehat \Sigma$ for sufficiently small $s_1$, where
\begin{align}\label{AH tilde g}
|\tilde q|_{\tilde g} + s |\tilde\na \tilde q|_{\tilde g} + s^2|\tilde\na^2 \tilde q|_{\tilde g} = O_2(s^2).
\end{align}
Note that the error term is $O(s^2)$ instead of $O(s^\alpha)$ because $\rho^{-1}$ is a ``rough" conformal factor. Equation \eqref{C^0_rho} translates into
\begin{align}\label{C^0_s}
c_1 e^{-t/2} \le s \le c_2 e^{-t/2}
\end{align}
for sufficiently large $t$ (with different $c_1$ and $c_2$). 

\begin{lem}\cite[Lemma 3.7]{LN13}
Let $\tilde\Sigma_t$ denote the surface $\Sigma_t$ endowed with the metric induced from $\tilde g$. Then the area of $\tilde\Sigma_t$ is uniformly bounded in time.
\end{lem}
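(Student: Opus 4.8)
The plan is to exploit the fact that inverse mean curvature flow produces \emph{exactly} exponential area growth with respect to $g$, and that this growth is precisely cancelled by the conformal weight defining $\tilde g$. Since $\tilde g = \rho^{-2} g$ is a conformal rescaling and $\Sigma_t$ is two-dimensional, the induced area forms are related pointwise by $d\tilde\sigma = \rho^{-2}\, d\sigma$ (note the exponent is $2$, matching the dimension of the surface, not the dimension of $M$). I would therefore begin by writing
\begin{equation*}
|\tilde\Sigma_t| = \int_{\Sigma_t} \rho^{-2}\, d\sigma.
\end{equation*}

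Next I would invoke the $C^{0}$ control on $\rho$ along the flow. For $t$ sufficiently large, $\Sigma_t$ lies in the exterior region and satisfies \eqref{C^0_rho} with $n=3$, so that $\rho \ge c_1 e^{t/2}$ holds pointwise on $\Sigma_t$, and hence $\rho^{-2} \le c_1^{-2} e^{-t}$ there. Combining this pointwise bound with the exact area growth $|\Sigma_t| = e^{t} |\Sigma_0^{+}|$ recorded in Theorem \ref{properties}(v) (equation \eqref{area_growth}) yields
\begin{equation*}
|\tilde\Sigma_t| = \int_{\Sigma_t} \rho^{-2}\, d\sigma \;\le\; c_1^{-2} e^{-t}\, |\Sigma_t| \;=\; c_1^{-2} e^{-t}\cdot e^{t}\, |\Sigma_0^{+}| \;=\; c_1^{-2}\, |\Sigma_0^{+}|,
\end{equation*}
a bound independent of $t$. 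For the remaining issue that \eqref{C^0_rho} is only guaranteed once $t \ge T$ for some fixed $T$, I would observe that on the compact interval $[0,T]$ the surfaces $\Sigma_t$ stay within a fixed compact region of $M$, on which $\tilde g$ is a smooth metric and the $\tilde g$-areas vary continuously and hence remain bounded. Taking the larger of the two bounds gives the uniform-in-time estimate.

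There is no substantial obstacle in this argument; its entire content is the observation that the conformal factor $\rho^{-2}\sim e^{-t}$ forced by \eqref{C^0_rho} exactly offsets the $e^{t}$ area growth of IMCF. The only points that genuinely require care are getting the conformal scaling of a two-dimensional area element right and making sure one uses the \emph{lower} bound on $\rho$ from \eqref{C^0_rho}, so that $\rho^{-2}$ is controlled from above.
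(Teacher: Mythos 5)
Your argument is correct and is essentially the standard one: the paper gives no proof of this lemma, deferring to \cite[Lemma 3.7]{LN13}, and the proof there is exactly the cancellation you describe, namely $d\tilde\sigma = \rho^{-2}d\sigma$ together with the pointwise lower bound $\rho \ge c_1 e^{t/2}$ from \eqref{C^0_rho} and the exact exponential area growth \eqref{area_growth}. The only cosmetic remark is that the statement is really only meaningful for $t$ large enough that $\Sigma_t$ lies in the exterior region where $\tilde g$ is defined, so your separate treatment of the initial compact time interval is not needed.
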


For the rest of this subsection, we omit the volume form $d\sigma$ of the surface when there is no confusion. We also remind the readers that the ambient metric $g$ satisfies \eqref{ah2} with $\alpha > 2$.
\begin{lem}\cite[Lemma 3.8]{LN13}\label{upperlower} $\int_{\Sigma_t} H^2-4 $ is bounded above and below.
\end{lem}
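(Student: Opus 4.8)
The quantity $\int_{\Sigma_t}(H^2-4)\,d\sigma$ is, up to a positive normalization and an additive constant, the negative of the Hawking mass of $\Sigma_t$ in the hyperbolic normalization, so the plan is to bound it from above by a Geroch-type monotonicity along the flow and from below by Gauss--Bonnet. For the smooth flow I would differentiate, use the evolution $\pl_t H = -\Delta_{\Sigma_t}\tfrac1H - (|A|^2 + \Ric(\nu,\nu))\tfrac1H$ together with $\pl_t(d\sigma)=d\sigma$, rewrite $\Ric(\nu,\nu)$ via the Gauss equation, and apply $\int_{\Sigma_t}K_{\Sigma_t}=2\pi\chi(\Sigma_t)$ to arrive at
\[
\frac{d}{dt}\int_{\Sigma_t}(H^2 - 4)\,d\sigma = -2\int_{\Sigma_t}\frac{|\na H|^2}{H^2}\,d\sigma - \int_{\Sigma_t}|\mathring A|^2\,d\sigma - \tfrac12\int_{\Sigma_t}(H^2-4)\,d\sigma - \int_{\Sigma_t}(R_g + 6)\,d\sigma + 4\pi\chi(\Sigma_t),
\]
where $\mathring A$ is the traceless second fundamental form. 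This is the differential form of the (weak) Geroch monotonicity of Huisken--Ilmanen \cite{HI99}.

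For the upper bound I discard the manifestly non-positive terms $-2\int|\na H|^2/H^2$ and $-\int|\mathring A|^2$. Since $\Sigma_t$ is connected by Theorem~\ref{properties}(iii), $\chi(\Sigma_t)\le 2$. Since $\Sigma_t$ lies in the exterior region for large $t$ with $\rho\sim e^{t/2}$ and $|\Sigma_t|\sim e^{t}$ by \eqref{C^0_rho} and the area growth, while $R_g + 6 = O(\rho^{-\alpha})$ there, the scalar-curvature term obeys $\left|\int_{\Sigma_t}(R_g+6)\right| = O(e^{(1-\alpha/2)t})=:\veps(t)$, which is integrable on $[0,\infty)$ because $\alpha>2$. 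Writing $\phi(t)=\int_{\Sigma_t}(H^2-4)$, these give $\phi'(t)\le -\tfrac12\phi(t) + 8\pi + \veps(t)$, so Gronwall applied to $\phi-16\pi$ yields $\phi(t)\le \max\{\phi(t_*),16\pi\} + \int_{t_*}^\infty\veps\,ds$ past the time $t_*$ at which $\Sigma_t$ enters the exterior region, a uniform upper bound (and in fact $\phi\to 16\pi$).

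For the lower bound I would not use the flow at all, but instead the Gauss equation $K_{\Sigma_t} = \overline K + \tfrac12(H^2 - |A|^2) = \overline K + \tfrac14 H^2 - \tfrac12|\mathring A|^2$, where $\overline K$ denotes the ambient sectional curvature of $T\Sigma_t$. Integrating and invoking Gauss--Bonnet gives
\[
\int_{\Sigma_t}(H^2 - 4)\,d\sigma = 8\pi\chi(\Sigma_t) - 4\int_{\Sigma_t}(\overline K + 1)\,d\sigma + 2\int_{\Sigma_t}|\mathring A|^2\,d\sigma \ge 8\pi\chi(\Sigma_t) - 4\int_{\Sigma_t}(\overline K + 1)\,d\sigma.
\]
In the exterior region $\overline K = -1 + O(\rho^{-\alpha})$, so $\int_{\Sigma_t}(\overline K + 1)\,d\sigma = O(e^{(1-\alpha/2)t})\to 0$, while the Euler-characteristic comparison \eqref{Lem3.3}, available precisely because of assumption (2) and Meeks--Simon--Yau, gives $\chi(\Sigma_t)\ge\chi(\widehat\Sigma)$. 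Hence $\int_{\Sigma_t}(H^2-4)$ is bounded below.

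The main obstacle is making both computations rigorous for a merely $C^{1,\alpha}$ weak flow, where $|A|$ and $\na H$ are not controlled a priori and jump times occur. This is exactly what Huisken--Ilmanen's weak formulation supplies: the differential inequality above holds in integrated form for the weak flow via elliptic regularization and lower semicontinuity, the outer-minimizing property guarantees that jumps cannot increase $\phi$ (the replacing piece is minimal, so $H\equiv 0$ there), and for almost every $t$ the surface $\Sigma_t$ has $H\in L^2$ by Theorem~\ref{properties}(ii), so that Gauss--Bonnet and the Gauss equation apply on $\Sigma_t$. The crucial structural point is that connectedness bounds $\chi(\Sigma_t)$ from above (feeding the upper estimate) while the topological input \eqref{Lem3.3} bounds it from below (feeding the lower estimate), so the two one-sided arguments close up.
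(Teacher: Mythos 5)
Your upper bound is correct but takes a different (and heavier) route than the paper's. The paper discards $-2|\na H|^2/H^2$, uses $H^2\le 2|A|^2$ to eliminate the $H^2-2|A|^2$ term, and is left with $\frac{d}{dt}\int_{\Sigma_t}(H^2-4)\le\int_{\Sigma_t}\lt(-2\Ric(\nu,\nu)-4\rt)=O(e^{(1-\alpha/2)t})$, which is integrable in $t$ since $\alpha>2$; no Gauss--Bonnet, no control of $\chi(\Sigma_t)$, and no Gronwall argument are needed, which makes passage to the weak flow lighter. Your version (full Geroch identity, $\chi(\Sigma_t)\le 2$ from connectedness, Gronwall) also works, though your parenthetical claim that $\phi\to 16\pi$ is unjustified and false unless $\widehat\Sigma=S^2$.

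The lower bound, however, has a genuine gap. You invoke \eqref{Lem3.3} as giving $\chi(\Sigma_t)\ge\chi(\widehat\Sigma)$, but \eqref{Lem3.3} is the \emph{reverse} inequality: assumption (2) together with Meeks--Simon--Yau forces the genus of $\Sigma_t$ to be at least that of $\widehat\Sigma$, i.e.\ $\chi(\Sigma_t)\le\chi(\widehat\Sigma)$. At this point in the argument there is no lower bound on $\chi(\Sigma_t)$ at all --- the genus of the weak level sets is a priori uncontrolled from above --- and the correct-direction statement $\chi(\Sigma_{t_i})\ge\chi(\widehat\Sigma)$ (Proposition \ref{EC}) is established only later, along a special subsequence, through a chain of lemmas whose first link is precisely the lemma you are proving; using it here would be circular. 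Consequently your identity $\int_{\Sigma_t}(H^2-4)=8\pi\chi(\Sigma_t)-4\int_{\Sigma_t}(\overline K+1)+2\int_{\Sigma_t}|\mathring A|^2$ does not close, since $8\pi\chi(\Sigma_t)$ could be arbitrarily negative for all you know. The paper (following Lee--Neves) instead exploits the conformal invariance of $\int_\Sigma\lt(2K+|\mathring A|^2\rt)$: passing to the compactification $\tilde g=\rho^{-2}g$, the problematic combination $8\pi\chi(\Sigma_t)+2\int_{\Sigma_t}|\mathring A|^2$ equals $\int_{\tilde\Sigma_t}\lt(\tilde H^2+2\tilde R-4\widetilde{\mathrm{Ric}}(\tilde\nu,\tilde\nu)\rt)\ge -C$, because $|\tilde\Sigma_t|$ is uniformly bounded by \cite[Lemma 3.7]{LN13} and the compactified curvature is bounded, so no separate control of the Euler characteristic is needed.
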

\begin{proof}
Our argument of the upper bound is different from Lee-Neves. For a smooth flow
\begin{align*}
\pl_t \int_{\Sigma_t} (H^2 -4) \,d\sigma &= \int_{\Sigma_t} -\Delta_\Sigma \frac{1}{H} \cdot 2H - 2|A|^2 - 2Ric(\nu,\nu) + H^2 -4 \\
&= \int_{\Sigma_t} -\frac{2}{H^2} |\na H|^2 + H^2 - 2|A|^2 -2 Ric(\nu,\nu) -4 \\
&\le \int_{\Sigma_t} -2 Ric(\nu,\nu) - 4. 
\end{align*}
The computation can be passed to the weak flow as in Section 5 of \cite{HI99}. Recalling $Ric(\nu,\nu) = -2 + O(e^{-\alpha r}) = -2 + O(e^{-\frac{\alpha}{2} t})$ and $ |\Sigma_t| = e^t |\Sigma_0|$, we get
\[ \int_{\Sigma_{t}} (H^2 -4) d\sigma \le  \int_{\Sigma_{t_0}} (H^2-4) d\sigma  + O(e^{1-\frac{\alpha}{2}t}). \]

The proof of lower bound proceeds as in Lee-Neves. Using the Gauss equation and conformal invariance of the integrals $\int_\Sigma |\mathring{A}|^2 + 2K$, we get
\begin{align*}
\int_{\Sigma_t} H^2 + 2R - 4 Ric(\nu,\nu) = \int_{\tilde\Sigma_t} \tilde H^2 + 2 \tilde R - 4 \widetilde{Ric}(\tilde\nu,\tilde\nu).
\end{align*}
Therefore, 
\begin{align}\label{(9) in LN13}
\int_{\Sigma_t} H^2 -4 = \int_{\Sigma_t} 4 Ric(\nu,\nu) + 8 - 2 (R+6) + \int_{\tilde\Sigma_t} \tilde H
^2 - C 
\end{align}
for some constant $C$ independent of $t$. Since the first integral on the right-hand side is $O(e^{(1-\frac{\alpha}{2})t})$, the lower bound follows.
\end{proof}

The next two lemma is exactly the same as in Lee-Neves.
\begin{lem}\cite[Lemma 3.9]{LN13}\label{L2graphical}
We have
\[ \int_{\tilde\Sigma_t} |\tilde\na^T s|^2 = O(e^{-t/2}). \]
\end{lem}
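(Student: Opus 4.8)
The plan is to read off the gradient integral from the weak first variation of area of $\tilde\Sigma_t$, tested against a well-chosen ambient vector field. Fix $t$ large enough that $\tilde\Sigma_t \subset (0,s_1)\times\widehat\Sigma$, and let $c$ be any constant with $\min_{\tilde\Sigma_t}s \le c \le \max_{\tilde\Sigma_t}s$. By \eqref{C^0_s} the oscillation of $s$ on $\tilde\Sigma_t$ is at most $(c_2-c_1)e^{-t/2}$, so $|s-c| = O(e^{-t/2})$ pointwise on $\tilde\Sigma_t$. I would then apply the divergence theorem on $\tilde\Sigma_t$ to the ambient field $X = (s-c)\,\tilde\na s$. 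Since $\text{div}_{\tilde\Sigma_t}X = |\tilde\na^T s|^2 + (s-c)\big(\tilde\Delta_{\tilde g}s - \tilde\na^2 s(\tilde\nu,\tilde\nu)\big)$, pairing with the weak mean curvature $\tilde H$ through $\int_{\tilde\Sigma_t}\text{div}_{\tilde\Sigma_t}X = -\int_{\tilde\Sigma_t}\tilde H\langle X,\tilde\nu\rangle$ yields the identity
\[
\int_{\tilde\Sigma_t}|\tilde\na^T s|^2 = -\int_{\tilde\Sigma_t}(s-c)\big(\tilde\Delta_{\tilde g}s - \tilde\na^2 s(\tilde\nu,\tilde\nu)\big) - \int_{\tilde\Sigma_t}\tilde H\,(s-c)\,\frac{\pl s}{\pl\tilde\nu}.
\]
Working with the weak mean curvature directly, rather than integrating $\tilde\Delta_{\tilde\Sigma_t}s$ by parts, is precisely what allows the argument to run on the merely $C^{1,\alpha}$ flow surfaces, which for a.e. $t$ carry bounded, integrable weak mean curvature by Theorem \ref{properties}(ii).

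Next I would estimate the two terms. For the exact product metric $ds^2 + \widehat g$ the function $s$ is a distance coordinate, so $\tilde\na^2 s = 0$ and $\tilde\Delta_{\tilde g}s = 0$; all corrections come from $\tilde q$. Since \eqref{AH tilde g} gives $|\tilde\na\tilde q|_{\tilde g} = O(s^{\alpha-1})$, the Christoffel symbols of $\tilde g$ differ from the product ones by $O(s^{\alpha-1})$, whence $\tilde\Delta_{\tilde g}s - \tilde\na^2 s(\tilde\nu,\tilde\nu) = O(s^{\alpha-1})$ and $|\pl s/\pl\tilde\nu| \le |\tilde\na s|_{\tilde g} = 1 + O(s^\alpha)$. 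On $\tilde\Sigma_t$ one has $s = O(e^{-t/2})$, so the first integral is bounded by $O(e^{-t/2})\cdot O(e^{-(\alpha-1)t/2})\cdot|\tilde\Sigma_t| = o(e^{-t/2})$, using the uniform area bound \cite[Lemma 3.7]{LN13}. This term is therefore negligible.

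The dominant contribution is the mean curvature term, which I would control by Cauchy--Schwarz:
\[
\Big|\int_{\tilde\Sigma_t}\tilde H\,(s-c)\,\frac{\pl s}{\pl\tilde\nu}\Big| \le \|s-c\|_{L^\infty(\tilde\Sigma_t)}\,\Big\|\frac{\pl s}{\pl\tilde\nu}\Big\|_{L^\infty(\tilde\Sigma_t)}\Big(\int_{\tilde\Sigma_t}\tilde H^2\Big)^{1/2} |\tilde\Sigma_t|^{1/2}.
\]
Here $\|s-c\|_{L^\infty(\tilde\Sigma_t)} = O(e^{-t/2})$, the normal-derivative factor is $1 + O(e^{-\alpha t/2}) = O(1)$, the area is uniformly bounded, and $\int_{\tilde\Sigma_t}\tilde H^2 = O(1)$: rearranging the conformal identity \eqref{(9) in LN13} expresses $\int_{\tilde\Sigma_t}\tilde H^2$ as $\int_{\Sigma_t}(H^2-4)$ plus terms of size $O(e^{(1-\frac\alpha2)t})$, and $\int_{\Sigma_t}(H^2-4)$ is bounded by Lemma \ref{upperlower}. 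Hence this term is $O(e^{-t/2})$, and combining it with the negligible term from the previous paragraph gives $\int_{\tilde\Sigma_t}|\tilde\na^T s|^2 = O(e^{-t/2})$.

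The genuinely substantive input is not the computation above but the uniform bound $\int_{\tilde\Sigma_t}\tilde H^2 = O(1)$; this is where the decay hypothesis $\alpha > 2$ and the conformal invariance of $\int_\Sigma(|\mathring A|^2 + 2K)$ enter, and it is already supplied by Lemma \ref{upperlower}. The only remaining point requiring care is the legitimacy of the divergence theorem on the weak flow surfaces: since for a.e. $t$ the surface $\tilde\Sigma_t$ is $C^{1,\alpha}$ with bounded and integrable weak mean curvature, the weak first-variation identity holds at those times, and one passes to all $t$ using the $C^{1,\alpha}$ convergence of Theorem \ref{properties}(iv).
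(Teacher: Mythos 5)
Your argument is correct and is essentially the proof the paper relies on: the paper gives no proof of its own but defers to \cite{LN13}, Lemma 3.9, whose argument is exactly this first-variation computation with the field $\approx s\,\tilde\na s$, the pointwise bound $s=O(e^{-t/2})$ from \eqref{C^0_s}, the uniform area bound, and the boundedness of $\int_{\tilde\Sigma_t}\tilde H^2$ coming from \eqref{(9) in LN13} and Lemma \ref{upperlower}. The shift by the constant $c$ is harmless but unnecessary, since $s$ itself is already $O(e^{-t/2})$ on $\tilde\Sigma_t$.
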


\begin{lem}\cite[Lemma 3.10]{LN13}\label{area}
\[  |\tilde\Sigma_t| = |\widehat\Sigma_t| + O(e^{-t/2}). \]
\end{lem}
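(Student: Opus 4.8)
The plan is to push both areas down to the common base $\widehat\Sigma$ via the vertical projection $\pi(s,\theta)=\theta$ of the collar $(0,s_1)\times\widehat\Sigma$, and to identify the area lost under $\pi$ with the integrated ``tilt'' of $\tilde\Sigma_t$ away from the horizontal slices $\{s=\mathrm{const}\}$. Since Lemma \ref{L2graphical} already bounds this tilt in $L^2$ by $O(e^{-t/2})$, and the metric perturbation $\tilde q$ is of strictly lower order on $\tilde\Sigma_t$, the comparison should follow almost immediately. Throughout I would use that $\tilde\Sigma_t$ sits in the collar with $s\le c_2 e^{-t/2}$ by \eqref{C^0_s}, together with the uniform area bound for $\tilde\Sigma_t$ (\cite[Lemma 3.7]{LN13}).

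First I would record the pointwise geometry of $\pi$. Writing $\tilde\nu$ for the $\tilde g$-unit normal of $\tilde\Sigma_t$ and decomposing the gradient of the coordinate function $s$ into tangential and normal parts gives $|\tilde\na^T s|^2 = |\tilde\na s|^2 - \langle \tilde\na s,\tilde\nu\rangle^2$. Because $\tilde g = ds^2 + \widehat g + \tilde q$ we have $\tilde\na s = \pl_s + O(\tilde q)$ and $|\tilde\na s| = 1 + O(\tilde q)$, so that $|\tilde\na^T s|^2 = 1 - \langle \tilde\nu,\pl_s\rangle^2 + O(\tilde q)$. The Jacobian of $\pi$ restricted to $\tilde\Sigma_t$, taking $d\mu_{\tilde g}$ to $d\mu_{\widehat g}$, is $J\pi = |\langle\tilde\nu,\pl_s\rangle| + O(\tilde q)$, whence by the area formula $|\widehat\Sigma_t| = \int_{\tilde\Sigma_t} J\pi\, d\mu_{\tilde g}$. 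The core estimate is then the elementary inequality $1 - |x| \le 1 - x^2$, valid for $|x|\le 1$: applied with $x = \langle\tilde\nu,\pl_s\rangle$ it yields $1 - J\pi \le |\tilde\na^T s|^2 + O(\tilde q)$ pointwise, and therefore
\[
\big||\tilde\Sigma_t| - |\widehat\Sigma_t|\big| = \Big|\int_{\tilde\Sigma_t}\big(1-J\pi\big)\,d\mu_{\tilde g}\Big| \le \int_{\tilde\Sigma_t}|\tilde\na^T s|^2\,d\mu_{\tilde g} + \int_{\tilde\Sigma_t}O(\tilde q)\,d\mu_{\tilde g}.
\]
The first term is $O(e^{-t/2})$ by Lemma \ref{L2graphical}. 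For the second, \eqref{AH tilde g} gives $\tilde q = O(s^\alpha)$ with $\alpha > 2$, so on $\tilde\Sigma_t$ one has $\tilde q = O(e^{-\alpha t/2})$ by \eqref{C^0_s}; combined with the uniform area bound this contributes $O(e^{-\alpha t/2}) = o(e^{-t/2})$. Adding the two completes the proof.

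The step requiring genuine care — and which I expect to be the only real obstacle — is the meaning of $\widehat\Sigma_t$ and the legitimacy of the identity $|\widehat\Sigma_t| = \int_{\tilde\Sigma_t} J\pi\,d\mu_{\tilde g}$. At this point $\tilde\Sigma_t$ is only known to be $L^2$-close to a horizontal slice through Lemma \ref{L2graphical}, not yet a genuine graph, so $\pi$ could a priori fold and fail to be injective. Reading $|\widehat\Sigma_t|$ as the projected area counted with multiplicity (equivalently $\int_{\tilde\Sigma_t}J\pi\,d\mu_{\tilde g}$) sidesteps this, since the inequality above then automatically absorbs any folding defect into the same tilt integral and no separate graphicality input is needed. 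If one instead interprets $\widehat\Sigma_t$ as the set-theoretic image (so that $|\widehat\Sigma_t|=w_2$ once $\pi$ is seen to have degree one), one must additionally rule out macroscopic overturned sheets of $\tilde\Sigma_t$ — the genuinely delicate point, because an overturned sheet can be nearly horizontal and hence invisible to $\int|\tilde\na^T s|^2$; this would be handled using the embeddedness and connectedness of $\tilde\Sigma_t$ (Theorem \ref{properties}(iii)) together with the uniform area bound. The remaining bookkeeping — that $|\langle\tilde\nu,\pl_s\rangle|\le 1 + O(\tilde q)$ pointwise and that all $O(\tilde q)$ contributions are of lower order — is routine given the $C^2$ decay in \eqref{AH tilde g}.
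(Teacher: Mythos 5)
The paper does not actually write out a proof of this lemma: it is quoted verbatim from \cite[Lemma 3.10]{LN13} (with $\widehat\Sigma_t$ meaning the cross-section $\widehat\Sigma$), and the mechanism there — which the paper reproduces when it proves the localized version used for Allard's theorem — is the divergence theorem applied to $\tilde\na s$ over the region between $\tilde\Sigma_t$ and $\{s=0\}$, yielding the \emph{signed} flux identity $\int_{\tilde\Sigma_t}\tilde\nu(s)\,d\mu = |\widehat\Sigma| + O(e^{-\alpha t/2})$, which is then combined with $\tilde\nu(s)^2+|\tilde\na^T s|^2 = 1+O(s^\alpha)$ and Lemma \ref{L2graphical}. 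Your projection/area-formula route diverges from this at exactly the step that carries the content. The unsigned projected area you introduce satisfies $\int_{\tilde\Sigma_t}J\pi\,d\mu = \int_{\widehat\Sigma}\#\,\pi^{-1}(\theta)\,d\theta$, which is only $\ge |\widehat\Sigma|$; so what your estimate actually proves is $|\tilde\Sigma_t| = \int_{\widehat\Sigma}\#\,\pi^{-1}(\theta)\,d\theta + O(e^{-t/2})$. That is weaker than the lemma and not sufficient for its later uses (the upper density ratio $|\tilde\Sigma_t\cap B|\le(1+\delta)\,|\widehat U|$ fed into Allard, and the Gauss--Bonnet comparison), all of which need the honest area $|\widehat\Sigma|=w_2$ on the right-hand side. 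The signed flux identity is precisely the device that pins the constant to $|\widehat\Sigma|$ by homology/degree, independently of folding, and it does not appear anywhere in your argument.

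The genuine gap is therefore the multiplicity-one statement, which you correctly flag as the delicate point but then dismiss with ``embeddedness and connectedness of $\tilde\Sigma_t$ together with the uniform area bound.'' Those three facts do not rule out overturned sheets of definite area: a connected embedded surface homologous to $\widehat\Sigma$ can carry a nearly horizontal fold (a mushroom cap) covering a fixed fraction of $\widehat\Sigma$ with multiplicity three, while keeping $|\tilde\Sigma_t|$ uniformly bounded and $\int_{\tilde\Sigma_t}|\tilde\na^T s|^2$ arbitrarily small, because only the turning region — which can have tiny measure — sees a large tangential gradient. (The same obstruction shows that the three integral facts ``flux $=|\widehat\Sigma|$'', ``$\int(1-\tilde\nu(s)^2)=O(e^{-t/2})$'', ``$|\tilde\nu(s)|\le 1$'' alone cannot yield the upper bound $|\tilde\Sigma_t|\le|\widehat\Sigma|+O(e^{-t/2})$: take $\tilde\nu(s)\equiv 1$ on most of the surface and $\equiv-1$ on a small but fixed-area piece.) What actually excludes such folds for IMCF level sets is additional geometric input — most naturally the outer-minimizing property of $\Sigma_t$ from Theorem \ref{properties}(i), since filling in the gap beneath an overhang produces an area-decreasing competitor once the collar is thin — and this input is absent from your proposal. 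To repair the argument you should both (i) import the divergence-theorem flux identity so that $|\widehat\Sigma|$ enters as a degree, and (ii) supply a genuine argument (via outer-minimization, not connectedness) that the overturned portion contributes $O(e^{-t/2})$.
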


\begin{lem}\cite[Lemma 3.11]{LN13}\label{seq of surface}
There exists a sequence of times $t_i$ such that
\[ \lim_{i \rw \infty} \int_{\Sigma_{t_i}} |\mathring A|^2 =0. \]
\end{lem}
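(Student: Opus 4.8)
The plan is to derive an $L^2$-in-time bound on the trace-free second fundamental form from the Geroch-type monotonicity already computed in the proof of Lemma \ref{upperlower}, and then extract a good sequence of times. First I would revisit that computation, which for a smooth flow gives
\[
\pl_t \int_{\Sigma_t} (H^2-4)\,d\mu = \int_{\Sigma_t} -\frac{2}{H^2}|\na H|^2 + H^2 - 2|A|^2 - 2\Ric(\nu,\nu) - 4.
\]
Using the two-dimensional identity $|A|^2 = |\mathring A|^2 + \tfrac12 H^2$, the term $H^2 - 2|A|^2$ collapses to $-2|\mathring A|^2$. Combined with $\Ric(\nu,\nu) = -2 + O(e^{-\frac{\alpha}{2}t})$ on $\Sigma_t$, the remaining terms $-2\Ric(\nu,\nu) - 4$ reduce to a pointwise error of size $O(e^{-\frac{\alpha}{2}t})$. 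Discarding the manifestly nonpositive gradient term and integrating the error over $\Sigma_t$ (whose area is $e^t|\Sigma_0|$) yields
\[
\pl_t \int_{\Sigma_t} (H^2-4)\,d\mu \le -2 \int_{\Sigma_t} |\mathring A|^2\,d\mu + O(e^{(1-\frac{\alpha}{2})t}),
\]
and since $\alpha > 2$ the error is integrable in $t$.

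Second, I would pass this differential inequality to the weak flow exactly as was done for the upper bound in Lemma \ref{upperlower}, via Section 5 of \cite{HI99}. The point is that the quantity $\int_{\Sigma_t}(H^2-4)\,d\mu$ does not increase across a jump time: the portion of $\Sigma_t^+$ that differs from $\Sigma_t$ is minimal, so it contributes nothing to $\int H^2$, while the enclosed area is preserved by Theorem \ref{properties}(v). Integrating from a fixed large $t_0$ then gives
\[
2\int_{t_0}^{t} \int_{\Sigma_s} |\mathring A|^2\,d\mu\,ds \le \int_{\Sigma_{t_0}} (H^2-4)\,d\mu - \int_{\Sigma_{t}} (H^2-4)\,d\mu + C,
\]
where $C$ bounds the time-integral of the error term.

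Third, the lower bound from Lemma \ref{upperlower} controls $-\int_{\Sigma_t}(H^2-4)\,d\mu$ uniformly in $t$, so the right-hand side above is bounded independently of $t$. Letting $t \rw \infty$ gives $\int_{t_0}^\infty \int_{\Sigma_s} |\mathring A|^2\,d\mu\,ds < \infty$. Hence $\liminf_{t \rw \infty} \int_{\Sigma_t} |\mathring A|^2\,d\mu = 0$, since otherwise the time-integral would diverge, and selecting times $t_i \rw \infty$ that realize this liminf produces the required sequence.

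The step I expect to be the main obstacle is the rigorous treatment at the jump times: one must verify not merely that the bare quantity is monotone but that the dissipation inequality carrying the $-2\int|\mathring A|^2$ term survives the weak formulation, including the correct sign of the minimal-surface contributions to $\int(H^2-4)$. This is precisely the mechanism used for the upper bound in Lemma \ref{upperlower}, so the difficulty should be bookkeeping within the Huisken-Ilmanen framework rather than a genuinely new estimate.
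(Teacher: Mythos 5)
Your proposal is correct and follows essentially the same route as the paper: the paper invokes the integrated Geroch monotonicity (5.22) of \cite{HI99} for the weak flow together with the lower bound from Lemma \ref{upperlower} to conclude that $\int_\xi^\eta \int_{\Sigma_t} 2|\mathring A|^2\,dt$ stays bounded as $\eta \rw \infty$, which is exactly the dissipation estimate you derive from the smooth evolution of $\int_{\Sigma_t}(H^2-4)$ and pass to the weak flow. The only cosmetic difference is that you re-derive the jump-time argument that the paper simply cites, and you phrase the conclusion via integrability in time rather than via the impossibility of a uniform positive lower bound; both yield the desired sequence $t_i$.
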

\begin{proof}
Fix $\xi$. As in Lee-Neves, (5.22) of \cite{HI99} and Lemma \ref{upperlower} imply
\begin{align*}
\int_{\Sigma_\xi} H^2 -4 &\ge \int_{\Sigma_\eta} H^2-4 + \int_\xi^\eta \int_{\Sigma_t} 2|\mathring A|^2 + 2 (Ric(\nu,\nu) + 2) \\
&\ge -C + \int_\xi^\eta \lt( \int_{\Sigma_t} 2 |\mathring A|^2 \rt) dt + \int_\xi^\eta O(e^{(1-\frac{\alpha}{2})t }) dt
\end{align*}
as $\eta \rw \infty$. Consequently $\int_{\Sigma_t} |\mathring A|^2$ cannot be bounded from below by any positive constant for all $t$.
\end{proof}

In Lee-Neves, the last three lemmas are used to  control the Euler characteristic of $\Sigma_t$ in the next proposition. In the analysis of Hawking mass, which behaves well under the weak solution, this topological information suffices. Nevertheless, we will see momentarily the last three lemmas are pivotal in the regularity of IMCF.

\begin{prop}\cite[Lemma 3.12]{LN13}\label{EC}
Using the same sequence as in Lemma \ref{seq of surface}, we have $\chi(\Sigma_{t_i}) \ge \chi(\widehat\Sigma)$ for sufficiently large $i$. 
\end{prop}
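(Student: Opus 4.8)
The plan is to run the Lee--Neves Euler-characteristic comparison, but carried out entirely in the conformally compactified metric $\tilde g = \rho^{-2} g$, where the near-slice structure at $s = 0$ makes the topological comparison transparent. Since $\tilde\Sigma_{t_i}$ is the same underlying surface as $\Sigma_{t_i}$, its Euler characteristic is unchanged, so Gauss--Bonnet gives $2\pi\chi(\Sigma_{t_i}) = \int_{\tilde\Sigma_{t_i}}\tilde K\,d\tilde\mu$, where $\tilde K$ is the Gauss curvature of the metric induced by $\tilde g$. Applying the Gauss equation in $\tilde g$,
\[
2\pi\chi(\Sigma_{t_i}) = \int_{\tilde\Sigma_{t_i}}\Big(\tfrac14\tilde H^2 - \tfrac12|\mathring{\tilde A}|^2 + \overline K(T\tilde\Sigma_{t_i})\Big)\,d\tilde\mu,
\]
where $\overline K(T\tilde\Sigma_{t_i})$ denotes the $\tilde g$--sectional curvature of the tangent plane. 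Since $\tfrac14\tilde H^2 \geq 0$, it suffices to show that the remaining two terms converge, along the sequence $t_i$, to $2\pi\chi(\widehat\Sigma)$.

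First I would dispatch the umbilicity term. Because $|\mathring A|^2\,d\mu$ is pointwise conformally invariant for a surface in a $3$-manifold, one has $\int_{\tilde\Sigma_{t_i}}|\mathring{\tilde A}|^2\,d\tilde\mu = \int_{\Sigma_{t_i}}|\mathring A|^2\,d\mu \to 0$ by Lemma \ref{seq of surface}. Next, for the curvature term I would compare $\overline K(T\tilde\Sigma_{t_i})$ with the Gauss curvature $\widehat k$ of $(\widehat\Sigma,\widehat g)$: in the product model $ds^2 + \widehat g$ a slice $\{s = \text{const}\}$ has a tangent plane of sectional curvature exactly $\widehat k$. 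The decay \eqref{AH tilde g} of $\tilde q = O_2(s^\alpha)$ with $\alpha > 2$ bounds the difference of the two ambient curvatures by $O(s^{\alpha-2})$, while the deviation of $T\tilde\Sigma_{t_i}$ from horizontal contributes an error linear in $|\tilde\na^T s|$. Using $s \le c_2 e^{-t/2}$ from \eqref{C^0_s}, the area bound of Lemma \ref{area}, and Cauchy--Schwarz together with Lemma \ref{L2graphical}, both error integrals tend to $0$. Hence
\[
\int_{\tilde\Sigma_{t_i}}\overline K(T\tilde\Sigma_{t_i})\,d\tilde\mu = \widehat k\,|\tilde\Sigma_{t_i}| + o(1) \longrightarrow \widehat k\, w_2 = 2\pi\chi(\widehat\Sigma),
\]
where the area convergence $|\tilde\Sigma_{t_i}| \to w_2$ is Lemma \ref{area} and the last equality is Gauss--Bonnet for the closed surface $(\widehat\Sigma,\widehat g)$, valid in each of the cases $\widehat k \in \{-1,0,1\}$.

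Combining these, $2\pi\chi(\Sigma_{t_i}) \ge -\tfrac12\int_{\Sigma_{t_i}}|\mathring A|^2 + 2\pi\chi(\widehat\Sigma) + o(1) = 2\pi\chi(\widehat\Sigma)+o(1)$, so $\chi(\Sigma_{t_i}) \ge \chi(\widehat\Sigma) + o(1)$. Since the Euler characteristic is integer-valued, this forces $\chi(\Sigma_{t_i})\ge\chi(\widehat\Sigma)$ for all sufficiently large $i$, which is the claim. Note that the full strength of the bound $\int_{\tilde\Sigma_{t_i}}\tilde H^2 \ge 0$ is never needed beyond its sign, so no delicate evaluation of the Willmore-type term is required.

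The main obstacle I anticipate is the curvature-comparison step, namely showing $\int_{\tilde\Sigma_{t_i}}\overline K(T\tilde\Sigma_{t_i})\,d\tilde\mu \to 2\pi\chi(\widehat\Sigma)$ rigorously. This requires turning the $C^2$ metric decay \eqref{AH tilde g} into a pointwise bound on the ambient sectional curvature of a plane that is itself only \emph{close to horizontal in an $L^2$ sense}, so the tilt error has to be absorbed by Cauchy--Schwarz against the bounded area rather than controlled pointwise. A secondary technical point is regularity: the weak-flow surfaces $\tilde\Sigma_{t_i}$ are a priori only $C^{1,\alpha}\cap W^{2,p}$, so Gauss--Bonnet and the Gauss equation must be invoked in their $W^{2,2}$ (integral) form, and the sequence $t_i$ should be chosen among times at which $\Sigma_{t_i}$ enjoys this regularity with $H = |\nabla u|$ and all relevant integrals finite, as provided by Theorem \ref{properties}.
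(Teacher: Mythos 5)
Your proposal is correct and follows essentially the same route as the paper: the paper's proof simply cites the Lee--Neves inequality \eqref{EC_comparison}, which is obtained exactly by your combination of Gauss--Bonnet and the Gauss equation in the compactified metric, conformal invariance of $\int|\mathring A|^2$, discarding $\int\tilde H^2\ge 0$, and controlling the ambient curvature comparison via \eqref{AH tilde g}, Lemma \ref{L2graphical}, and the area bound (your Cauchy--Schwarz error is the paper's $O(e^{-t/4})$ term). The concluding appeal to integrality of $\chi$ together with Lemma \ref{seq of surface} matches the paper's "the assertion follows from the previous lemma."
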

\begin{proof}
As in Lee-Neves, we have
\begin{align}\label{EC_comparison}
8\pi \chi(\tilde\Sigma_t) + \int_{\tilde\Sigma_t} 2 |\mathring{\tilde A}|^2 \ge 8\pi \chi(\widehat\Sigma) + O(e^{-t/4}) +  \int_{\tilde\Sigma_t} \tilde H^2. 
\end{align}
The assertion follows from the previous lemma.
\end{proof}
Next we follow Lemma 2.1 of Shi-Zhu \cite{SZ21} to prove exponential decay of $\int_{\Sigma_t} |\mathring A|^2$ using the lower bound of Hawking mass. 
\begin{lem}
Let $\Sigma_{t_i}$ be the sequence of surfaces as in Proposition \ref{EC}. Then
\begin{align*}
\int_{\Sigma_{t_i}} |\mathring A|^2 = O(e^{(1-\frac{\alpha}{2})t_i}).
\end{align*}
\end{lem}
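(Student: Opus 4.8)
The plan is to upgrade the qualitative statement $\int_{\Sigma_{t_i}}|\mathring A|^2 \to 0$ to the quantitative rate $O(e^{(1-\frac{\alpha}{2})t_i})$ by combining the Gauss equation with a one-sided bound on the Hawking mass. Since a larger decay exponent only sharpens the estimates below, I may assume $\alpha \in (2,3]$, so that $e^{-t/2} = O(e^{(1-\frac{\alpha}{2})t})$. First I record a purely algebraic identity. The Gauss equation for $\Sigma_t \subset (M^3,g)$ reads $2K = R - 2Ric(\nu,\nu) + \tfrac12 H^2 - |\mathring A|^2$; integrating and invoking Gauss-Bonnet $\int_{\Sigma_t}2K\,d\mu = 4\pi\chi(\Sigma_t)$ gives $\int_{\Sigma_t}|\mathring A|^2 = \int_{\Sigma_t}(R - 2Ric(\nu,\nu) + \tfrac12 H^2)\,d\mu - 4\pi\chi(\Sigma_t)$. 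The ALH asymptotics give $R = -6 + O(e^{-\alpha r})$ and $Ric(\nu,\nu) = -2 + O(e^{-\alpha r})$, so that $R - 2Ric(\nu,\nu) + \tfrac12 H^2 = \tfrac12(H^2-4) + O(e^{-\alpha r})$; since $r = \tfrac{t}{2} + O(1)$ and $|\Sigma_t| = e^t|\Sigma_0|$, the error integrates to $O(e^{(1-\frac{\alpha}{2})t})$. Hence
\[ \int_{\Sigma_t}|\mathring A|^2\,d\mu = \tfrac12\int_{\Sigma_t}(H^2-4)\,d\mu - 4\pi\chi(\Sigma_t) + O(e^{(1-\frac{\alpha}{2})t}). \qquad (\star) \]

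The decisive step is an upper bound for $\int_{\Sigma_t}(H^2-4)$. For this I would use the genus-adapted Hawking mass
\[ m_H(t) = \frac{|\Sigma_t|^{1/2}}{(16\pi)^{3/2}}\Big(8\pi\chi(\widehat\Sigma) - \int_{\Sigma_t}(H^2-4)\,d\mu\Big). \]
Following the analysis of Lee-Neves and Shi-Zhu, the Geroch computation together with $R + 6 = O(e^{-\alpha r})$ shows that, for the smooth flow,
\[ \frac{d}{dt}m_H = \frac{|\Sigma_t|^{1/2}}{(16\pi)^{3/2}}\Big(4\pi\big(\chi(\widehat\Sigma)-\chi(\Sigma_t)\big) + 2\int_{\Sigma_t}\frac{|\na H|^2}{H^2} + \int_{\Sigma_t}|\mathring A|^2 + \int_{\Sigma_t}(R+6)\Big), \]
which is non-negative up to the error $\int_{\Sigma_t}(R+6) = O(e^{(1-\frac{\alpha}{2})t})$ once $\chi(\Sigma_t) \le \chi(\widehat\Sigma)$; the passage across jump times is as in Section 5 of \cite{HI99}. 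Monotonicity then bounds $m_H$ from below, and since $|\Sigma_t|^{1/2} = e^{t/2}|\Sigma_0|^{1/2}\to\infty$ this translates (the exponents $-\tfrac12$ and $1-\tfrac{\alpha}{2}$ combining) into $\int_{\Sigma_t}(H^2-4) \le 8\pi\chi(\widehat\Sigma) + O(e^{(1-\frac{\alpha}{2})t})$.

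Finally I would evaluate $(\star)$ along the sequence $t_i$ of Proposition \ref{EC}, where $\chi(\Sigma_{t_i}) \ge \chi(\widehat\Sigma)$. Inserting the upper bound on $\int(H^2-4)$ and the inequality $-4\pi\chi(\Sigma_{t_i}) \le -4\pi\chi(\widehat\Sigma)$ gives $0 \le \int_{\Sigma_{t_i}}|\mathring A|^2 \le 4\pi\chi(\widehat\Sigma) - 4\pi\chi(\widehat\Sigma) + O(e^{(1-\frac{\alpha}{2})t_i}) = O(e^{(1-\frac{\alpha}{2})t_i})$, which is the assertion; note that the non-negativity of $\int|\mathring A|^2$ supplies the matching lower bound for free. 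The main obstacle is the monotonicity step: the sharp reference constant $8\pi\chi(\widehat\Sigma)$ (rather than $16\pi$) is essential to produce a bound matching the $4\pi\chi(\widehat\Sigma)$ on the right of $(\star)$, but its monotonicity requires $\chi(\Sigma_t) \le \chi(\widehat\Sigma)$ throughout the weak flow. This is automatic for $\widehat\Sigma = S^2$, whereas for higher genus it is exactly where hypotheses (1)-(2) of Theorem \ref{main_regularity} and the Meeks-Simon-Yau theorem must be invoked to control the genus of the flowing surfaces, including through the jump times.
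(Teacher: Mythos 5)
Your proof is correct and follows essentially the same route as the paper's: the genus-adapted Hawking mass with reference constant $8\pi\chi(\widehat\Sigma)$, its Geroch-type monotonicity up to the $\int_{\Sigma_t}(R+6)$ error, and the combination of $\chi(\Sigma_{t_i})\ge\chi(\widehat\Sigma)$ from Proposition \ref{EC} with $\chi(\Sigma_t)\le\chi(\widehat\Sigma)$ from Lee--Neves, all fed into the integrated Gauss equation. The only cosmetic difference is that you first extract an intermediate upper bound on $\int_{\Sigma_t}(H^2-4)$ and then substitute, whereas the paper substitutes the Gauss equation directly into the Hawking mass along the sequence $t_i$.
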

\begin{proof}
Define the Hawking mass
\begin{align}
m_H(\Sigma) := \sqrt{\frac{|\Sigma|}{16\pi}} \lt( 1 - \mathfrak{g} - \frac{1}{16\pi} \int_\Sigma H^2-4 \rt)
\end{align}
where $\mathfrak{g}$ is the genus of $\widehat\Sigma$. Fix $\xi \ge 0$. By Theorem 3.2 of \cite{LN13}, for $\eta > \xi$
\begin{multline*}
m_H(\Sigma_\eta) - m_H(\Sigma_\xi) \ge \frac{1}{2} (16\pi)^{-3/2} \int_{\xi}^\eta |\Sigma_t|^{1/2} \bigg[ 8\pi (\hat\chi - \chi(\Sigma_t)) \\
+ \int_{\Sigma_t} 2(R+6) + |\mathring A|^2 + 4H^{-2}|\na H|^2 \bigg] dt, 
\end{multline*}
where $\hat\chi = \chi(\widehat\Sigma) = 2-2\mathfrak{g}$. For $t$ sufficiently large, $\Sigma_t = \pl \{ u < t \}$ encloses $\pl M$ and Lemma 3.3 of \cite{LN13} implies
\begin{align}\label{Lem3.3}
 \hat\chi \ge \chi(\Sigma_t).
\end{align}
Therefore, for $\xi,\eta$ sufficiently large
\begin{align*}
m_H(\Sigma_\eta) - m_H(\Sigma_\xi) \ge (16\pi)^{-3/2} \int_\xi^\eta |\Sigma_t|^{1/2} \lt( \int_{\Sigma_t} R+6 \rt) dt \ge  -C \int_\xi^\eta e^{\frac{1}{2} (3-\alpha) t} dt.
\end{align*}
Fixing $\xi$ and sending $\eta=t$ to infinity, we get
\begin{align*}
m_H(\Sigma_t) \ge - O(e^{\frac{1}{2}(3-\alpha)t})
\end{align*}
for $t$ sufficiently large. By Gauss equation, 
\begin{align*}
H^2-4 = 4K + 2|\mathring A|^2 + 4(Rc(\nu,\nu) +2 ) - 2(R+6). 
\end{align*}
By Proposition \ref{EC} and \eqref{Lem3.3}, we have $\hat\chi = \chi(\Sigma_{t_i})$ and
\begin{align*}
- O(e^{\frac{1}{2}(3-\alpha)t_i}) \le m_H(\Sigma_{t_i}) = \sqrt{\frac{|\Sigma_{t_i}|}{16\pi}} \cdot \frac{1}{8\pi} \int_{\Sigma_{t_i}} \lt( |\mathring A|^2 + O(e^{-\frac{\alpha}{2}t_i})\rt).
\end{align*}
It follows that $\int_{\Sigma_{t_i}} |\mathring A|^2 = O(e^{(1 -\frac{\alpha}{2}) t_i})$.
\end{proof}

Next we observe that the $L^2$ norm of the full second fundamental form and $L^{2+\epsilon}$ norm of the mean curvature decays along $\tilde\Sigma_{t_i}$.
\begin{lem}\label{A2H2+}
Let $\Sigma_{t_i}$ be the sequence of surfaces as in Proposition \ref{EC}. Then 
\begin{align}\label{A2}
\int_{\tilde\Sigma_{t_i}} |\tilde A|^2 = O(e^{(1-\frac{\alpha}{2})t_i})
\end{align}
and
\begin{align}\label{H2+}
\int_{\tilde \Sigma_{t_i}} \tilde H^{2+\epsilon} = O(e^{(1+\frac{\epsilon}{2}-\frac{\alpha}{2})t_i}).
\end{align} 
\end{lem}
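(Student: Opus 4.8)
The plan is to split the full second fundamental form into its trace-free and mean-curvature parts, dispose of the trace-free part by conformal invariance, and reduce both assertions to controlling $\int_{\tilde\Sigma_{t_i}}\tilde H^2$. Since $\tilde\Sigma_{t_i}$ is a surface, $|\tilde A|^2 = |\mathring{\tilde A}|^2 + \frac{1}{2}\tilde H^2$, so
\begin{align*}
\int_{\tilde\Sigma_{t_i}} |\tilde A|^2 = \int_{\tilde\Sigma_{t_i}} |\mathring{\tilde A}|^2 + \frac{1}{2} \int_{\tilde\Sigma_{t_i}} \tilde H^2 .
\end{align*}
The quantity $|\mathring A|^2 \, d\mu$ is a pointwise conformal invariant on a surface, so $\int_{\tilde\Sigma_{t_i}} |\mathring{\tilde A}|^2 = \int_{\Sigma_{t_i}} |\mathring A|^2 = O(e^{(1-\frac{\alpha}{2})t_i})$ by the previous lemma. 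Thus \eqref{A2} reduces to showing $\int_{\tilde\Sigma_{t_i}}\tilde H^2 = O(e^{(1-\frac{\alpha}{2})t_i})$, and the same bound will feed \eqref{H2+}.

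To control $\int_{\tilde\Sigma_{t_i}}\tilde H^2$ I would combine the Gauss equation for $\tilde\Sigma_{t_i}\subset(M,\tilde g)$ with Gauss--Bonnet, obtaining
\begin{align*}
\int_{\tilde\Sigma_{t_i}} \tilde H^2 = 8\pi \chi(\tilde\Sigma_{t_i}) + 2\int_{\tilde\Sigma_{t_i}} |\mathring{\tilde A}|^2 + \int_{\tilde\Sigma_{t_i}} \big( 4\widetilde{Ric}(\tilde\nu,\tilde\nu) - 2\tilde R \big).
\end{align*}
The middle term is $O(e^{(1-\frac{\alpha}{2})t_i})$ as above. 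The decisive point is that along the distinguished sequence $\chi(\tilde\Sigma_{t_i}) = \chi(\Sigma_{t_i}) = \hat\chi$, by Proposition \ref{EC} together with \eqref{Lem3.3}. For the curvature integral I would pass to the compactified picture $\tilde g = ds^2 + \widehat g + \tilde q$ with $\tilde q = O_2(s^\alpha)$: in the product limit $ds^2+\widehat g$ one has $\widetilde{Ric}(\pl_s,\pl_s)=0$ and $\tilde R = 2\widehat k$, so the integrand tends to $-4\widehat k$, and Gauss--Bonnet on $\widehat\Sigma$ (namely $\widehat k\,w_2 = 2\pi\hat\chi$) gives $\int (4\widetilde{Ric}(\tilde\nu,\tilde\nu)-2\tilde R) \to -8\pi\hat\chi$, which cancels the Euler-characteristic term exactly. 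The remainder is governed by the decay of $\tilde q$, which by \eqref{AH tilde g} contributes $O(s^{\alpha-2}) = O(e^{(1-\frac{\alpha}{2})t_i})$ after integration over the bounded area of $\tilde\Sigma_{t_i}$, and by the deviation of $\tilde\nu$ from $\pl_s$, controlled by the $L^2$-graphicality $\int_{\tilde\Sigma_t}|\tilde\na^T s|^2 = O(e^{-t/2})$ of Lemma \ref{L2graphical}. Collecting these yields the bound for $\int_{\tilde\Sigma_{t_i}}\tilde H^2$ and hence \eqref{A2}.

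For \eqref{H2+} I would interpolate, writing
\begin{align*}
\int_{\tilde\Sigma_{t_i}} \tilde H^{2+\epsilon} \le \Big( \operatorname*{ess\,sup}_{\tilde\Sigma_{t_i}} \tilde H \Big)^{\epsilon} \int_{\tilde\Sigma_{t_i}} \tilde H^2 ,
\end{align*}
and bound the pointwise factor via the conformal transformation law $\tilde H = \rho H - 2\,\frac{\pl\rho}{\pl\nu}$. Since $c_1 e^{t/2}\le \rho \le c_2 e^{t/2}$ on the flow surfaces by \eqref{C^0_rho}, since $|\na\rho|_g = \sqrt{\rho^2+\widehat k} = O(e^{t/2})$, and since the weak mean curvature $H = |\nabla u|$ obeys a uniform gradient bound $H \le C$ for large $t$ from the weak-flow construction (\cite{LN13}, \cite{HI99}), we get $\operatorname*{ess\,sup}_{\tilde\Sigma_{t_i}} \tilde H = O(e^{t/2})$. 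Multiplying by $\int_{\tilde\Sigma_{t_i}}\tilde H^2 = O(e^{(1-\frac{\alpha}{2})t_i})$ gives $O(e^{\frac{\epsilon}{2} t_i})\cdot O(e^{(1-\frac{\alpha}{2})t_i}) = O(e^{(1+\frac{\epsilon}{2}-\frac{\alpha}{2})t_i})$, which is \eqref{H2+}.

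The main obstacle is the asymptotic evaluation of $\int_{\tilde\Sigma_{t_i}}(4\widetilde{Ric}(\tilde\nu,\tilde\nu)-2\tilde R)$: one must pin down its limit $-8\pi\hat\chi$ and quantify the convergence rate. Two error sources appear -- the metric perturbation $\tilde q$, which after integration decays like $e^{(1-\frac{\alpha}{2})t}$, and the tilt of $\tilde\nu$ away from $\pl_s$, which enters through $\int_{\tilde\Sigma_t}|\tilde\na^T s|^2 = O(e^{-t/2})$. The exponent $1-\frac{\alpha}{2}$ is the dominant rate in the regime $\alpha\le 3$; in every case both contributions decay, which is all that the subsequent Allard-type regularity argument requires. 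A secondary technical point is justifying the uniform pointwise bound on the weak mean curvature $H$ used in the $L^{2+\epsilon}$ estimate, which rests on barrier comparison within the Lee--Neves construction and not on any regularity we are attempting to prove.
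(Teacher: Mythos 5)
Your proposal is correct and follows essentially the same route as the paper: the paper deduces the $\tilde H^2$ bound directly from \eqref{EC_comparison} (whose derivation via the Gauss equation, Gauss--Bonnet, and the asymptotic evaluation of $\int(4\widetilde{Ric}(\tilde\nu,\tilde\nu)-2\tilde R)$ you simply unpack) together with $\chi(\tilde\Sigma_{t_i})=\hat\chi$ and conformal invariance of $\int|\mathring A|^2$, and it obtains \eqref{H2+} exactly as you do, from $H = s\tilde H + 2\tilde\nu(s)$, the uniform bound on $H$ from \cite[Theorem 3.1]{HI99}, and interpolation. The only point worth noting is that the paper secures the stated exponent by reducing without loss of generality to $\alpha - 2 \ll \tfrac14$, which matches your observation that for larger $\alpha$ the tilt error $O(e^{-t/2})$ would dominate.
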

\begin{proof}
By \eqref{EC_comparison} and the conformal invariance of $\int |\mathring A|^2$, we get  $\int_{\tilde\Sigma_{t_i}} \tilde H^2 \le O(e^{(1-\frac{\alpha}{2})t})$, assuming $\alpha-2 \ll \frac{1}{4}$. Together with the previous lemma, this proves \eqref{A2}. 

By the formula of the mean curvature under conformal change,
\[ H = s \tilde H + 2 \tilde\nu(s),\]
and the fact that $H$ is uniformly bounded \cite[Theorem 3.1]{HI99}, we have $\tilde H = O(s^{-1}) = O(e^{\frac{t}{2}})$ and \eqref{H2+} follows.
\end{proof}

The last ingredient is a localized version of the area bound, Lemma \ref{area}.
\begin{lem}
For any open set $\hat U$ of $\widehat\Sigma$ with smooth boundary, we have
\begin{align*}
\lt| \tilde\Sigma_t \cap \lt( (0,s_1) \times \hat U \rt) \rt| = |\hat U| + O(e^{-t/2}).
\end{align*}
\end{lem}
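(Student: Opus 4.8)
The plan is to evaluate the left-hand side through the vertical projection $\pi\colon(0,s_1)\times\widehat\Sigma\to\widehat\Sigma$, $\pi(s,\theta)=\theta$, together with the area formula, controlling the deficit from $|\hat U|$ by the integrated graphicality bound of Lemma~\ref{L2graphical} and the global area comparison of Lemma~\ref{area}. Throughout $t$ is large enough that $s=O(e^{-t/2})<s_1$ on $\tilde\Sigma_t$ by \eqref{C^0_s}, so $\tilde\Sigma_t\subset(0,s_1)\times\widehat\Sigma$, the restriction $\pi|_{\tilde\Sigma_t}$ is a $C^{1,\alpha}$ map into $\widehat\Sigma$, and $\tilde\Sigma_t\cap\big((0,s_1)\times\hat U\big)=\tilde\Sigma_t\cap\pi^{-1}(\hat U)$. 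For the Jacobian $J\pi$ of $\pi|_{\tilde\Sigma_t}$ I would first note that in the product metric $ds^2+\widehat g$ one has $J\pi=|\langle\pl_s,\tilde\nu\rangle|$, so that $J\pi=|\langle\pl_s,\tilde\nu\rangle|+O(s^\alpha)$ since $\tilde g=ds^2+\widehat g+\tilde q$ with $|\tilde q|_{\tilde g}=O(s^\alpha)$; here $\tilde\nu$ is the $\tilde g$-unit normal. Decomposing $\pl_s$ into normal and tangential parts and using $\tilde\na s=\pl_s+O(s^\alpha)$, $|\pl_s|_{\tilde g}^2=1+O(s^\alpha)$ gives $\langle\pl_s,\tilde\nu\rangle^2=1-|\tilde\na^T s|^2+O(s^\alpha)$, whence $-Cs^\alpha\le 1-J\pi\le|\tilde\na^T s|^2+Cs^\alpha$.

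The area formula applied to $\pi|_{\tilde\Sigma_t}$ reads $\int_{\tilde\Sigma_t\cap\pi^{-1}(\hat U)}J\pi\,d\mu=\int_{\hat U}N\,d\mu_{\widehat g}$, where $N(\theta)$ counts the points of $\tilde\Sigma_t$ over $\theta$. Subtracting this from $|\tilde\Sigma_t\cap\pi^{-1}(\hat U)|$, integrating the pointwise bound just obtained, and using Lemma~\ref{L2graphical}, the uniform area bound $|\tilde\Sigma_t|\le C$ of \cite[Lemma 3.7]{LN13}, and $s^\alpha=O(e^{-\alpha t/2})=o(e^{-t/2})$ (as $\alpha>2$), I get
\[
\Big|\,|\tilde\Sigma_t\cap\pi^{-1}(\hat U)|-\int_{\hat U}N\,d\mu_{\widehat g}\,\Big|\;\le\;\int_{\tilde\Sigma_t}|\tilde\na^T s|^2+Cs^\alpha|\tilde\Sigma_t|\;=\;O(e^{-t/2}).
\]
Since $\pl\hat U$ is smooth, $\tilde\Sigma_t\cap\pi^{-1}(\pl\hat U)$ is at most one-dimensional and thus $\mathcal H^2$-null, so it is immaterial whether $\hat U$ is open or closed.

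It remains to replace $\int_{\hat U}N$ by $|\hat U|$. For each $\theta\in\widehat\Sigma$ the segment $\{(\sigma,\theta):0<\sigma<s_1\}$ passes from the region enclosed by $\tilde\Sigma_t$ out to conformal infinity and hence meets $\tilde\Sigma_t$, so $N\ge 1$ on $\widehat\Sigma$. Applying the displayed estimate with $\hat U=\widehat\Sigma$ and comparing with the global bound $|\tilde\Sigma_t|=|\widehat\Sigma|+O(e^{-t/2})$ of Lemma~\ref{area} forces $\int_{\widehat\Sigma}(N-1)\,d\mu_{\widehat g}=O(e^{-t/2})$. As $N-1\ge0$, this yields $0\le\int_{\hat U}(N-1)\le\int_{\widehat\Sigma}(N-1)=O(e^{-t/2})$ for every $\hat U$, and combining with the display gives $|\tilde\Sigma_t\cap\pi^{-1}(\hat U)|=|\hat U|+O(e^{-t/2})$, as claimed.

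The main obstacle is that at this stage $\tilde\Sigma_t$ is known to be graphical over $\widehat\Sigma$ only in the integrated sense of Lemma~\ref{L2graphical}, so one cannot simply represent it as a graph $s=w(\theta)$ and integrate $\sqrt{1+|\widehat\na w|^2}$ over $\hat U$. The device that bypasses this is to route the computation through the area formula and to observe that the nonnegative total multiplicity excess $\int_{\widehat\Sigma}(N-1)$ is globally $O(e^{-t/2})$, hence automatically $O(e^{-t/2})$ over any subdomain $\hat U$; the remaining Jacobian and $\tilde q$-error bookkeeping is routine.
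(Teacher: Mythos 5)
Your argument is correct, but it localizes the area comparison by a genuinely different mechanism than the paper. The paper re-runs the Lee--Neves divergence-theorem computation on the truncated region $\tilde M_t$ bounded by $\tilde\Sigma_t\cap\big((0,s_1)\times\hat U\big)$, the piece $\{0\}\times\hat U$ of conformal infinity, and a lateral surface $\mathcal{C}$ over $\pl\hat U$; the only new work is estimating the flux of $\tilde\na s$ through $\mathcal{C}$ and the bulk term $\tilde\Delta s$, both $O(e^{-\alpha t/2})$, after which the passage from $\int\tilde\nu(s)$ to the area is verbatim Lemma 3.10 of \cite{LN13}. You instead push forward by the projection $\pi$ onto $\widehat\Sigma$, control $1-J\pi$ pointwise by $|\tilde\na^T s|^2+O(s^\alpha)$, and reduce everything to the multiplicity function $N$: the topological crossing argument gives $N\ge1$ (this is correct -- the component of $\{\sigma:(\sigma,\theta)\in\{u<t\}\}$ containing $s_1$ has its lower endpoint on $\Sigma_t$), and then the \emph{global} statement of Lemma \ref{area} forces $\int_{\widehat\Sigma}(N-1)=O(e^{-t/2})$, which by nonnegativity of $N-1$ is inherited by every subdomain. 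This is an attractive shortcut: it avoids the lateral-boundary bookkeeping entirely and derives the local estimate as a formal consequence of the global one, at the cost of invoking the unsigned area formula and the crossing argument where the paper's signed flux $\tilde\nu(s)$ handles orientation and multiplicity automatically. Two small remarks: your parenthetical claim that $\tilde\Sigma_t\cap\pi^{-1}(\pl\hat U)$ is $\mathcal H^2$-null is not justified (a priori the surface could contain a two-dimensional piece of the cylinder over $\pl\hat U$), but it is also not needed, since the lemma concerns the open set $\hat U$ and $\pl\hat U$ is $\widehat g$-null so $\int_{\hat U}N=\int_{\overline{\hat U}}N$; and both routes land on the same final error $O(e^{-t/2})$, dominated in each case by Lemma \ref{L2graphical}.
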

\begin{proof}
Let $\tilde M_t$ be the region in $(0,s_1) \times \widehat\Sigma$ so that $\pl \tilde M_t$ is the union of $\tilde\Sigma_t \cap \lt( (0,s_1) \times \hat U \rt)$, $\{ 0\} \times \hat U$, and a lateral surface $\mathcal{C} = \pl \tilde M_t \cap \lt( (0,s_1) \times \pl \hat U \rt)$. See Figure \ref{figure:star-shape}. By the divergence theorem, we have
\begin{align*}
\int_{\tilde\Sigma_t \cap (0,s_1) \times \hat U} \tilde\nu (s) + \int_{\{ 0\} \times \hat U} \tilde\nu(s) + \int_{\mathcal{C}} \tilde\nu(s) = \int_{\tilde M_t} \Delta_{\tilde g} s
\end{align*}
where $\tilde\nu$ denotes the outward unit normal of $\pl \tilde M_t$. By \eqref{AH tilde g}, $\tilde\nu(s) = O(s^{\alpha-1})$ on $\mathcal{C}$, $\Delta_{\tilde g} s = O(s^{\alpha-1})$ in $\tilde M_t$, $\mbox{area}(\mathcal{C}) = O(s)$, and $\mbox{vol}(\tilde M_t) = O(s)$. Moreover, $\tilde\nu(s)$ is exactly $-1$ on $\{ 0\} \times \hat U$. Recall that $s \sim e^{-t/2}$ and we get
\begin{align*}
\int_{\tilde\Sigma_t \cap (0,s_1) \times \hat U} \tilde\nu (s) = |\hat U| + O(e^{- \alpha \cdot \frac{t}{2}}).
\end{align*}
The rest of the proof is identical to that of Lemma 3.10 of Lee-Neves with $\tilde\Sigma_t$ replaced by $\tilde\Sigma_t \cap (0,s_1) \times \hat U$.  
\end{proof}
We are ready to prove the main result of this subsection.
\begin{prop}
Let $\Sigma_{t_i}$ be the sequence of surfaces as in Proposition \ref{EC}. Then for any $\eta > 0$, there is an $i_0$ such that we have
\begin{align*}
\langle \pl_r,\nu \rangle \ge 1-\eta
\end{align*}
on $\Sigma_{t_i}$ for $i \ge i_0$.
\end{prop}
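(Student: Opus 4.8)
The plan is to promote the integral estimates of the preceding lemmas to a uniform pointwise bound by applying Allard's regularity theorem in the conformally compactified manifold $(\bar M, \tilde g)$ near the boundary slice $\widehat\Sigma = \{ s = 0 \}$. The geometric content is that $\langle \pl_r, \nu \rangle$ is close to $1$ precisely when $\Sigma_{t_i}$ is close to a coordinate slice $\{ \rho = \mbox{const} \}$, equivalently when $\tilde\Sigma_{t_i}$ is a graph over $\widehat\Sigma$ with small gradient $|\tilde\na^T s|$. Concretely, since $\na_g r \approx \pl_r$ is asymptotically unit one has $\langle \pl_r,\nu \rangle^2 = 1 - |\na^{\Sigma} r|_g^2 + O(e^{-kr})$, while the relations $s = \rho^{-1} + O(\rho^{-3})$, $\tfrac{d\rho}{dr} = \sqrt{\rho^2 + \widehat k}$, and $\tilde g = \rho^{-2} g$ give $|\tilde\na^T s|_{\tilde g} = |\na^\Sigma r|_g + O(e^{-kr})$. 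Thus it suffices to prove that $|\tilde\na^T s| \to 0$ \emph{uniformly} on $\tilde\Sigma_{t_i}$, upgrading the $L^2$ bound of Lemma \ref{L2graphical} to an $L^\infty$ bound.

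First I would fix $\epsilon > 0$ with $\epsilon < \alpha - 2$, so that the exponents in \eqref{A2} and \eqref{H2+} are negative and both $\int_{\tilde\Sigma_{t_i}} |\tilde A|^2 \to 0$ and $\int_{\tilde\Sigma_{t_i}} \tilde H^{2+\epsilon} \to 0$. Since $2 + \epsilon > 2 = \dim \tilde\Sigma_{t_i}$, the mean curvature lies in the supercritical Lebesgue space demanded by Allard's theorem, and its scale-invariant norm on balls of any fixed radius is as small as we wish for $i$ large.

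Next I would establish the density hypothesis. Combining the global area bound (Lemma \ref{area}), the localized area bound (the preceding lemma), and the $L^2$-smallness of the tilt $\tilde\na^T s$ (Lemma \ref{L2graphical}), one shows that the integral varifolds associated to $\tilde\Sigma_{t_i}$ converge to $\widehat\Sigma = \{ s = 0 \}$ with multiplicity one; in particular the density ratios of $\tilde\Sigma_{t_i}$ at a fixed small scale converge to $1$, uniformly in the base point, as $i \to \infty$. With this density input and the $L^{2+\epsilon}$ smallness of $\tilde H$, Allard's regularity theorem yields that for $i$ large each $\tilde\Sigma_{t_i}$ is a $C^{1,\beta}$ graph over its approximate tangent plane on balls of a fixed radius, with $C^1$-norm arbitrarily small. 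The multiplicity-one varifold limit $\widehat\Sigma$ then forces these approximate tangent planes to be uniformly close to $T\widehat\Sigma$ (a persistently tilted plane would project to area strictly exceeding $|\hat U|$, contradicting the localized estimate), so the $C^{1,\beta}$ estimates improve to $C^{1,\beta}$ convergence $\tilde\Sigma_{t_i} \to \widehat\Sigma$. Hence $\tilde\Sigma_{t_i}$ is globally a graph over $\widehat\Sigma$ with uniformly small gradient, giving $|\tilde\na^T s| \to 0$ uniformly and, by the conversion above, $\langle \pl_r, \nu \rangle \ge 1 - \eta$ on $\Sigma_{t_i}$ for $i \ge i_0$.

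The main obstacle is the uniform verification of Allard's density hypothesis along each surface: one must convert the averaged area information into a pointwise density ratio close to $1$ at a scale independent of $i$, ruling out concentration or folding of $\tilde\Sigma_{t_i}$ near $\widehat\Sigma$. This is exactly where the vanishing of the $L^2$ tilt $|\tilde\na^T s|$ and the matching of global and localized areas must be used together to pin down multiplicity-one convergence and to control the tangent planes. Once that is in place, the $C^1$ graphical conclusion and the translation back to $\langle \pl_r, \nu \rangle$ are routine.
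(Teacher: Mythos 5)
Your proposal is correct and follows essentially the same route as the paper: the localized area bound plus the $L^{2+\epsilon}$ decay of $\tilde H$ feed into Allard's regularity theorem in the conformally compactified picture, and the $L^2$ decay of the tilt $|\tilde\na^T s|$ from Lemma \ref{L2graphical} is what ultimately pins down the normal direction. The only cosmetic difference is that the paper extracts from Allard merely a uniform $C^{1,\alpha}$ bound on the local graphs and then argues by contradiction (a single point with $\langle \tilde\nu,\pl_s\rangle < 1-2\eta$ would, by that bound, force $|\tilde\na^T s|^2 \gtrsim \eta$ on a ball of definite area, contradicting Lemma \ref{L2graphical}), whereas you additionally track approximate tangent planes and multiplicity-one varifold convergence; your step aligning those planes with $T\widehat\Sigma$ is precisely where the same $L^2$ tilt bound must be invoked.
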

\begin{proof}
Fix a small constant $\delta > 0$ such that for any $\hat x \in \widehat\Sigma$, $\widehat g$ is close to the Euclidean metric on $B_\delta (\hat x)$ and $\tilde g$ is also close to the (3-dimensional) Euclidean metric on $(0,\delta) \times \widehat\Sigma$ (Recall that $\tilde g$ is close to a product metric). By the previous lemma and \eqref{H2+}, we have
\begin{align*}
|\tilde\Sigma_t \cap (0,\delta) \times B_\delta(\hat x)| = |B_\delta(\hat x)| + O(e^{-t/2})
\end{align*}
and
\begin{align*}
\int_{\tilde\Sigma_{t_i}} \tilde H^{2+\epsilon} \rw 0 \mbox{ as } i \rw \infty
\end{align*}
for $0 < \epsilon < \alpha -2$. For $\delta$ sufficiently small and $i$ sufficiently large, Allard's regularity theorem, Theorem \ref{Allard_Riemannian}\footnote{More precisely, the varifold $V = \Sigma_{t_i}$ is a $C^1$ surface. Hence the multiplicity $\theta =1$ everywhere and $\mu(B_\rho(x)) = |\Sigma_{t_i} \cap B_\rho(x)|$. We can choose $\rho_0$ small, independent of $x$, so that the metric $\tilde g$ is close to Euclidean metric on $B_{\rho_0}(x)$ so that the first inequality of \eqref{Allard assumption 2} holds.}, implies that $\tilde\Sigma_{t_i} \cap (0,\delta) \times B_\delta (\hat x)$ can be written as the graph of a $C^{1,\alpha}$ function $u_i$ with $\| u_i\|_{C^{1,\alpha}}$ uniformly bounded independent of $i$.

Now suppose $\langle \tilde\nu,\pl_s \rangle < 1 - 2\eta$ at some point $x \in \tilde\Sigma_{t_i}$. By the $C^{1,\alpha}$ bound, we have $\langle \tilde\nu,\pl_s \rangle \le 1 - \eta$ on $B_{c\delta}(x) \subset \tilde\Sigma_{t_i}$ and the area lower bound $|B_{c\rho}(x)| > c'$ for constants $c$ and $c'$ depending only on $\| u_i\|_{C^{1,\alpha}}$. Since $\langle \tilde\nu,\pl_s \rangle^2 + |\tilde\na^T s|^2 = 1 + O(s^\alpha)$, we get
\begin{align*}
\int_{\tilde\Sigma_{t_i}} |\tilde\na^T s|^2 \ge \int_{B_{c\rho}(x)} \eta + O(s^\alpha) \ge c' \eta + O(e^{-\alpha \frac{t_i}{2}}), 
\end{align*} contradicting Lemma \ref{L2graphical}. Finally, from
\begin{align*}
\pl_s = \frac{\rho^2}{-1 + \frac{\widehat k}{2} \rho^{-2}} \pl_\rho = \frac{\rho^2}{-1 + \frac{\widehat k}{2} \rho^{-2}} \frac{1}{\sqrt{\rho^2 + \widehat k}} \pl_r,
\end{align*}
we see the relation 
\begin{align*}
\langle \tilde\nu,\pl_s \rangle = \tilde g (\tilde\nu,\pl_s) = \rho^{-2} g (-\rho\nu, \frac{\rho^2}{-1 + \frac{\widehat k}{2} \rho^{-2}} \frac{1}{\sqrt{\rho^2 + \widehat k}} \pl_r) = \frac{1}{(1 - \frac{\widehat k}{2}\rho^{-2})\sqrt{1 +\widehat k \rho^{-2}}}  \langle \pl_r,\nu \rangle
\end{align*}
and conclude that $\langle \pl_r,\nu \rangle \ge 1 - \eta$ on $\Sigma_{t_i}$ for $i$ sufficiently large.
\end{proof}

\begin{figure}\label{figure:star-shape}
    \centering
 \textbf{Star-Shape of IMCF in Asymptotically Locally Hyperbolic 3-Manifolds}
    \vspace{1cm}

\begin{tikzpicture}
\begin{scope}[xshift=90, domain=-3:3]
\draw[line width=2.5pt] (0,0) ellipse (3 and 1);
\draw[line width=2.5pt, domain=3:5] plot(\x, {3*(\x-3)^(0.5)} );
\draw[line width=2.5pt, domain=-5:-3] plot(\x, {3*(-3 - \x)^(0.5)});

\begin{scope}[scale=0.7, yshift=-13]
\draw[color=blue!50, line width=2.5pt, domain=-0.5:0.5, name path=A] plot (\x,{-(1- 0.1111*((\x)^(2)))^(0.5)});
\draw[line width=2.5pt, color=blue!50, domain=-0.6:-0.5, name path=B] plot(\x, {5*(-0.5-\x)^(0.5) - 0.98}) ;
\draw[line width=2.5pt, color=blue!50, domain=0.5:0.6, name path=C] plot(\x, {5*(\x-0.5)^(0.5) - 0.98}) ;
\draw[color=blue!50, line width=2.5pt, domain=-0.6:-0.5, name path=D] plot (\x,{-(1- 0.1111*((\x)^(2)))^(0.5) + 1.58});
\draw[color=blue!50, line width=2.5pt, domain=-0.5:0.5, name path=E] plot (\x,{-(1- 0.1111*((\x)^(2)))^(0.5) + 1.58});
\draw[color=blue!50, line width=2.5pt, domain=0.5:0.6, name path=F] plot (\x,{-(1- 0.1111*((\x)^(2)))^(0.5) + 1.58});
\tikzfillbetween[of=B and D]{blue!50, opacity=0.3};
\tikzfillbetween[of=A and E]{blue!50, opacity=0.3};
\tikzfillbetween[of=C and F]{blue!50, opacity=0.3};
\end{scope}
\filldraw[color=blue, fill=blue](0,-1) circle (0.1);
  \end{scope}
\draw (6.2,0) node[anchor=west]{\large{$\{ s=0 \} =\widehat{\Sigma}$}};
\draw (6.7,2) node[anchor=west]{\large{$(M^{3},\widetilde{g})$}};
\draw (3,0.5) node[color=blue]{$B_{\delta}(\widehat{x}) \times (0,\delta)$} ;
\draw (3.1,-1.2) node[color=blue, anchor=north] {\large{$\widehat{x}$}} ;
\draw [red, line width=2.5pt] plot [smooth] coordinates {(0.1,0.5) (0.5,0) (2,0.3) (2.2,-0.1) (3,-0.2)  (3.4,-0.65) (4.2,-0.4) (5,-0.2) (6.2,0.4) };
\draw [red, line width=2.5pt, dashed] plot [smooth] coordinates {(0.1,0.5) (0.5,1.2) (2,1.6) (3,2) (4, 1.3) (5,1.5) (5.6,1.1) (6.15, 0.6) (6.2,0.4)};
\draw (0.1,0.4) node[color=red, anchor=east] {\large{$\widetilde{\Sigma}_{t_{i}}$}};

\begin{scope}[xshift=86, yshift=-80]
\draw(0,0) node{\Huge{$\Downarrow$}};
\end{scope}

\begin{scope}[xshift=46, yshift=-250]
\draw[line width=2.5pt] (-1,0) -- (4,0);
\draw [red, line width=2.5pt] plot [smooth] coordinates {(0,4) (1.2,3.7) (1.8,3)  (2.3,1.8) (3,2) };
\draw[red] (1.2,2) node{$\text{graph}(u_i)$};
\draw (1.5,0.5) node{$\tilde M_t$};
\draw (-0.3,1) node{$\mathcal{C}$};
\draw (3.3,1) node{$\mathcal{C}$};
\draw[color=blue!50, line width=2.5pt, fill=blue, opacity=0.1] (0,0) rectangle (3,4.5);
\draw[color=blue!50, line width=2.5pt] (0,0) rectangle (3,4.5);
\draw[color=red] (3,3) node[anchor=west] {$\partial_{\alpha} u_i \rightarrow 0$} ;
\end{scope}
\end{tikzpicture}
 \caption{Given $L^{2+\epsilon}$ decay of $H$ and $L^{2}$ decay of $|\nabla^{T} s|$, $| A|$, Allard's regularity gives that $\Sigma_{t_{i}}$ can be written as the graph in the tube $B_{\delta}(\widehat{x}) \times (0,\delta)$ of a function converging to a constant in $C^{1,\alpha}$.}
    \label{graph_figure}
\end{figure}
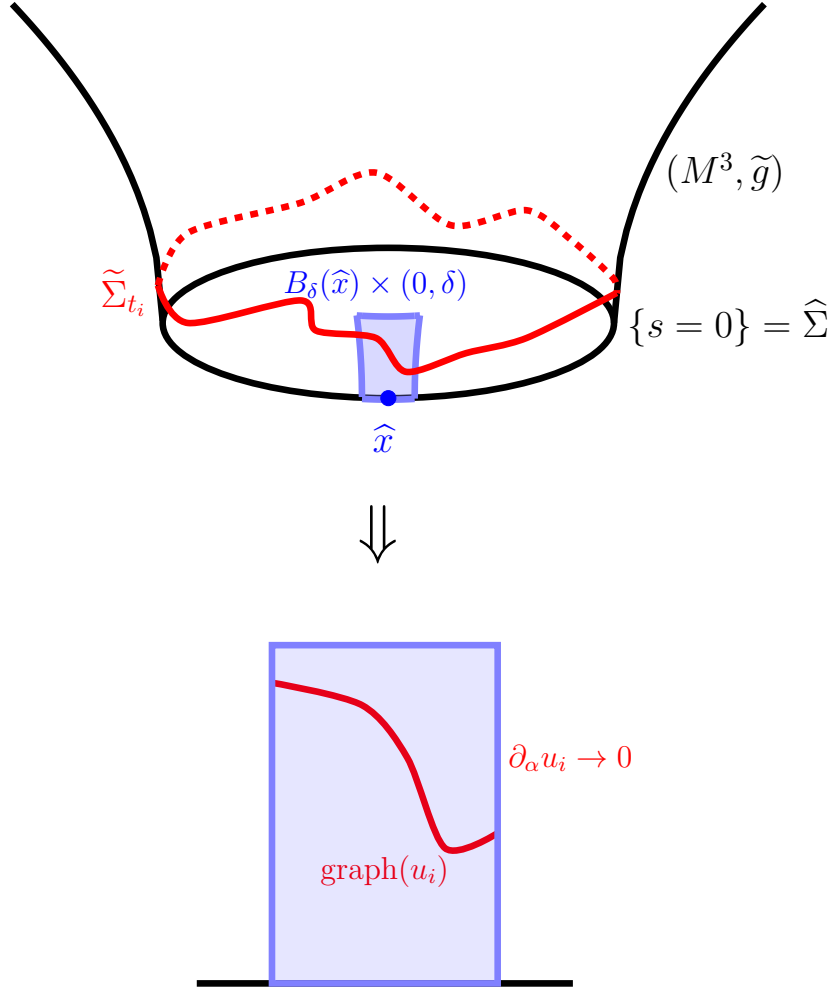

\subsection{Smooth Star-Shaped Solutions in $n$ Dimensions}\label{subsection_star-shaped}
We first review the approximation scheme of Huisken-Ilmanen \cite{HI08} in proving regularity for weak solutions. Consider the solution of mean curvature flow $\Sigma_{t_i,\tau}, 0 < \tau \le \epsilon_i$ evolving from $\Sigma_{t_i}$ obtained in the previous subsection. By Theorem \ref{mcf_approximation}, $\Sigma_{t_i,\tau}$ is smooth for $\tau >0$ and converges to $\Sigma_{t_i}$ in $C^{1,\beta}$ as $\tau \rw 0$; in particular, $\langle \pl_r, \nu \rangle$ is close to $1$ on each $\Sigma_{t_i,\tau}$. Moreover, by strong maximum principle and the assumption of no compact minimal surfaces, each $\Sigma_{t_i,\tau}$ has positive mean curvature. Next, consider \underline{smooth} solution of IMCF $\Sigma_{t_i,\tau,t-t_i}, 0 \le t-t_i \le \epsilon'_i$ with $\Sigma_{t_i,\tau,0} = \Sigma_{t_i,\tau}$. We aim to prove uniform estimates that can pass, as $\tau \rw 0$, to the original weak solution $\Sigma_t, t > t_i$.

In this subsection we work with asymptotics (a) defined in Subsection \ref{subsection geometry of the exterior region}.  

A hypersurface $\Sigma \subset (1,\infty) \times \widehat\Sigma$ is said to be {\it star-shaped} if $\langle \nu, \pl_r \rangle \ge c > 0$ on $\Sigma$. Let $F: \widehat \Sigma \times [T,T_1] \rw (1,\infty) \times \widehat\Sigma$ be a smooth solution to IMCF starting from a star-shaped hypersurface and satisfying
\begin{align}\label{C^0_r}
c_1 e^{\frac{t}{n-1}} \le e^r \le c_2 e^{\frac{t}{n-1}}
\end{align}
on $\Sigma_t = F(\widehat\Sigma \times \{ t\})$. We assume that $T$ is large so that the effect of $q$ is negligible via \eqref{C^0_r}.

Noting that $1 - |\na r|^2 - (\frac{\pl r}{\pl \nu})^2 = O(e^{-kr})$, the evolution equation simplifies to
\begin{align*}
\pl_t \langle \pl_r,\nu \rangle &= \frac{1}{H^2} \Big( \Delta_\Sigma \langle \pl_r ,\nu \rangle + 2 \na r \cdot \na \langle \pl_r,\nu \rangle \\
&\qquad\qquad  + (n-1) \langle \pl_r,\nu \rangle - 2 H \frac{\pl r}{\pl\nu}\langle \pl_r,\nu \rangle + |A|^2 \langle \pl_r,\nu\rangle + A * O(e^{-kr}) + O(e^{-kr}) \Big) . 
\end{align*}
 
\begin{prop}\label{starshape}
Let $\Sigma_t$ be a smooth solution to IMCF on $(1,\infty) \times \widehat\Sigma$. Then for any small $\delta > 0$, there exists a large time $T$ such that $\langle \nu,\pl_r \rangle \ge c \ge \frac{1}{2}$ on $\Sigma_T$ implies $\langle \nu,\pl_r \rangle > c-\delta$ on $\Sigma_t$ for $t \ge T$. 
\end{prop}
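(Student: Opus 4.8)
The plan is to run a parabolic maximum principle on the minimum of $\phi := \langle \pl_r, \nu \rangle$ over $\Sigma_t$, using the simplified evolution equation recorded just above the statement. Since $\widehat\Sigma$ is closed, each $\Sigma_t$ is compact, so $\phi_{\min}(t) = \min_{\Sigma_t} \phi$ is attained and Lipschitz in $t$; by Hamilton's trick it suffices to estimate $\pl_t \phi$ at a spatial minimum point $p_t$. There $\na \phi = 0$ and $\Delta_\Sigma \phi \ge 0$, so the gradient term $2\na r \cdot \na \phi$ drops out and the Laplacian term has a favorable sign. The whole question reduces to showing that the reaction terms
\[
N := (n-1)\phi - 2H \tfrac{\pl r}{\pl\nu}\phi + |A|^2 \phi + A * O(e^{-kr}) + O(e^{-kr})
\]
are bounded below by $-C e^{-kr}$ at $p_t$.

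The key algebraic step is a completion of squares. Since $\na r$ here is the tangential gradient and the ambient gradient of $r$ is $\pl_r + O(e^{-kr})$, one has $\tfrac{\pl r}{\pl\nu} = \phi + O(e^{-kr})$; using $|\pl_r|_g = 1 + O(e^{-kr})$ one also gets $\phi \le 1 + O(e^{-kr})$, i.e. $1 - \phi^2 \ge -Ce^{-kr}$. Writing $|A|^2 = \tfrac{H^2}{n-1} + |\mathring A|^2$ and substituting, the ``round'' part combines with the first two terms into a perfect square,
\[
(n-1)\phi - 2H\phi^2 + \tfrac{H^2}{n-1}\phi = \tfrac{\phi}{n-1}\big( (H-(n-1)\phi)^2 + (n-1)^2(1-\phi^2) \big) \ge -Ce^{-kr},
\]
leaving the genuinely positive surplus $|\mathring A|^2 \phi$. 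This surplus is exactly what absorbs the curvature error: from $|A| \le \tfrac{H}{\sqrt{n-1}} + |\mathring A|$ and Young's inequality (with $\phi$ bounded below), the term $A * O(e^{-kr})$ is bounded below by $-\tfrac12 |\mathring A|^2 \phi - C e^{-2kr}$. Combining everything gives $N \ge \tfrac12 |\mathring A|^2 \phi - Ce^{-kr} \ge -Ce^{-kr}$, hence $H^2 \pl_t \phi \ge -Ce^{-kr}$ at $p_t$.

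To finish, divide by $H^2$ and invoke the a priori bounds $0 < c_0 \le H \le C_0$ on $\Sigma_t$ for large $t$ (the upper bound from \cite[Theorem 3.1]{HI99}, the lower bound from the standard gradient and curvature estimates for smooth star-shaped IMCF in this regime). With \eqref{C^0_r} giving $e^{-kr} \le C e^{-kt/(n-1)}$, Hamilton's trick yields $\tfrac{d}{dt}\phi_{\min}(t) \ge -Ce^{-kt/(n-1)}$ for a.e. $t \ge T$, so $\phi_{\min}(t) \ge \phi_{\min}(T) - \tfrac{C(n-1)}{k} e^{-kT/(n-1)} \ge c - \delta$ once $T$ is chosen large enough that the integrated error is below $\delta$; a short continuity argument keeps $\phi$ above a fixed positive constant throughout, so the estimate is self-consistent.

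The main obstacle is the presence of the term $-2H\tfrac{\pl r}{\pl\nu}\phi$, which is negative and of the same order as the positive reaction terms $(n-1)\phi$ and $|A|^2\phi$, so no crude estimate (e.g. $|A|^2 \ge H^2/(n-1)$ alone, or $H \ge 2(n-1)$) suffices; indeed on near-round expanding surfaces $H \to n-1$ and $\phi \to 1$, precisely where the naive bound is worst. The delicate point is therefore the exact completion of squares together with the realization that the traceless part $|\mathring A|^2 \phi$ must be \emph{retained} to absorb the $A*O(e^{-kr})$ error while the full round part $\tfrac{H^2}{n-1}\phi$ is spent on the square; splitting $|A|^2$ any other way reintroduces a non-negligible negative contribution. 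A secondary technical point is securing the uniform two-sided bound on $H$ needed to pass from the weighted inequality to an integrable ODE for $\phi_{\min}$.
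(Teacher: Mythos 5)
Your completion of squares is essentially the identity the paper itself uses, and the observation that the traceless surplus $|\mathring A|^2\phi$ can be retained to absorb the $A*O(e^{-kr})$ error is correct. The gap is in the final step. Having shown $H^2\,\pl_t\phi \ge -Ce^{-kr}$ at the spatial minimum, you divide by $H^2$ and invoke a uniform positive lower bound $H\ge c_0>0$. No such bound is available at this stage: in the approximation scheme of Subsection \ref{subsection_star-shaped}, Proposition \ref{starshape} must be applied to the smooth flows $\Sigma_{t_i,\tau,t-t_i}$ with constants \emph{uniform in} $\tau$, and the initial surfaces $\Sigma_{t_i,\tau}$ have $\min H>0$ for each $\tau$ but not uniformly as $\tau\rw 0$. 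The uniform lower bound $H\ge C\min\{(t-T)^{1/2},1\}$ is the content of the theorem that \emph{follows} Proposition \ref{starshape}, and its proof invokes Proposition \ref{starshape} (through $\langle X,\nu\rangle\ge ce^{t/(n-1)}$), so your argument is circular. Even granting that bound, it degenerates at $t=T$, so the error $H^{-2}e^{-lt}$ behaves like $(t-T)^{-1}$ near $T$ and the differential inequality for $\phi_{\min}$ cannot be integrated from $T$.

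The paper avoids the division entirely by working with $\beta=(1-Le^{-lt})\langle\pl_r,\nu\rangle$, $l=k/(n-1)$, $L$ large. Differentiating the prefactor injects an additional reaction term proportional to $Le^{-lt}H^2\beta$; carrying out the same completion of squares with the coefficient of $H^2$ enlarged from $\tfrac{1}{n-1}$ to $\tfrac{1}{n-1}+\tfrac{Ll}{2}e^{-lt}$ leaves a surplus of order $(n-1)\tfrac{Lk}{4}e^{-lt}\cdot\beta$ that is \emph{independent of $H$} and, for $L$ large, strictly dominates the additive $O(e^{-kr})=O(e^{-lt})$ error before any division occurs. One then has $\pl_t\beta > H^{-2}\lt(\Delta_\Sigma\beta+2\na r\cdot\na\beta\rt)$ with no leftover error, and the maximum principle applies at the spatial minimum with no quantitative control on $H$ whatsoever. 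To repair your proof you need this (or an equivalent) device for absorbing the additive error without ever dividing it by $H^2$.
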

\begin{proof}
Suppose $\langle \pl_r,\nu \rangle$ hits $c - \delta$ for the first time at $T_2$. Let $\beta = (1 - L e^{-lt}) \langle \pl_r,\nu\rangle$ with $l = \frac{k}{n-1}$ and $L$ a large constant. Choosing $T$ sufficiently large, we have
\begin{align}\label{temp_alpha_lb}
\beta \ge c - 2\delta 
\end{align}
on $[T,T_2]$. By Cauchy-Schwarz, we have
\begin{align*}
\pl_t \beta \ge \frac{1}{H^2} \lt( \Delta_\Sigma \beta + 2 \na r \cdot \na \alpha + I \cdot \beta - O(e^{-kr}) \rt) 
\end{align*}
where
\begin{align*}
I &= (n-1) - 2  H \frac{\pl r}{\pl\nu} + |A|^2 (1 - O(e^{-kr})) + \frac{Lle^{-lt}}{1 - e^{-lt}} H^2.
\end{align*}

For $T,L$ sufficiently large, we have
\begin{align*}
I &\ge (n-1)  - 2  H \frac{\pl r}{\pl\nu} + \lt( \frac{1}{n-1} + \frac{L l}{2} e^{-lt} \rt)H^2 \\
&= \lt( \sqrt{ \frac{n-1}{1 + \frac{Lk}{2} e^{-lt}} }  \frac{\pl r}{\pl \nu} - \sqrt{\frac{1+ \frac{Lk}{2} e^{-lt}}{n-1}} H \rt)^2 + (n-1) \lt( 1 - \frac{1}{1 + \frac{Lk}{2} e^{-lt}} (\frac{\pl r}{\pl\nu})^2 \rt) \\
&\ge (n-1) \cdot \frac{Lk}{4} e^{-lt}
\end{align*}
where we used $\frac{\pl r}{\pl \nu} = \langle \pl_r,\nu \rangle + O(e^{-kr}) \le 1 + O(e^{-kr})$. By \eqref{temp_alpha_lb}, $I \cdot \beta - O(e^{-kr}) > 0$ for $T,L$ sufficiently large and we get
\begin{align*}
\pl_t \beta > \frac{1}{H^2} \lt( \Delta_\Sigma \beta + 2 \na r \cdot \na\beta \rt).
\end{align*}
Maximum principle then implies $\min \beta$ is increasing and hence  
\[ (1 - L e^{-lT_1})(c-\delta) \ge (1 - L e^{-lT}) c,\] which is a contradiction if $T$ is sufficiently large. We conclude that $\langle \pl_r ,\nu \rangle$ never falls below $c-\delta$.
\end{proof}

We now come to the main theorem of this subsection, which is adapted from Lemma 4.1 of \cite{LW17}.
\begin{Theorem}
Let $\Sigma_t$ be a smooth solution to IMCF in $(1,\infty) \times \widehat\Sigma$. Suppose $\langle \pl_r,\nu \rangle > \frac{2\sqrt{2}}{3}$ on $\Sigma_t$. Then there exists a large time $T$ such that
\begin{align*}
H \ge C \min \{  (t-T)^{1/2}, 1 \}
\end{align*}
on $\Sigma_t$ for $t > T$.
\end{Theorem}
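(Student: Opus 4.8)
The plan is to run a parabolic maximum principle on the scalar quantity
\[
\chi = \frac{1}{H\langle \pl_r,\nu\rangle},
\]
equivalently to bound $Q = H\langle\pl_r,\nu\rangle$ from below. The hypothesis $\langle\pl_r,\nu\rangle > \frac{2\sqrt2}{3}$ together with $|\pl_r|^2 = 1 + O(e^{-kr})$ confines $\tilde v := \langle\pl_r,\nu\rangle^{-1}$ to a compact interval $[1-O(e^{-kr}),\tfrac{3}{2\sqrt2}]$, so a positive lower bound for $H$ is equivalent to an upper bound for $\chi$. First I would record the two evolution equations under IMCF: the standard one
\[
\pl_t H = \frac{1}{H^2}\Delta_\Sigma H - \frac{2}{H^3}|\na H|^2 - \frac{|A|^2 + \Ric(\nu,\nu)}{H},
\]
with $\Ric(\nu,\nu) = -(n-1) + O(e^{-kr})$, and the one for $\langle\pl_r,\nu\rangle$ from Proposition \ref{evo} specialized to asymptotics (a).

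The decisive algebraic point is that in $\pl_t\chi$ the two reaction terms proportional to $|A|^2$ — namely $-|A|^2/H$ coming from $\pl_t H$ and $+|A|^2\langle\pl_r,\nu\rangle/H^2$ coming from $\pl_t\langle\pl_r,\nu\rangle$ — cancel identically. After converting the principal part into $\frac{1}{H^2}\Delta\chi$ and collecting, one is left with a reaction--diffusion inequality of Bernoulli type. Writing everything in terms of $\chi$ and $\tilde v\approx 1$, the reaction is $\frac{2\chi^2}{\tilde v^2} - \frac{2(n-1)\chi^3}{\tilde v^2}$ up to errors; the cubic has the favorable sign and its fixed point $\chi_* = \frac{1}{n-1}$ matches the coordinate slices ($H=n-1$, $\langle\pl_r,\nu\rangle=1$). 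Thus at an interior spatial maximum of $\chi$, Hamilton's trick would give
\[
\tfrac{d}{dt}\max_{\Sigma_t}\chi \le 2\chi^2 - 2(n-1)\chi^3 + \mathcal R,
\]
where $\mathcal R$ is a first-order remainder.

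The main obstacle is controlling $\mathcal R$. All the genuine gradient terms assemble into the transport term $\frac{2}{H^2}\na r\cdot\na\chi$, which vanishes at the maximum, except for one stubborn leftover of the form $\frac{2}{H^3}\na r\cdot\na\tilde v$, equivalently $\frac{2}{H}\big(A(\na r,\na r) - \langle\pl_r,\nu\rangle|\na r|^2\big)$ after using \eqref{nabla_Drnu}. Because the $|A|^2$ reaction has already cancelled, there is no curvature term left to absorb $A(\na r,\na r)$ by Young's inequality, so this term must be handled by hand. Here the sharp star-shapedness enters: $|\na^T r|^2 = 1 - \langle\pl_r,\nu\rangle^2 < 1 - \tfrac89 = \tfrac19$, which renders the $-\langle\pl_r,\nu\rangle|\na r|^2$ contribution harmless and, more importantly, forces $\na r$ into a narrow cone so that $A(\na r,\na r)$ can be reabsorbed into the good cubic $-2(n-1)\chi^3$ (using also $\Delta\chi\le 0$ at the maximum and, for $t$ large, the smallness of the $O(e^{-kr})$ errors via \eqref{C^0_r}). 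I expect this absorption — pinning down that the constant $\frac{2\sqrt2}{3}$ is exactly what keeps the net cubic coefficient negative — to be the delicate heart of the argument, mirroring Lemma 4.1 of \cite{LW17}; a time-dependent modification $\chi\mapsto(1+Le^{-lt})\chi$ as in Proposition \ref{starshape} may be needed to close it cleanly.

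Finally, once the differential inequality $\frac{d}{dt}\max\chi \le 2\chi^2 - 2(n-1)\chi^3$ (with negligible errors for $t\ge T$) is in hand, I would compare with the scalar ODE $y' = 2y^2 - 2(n-1)y^3$. Its solution issuing from $y(T^+)=+\infty$ exists for all $s=t-T>0$ because the cubic forces instantaneous regularization: near infinity $y'\approx -2(n-1)y^3$ gives $y\sim(4(n-1)s)^{-1/2}$, while $y\to\frac{1}{n-1}$ as $s\to\infty$. Hence $\max_{\Sigma_t}\chi \le C\max\{(t-T)^{-1/2},1\}$ independently of the possibly degenerate initial data at $t=T$, and therefore
\[
H = \frac{1}{\langle\pl_r,\nu\rangle\,\chi} \ge C\min\{(t-T)^{1/2},1\}
\]
on $\Sigma_t$ for $t>T$, as claimed.
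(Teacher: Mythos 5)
Your overall strategy (a parabolic maximum principle for ``$H$ times a star-shapedness factor'') is the right family of ideas, but the specific quantity $\chi = (H\langle \pl_r,\nu\rangle)^{-1}$ does not close, and the step you yourself flag as ``the delicate heart'' is a genuine gap rather than a technicality. After the $|A|^2$ reaction terms cancel between $\pl_t H$ and $\pl_t\langle\pl_r,\nu\rangle$, the leftover first-order terms are not purely transport in $\chi$: by \eqref{nabla_Drnu}, $\na\langle\pl_r,\nu\rangle$ contains the piece $h_i^j\langle\pl_r,\pl_j\rangle$, so the term $\frac{2}{H}\frac{\lambda'}{\lambda}\na r\cdot\na\langle\pl_r,\nu\rangle$ contributes something of size $\frac{C}{H}\,|A|\,|\pl_r^T|^2$, which is \emph{linear} in $|A|$. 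Since $|A|$ is not a priori controlled by $H$ (that bound is exactly what the subsequent proposition of the paper establishes, and only \emph{after} the lower bound on $H$ is known), no choice of the threshold $\frac{2\sqrt2}{3}$ — i.e.\ no narrowing of the cone containing $\na r$ — can absorb a term linear in the unbounded quantity $|A|$ into the reaction terms $2\chi^2 - 2(n-1)\chi^3$, which contain no $|A|$ at all. Young's inequality is unavailable precisely because, as you observe, the quadratic $|A|^2$ reaction has already been cancelled; that cancellation is the fatal feature of the first-power quantity, not a virtue.

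The paper's proof (following Lemma 4.1 of \cite{LW17}) avoids this by working with $u = \bigl(H\langle X,\nu\rangle^2\bigr)^{-1}$, $X = e^r\pl_r$: the \emph{square} is essential. Differentiating $\langle X,\nu\rangle^2$ produces the reaction term with coefficient two, $2|A|^2\langle X,\nu\rangle^2$, alongside the gradient term $-2|\na\langle X,\nu\rangle|^2$; since $\na\langle X,\nu\rangle = A(X^T,\cdot) + O(e^{(1-k)r})$, Cauchy--Schwarz bounds the latter by $4|A|^2|X^T|^2$ plus errors, and the hypothesis $\langle\pl_r,\nu\rangle > \frac{2\sqrt2}{3}$ enters exactly here: it yields $|X^T|^2 < \frac18\langle X,\nu\rangle^2$, so the gradient term costs at most $\frac12|A|^2\langle X,\nu\rangle^2$ and a full unit of $|A|^2\langle X,\nu\rangle^2$ survives. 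That surviving unit is what cancels the $-|A|^2\langle X,\nu\rangle^2/H$ coming from $\pl_t H$, leaving a clean inequality for $H\langle X,\nu\rangle^2$ with no stray curvature terms. (The paper then finishes with a Stampacchia truncation $v_K = \max(v-K,0)$ applied to $v = (t-t_0)^{1/2}u$; your ODE comparison with $y' = 2y^2 - 2(n-1)y^3$ starting from $y(T^+)=+\infty$ would be an acceptable alternative endgame \emph{if} the differential inequality held, but for your $\chi$ it does not.) To repair the argument, replace $H\langle\pl_r,\nu\rangle$ by $H\langle X,\nu\rangle^2$ before attempting the maximum principle.
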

\begin{proof}
Let $X = e^r \pl_r$. As in Proposition \ref{evo}, one derives  
\begin{align}\label{dXnu}
\pl_i \langle X,\nu \rangle = \langle X, h_i^j \pl_j \rangle + \varepsilon_1(\pl_i,\nu)
\end{align}
for some $(0,2)$ tensor $\varepsilon_1 = O(e^{(1-k)r})$ and the evolution equation
\begin{align*}
\pl_t \langle X,\nu \rangle &= \frac{1}{H^2} \lt( \Delta_\Sigma \langle X,\nu \rangle + |A|^2 \langle X,\nu \rangle + A * O(e^{(1-k)r}) + O(e^{(1-k)r}) \rt)
\end{align*}
where $R(X^T,\nu)$ term is absorbed into $O(e^{(1-k)r})$. By Cauchy-Schwarz, we have $A * O(e^{(1-k)r}) \ge - |A|^2 \langle X,\nu \rangle e^{-kr} - O(e^{(1-k)r}) $ and hence
\begin{align*}
\pl_t \langle X,\nu \rangle^2 &\ge \frac{1}{H^2} \lt( \Delta_\Sigma \langle X,\nu \rangle^2 - 2 |\na \langle X,\nu \rangle|^2 + 2(1 - O(e^{-kr})) |A|^2 \langle X,\nu \rangle^2 - O(e^{(2-k)r}) \rt).
\end{align*}
By \eqref{dXnu}, we have $|\na \langle X,\nu \rangle|^2 \le 2 |A|^2 |X^T|^2 + O(e^{(2-2k)r})$. Moreover the assumption $\langle \pl_r,\nu \rangle > \frac{2\sqrt{2}}{3}$ implies that $|X^T|^2 < \frac{1}{8} \langle X,\nu \rangle^2$ for $t$ sufficiently large. Therefore, we have
\begin{align*}
\pl_t \langle X,\nu \rangle^2 \ge \frac{1}{H^2} \lt( \Delta_\Sigma \langle X,\nu \rangle^2 + |A|^2 \langle X,\nu \rangle^2 - O(e^{(2-k)r}) \rt)
\end{align*}
and, combining with the evolution equation of $H$, \begin{align*}
\pl_t \lt( H \langle X,\nu \rangle^2	\rt) &\ge \frac{1}{H^2} \lt( \Delta_\Sigma H \cdot \langle X,\nu \rangle^2  +  H \Delta_\Sigma \langle X,\nu \rangle^2\rt) - \frac{2}{H^3} |\na H|^2 \langle X,\nu \rangle^2 \\
&\qquad + \frac{1}{H} \lt( - R(\nu,\nu)\langle X,\nu \rangle^2 - O(e^{(2-k)r}) \rt) \\
&= \frac{1}{H^2} \Delta_\Sigma (H \langle X,\nu \rangle^2) - 2 \frac{\na H}{H^3} \cdot \na \lt( H\langle X,\nu \rangle^2 \rt) + (n-1-O(e^{-kr})) \frac{\langle X,\nu \rangle^2 }{H}
\end{align*}
for $t$ sufficiently large, say $t \ge T$.
Let $u = \lt( H \langle X,\nu \rangle^2 \rt)^{-1}$. We conclude that
\begin{align*}
\pl_t u \le \mbox{div} (H^{-2}\na u) -2 H^{-2} u^{-1} |\na u|^2 - (n-1 - O(e^{-kr})) u^3 \langle X,\nu \rangle^4 
\end{align*}
for $t \ge T$. 

To simplify notation, set $\Sigma'_t = \Sigma_{T + t}$.
For an arbitrary $t_0 > 0$, let $v = (t-t_0)^{\frac{1}{2}} u$ and we get
\begin{align*}
\pl_t v &\le \mbox{div} (H^{-2} \na v) - (t-t_0)^{\frac{1}{2}} \cdot 2 H^{-2} v^{-1}|\na v|^2 \\
&\quad + \frac{1}{2} (t-t_0)^{-1} v - \lt( n-1-O(e^{-kr}) \rt) v^3 \langle X,\nu \rangle^4.
\end{align*}

Let $v_K = \max (v-K,0)$ for $K \ge 0$ and let $A(K) = \{ x \in \Sigma'_t, v(x,t) > K \}$. Multiplying by $v_K$ and integrating by parts, we get 
\begin{align*}
\frac{d}{dt} \int_{\Sigma'_t} v_K^2 d\sigma &\le (t-t_0)^{-1} \int_{A(K)} vv_K d\sigma + \int_{A(K)} v_K^2 d\sigma \\
&\quad - 2\lt( n-1 - O(e^{-kr}) \rt) (t-t_0)^{-1} \int_{A(K)} v^3 v_K \langle X,\nu \rangle^4 d\sigma.
\end{align*}
By Proposition \ref{starshape}, we have $\langle X,\nu \rangle \ge c e^{\frac{t+T}{n-1}} \le c e^{\frac{t}{n-1}}$. Using $v > K$ on $A(K)$ and $v \ge v_K$, we obtain
\begin{align*}
\frac{d}{dt} \int_{\Sigma_t} v_K^2 d\sigma &\le (t-t_0)^{-1} \int_{A(K)} vv_K d\sigma + \int_{A(K)} v_K^2 d\sigma \\
&\quad - 2  \lt( n-1 - O(e^{-kr}) \rt) (t-t_0)^{-1} K^2 c^4 e^{\frac{4}{n-1} t} \int_{A(K)} v v_K d\sigma \\
&\le 0
\end{align*}
on the interval $[t_0,t_1]$ provided
\begin{align}\label{conditionK}
K^2 \ge K_0^2 = \frac{1}{n-1 - O(e^{-kr})} c^{-4} e^{-\frac{4}{n-1}t_0} \max (t_1-t_0,1). 
\end{align} 
By definition $v_K(x,t_0) \equiv 0$ and we get $v_K(x,t) \equiv 0$ for all $t \in [t_0,t_1]$ if \eqref{conditionK} holds.

Separate into small and large times:
\begin{itemize}
\item If $t_1 \le 2$, choose $t_0 = \frac{t_1}{2} \le 1$ and then \[ K_0^2 = \frac{1}{n-1 - O(e^{-kr})} c^{-4} e^{-\frac{4}{n-1}t_0}. \]
From the definition of $v_K$, we see that
\begin{align*}
\sup_{\Sigma'_{t_1}} u \le C t_1^{-\frac{1}{2}} e^{-\frac{2}{n-1} t_1} 
\end{align*}
since $t_0 = \frac{t_1}{2} \le 1$.
\item If $t_1 \ge 2$, choose $t_0 = t_1 -1 \ge 1$ and then
\begin{align*}
K_0^2 = \frac{1}{n-1 - O(e^{-kr})} c^{-4} e^{-\frac{4}{n-1}t_0}.
\end{align*}
We see that
\begin{align*}
\sup_{\Sigma'_{t_1}} u \le C e^{-\frac{2}{n-1} t_1}.
\end{align*}
\end{itemize}
Therefore, in any case we have
\[ \sup_{\Sigma'_t} u \le C e^{-\frac{2}{n-1}t} \max (t^{\frac{1}{2}}, 1). \]
From the definition $u = (H \langle X,\nu \rangle^2)^{-1}$ and $\langle X,\nu \rangle \ge c e^{\frac{t}{n-1}}$, we obtain
\begin{align*}
\min_{\Sigma_t} H \ge C \min ((t-T)^{\frac{1}{2}}, 1).
\end{align*} 
for $t > T$.
\end{proof}

\begin{prop}\cite[Proposition 3.6]{BHW12}
Suppose the mean curvature is bounded from above and below
\[ 0 < H_0 \le H \le H_1 \] by positive constants $H_0$ and $H_1$. Then the second fundamental form is bounded.
\end{prop}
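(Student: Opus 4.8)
The plan is to bound the full second fundamental form by controlling the scale--invariant quantity $\varphi = |A|^2 H^{-2}$ along the flow: since $0 < H_0 \le H \le H_1$, a bound on $\varphi$ is equivalent to a bound on $|A|$. Because each $\Sigma_t$ is closed, the spatial maximum of $\varphi$ is attained, so the natural tool is a parabolic maximum principle for $\varphi$, fed by the two--sided pinching of $H$ and by the boundedness of the ambient curvature. The latter holds in our setting: the sectional curvatures of $(M,g)$ are $-1 + O(e^{-kr})$, and both $\overline{\Rm}$ and $\bar\na\,\overline{\Rm}$ are bounded on the region swept out by the flow, using the decay of $q$ and its derivatives.

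First I would record the evolution equations under IMCF $\pl_t X = H^{-1}\nu$, namely $\pl_t g_{ij} = 2H^{-1}h_{ij}$ and $\pl_t\nu = H^{-2}\na H$, together with the evolution of the Weingarten map obtained from Simons' identity,
\[
\pl_t h_i^j = \frac{1}{H^2}\Delta h_i^j + \frac{|A|^2}{H^2}h_i^j - \frac{2}{H}(h^2)_i^j - \frac{2}{H^3}\na_i H\,\na^j H + \frac{1}{H^2}\overline{\RR}_i^j,
\]
where $\overline{\RR}_i^j$ gathers the ambient curvature terms; one checks the coefficients against the round sphere $\Sigma_t$ of radius $R(t)=R_0 e^{t/(n-1)}$, for which $h_i^j = R^{-1}\delta_i^j$ evolves by $-\frac{1}{(n-1)R}\delta_i^j$. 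Contracting and combining with the evolution of $H$, the scale--invariant quantity satisfies a parabolic equation of the schematic form
\[
\pl_t\varphi = \frac{1}{H^2}\Delta\varphi + \langle b,\na\varphi\rangle - \frac{2}{H^4}\,\mathcal G + \frac{1}{H^2}\mathcal E,
\]
with drift $b = 2H^{-1}\na H$, a nonnegative gradient term $\mathcal G = |H\na A - \na H \otimes A|^2 + \dots \ge 0$ coming from the good term $-\tfrac{2}{H^2}|\na A|^2$, and $\mathcal E = O(\varphi + 1)$ absorbing the bounded ambient curvature via the $H$--bounds. The purely algebraic reaction collapses, in dimension $n=3$ with principal curvatures $\kappa_1 \le \kappa_2$, to $-4H^{-4}\kappa_1\kappa_2(\kappa_1-\kappa_2)^2$.

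Finally I would run the maximum principle on $\varphi$. Note first that if $\Sigma_t$ is convex at the maximum point ($\kappa_1 \ge 0$), then automatically $0 \le \kappa_i \le H \le H_1$ and there is nothing to prove; the entire difficulty is the saddle regime $\kappa_1 < 0 < \kappa_2$, in which the pinching $H \le H_1$ forces $\kappa_1 \approx -\kappa_2$ as $\kappa_2 \to \infty$, so $|A|^2 \approx 2\kappa_2^2$ and the algebraic reaction $-4H^{-4}\kappa_1\kappa_2(\kappa_1-\kappa_2)^2$ is \emph{positive} of order $H^{-4}|A|^4$. Controlling this indefinite term is exactly where the favorable gradient term $\mathcal G$ must enter: using the constraint $\na\varphi = 0$ at the maximum to relate $\na A$ and $\na H$, a Kato--type inequality should show that $-\tfrac{2}{H^4}\mathcal G$ dominates the reaction, giving $\pl_t\max_{\Sigma_t}\varphi \le \tfrac{C}{H_0^2}(\max_{\Sigma_t}\varphi + 1)$ and hence a bound on $|A|$ over any finite time interval. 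For a bound uniform in $t$ I would instead run a De Giorgi--Stampacchia iteration on the super--level sets of $\varphi$, using $\mathcal G$ as Dirichlet energy in the Michael--Simon Sobolev inequality (whose constant is controlled by $H \le H_1$) together with the area growth $|\Sigma_t| = e^t|\Sigma_0|$. I expect this interplay between the indefinite saddle reaction and the gradient term to be the delicate heart of the estimate; the evolution equations, the sphere check, and the ambient--curvature bounds are routine by comparison.
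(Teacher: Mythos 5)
The paper offers no proof of this proposition; it is quoted verbatim from \cite{BHW12} (Proposition 3.6), so what you are really being asked to reconstruct is that argument. Your computation of the reaction term for $\varphi=|A|^2H^{-2}$ is correct (the zeroth-order part in the surface case is indeed $-4H^{-4}\kappa_1\kappa_2(\kappa_1-\kappa_2)^2$ up to bounded ambient-curvature corrections), and you have correctly located the difficulty in the saddle regime. But the step you propose to resolve it does not work. At an interior spatial maximum of $\varphi$ the condition $\nabla\varphi=0$ only ties $\nabla|A|^2$ to $\nabla H$; it gives no lower bound on $|\nabla A|^2$ in terms of the \emph{zeroth-order} quantity $\varphi^2$. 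All gradients can be arbitrarily small at the maximum point while the reaction is of size $4\varphi^2$, so no Kato-type inequality can make $-2H^{-4}\mathcal G$ dominate a positive algebraic reaction there. The honest conclusion of your maximum principle is the Riccati inequality $\frac{d}{dt}\max\varphi\le C\,(\max\varphi)^2+C$, which gives only short-time control and is inconsistent with the linear inequality $\partial_t\max\varphi\le CH_0^{-2}(\max\varphi+1)$ you write down. The Stampacchia/Michael--Simon fallback has the same problem: the quadratic nonlinearity requires a smallness input that is not available.

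The missing ingredient is the star-shapedness, which is a standing hypothesis in this subsection: Proposition \ref{starshape} and the theorem preceding the statement guarantee $\langle\partial_r,\nu\rangle\ge c>0$ along the flow. The proof in \cite{BHW12} is the Tso/Gerhardt--Urbas device: one applies the maximum principle not to $|A|^2H^{-2}$ but to the largest principal curvature tested against the support function $u=\langle\lambda\partial_r,\nu\rangle$ (e.g.\ to $\log\lambda_{\max}-\log u$, or to $\lambda_{\max}/(u-a)$). Since $u$ satisfies
\begin{align*}
\partial_t u=\frac{1}{H^2}\Delta_\Sigma u+\frac{|A|^2}{H^2}\,u-\frac{2\lambda'}{H}+\cdots,
\end{align*}
the dangerous coefficient $+|A|^2H^{-2}$ appears identically in the evolutions of $\lambda_{\max}$ and of $u$ and cancels in the difference of logarithms; what remains is a reaction of the form $-2\lambda_{\max}/H+2\lambda'/(Hu)+\cdots$, which is strictly negative once $\lambda_{\max}$ is large because $H\le H_1$ and $u\ge c\lambda>0$. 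This is what closes the estimate. (A speed bound alone does imply a curvature bound without star-shapedness, but only via the much heavier interior estimates of Huisken--Ilmanen, not via a pointwise maximum principle on $|A|^2H^{-2}$.) So your proposal, as written, has a genuine gap precisely at the step you flag as the ``delicate heart.''
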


\begin{proof}[Proof of Theorem \ref{main_regularity}]
By the above proposition and Krylov's regularity theorem, the smooth solution to IMCF $\Sigma_{t_i,\tau,t-t_i}$ exists for all time $0 \le t-t_i < \infty$ and is uniformly bounded independent of $\tau$. Let $\tau \rw 0$ and we get a solution of IMCF that starts from $\Sigma_{t_i}$ and is smooth for positive time. By the uniqueness of IMCF, this flow coincides with the original weak flow.
\end{proof}

\section{The Asymptotic Behavior of the Flow as $t \rw \infty$}\label{asymptotic behavior}
In this section, we evaluate the limit of $Q(t)$ defined in Section 3, following \cite{BHW12} or Section 5 of \cite{GWWX13}. The asymptotics we work with are the higher dimensional analogue of \eqref{ah1}.

\noindent{\bf Assumption.}
Let $(\widehat\Sigma, \widehat g)$ be an $(n-1)$-dimensional closed \underline{oriented} space form with sectional curvature $ \widehat k \in \{ 1,0,-1\}$. Then the metric
\[ \bar g = \frac{1}{\rho^2 + \widehat k} d\rho^2 + \rho^2 \widehat g \]
defined on $(1,\infty) \times \widehat\Sigma$ has constant sectional curvature $-1$. Let $g$ be a metric on $(1,\infty) \times \widehat\Sigma$ satisfying
\begin{align*}
g = \bar g + q
\end{align*}
with $q = O_3(\rho^{-\alpha})$ for some $\alpha > 2$. 

Consider smooth graphical solutions $F: \widehat\Sigma \times [T,\infty) \rw (1,\infty) \times \widehat\Sigma$ to IMCF given by the function $r(t,\theta), \theta \in \widehat\Sigma$. We may assume there exist positive constants $c$ and $C$ such that 
\begin{align}\label{assumption for asymptotic analysis}
H \ge c ,  \langle \pl_r,\nu \rangle \ge c, |A| \le C,
\end{align}
which will be improved momentarily, on $\Sigma_t$ in addition to
\[ c_1 e^{\frac{t}{n-1}} \le e^r \le c_2 e^{\frac{t}{n-1}}. \] 

First we analyze the asymptotic behavior of $\langle \pl_r,\nu \rangle$ and $H$.

\begin{prop}\label{asymp_C1} We have $\langle \pl_r,\nu \rangle = 1 + O(e^{-\frac{2}{n-1}t})$. 
\end{prop}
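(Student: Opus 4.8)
The plan is to run the parabolic maximum principle on the evolution equation for $w := \langle \pl_r,\nu\rangle$ furnished by Proposition \ref{evo}, specialized to the asymptotics of this section, where $\mu := \lambda'/\lambda = 1+O(e^{-2r})$, $e^r \sim c\,e^{t/(n-1)}$ on $\Sigma_t$, and the genuinely perturbative error terms are $O(e^{-kr})$ with $k=\alpha>2$. The trivial half of the estimate is the upper bound: Cauchy--Schwarz gives $w \le |\pl_r|_g = 1+O(e^{-kr}) = 1 + O(e^{-\frac{k}{n-1}t})$, which is stronger than needed since $k>2$. Thus the content is the lower bound $w \ge 1 - O(e^{-\frac{2}{n-1}t})$, for which I would study $\phi := 1 - w$ and bound $\max_{\Sigma_t}\phi$, invoking Hamilton's trick to legitimize differentiating $t\mapsto\max_{\Sigma_t}\phi$ and using the uniform bounds $H\in[c,C]$, $w\ge c$, $|A|\le C$ from \eqref{assumption for asymptotic analysis}.

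At a spatial maximum of $\phi$ (equivalently a minimum of $w$) the first-order terms of Proposition \ref{evo} drop out and $\Delta_\Sigma w \ge 0$. Substituting the near-infinity identities $\tfrac{\pl r}{\pl\nu} = w + O(e^{-kr})$ and $|\na^\Sigma r|^2 = 1 - w^2 + O(e^{-kr})$ (both coming from $\pl_r$ being asymptotically the unit outward conormal, exactly as in the proof of Proposition \ref{starshape}), the reaction part of $H^2\,\pl_t w$ becomes
$$(n-1)\mu^2 w - 2\mu H w^2 + |A|^2 w + (\mu^2-1)(1-w^2)w + O(e^{-kr}).$$
The decisive manipulation is to discard the $\na H$ contributions and apply Cauchy--Schwarz $|A|^2 \ge \tfrac{H^2}{n-1}$, then complete the square:
$$(n-1)\mu^2 w - 2\mu H w^2 + |A|^2 w \ge \tfrac{w}{n-1}\big(H-(n-1)\mu w\big)^2 + (n-1)\mu^2 w(1-w^2).$$
The whole point of completing the square \emph{with $\mu$ retained} is that the leftover reaction is proportional to $(1-w^2) = \phi(1+w)$, so no $\mu$-correction survives as a standalone error of size $e^{-2r}$: every such term is either absorbed into the square or multiplied by $(1-w^2)$. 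Collecting everything and dropping the nonnegative square gives, at the maximum point,
$$H^2\,\pl_t w \ge \big(n\mu^2-1\big)w(1-w^2) + O(e^{-kr}),$$
and after dividing by $H^2$ and using $w,\mu\to1$, $H\to n-1$,
$$\tfrac{d}{dt}\max_{\Sigma_t}\phi \le -\big(\tfrac{2}{n-1}+o(1)\big)\phi + O\!\big(e^{-\frac{k}{n-1}t}\big).$$

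The main obstacle is that the effective decay constant tends to \emph{exactly} $\tfrac{2}{n-1}$, the same rate as the target, so the comparison is borderline; the saving grace is that the only genuine forcing decays at the strictly faster rate $e^{-\frac{k}{n-1}t}$ with $k=\alpha>2$. I would resolve this by a two-step argument. First, since $w\to1$ and $\mu\to1$ keep the effective coefficient above a fixed positive constant for large $t$, a crude Gronwall yields $\phi = O(e^{-at})$ for some $a>0$, hence $\phi\in L^1([T,\infty))$. Consequently the $o(1)$ correction to the decay constant, which is $O(\phi)+O(e^{-2r})$, is integrable in $t$, so the integrating factor $\exp\!\int(\tfrac{2}{n-1}-\delta)$ is comparable to $e^{\frac{2}{n-1}t}$; feeding this into variation of constants and using $\int e^{\frac{2}{n-1}s}e^{-\frac{k}{n-1}s}\,ds<\infty$ (again because $k>2$) closes the estimate at the sharp rate $\phi = O(e^{-\frac{2}{n-1}t})$, which combined with the trivial upper bound proves $\langle\pl_r,\nu\rangle = 1 + O(e^{-\frac{2}{n-1}t})$.
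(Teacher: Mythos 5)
Your overall strategy (parabolic maximum principle on the evolution equation of Proposition \ref{evo}, completing the square against $|A|^2 \ge H^2/(n-1)$, and a Gronwall argument at the borderline rate) is the same as the paper's, and your algebra is correct as far as it goes. But there is one genuine gap: in the final step you divide by $H^2$ and invoke ``$H\to n-1$'' to identify the effective decay constant as $\tfrac{2}{n-1}$. In this paper that fact is \emph{not available yet}: the asymptotics $H\le n-1+O(e^{-\frac{2}{n-1}t})$ and $H=n-1+O(te^{-\frac{2}{n-1}t})$ are Propositions stated and proved \emph{after} this one, and the lower bound on $H$ explicitly uses the present proposition, so quoting the full statement is circular. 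With only the a priori bounds $c\le H\le C$ from \eqref{assumption for asymptotic analysis}, your linear coefficient $\bigl(n\mu^2-1\bigr)w(1+w)/H^2$ could be strictly smaller than $\tfrac{2}{n-1}$ (if $H$ exceeds $n-1$ by a definite amount), and your Gronwall then only yields a suboptimal rate. The gap is reparable: what you actually need is the one-sided bound $H\le n-1+O(e^{-bt})$ with $b>0$, and this can be obtained \emph{independently} by a maximum-principle argument on $H$ alone, using $\pl_t H=-\Delta_\Sigma H^{-1}-(|A|^2+\mathrm{Ric}(\nu,\nu))H^{-1}$, $|A|^2\ge H^2/(n-1)$ and $\mathrm{Ric}(\nu,\nu)=-(n-1)+O(e^{-\alpha r})$ — but you must supply this ingredient rather than assert the limit.

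It is worth seeing why the paper avoids this issue entirely: instead of $\phi=1-\langle\pl_r,\nu\rangle$ it works with $\beta=\langle\pl_r,\nu\rangle^{-2}-1$. With that substitution the completion of the square produces $-\tfrac{2}{n-1}\bigl(1-\tfrac{(n-1)\lambda'}{\lambda H}\langle\pl_r,\nu\rangle^{-1}\bigr)^2-\tfrac{2}{n-1}\beta$ plus a nonpositive $|\mathring A|^2$ term, i.e.\ the coefficient of the linear term is the \emph{exact constant} $-\tfrac{2}{n-1}$, with all dependence on $H$, $\langle\pl_r,\nu\rangle$ and $\lambda'/\lambda$ swallowed by the perfect square; the remaining corrections are $\beta\cdot O(e^{-2r})+O(e^{-\alpha r})$, both integrable after one pass. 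Consequently neither your two-step bootstrap for $w\to1$ nor any asymptotics for $H$ is needed. Your bootstrap for the $o(1)$ corrections coming from $w$ and $\mu$ is fine, and your observation that the forcing decays at the strictly faster rate $e^{-\frac{\alpha}{n-1}t}$ (so the borderline ODE still closes) is exactly the point the paper uses implicitly; the only missing piece is the justification of the upper bound on $H$.
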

\begin{proof}
Since the function $|D\vphi|^2_{S^{n-1}}$ used in \cite{BHW12} is equal to $\langle \pl_r,\nu \rangle^{-2} -1$ there, we work with $\langle \pl_r,\nu \rangle^{-2} -1$. From \eqref{assumption for asymptotic analysis} and $|\na r|^2 = 1 - (\frac{\pl r}{\pl\nu})^2 + O(e^{-\alpha r})$, the evolution equation simplifies to
\begin{align*}
\pl_t \langle \pl_r,\nu \rangle = \frac{1}{H^2} \Big( &\Delta_\Sigma \langle \pl_r ,\nu \rangle + 2 \frac{\lambda'}{\lambda} \na r \cdot \na \langle \pl_r,\nu \rangle \\
&\quad +(n-1) \lt( \frac{\lambda'}{\lambda}\rt)^2 \langle \pl_r,\nu \rangle - 2 \frac{\lambda'}{\lambda} H \frac{\pl r}{\pl\nu}\langle \pl_r,\nu \rangle + |A|^2 \langle \pl_r,\nu\rangle \\
&\quad + \frac{\widehat k}{\lambda^2} \lt( 1 - (\frac{\pl r}{\pl\nu})^2 \rt) \langle \pl_r,\nu \rangle + O(e^{-\alpha r}) \Big). 
\end{align*}
Since $\frac{\pl r}{\pl\nu} = \langle \pl_r,\nu \rangle + O(e^{-\alpha r})$, $\frac{\pl r}{\pl\nu}$ can be replaced by $\langle \pl_r,\nu \rangle$. Direct computation yields 
\begin{align*}
\pl_t \langle \pl_r,\nu \rangle^{-2} &= \frac{1}{H^2} \lt( \Delta_\Sigma \langle \pl_r, \nu \rangle^{-2}  -6 \langle \pl_r,\nu \rangle^{-4} |\na \langle \pl_r,\nu \rangle|^2 - 4 \langle \pl_r,\nu \rangle^{-3} \frac{\na\lambda}{\lambda} \cdot \na \langle \pl_r,\nu \rangle \rt)\\
&\quad - \frac{2}{H^2} \langle \pl_r,\nu \rangle^{-3} \lt( -2 \frac{\lambda'}{\lambda} H \langle \pl_r,\nu \rangle^2 + (n-1) \lt( \frac{\lambda'}{\lambda}\rt)^2 \langle \pl_r,\nu \rangle + |A|^2 \langle \pl_r,\nu \rangle \rt) \\
&\quad - \frac{2\widehat k}{H^2} \langle \pl_r,\nu \rangle^{-2} \cdot \frac{1}{\lambda^2} \lt( 1 - \langle \pl_r,\nu \rangle^2 \rt) + O(e^{-\alpha r}).
\end{align*}
The second line is
\begin{align*}
- \frac{2}{n-1} \lt( 1 -  \frac{n-1}{H} \frac{\lambda'}{\lambda} \langle \pl_r,\nu \rangle^{-1}  \rt)^2 -\frac{2}{n-1} \lt( \langle \pl_r,\nu \rangle^{-2}-1 \rt) - \frac{2}{H^2} \langle \pl_r,\nu \rangle^{-3} |\mathring A|^2 
\end{align*}
and the third line is
\[ ( \langle \pl_r,\nu \rangle^{-2} -1) \cdot O(e^{-2r}) + O(e^{-\alpha r}). \]
Let $\beta = \langle \pl_r,\nu \rangle^{-2} -1$ and we get
\[ \pl_t \beta \le \frac{1}{H^2} \Delta_\Sigma \beta + G \cdot \na \beta + \lt( -\frac{2}{n-1} + O(e^{-2r}) \rt)\beta + O(e^{-\alpha r}). \] 
Recall $\alpha > 2$, and the maximum principle implies
\begin{align*}
\max_{\Sigma_t} \beta \le O( e^{\frac{-2}{n-1}t}),
\end{align*}
where we used $e^r \sim e^{\frac{t}{n-1}}$. Finally, since $\langle \pl_r,\nu\rangle^{-2}-1 \ge -O(e^{-\alpha r})$, we obtain
\[ \langle \pl_r,\nu \rangle^{-2} = 1 + O(e^{\frac{-2}{n-1}t}). \]
\end{proof}

\begin{prop}\cite[Proposition 3.2]{BHW12}\label{upperH}
We have $H \le n-1 + O(e^{-\frac{2}{n-1}t})$
\end{prop}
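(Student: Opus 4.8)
The plan is to run a maximum-principle argument directly on the evolution equation of the mean curvature $H$, reduce the resulting PDE to a scalar ODE for $\phi(t):=\max_{\Sigma_t}H$, and then read off the sharp decay rate from the linearization of that ODE at its stable equilibrium $H=n-1$. First I would record the evolution equation of $H$ under IMCF. From the first-variation computation already used in the proof of Lemma \ref{upperlower} (equivalently, from $\pl_t(H^2)=-2H\Delta_\Sigma H^{-1}-2|A|^2-2\Ric(\nu,\nu)$) one obtains
\begin{equation*}
\pl_t H=\frac{1}{H^2}\Delta_\Sigma H-\frac{2}{H^3}|\na H|^2-\frac{|A|^2}{H}-\frac{\Ric(\nu,\nu)}{H}.
\end{equation*}
Two elementary inputs control the zeroth-order terms: the Cauchy--Schwarz inequality $|A|^2\ge\frac{1}{n-1}H^2$ on the $(n-1)$-dimensional hypersurface $\Sigma_t$, and the asymptotic identity $\Ric(\nu,\nu)=-(n-1)+O(e^{-\alpha r})$, which holds for \emph{any} unit normal since the ambient Ricci tensor equals $-(n-1)g+O(e^{-\alpha r})$ (consistent with the value $\Ric(\nu,\nu)=-2$ for $n=3$ used in Lemma \ref{upperlower}). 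I emphasize that, in contrast to Proposition \ref{asymp_C1}, this step does not require $\langle\pl_r,\nu\rangle$ to be close to $1$, because $\Ric(\nu,\nu)$ is direction-independent to leading order.

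Next I would invoke Hamilton's maximum principle. Since $\widehat\Sigma$ is compact and the flow is smooth and graphical, $H$ attains its maximum on each $\Sigma_t$; evaluating the evolution equation at a spatial maximum (where $\na H=0$ and $\Delta_\Sigma H\le0$) and inserting the two bounds above gives, in the sense of upper Dini derivatives,
\begin{equation*}
\frac{d}{dt}\phi\le-\frac{\phi}{n-1}+\frac{n-1}{\phi}+O\!\left(e^{-\frac{\alpha}{n-1}t}\right),
\end{equation*}
where I used $\phi\ge c>0$ from \eqref{assumption for asymptotic analysis} to absorb the Ricci error into the forcing and $e^r\sim e^{\frac{t}{n-1}}$ to convert $O(e^{-\alpha r})$ into $O(e^{-\frac{\alpha}{n-1}t})$. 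The reaction term $F(\phi)=-\frac{\phi}{n-1}+\frac{n-1}{\phi}$ vanishes exactly at $\phi=n-1$, is negative for $\phi>n-1$, and satisfies $F'(n-1)=-\frac{2}{n-1}$, which is precisely the claimed decay rate.

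Finally I would analyze this scalar ODE by comparison, in two stages. A soft step first shows $\limsup_t\phi\le n-1$: if $\phi$ remained above $n-1+\eta$ then $F(\phi)$ would be bounded away from $0$ by a fixed negative constant, eventually dominating the decaying forcing and forcing $\phi$ to decrease. Once $\phi=(n-1)+\psi$ with $\psi$ small, Taylor expansion of $F$ yields $\psi'\le-\frac{2}{n-1}\psi+c_1\psi^2+c_2 e^{-\frac{\alpha}{n-1}t}$. Bounding $\psi^2\le\eta\psi$ and comparing gives an initial rate $\psi=O(e^{-a't})$ with $a'$ close to $\frac{2}{n-1}$ (in particular $a'>\frac{1}{n-1}$); a single bootstrap, feeding $\psi=O(e^{-a't})$ back into the quadratic term, then upgrades the rate to exactly $\frac{2}{n-1}$, since both the quadratic error ($\sim e^{-2a't}$, with $2a'>\frac{2}{n-1}$) and the metric forcing ($\sim e^{-\frac{\alpha}{n-1}t}$, with $\alpha>2$) decay strictly faster than $e^{-\frac{2}{n-1}t}$. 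This delivers $\phi(t)\le n-1+O(e^{-\frac{2}{n-1}t})$, which is the assertion.

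The main obstacle is this last step. A naive barrier $n-1+Ae^{-\frac{2}{n-1}t}$ fails to be a supersolution of the ODE, because the quadratic correction from $F$ and the metric-error forcing are both positive and exactly cancel the gain from the linear term at leading order, leaving a strictly positive remainder of size $O(e^{-\frac{4}{n-1}t})+O(e^{-\frac{\alpha}{n-1}t})$. The resolution is the two-stage scheme above: the soft $\limsup$ estimate places $\phi$ in the regime where the linearization is valid, and the bootstrap exploits the hypothesis $\alpha>2$ to guarantee that all error terms decay faster than the homogeneous rate $\frac{2}{n-1}$, so that the sharp exponent — rather than a slightly slower one — is recovered.
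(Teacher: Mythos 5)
The paper gives no proof of Proposition \ref{upperH} beyond the citation, and your argument is essentially the standard one from \cite{BHW12}: evolve $H$ under IMCF, insert $|A|^2\ge H^2/(n-1)$ and $\Ric(\nu,\nu)=-(n-1)+O(e^{-\alpha r})$ (which, as you correctly note, needs no control on $\langle \pl_r,\nu\rangle$), and run Hamilton's maximum principle on $\phi=\max_{\Sigma_t}H$, reading the exponent $\tfrac{2}{n-1}$ off the linearization of $F(\phi)=-\tfrac{\phi}{n-1}+\tfrac{n-1}{\phi}$ at its root $\phi=n-1$. The evolution equation, the reduction to the scalar differential inequality, and the two-stage bootstrap exploiting $\alpha>2$ are all sound. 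The one step that needs tightening is the quantitative stage: the soft step only yields the one-sided bound $\psi=\phi-(n-1)\le\eta$, not $|\psi|\le\eta$, so the estimates $\psi^2\le\eta\psi$ and, later, $\psi^2=O(e^{-2a't})$ are not valid wherever $\psi<0$ (there $\psi$ is only bounded below by $c-(n-1)$, so $\psi^2$ can be of order one). Since the assertion is only an upper bound, the repair is routine: run the Duhamel/comparison argument on each connected component of $\{t:\phi(t)>n-1\}$, at whose left endpoint $\psi$ either vanishes or equals its (small, positive) initial value; on such components $\psi\ge 0$, the quadratic term is absorbable exactly as you describe, and the resulting bound on $\psi_+$ is uniform over components because the forcing constant does not depend on the crossing time. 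With that adjustment the proof is correct and consistent with the cited source.
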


\begin{prop}
We have $H = n-1 + O(t e^{-\frac{2}{n-1}t})$.
\end{prop}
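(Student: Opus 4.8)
The plan is to establish the matching lower bound $H \ge (n-1) - O(t e^{-\frac{2}{n-1}t})$ and combine it with the upper bound already proved in Proposition \ref{upperH}. The engine is the standard evolution equation for the mean curvature along \eqref{IMCF},
\begin{align*}
\pl_t H = \frac{\Delta_\Sigma H}{H^2} - \frac{2|\na H|^2}{H^3} - \frac{|A|^2 + \Ric(\nu,\nu)}{H},
\end{align*}
which I would feed into the maximum principle applied to $m(t) := \min_{\widehat\Sigma} H(\cdot,t)$ along the smooth graphical flow. Because the base metric $\bar g$ has constant sectional curvature $-1$ and $q = O_2(\rho^{-\alpha})$ with $\alpha > 2$, we have $\Ric(\nu,\nu) = -(n-1) + O(e^{-\alpha r})$, and writing $|A|^2 = \frac{H^2}{n-1} + |\mathring A|^2$ the equation becomes
\begin{align*}
\pl_t H = \frac{\Delta_\Sigma H}{H^2} - \frac{2|\na H|^2}{H^3} - \frac{H}{n-1} + \frac{n-1}{H} - \frac{|\mathring A|^2}{H} + O(e^{-\frac{\alpha}{n-1}t}).
\end{align*}

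The key quantitative input I would secure first is the decay $|\mathring A|^2 = O(e^{-\frac{2}{n-1}t})$. I expect to extract this from Proposition \ref{asymp_C1}: the estimate $\langle \pl_r,\nu\rangle = 1 + O(e^{-\frac{2}{n-1}t})$ forces the graph function to have gradient of size $O(e^{-\frac{1}{n-1}t})$, and interior elliptic/parabolic estimates for the graphical flow—using $c \le H \le C$ and $|A| \le C$ from \eqref{assumption for asymptotic analysis} to keep the flow uniformly parabolic—upgrade this $C^1$ control to $C^2$ control of the same order, whence $|\mathring A| = O(e^{-\frac{1}{n-1}t})$ (equivalently, one may quote the shape-operator asymptotics underlying Proposition \ref{asymp_C2}). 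The reason this step is unavoidable is that the term $-|\mathring A|^2/H$ enters with the sign that is unfavourable for a lower bound on $H$, so it cannot simply be discarded.

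With this in hand I would run the maximum principle. Setting $\psi(t) = (n-1) - m(t)$, which is bounded below by $-O(e^{-\frac{2}{n-1}t})$ thanks to Proposition \ref{upperH}, and evaluating at a spatial minimum where $\na H = 0$ and $\Delta_\Sigma H \ge 0$, the two curvature terms linearize as $-\frac{H}{n-1} + \frac{n-1}{H} = \frac{2}{n-1}\psi + O(\psi^2)$, so in the barrier sense
\begin{align*}
\frac{d\psi}{dt} \le -\frac{2}{n-1}\,\psi + O(\psi^2) + O(e^{-\frac{2}{n-1}t}).
\end{align*}
A short continuity argument absorbs the quadratic term once $\psi$ is known to remain small, and comparison with the linear ODE $\bar\psi' = -\frac{2}{n-1}\bar\psi + C e^{-\frac{2}{n-1}t}$, integrated against the factor $e^{\frac{2}{n-1}t}$, gives $\bar\psi = O(t e^{-\frac{2}{n-1}t})$. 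This is precisely where the extra factor of $t$ comes from: the forcing rate $\frac{2}{n-1}$ carried by $|\mathring A|^2$ is resonant with the homogeneous decay rate of the linearized equation. Hence $H \ge (n-1) - O(t e^{-\frac{2}{n-1}t})$, which together with Proposition \ref{upperH} yields $H = (n-1) + O(t e^{-\frac{2}{n-1}t})$.

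The main obstacle is the second step, the decay $|\mathring A|^2 = O(e^{-\frac{2}{n-1}t})$: pinning this down at exactly the rate $e^{-\frac{2}{n-1}t}$ is both what makes the maximum principle close and what produces the resonant $t$-factor in the final rate. Once that estimate is available, the remainder is a routine linear ODE comparison.
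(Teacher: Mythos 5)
Your overall scheme (lower bound via maximum principle plus the already-proved upper bound, with the $t$-factor arising from resonant forcing at rate $e^{-\frac{2}{n-1}t}$) is the right shape, but it hinges on a pointwise estimate $|\mathring A|^2 = O(e^{-\frac{2}{n-1}t})$ that you cannot obtain at this stage of the argument. Neither of your two proposed routes to it works. Quoting ``the shape-operator asymptotics underlying Proposition \ref{asymp_C2}'' is circular: that proposition is proved \emph{after} the mean curvature asymptotics and (following \cite{BHW12}) uses $H = n-1 + O(te^{-\frac{2}{n-1}t})$ as input. The alternative, upgrading the $C^1$ decay $|D\vphi| = O(e^{-\frac{1}{n-1}t})$ from Proposition \ref{asymp_C1} to $C^2$ decay ``of the same order'' by interior elliptic/parabolic estimates, is not a valid step: interior regularity and interpolation do not transfer a decay \emph{rate} to one more derivative without an additional structural argument (indeed the only available integral smallness of $|\mathring A|^2$, Lemma \ref{seq of surface}, holds merely along a sequence of times and only in $L^2$, in dimension $3$). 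Since the term $-|\mathring A|^2/H$ enters your differential inequality with the unfavourable sign, the gap is fatal to the argument as written.

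The paper circumvents this entirely by running the maximum principle on the product $H\langle \pl_r,\nu\rangle$ (equivalently on $\chi = (H\langle\pl_r,\nu\rangle)^{-1}$) rather than on $H$ itself: in the evolution of this product the $-|A|^2\langle\pl_r,\nu\rangle/H$ contribution from $\pl_t H$ cancels exactly against the $+|A|^2\langle\pl_r,\nu\rangle/H$ contribution from $\pl_t\langle\pl_r,\nu\rangle$ (Proposition \ref{evo}), so no estimate on the traceless second fundamental form is needed. One then gets a reaction--diffusion inequality for $\chi$ with reaction term $2\chi^2 - 2(n-1)\chi^3$ up to errors $O(e^{-\frac{2}{n-1}t})$, and the ODE comparison you describe, applied to $\chi$ and combined with $\langle\pl_r,\nu\rangle = 1 + O(e^{-\frac{2}{n-1}t})$, yields the lower bound $H \ge n-1 - O(te^{-\frac{2}{n-1}t})$. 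Your final ODE step and the explanation of where the factor $t$ comes from are correct; it is the input estimate feeding it that needs to be replaced by this cancellation.
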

\begin{proof}
In view of the previous proposition, it suffices to show that $H \le n-1 + O(te^{-\frac{2}{n-1}t})$. We will denote vectors, which change from line to line, whose forms are not relevant by $G$. Using $Ric(\nu,\nu) = -(n-1) + O(e^{-\alpha r})$, we obtain
\begin{align*}
\pl_t (H \langle \pl_r,\nu \rangle) &= \frac{1}{H^2} \lt( \Delta_\Sigma H \langle \pl_r,\nu \rangle + H \Delta_\Sigma \langle \pl_r,\nu \rangle \rt) - \frac{2|\na H|^2}{H^3}\langle \pl_r,\nu \rangle + \frac{n-1}{H}\langle \pl_r,\nu \rangle \\
&\quad + \frac{2}{H} \frac{\lambda'}{\lambda}\na r \cdot \na \langle \pl_r,\nu \rangle + \frac{n-1}{H} \lt( \frac{\lambda'	}{\lambda}\rt)^2 \langle \pl_r,\nu \rangle - 2 \frac{\lambda'}{\lambda} \langle \pl_r,\nu \rangle^2 + O(e^{-\alpha r}). 
\end{align*}
Proposition \ref{asymp_C1} now implies $|\na r|^2 = 1 - (\frac{\pl r}{\pl\nu})^2 + O(e^{-\alpha r})= O(e^{-\frac{2}{n-1}t})$ and hence   
$|\na \langle \pl_r,\nu \rangle|^2 = O( e^{-\frac{2}{n-1}t})$ by \eqref{nabla_Drnu}. From
\[ \na (H \langle \pl_r,\nu \rangle) = \langle \pl_r,\nu \rangle \na H + H \na \langle \pl_r,\nu \rangle,\]
we see that $|\na H|^2 = G \cdot \na (H\langle \pl_r,\nu \rangle) + O( e^{-\frac{2}{n-1}t})$. Putting these together with $\frac{\lambda'}{\lambda} = 1 + O(e^{-2r})$, we obtain
\begin{align*}
\pl_t (H \langle \pl_r,\nu \rangle) = \frac{1}{H^2}\Delta_\Sigma (H \langle \pl_r,\nu \rangle) + G \cdot \na (H\langle \pl_r,\nu \rangle) + \frac{2(n-1)}{H} -2 +O(e^{-\frac{2}{n-1}t}).
\end{align*}

Let $\chi = \frac{1}{H\langle \pl_r,\nu \rangle}$ and we get
\begin{align*}
\pl_t \chi = \frac{1}{H^2} \Delta_\Sigma \chi + G \cdot \na\chi + \lt( 2 + O(e^{-\frac{2}{n-1}t}) \rt)\chi^2 - \lt( 2(n-1) + O(e^{-\frac{2}{n-1}t}) \rt) \chi^3.
\end{align*}
Applying the maximum principle as in \cite[Proposition 4.1]{BHW12}, we obtain $H \le n-1 + O(te^{-\frac{2}{n-1}t})$. 
\end{proof}
\begin{prop}\cite[Proposition 4.2]{BHW12}\label{asymp_C2}
We have $|h_i^j - \delta_i^j| = O(t^2 e^{-\frac{2}{n-1}t})$.
\end{prop}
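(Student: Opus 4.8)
The plan is to control the full Weingarten map $h_i^j$ through its parabolic evolution equation under IMCF, treating the symmetric $(1,1)$-tensor $T_i^j := h_i^j - \delta_i^j$ as the unknown and running a maximum-principle/ODE-comparison argument of the same flavor as Proposition~\ref{asymp_C1} and the preceding one. The starting point is the evolution equation for the mixed second fundamental form under $\pl_t X = H^{-1}\nu$, which has the schematic form
\begin{align*}
\pl_t h_i^j = \frac{1}{H^2}\Delta_\Sigma h_i^j - \frac{2}{H^3}\na_i H\,\na^j H + \frac{|A|^2}{H^2}h_i^j - \frac{2}{H}h_i^k h_k^j + \frac{1}{H^2}\mathcal{R}_i^j,
\end{align*}
where $\mathcal{R}_i^j$ collects the contributions of the ambient curvature $\Rm$ and its covariant derivative $\bar\na\Rm$ restricted to $\Sigma_t$. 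This is exactly the step that forces the hypotheses flagged in the earlier remark: because $\widehat g$ is a space form and $q = O_3(\rho^{-\alpha})$, the background has constant sectional curvature $-1$ up to $O(e^{-\alpha r})$ with $\bar\na\Rm = O(e^{-\alpha r})$, so $\mathcal{R}_i^j$ equals its exact-hyperbolic value $+\,O(e^{-\alpha r})$. In the exact model the slices are umbilic with $h_i^j = (\lambda'/\lambda)\delta_i^j \to \delta_i^j$, which both identifies $\delta_i^j$ as the correct limit and fixes the sign of the linear reaction term.

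Next I would insert the bounds already available: $\langle \pl_r,\nu\rangle = 1 + O(e^{-\frac{2}{n-1}t})$ from Proposition~\ref{asymp_C1}, $H = (n-1) + O(t e^{-\frac{2}{n-1}t})$ from the preceding proposition, $|A|\le C$ from \eqref{assumption for asymptotic analysis}, together with the gradient decay $|\na r|^2 = O(e^{-\frac{2}{n-1}t})$. Substituting $H = (n-1)(1 + O(t e^{-\frac{2}{n-1}t}))$ into the algebraic terms $H^{-2}|A|^2 h_i^j - 2H^{-1}h_i^k h_k^j$ and combining with the exact-hyperbolic part of $\mathcal{R}_i^j$, the identities $|A|^2 = H^2/(n-1) + |\mathring A|^2$ and $\bar R_{\nu i\nu}{}^{j} = -\delta_i^j + O(e^{-\alpha r})$ should collapse the reaction to leading order into $-\tfrac{2}{n-1}T_i^j$, exactly as the scalar coefficient $-\tfrac{2}{n-1}$ emerged in Proposition~\ref{asymp_C1}. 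The goal is therefore the tensor inequality
\begin{align*}
\pl_t T_i^j \le \frac{1}{H^2}\Delta_\Sigma T_i^j + G\cdot\na T_i^j - \frac{2}{n-1}T_i^j + O(t e^{-\frac{2}{n-1}t}),
\end{align*}
where the inhomogeneous source is the $O(t e^{-\frac{2}{n-1}t})$ produced by the deviation $H - (n-1)$, which dominates the faster curvature error $O(e^{-\frac{\alpha}{n-1}t})$ because $\alpha > 2$.

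Finally I would apply Hamilton's maximum principle for symmetric $2$-tensors to the largest eigenvalue $\mu(t) = \max_{\Sigma_t}\lambda_{\max}(T)$ and, symmetrically, to $-\min_{\Sigma_t}\lambda_{\min}(T)$: at a spatial maximum the diffusion and transport terms have the favorable sign, so $\mu$ obeys the scalar comparison $\tfrac{d}{dt}\mu \le -\tfrac{2}{n-1}\mu + C t e^{-\frac{2}{n-1}t}$. Since the homogeneous rate $\tfrac{2}{n-1}$ is resonant with the exponential in the source, multiplying by the integrating factor $e^{\frac{2}{n-1}t}$ leaves $Ct$ on the right, and one integration yields $\mu(t) = O(t^2 e^{-\frac{2}{n-1}t})$; this extra power of $t$ relative to the mean-curvature estimate is precisely the resonance effect. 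The same bound for the lower eigenvalue gives $|h_i^j - \delta_i^j| = O(t^2 e^{-\frac{2}{n-1}t})$.

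The main obstacle is controlling the rank-one gradient term $-\tfrac{2}{H^3}\na_i H\,\na^j H$ inside the tensor maximum principle. It is negative semidefinite and hence harmless for bounding $\mu$ from above, but when bounding $\lambda_{\min}$ from below it has the wrong sign and must be absorbed into the source using a decay estimate for $|\na H|$, which in turn has to be extracted from the near-constancy of $H\langle\pl_r,\nu\rangle$ established in the preceding propositions. The secondary difficulty is the bookkeeping that isolates the clean coefficient $-\tfrac{2}{n-1}$ from the competing quadratic terms in $h$ and from $\mathcal{R}_i^j$ once $H$ and $\langle\pl_r,\nu\rangle$ are replaced by their limiting values.
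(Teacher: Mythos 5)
The paper offers no proof of this proposition; it is quoted verbatim from \cite[Proposition 4.2]{BHW12}, and your outline correctly reproduces the skeleton of that argument: evolve $h_i^j$ under IMCF, feed in $\langle\pl_r,\nu\rangle=1+O(e^{-\frac{2}{n-1}t})$ and $H=n-1+O(te^{-\frac{2}{n-1}t})$, identify the linearized reaction rate $-\tfrac{2}{n-1}$, and integrate against the resonant source $O(te^{-\frac{2}{n-1}t})$ to pick up the extra factor of $t$. You also correctly locate why this is the one step in the paper requiring $O_3$ decay of $q$ and $\widehat g$ a space form (the $\bar\na\Rm$ term in the evolution of $h_i^j$).

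The one step that would fail as written is your treatment of the lower eigenvalue. You propose to run the tensor maximum principle on $\lambda_{\min}$ and absorb the unfavorable rank-one term $-\tfrac{2}{H^3}\na_iH\,\na^jH$ using a decay estimate for $|\na H|$ ``extracted from the near-constancy of $H\langle\pl_r,\nu\rangle$.'' But the preceding propositions give only $C^0$ control of $H\langle\pl_r,\nu\rangle$ via the scalar maximum principle, and a sup bound on a function yields no pointwise bound on its gradient; neither this paper nor \cite{BHW12} establishes such a gradient estimate at this stage, and producing one (say by interpolation or a Bernstein-type argument) would be a substantial additional step. The standard resolution, and the one in \cite{BHW12}, is to prove \emph{only} the upper bound $\lambda_{\max}\le 1+O(t^2e^{-\frac{2}{n-1}t})$ by the maximum principle (where $-\na H\otimes\na H$ has the favorable sign) and then deduce the lower bound purely algebraically from the trace:
\begin{align*}
\lambda_{\min} \;=\; H-\sum_{i\neq\min}\lambda_i \;\ge\; (n-1)-O(te^{-\frac{2}{n-1}t})-(n-2)\bigl(1+O(t^2e^{-\frac{2}{n-1}t})\bigr) \;=\; 1-O(t^2e^{-\frac{2}{n-1}t}).
\end{align*}
With that substitution your argument closes; the rest of the bookkeeping (the collapse of the reaction to $-\tfrac{2}{n-1}T_i^j$ and the dominance of the $H$-deviation over the $O(e^{-\frac{\alpha}{n-1}t})$ curvature error) is as you describe.
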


We turn to the parametric form of $\Sigma_t$. We first compute the geometric data assuming mild conditions on the radial function.
\begin{prop}
Let $\Sigma = (r(\theta),\theta)$ be a radial graph in $(1,\infty) \times \widehat\Sigma$ given by the function $r: \widehat\Sigma \rw \R$.  Assume
\begin{align}\label{Dr bound}
|D r|_{\widehat g} = O(e^r), \quad |D^2 r|_{\widehat g} = O(e^{2r})
\end{align}
where $D$ denotes the covariant derivative\footnote{In this and the next 2 propositions, we use $\na$ to denote the Levi-Civita connection of $g$ to facilitate the comparison with \cite{BHW12}} of $\widehat g$. Let $\varphi = \Phi(r(\theta))$ where $\Phi(r)$ is a positive function with $\Phi' = \frac{1}{\lambda}$. Let $\vphi_i = \pl_i \vphi$ and $\vphi_{ij} = D_iD_j \vphi$. 
Then the induced metric, second fundamental form, and the mean curvature of $\Sigma$ satisfy
\begin{align*}
\gamma_{ij} &= \lambda^2 (\widehat g_{ij} + \varphi_i\varphi_j + O(e^{-\alpha r})) \\
h_{ij} &= \frac{\lambda}{\varrho} \lt( \lambda' (\widehat g_{ij} + \varphi_i \varphi_j) - \varphi_{ij} \rt) + O(e^{(2-\alpha)r}) \\
H &=  \frac{(n-1)\lambda'}{\lambda \varrho} - \frac{\tilde\sigma^{ij}}{\lambda\varrho} \varphi_{ij} + O(e^{-\alpha r})
\end{align*}
where $\tilde\sigma^{ij} = \widehat g^{ij} - \frac{\varphi^i\varphi^j}{1+|D\varphi|^2_{\hat g}}$, $\varrho = \sqrt{1 + |D\varphi|^2_{\hat g}} + O(e^{-\alpha r})$ and we raise the indices by $\widehat g^{ij}$.
\end{prop}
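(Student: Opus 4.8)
The plan is to compute all three quantities first for the exact warped-product background $\bar g = dr^2 + \lambda^2 \widehat g$, where the formulas hold with no error term, and then to show that the perturbation $q = O_3(e^{-\alpha r})$ contributes only the stated remainders. The organizing device is the substitution $\varphi = \Phi(r)$ with $\Phi' = \lambda^{-1}$, so that $\varphi_i = \lambda^{-1} r_i$ and the graph $r=r(\theta)$ becomes the graph $\varphi = \varphi(\theta)$ in the product metric $\lambda^{-2}\bar g = d\varphi^2 + \widehat g$; this is exactly what converts the warping into the coefficients appearing in the statement. First I would record the tangent vectors $\pl_i = r_i \pl_r + \pl_{\theta^i}$ and compute the background induced metric $\bar\gamma_{ij} = \bar g(\pl_i,\pl_j) = r_i r_j + \lambda^2 \widehat g_{ij} = \lambda^2(\widehat g_{ij} + \varphi_i\varphi_j)$ using $r_i = \lambda\varphi_i$. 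Inverting by the Sherman--Morrison identity gives $\bar\gamma^{ij} = \lambda^{-2}\widetilde\sigma^{ij}$ with $\widetilde\sigma^{ij} = \widehat g^{ij} - \varphi^i\varphi^j/(1+|D\varphi|^2)$, and an orthogonality-plus-normalization calculation produces the background unit normal $\bar\nu = \bar\varrho^{-1}(\pl_r - \lambda^{-1}\varphi^i\pl_{\theta^i})$ with $\bar\varrho = \sqrt{1+|D\varphi|^2}$.

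For the second fundamental form I would use $h_{ij} = -\bar g(\bar\nabla_{\pl_i}\pl_j, \bar\nu)$ (the sign convention under which coordinate slices $\{r=\mathrm{const}\}$ are convex) together with the Christoffel symbols of $\bar g$, namely $\bar\Gamma^0_{ij} = -\lambda\lambda'\widehat g_{ij}$, $\bar\Gamma^i_{0j} = \tfrac{\lambda'}{\lambda}\delta^i_j$, and $\bar\Gamma^i_{jk} = \widehat\Gamma^i_{jk}$. The one genuine manipulation is to trade the coordinate Hessian $r_{ij}$ for the $\widehat g$-Hessian $\varphi_{ij}$ through the identity $r_{ij} = \lambda\varphi_{ij} + \lambda\lambda'\varphi_i\varphi_j + \lambda\widehat\Gamma^l_{ij}\varphi_l$; after substitution the Christoffel contributions cancel and one lands on $\bar h_{ij} = \frac{\lambda}{\bar\varrho}\big(\lambda'(\widehat g_{ij}+\varphi_i\varphi_j) - \varphi_{ij}\big)$. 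Tracing with $\bar\gamma^{ij}$ and using $\widetilde\sigma^{ij}(\widehat g_{ij}+\varphi_i\varphi_j) = n-1$ then yields $\bar H = \frac{(n-1)\lambda'}{\lambda\bar\varrho} - \frac{\widetilde\sigma^{ij}}{\lambda\bar\varrho}\varphi_{ij}$.

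It remains to pass from $\bar g$ to $g = \bar g + q$, and this error analysis is the main point requiring care. The growth bounds $|\pl_i|_{\bar g} = O(e^r)$ (from $|Dr|_{\widehat g} = O(e^r)$) and $|\bar\nabla_{\pl_i}\pl_j|_{\bar g} = O(e^{2r})$ (from $|D^2 r|_{\widehat g} = O(e^{2r})$), together with $|q|_{\bar g}, |\bar\nabla q|_{\bar g} = O(e^{-\alpha r})$ from the frame form of the decay, immediately give $q(\pl_i,\pl_j) = O(e^{(2-\alpha)r})$, which is the metric remainder, and then $\gamma^{ij} = \lambda^{-2}\big(\widetilde\sigma^{ij} + O(e^{-\alpha r})\big)$ upon inverting a uniformly bounded matrix with bounded inverse. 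For the normal I would write $\nu = \nabla^g F/|\nabla^g F|_g$ for the defining function $F = r - r(\theta)$ and expand $g^{-1} = \bar g^{-1} - \bar g^{-1}q\,\bar g^{-1} + \cdots$, which yields both $\nu = \bar\nu + O(e^{-\alpha r})$ and $\varrho = \sqrt{1+|D\varphi|^2} + O(e^{-\alpha r})$.

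Finally, the difference $h_{ij} - \bar h_{ij}$ splits into three pieces: the metric discrepancy $q(\bar\nabla_{\pl_i}\pl_j,\nu)$, the connection difference governed by $\bar\nabla q$ contracted against $\pl_i$ and $\pl_j$, and the normal correction $\bar\nabla_{\pl_i}\pl_j$ paired with $\delta\nu = \nu - \bar\nu$. Each is $O(e^{(2-\alpha)r})$ by the same norm bounds, so summing gives the claimed $O(e^{(2-\alpha)r})$ remainder for $h_{ij}$; tracing against $\gamma^{ij}=O(e^{-2r})$ and using $\bar h_{ij} = O(e^{2r})$ then produces the $O(e^{-\alpha r})$ remainder for $H$. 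The delicate bookkeeping is ensuring that the largeness $|\bar\nabla_{\pl_i}\pl_j|_{\bar g} = O(e^{2r})$ is always compensated by the decay of $q$ or of $\bar\nabla q$, which is precisely where the hypothesis $\alpha > 2$ enters.
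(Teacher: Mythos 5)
Your proposal is correct and follows essentially the same route as the paper: the same tangent frame $r_i\pl_r + \pl_{\theta^i}$, the same substitution $r_i = \lambda\varphi_i$ trading the Hessian of $r$ for the $\widehat g$-covariant Hessian of $\varphi$, and the same Christoffel/Koszul computation producing $h_{ij}$ and then $H$ by tracing against $\gamma^{ij}=\lambda^{-2}\tilde\sigma^{ij}+O(e^{-(2+\alpha)r})$. The only difference is organizational: the paper carries the $q$-terms through the computation and estimates them inline, whereas you compute the background quantities exactly and bound the perturbation a posteriori using $|\pl_i|_{\bar g}=O(e^r)$ and $|\bar\na_{\pl_i}\pl_j|_{\bar g}=O(e^{2r})$; both give the same remainders.
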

\begin{proof}
 A basis of tangent vectors of $\Sigma$ is of the form $r_i\pl_r + \pl_{\theta^i}$. We compute
\begin{align*}
\gamma_{ij} &= r_i r_j (1 + q_{rr}) + r_i q_{rj} + r_j q_{ri} + \lambda^2 \widehat g_{ij} + q_{ij}\\
&= \lambda^2 (\widehat g_{ij} + \varphi_i \varphi_j + O(e^{-\alpha r})).
\end{align*}

The unit normal vector is given by 
\begin{align}\label{nu}
\nu = \frac{1}{\varrho} \lt( \pl_r + B^k \pl_k \rt)
\end{align}
where $B^k = - \frac{1}{\lambda^2} r^k + O(e^{(-\alpha-1)r})$ and 
\begin{align*}
\varrho^2 = 1 + q_{rr} + 2B^k q_{rk} + B^kB^l (\lambda^2 \widehat g_{kl} + q_{kl}) = 1 + |D \varphi|^2_{\hat g} + O(e^{-\alpha r}).
\end{align*}
The second fundamental form is $h_{ij} = - \langle \na_{r_i \pl_r + \pl_i} r_j \pl_r + \pl_j, \nu \rangle$. We have
\begin{align}
\langle \na_{r_i \pl_r + \pl_i} r_j \pl_r + \pl_j, \pl_r + B^k\pl_k \rangle &= \pl_i \pl_j r (1 + q_{rr} + B^k q_{rk}) \label{hij_1}\\
&\quad + r_j \langle \na_{r_i \pl_r + \pl_i} \pl_r, \pl_r + B^k\pl_k \rangle + \langle \na_{r_i \pl_r + \pl_i} \pl_j, \pl_r + B^k\pl_k \rangle.
\end{align}
Note that $q_{rr} + B^k q_{rk} = O(e^{-\alpha r})$. For the second line, we compute
\begin{align*}
\langle \na_{r_i \pl_r + \pl_i} \pl_r, \pl_r + B^k\pl_k \rangle &= r_i \lt[ \frac{1}{2} \pl_r q_{rr} + B^k (\pl_r q_{rk} - \frac{1}{2} \pl_k q_{rr}) \rt] + \pl_i q_{rr} \\
&\quad + \frac{1}{2} B^k \lt( \pl_i q_{rk} - \pl_k q_{ir} + \underline{ 2 \lambda\lambda' \widehat g_{ik} } + \pl_r q_{ik} \rt)\\
&= - \frac{\lambda'}{\lambda} r_i + O(e^{(1-\alpha ) r})
\end{align*}
and
\begin{align*}
\langle \na_{r_i \pl_r + \pl_i} \pl_j, \pl_r + B^k\pl_k \rangle &= r_i \lt[ \frac{1}{2} \pl_j q_{rr} + \frac{B^k}{2} ( \underline{ 2 \lambda\lambda' \widehat g_{jk} } + \pl_j q_{rk} - \pl_k q_{rj}) \rt] \\
&\quad + \frac{1}{2} \lt( \pl_i q_{jr} + \pl_j q_{ir} - \underline{2 \lambda\lambda' \widehat g_{ij}} - \pl_r q_{ij} \rt) \\
&\quad + B^k \lt[ \lambda^2 \cdot \frac{1}{2} (\pl_i \widehat g_{jk} + \pl_j \widehat g_{ik} - \pl_k \widehat g_{ij}) + \frac{1}{2} (\pl_i q_{jk} + \pl_j q_{ik} - \pl_k q_{ij}) \rt]\\
&= - \frac{\lambda'}{\lambda} r_i r_j - \lambda \lambda' \widehat g_{ij} - r^k \cdot \frac{1}{2} (\pl_i \widehat g_{jk} + \pl_j \widehat g_{ik} - \pl_k \widehat g_{ij}) + O(e^{(2-\alpha) r}).
\end{align*}
Putting these together, we get
\begin{align*}
h_{ij} = \frac{1}{\varrho} \lt( - r_{ij} + 2 \frac{\lambda'}{\lambda} r_i r_j + \lambda \lambda' \widehat g_{ij} + O(e^{(2-\alpha)r }) \rt) = \frac{\lambda}{\varrho} \lt( \lambda'(\widehat g_{ij} + \varphi_i\varphi_j) - \varphi_{ij} \rt) + O(e^{(2-\alpha) r}). 
\end{align*}

Finally, we compute the mean curvature
\begin{align*}
H = (\gamma^{-1})^{ij} h_{ij} = \frac{1}{\lambda^2} \lt( \widehat g^{ij} - \frac{\varphi^i \varphi^j}{1 + |D\varphi|^2_{\hat g}} + O(e^{-\alpha r}) \rt) h_{ij} = \frac{(n-1)\lambda'}{\lambda \varrho} - \frac{\tilde\sigma^{ij}}{\lambda\varrho} \varphi_{ij} + O(e^{-\alpha r}).
\end{align*} 
\end{proof}

The above proposition can be applied to star-shaped solutions to IMCF satisfying \eqref{assumption for asymptotic analysis}; namely, the assumption \eqref{Dr bound} follows from $\langle \pl_r,\nu \rangle \ge c$ and $|A| \le C$. Indeed, $\langle \pl_r,\nu \rangle \ge c$ implies that the angle of tangents of $\Sigma$ and $\pl_r$ is bounded above by a constant less than $1$. Let $e$ be a unit tangent vector on $\widehat\Sigma$. We have
\[ \frac{e(r)(1+q_{rr}) + q_{re})}{\sqrt{e(r)^2(1+q_{rr}) + 2 e(r)q_{re} + \lambda^2)}} \le \eta < 1. \]
It follows from quadratic formula that $|Dr|_{\widehat g} = O(e^r)$. Next, $|A| \le C$ implies $\gamma^{jk} r_{ik}$ in \eqref{hij_1} is bounded and it follows that $|D^2 r|_{\widehat g} = O(e^{2r})$.  

In fact, Proposition \ref{asymp_C1} and Proposition \ref{asymp_C2} imply the better bounds 
\begin{align}\label{improve1}
|D\vphi|_{\widehat g} = O( e^{-\frac{1}{n-1}t}) 
\end{align} 
and
\begin{align}\label{improve2}
|D^2 \vphi|_{\widehat g} = O(t^2 e^{-\frac{1}{n-1}t}).
\end{align}

We are ready to prove the main result of this section, cf Corollary 5.4 of \cite{BHW12} and Lemma 5.3 of \cite{GWWX13}. \begin{prop}\label{tildeQ}
Let $V$ be a smooth function on $(1,\infty) \times \widehat\Sigma$ satisfying 
\begin{align}
\Delta_{g} V &\ge n V \label{subharmonic}
\end{align}
and \begin{align}
V = \sqrt{\rho^2 + \widehat k} + o_1(\rho^{-1}). \label{V_expansion}
\end{align}
Then the quantity
\begin{align}
\tilde Q(t) = |\Sigma_t|^{-\frac{n-2}{n-1}} \lt( \int_{\Sigma_t} VH d\sigma -n(n-1) \int_{\Omega_t} V d\mbox{vol} \rt)
\end{align} has $\liminf_{t\rw\infty} \tilde Q(t) \ge (n-1) \widehat k |\widehat\Sigma|^{\frac{1}{n-1}}$.
\end{prop}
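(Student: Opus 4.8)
The plan is to feed the parametric description of $\Sigma_t$ from the previous proposition into both integrals, use the subharmonicity \eqref{subharmonic} to trade the bulk term for a boundary term, and reduce everything to a single sharp integral inequality on $\widehat\Sigma$. First I would dispose of the bulk integral: since $\Delta_g V \ge nV$, the divergence theorem on $\Omega_t$ gives
\[
n(n-1)\int_{\Omega_t} V\, d\mathrm{vol} \le (n-1)\int_{\Omega_t}\Delta_g V\, d\mathrm{vol} = (n-1)\int_{\Sigma_t}\frac{\partial V}{\partial\nu}\, d\mu + C,
\]
where $C$ is the fixed contribution of the inner boundary of $\Omega_t$, independent of $t$. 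Hence
\[
\int_{\Sigma_t} VH\, d\mu - n(n-1)\int_{\Omega_t} V\, d\mathrm{vol} \ge \int_{\Sigma_t}\Big( VH - (n-1)\tfrac{\partial V}{\partial\nu}\Big)\, d\mu - C,
\]
and since $|\Sigma_t| = e^t|\Sigma_0| \to \infty$ the constant $C$ is $o\big(|\Sigma_t|^{\frac{n-2}{n-1}}\big)$, so it is harmless for the $\liminf$. It is exactly the hypothesis \eqref{subharmonic} that makes this step an inequality in the favorable direction.

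Next I would evaluate the boundary integrand asymptotically. Writing $\Sigma_t$ as a radial graph $r(\theta)$, setting $\varphi = \Phi(r)$ with $\Phi' = \lambda^{-1}$, and substituting the parametric formulas for $H$, the area element $d\mu = \lambda^{n-1}\varrho\, d\mu_{\widehat g}$, together with $V = \lambda' + o_1(\lambda^{-1})$ and $\frac{\partial V}{\partial\nu} = \lambda\langle\partial_r,\nu\rangle + \text{l.o.t.}$ (using $\lambda'' = \lambda$), the decisive cancellation is $(\lambda')^2 - \lambda^2 = \widehat k$: it collapses the leading $\lambda^n$ terms and leaves
\[
\Big( VH - (n-1)\tfrac{\partial V}{\partial\nu}\Big)\, d\mu = \big[(n-1)\widehat k\,\lambda^{n-2} - \lambda'\lambda^{n-2}\,\tilde\sigma^{ij}\varphi_{ij}\big]\, d\mu_{\widehat g} + (\text{errors}).
\]
Integrating the $\varphi_{ij}$-term by parts on the closed manifold $\widehat\Sigma$ and using $D\varphi = \lambda^{-1}Dr$ converts $-\int\lambda'\lambda^{n-2}\,\tilde\sigma^{ij}\varphi_{ij}$ into $(n-1)\int_{\widehat\Sigma}\lambda^{n}|D\varphi|^2\, d\mu_{\widehat g}$ to leading order, so that
\[
\int_{\Sigma_t}\Big( VH - (n-1)\tfrac{\partial V}{\partial\nu}\Big)\, d\mu = (n-1)\Big[\widehat k\int_{\widehat\Sigma}\lambda^{n-2}\, d\mu_{\widehat g} + \int_{\widehat\Sigma}\lambda^{n}|D\varphi|^2\, d\mu_{\widehat g}\Big] + o\big(|\Sigma_t|^{\frac{n-2}{n-1}}\big).
\]
The error bookkeeping is delicate: the $q$-terms contribute at size $e^{(n-\alpha)r}$, which is subcritical precisely because $\alpha > 2$, while the improved bounds \eqref{improve1}--\eqref{improve2} are essential because the gradient term $\int\lambda^n|D\varphi|^2$ sits at the \emph{critical} order $|\Sigma_t|^{\frac{n-2}{n-1}}$ and is not negligible.

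Combining with $|\Sigma_t| = \int_{\widehat\Sigma}\lambda^{n-1}\varrho\, d\mu_{\widehat g} = (1+o(1))\int_{\widehat\Sigma}\lambda^{n-1}\, d\mu_{\widehat g}$, the statement reduces to the sharp inequality
\[
\widehat k\int_{\widehat\Sigma}\lambda^{n-2}\, d\mu_{\widehat g} + \int_{\widehat\Sigma}\lambda^{n}|D\varphi|^2\, d\mu_{\widehat g} \ge \widehat k\,|\widehat\Sigma|^{\frac{1}{n-1}}\Big(\int_{\widehat\Sigma}\lambda^{n-1}\, d\mu_{\widehat g}\Big)^{\frac{n-2}{n-1}}.
\]
For $\widehat k \le 0$ this is immediate and self-contained: Hölder's inequality gives $\int\lambda^{n-2} \le |\widehat\Sigma|^{\frac{1}{n-1}}\big(\int\lambda^{n-1}\big)^{\frac{n-2}{n-1}}$, multiplying by $\widehat k \le 0$ reverses it, and the gradient term is non-negative --- this is the route of \cite{GWWX13}. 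The genuinely hard case is $\widehat k = 1$, where $\widehat\Sigma = S^{n-1}$ and the gradient term must now \emph{compensate} the deficit in $\int\lambda^{n-2}$, with equality exactly at the round slices $r \equiv \mathrm{const}$; this is the sharp Sobolev-type inequality on the sphere proved by Brendle--Hung--Wang \cite{BHW12}, which I would invoke directly. Granting it, dividing by $|\Sigma_t|^{\frac{n-2}{n-1}}$ and letting $t \to \infty$ yields $\liminf_t \tilde Q(t) \ge (n-1)\widehat k\,|\widehat\Sigma|^{\frac{1}{n-1}}$.

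I expect the main obstacle to be twofold. The sharp $\widehat k = 1$ inequality is the true analytic content: its sharpness (and the identification of round slices as equality cases) is genuinely nontrivial, and I would defer it to \cite{BHW12} rather than reprove it. The second difficulty is the uniform control of all error terms at the critical order $|\Sigma_t|^{\frac{n-2}{n-1}}$, which is exactly what the decay threshold $\alpha > 2$ and the refined gradient and Hessian bounds \eqref{improve1}--\eqref{improve2} are designed to supply; without them the gradient contribution could not be separated cleanly from the error.
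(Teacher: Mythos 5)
Your proposal is correct and follows essentially the same route as the paper: divergence theorem plus \eqref{subharmonic} to discard the bulk term, the parametric expansion of $H$ and $d\mu$ with the cancellation $(\lambda')^2-\lambda^2=\widehat k$, integration by parts of the $\varphi_{ij}$-term, and reduction to the sharp Sobolev-type inequality of \cite{BHW12} for $\widehat k=1$ and H\"older for $\widehat k\le 0$ (your gradient coefficient $(n-1)$ versus the paper's $\tfrac{n-1}{2}$ is only because the paper discards the nonnegative $|D\varphi|^2$-contribution of $V-\partial_\nu V$ via Cauchy--Schwarz, and either version is covered by the cited inequality). The one small imprecision is that for $\widehat k=1$ and $n\ge 4$ the space form $\widehat\Sigma$ need not be $S^{n-1}$, so one must lift to a finite Riemannian cover $S^{n-1}\to\widehat\Sigma$ before invoking \cite[Proposition 5.1]{BHW12}, as the paper does.
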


\begin{proof}
By \eqref{subharmonic} and divergence theorem, we have
\begin{align}
\int_{\Sigma_t} VH d\sigma - n(n-1) \int_\Omega V d\mbox{vol} &\ge \int_{\Sigma_t} VH d\sigma - (n-1) \int_\Sigma \frac{\pl V}{\pl\nu} d\sigma + O(1) \\
&\ge \int_{\Sigma_t} V (H-n+1) d\sigma + (n-1) \int_{\Sigma_t} \lt( V -\frac{\pl V}{\pl\nu} \rt) d\sigma + O(1) \label{temp}.
\end{align}

By \eqref{V_expansion}, we have $|\na_g V| = \rho + o(\rho^{-1})$ and hence 
\begin{align*}
V - \frac{\pl V}{\pl \nu} \ge V - |\na_g V| = \frac{\widehat k}{2} \lambda^{-1} + o(e^{-r}). 
\end{align*}
From the expression of $\gamma$ and the improved bound \eqref{improve1}, we have
\begin{align}\label{volume}
d\sigma = \lambda^{n-1} d\widehat\Sigma \lt( 1 + O(e^{-\frac{2}{n-1}t}) \rt)
\end{align}
where $d\widehat\Sigma$ denotes the volume form of $(\widehat\Sigma, \hat g)$. Hence the second integral in \eqref{temp} is
\begin{align}\label{integral2}
\int_{\widehat\Sigma} \frac{n-1}{2} \widehat k \lambda^{n-2} d\widehat\Sigma + o(e^{\frac{n-2}{n-1}t}). 
\end{align}

On the other hand, from the improved bounds \eqref{improve1} and \eqref{improve2} we get
\[ \tilde\sigma^{ij} \vphi_{ij} = \Delta_{\widehat g} \vphi + O(t^2 e^{-\frac{3}{n-1}t}). \]
We may assume $\alpha-2 \ll 1$ and this implies
\begin{align*}
H &= \frac{(n-1)\lambda'}{\lambda \varrho} - \frac{\tilde\sigma^{ij}}{\lambda\varrho} \varphi_{ij} + O(e^{-\alpha r})\\
&= \frac{(n-1)\lambda'}{\lambda \varrho} - \frac{1}{\lambda\varrho} \Delta_{\widehat g} \vphi + O(e^{-\frac{\alpha}{n-1}t}) \\
&= n-1 + \frac{n-1}{2\lambda^2}\widehat k - \frac{n-1}{2} |D\vphi|^2_{\widehat g} - \frac{1}{\lambda} \Delta_{\widehat g} \vphi + O(e^{-\frac{\alpha}{n-1}t})
\end{align*}
where we used $\lambda' = \lambda + \frac{\widehat k}{2}\lambda^{-1} + O(e^{-\frac{2}{n-1}t})$ and $\frac{1}{\varrho} = 1 - \frac{1}{2}|D\vphi|^2_{\widehat g} + O(e^{-\frac{\alpha}{n-1}t})$ in the last equality.
Since $V = \lambda (1+ O(e^{-\frac{2}{n-1}t}))$, we obtain
\begin{align*}
&\int_{\Sigma_t} V(H-n+1) d\sigma\\
&= \int_{\widehat\Sigma} \lt( \frac{n-1}{2} \widehat k\lambda^{n-2} - \frac{n-1}{2} \lambda^n |D\vphi|^2_{\widehat g} + (n-1) \lambda^{n-2} \langle D\lambda, D\vphi \rangle_{\widehat g} \rt) d\widehat\Sigma + O(e^{\frac{n-\alpha}{n-1}t}).
\end{align*}
By \eqref{improve1} and $D\lambda = \lambda \lambda' D\vphi$, we have $|D\lambda - \lambda^2 D\vphi|_{\widehat g} = O(t^{1/2} e^{-\frac{1}{n-1}t})$. This implies
\begin{align}\label{integral1}
\begin{split}
&\int_{\Sigma_t} V(H-n+1) d\sigma\\
&= \int_{\widehat\Sigma} \lt( \frac{n-1}{2} \widehat k\lambda^{n-2} + \frac{n-1}{2} \lambda^{n-4} |D\lambda|^2_{\widehat g}  \rt) d\widehat\Sigma + O(e^{\frac{n-\alpha}{n-1}t})
\end{split}
\end{align}

Adding \eqref{integral1} and \eqref{integral2}, we get
\begin{align*}
&\int_{\Sigma_t} VH d\sigma - n(n-1) \int_\Omega V d\mbox{vol} \\&\ge \int_{\widehat\Sigma} \lt( \frac{n-1}{2} \lambda^{n-4} |D\lambda|^2_{\widehat g} + (n-1) \widehat k \lambda^{n-2} \rt) d\widehat\Sigma + o(e^{\frac{n-2}{n-1}t}).  
\end{align*}
Moreover, by \eqref{volume}, $|\Sigma_t| = \int_{\widehat\Sigma} \lambda^{n-1} d\widehat\Sigma + O( e^{\frac{n-3}{n-1}t})$. When $\widehat k=1$, there is a finite Riemannian $\mathfrak{m}$-cover $p:S^{n-1} \rw \widehat\Sigma$. For every positive function $u$ on $\widehat\Sigma$, apply \cite[Proposition 5.1]{BHW12} to $u \circ p$ to get 
\begin{align*}
    \frac{\mathfrak{m}}{2} \int_{\widehat\Sigma} u^{n-4} |D u|^2_{\hat g} d\widehat\Sigma + \mathfrak{m} \int_{\widehat\Sigma} u^{n-2} d\widehat\Sigma
    \ge (\mathfrak{m} |\widehat\Sigma|)^{\frac{1}{n-1}} \lt( \mathfrak{m} \int_{\widehat\Sigma} u^{n-1} d\widehat\Sigma \rt)^{\frac{n-2}{n-1}}.
\end{align*} When $\widehat k=0,-1$, use H\"{o}lder inequality\footnote{The case $\widehat k=0$ is actually trivial} as in \cite[Lemma 5.3]{GWWX13}. 
We thus conclude that for $\widehat k = 1,0,-1$
\begin{align*}
\liminf_{t \rw \infty} |\Sigma_t|^{-\frac{n-2}{n-1}} \lt( \int_{\Sigma_t} VH d\sigma - n(n-1) \int_\Omega V d\mbox{vol} \rt) \ge (n-1) \widehat k |\widehat\Sigma|^{n-1} .
\end{align*}
\end{proof}

The next proposition will be used in the proof of Theorem \ref{area_minkowski}.
\begin{prop}\label{tildeQ2}
Let $V$ be a smooth function on $(1,\infty) \times \widehat\Sigma$ satisfying 
\eqref{V_expansion}. Then the quantity 
\begin{align}
    \tilde Q_2(t) = \lt( \frac{|\Sigma_t|}{\omega_{n-1}} \rt)^{-\frac{n-2}{n-1}} \lt( \int_{\Sigma_t} VH d\sigma - (n-1) \omega_{n-1} \lt( \frac{|\Sigma_t|}{\omega_{n-1}}\rt)^{\frac{n}{n-1}}\rt)
\end{align}
has $\liminf_{t\rw\infty} \tilde Q_2(t) \ge (n-1) \widehat k \omega_{n-1}^{\frac{1}{n-1}}$, where $\omega_{n-1} = |\widehat\Sigma|$.
\end{prop}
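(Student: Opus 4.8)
The plan is to follow the proof of Proposition~\ref{tildeQ} almost verbatim: the asymptotic analysis of the flow is unchanged, and the only structural difference is that, having dropped \eqref{subharmonic}, we can no longer trade the bulk integral for a boundary term via the divergence theorem. Instead, the explicitly subtracted area power $(n-1)\omega_{n-1}\big(|\Sigma_t|/\omega_{n-1}\big)^{n/(n-1)}$ is what must absorb the leading growth of $\int_{\Sigma_t}VH\,d\mu$, and the mechanism that effects this is H\"older's inequality rather than the identity $\Delta_g V=nV$.

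Concretely, I would first record the expansions already obtained in this section. Combining the parametric formula for $H$, Propositions~\ref{asymp_C1}--\ref{asymp_C2}, the improved bounds \eqref{improve1}--\eqref{improve2}, the expansion $V=\lambda+\tfrac{\widehat k}{2}\lambda^{-1}+o(\lambda^{-1})$ coming from \eqref{V_expansion}, and $d\mu=\lambda^{n-1}\big(1+\tfrac12|D\varphi|^2\big)\,d\widehat\Sigma+\dots$, one finds (all integrals over $\widehat\Sigma$)
\begin{align*}
\int_{\Sigma_t}VH\,d\mu &= (n-1)\int\lambda^n+(n-1)\widehat k\int\lambda^{n-2}+(n-1)\int\lambda^n|D\varphi|^2+o\big(e^{\frac{n-2}{n-1}t}\big),\\
|\Sigma_t| &= \int\lambda^{n-1}+\tfrac12\int\lambda^{n-1}|D\varphi|^2+o\big(e^{\frac{n-3}{n-1}t}\big).
\end{align*}
Writing $P=\int\lambda^{n-1}$, I would then expand $(n-1)\omega_{n-1}^{-1/(n-1)}|\Sigma_t|^{n/(n-1)}$ about $P$ to order $e^{\frac{n-2}{n-1}t}$, keeping the cross term generated by $\tfrac12\int\lambda^{n-1}|D\varphi|^2$.

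The key point is that the H\"older inequality $\int\lambda^n\ge\omega_{n-1}^{-1/(n-1)}P^{n/(n-1)}$ now plays the role played by subharmonicity in Proposition~\ref{tildeQ}: the leading $e^{\frac{n}{n-1}t}$ terms cancel up to the nonnegative deficit $T_1:=(n-1)\big[\int\lambda^n-\omega_{n-1}^{-1/(n-1)}P^{n/(n-1)}\big]\ge0$. After this cancellation the surviving $e^{\frac{n-2}{n-1}t}$ terms group as
\[
\Big[\tfrac{n-1}{2}\!\int\!\lambda^n|D\varphi|^2+(n-1)\widehat k\!\int\!\lambda^{n-2}\Big]+\Big[T_1+\tfrac{n-1}{2}\!\int\!\lambda^n|D\varphi|^2-\tfrac{n}{2}\,\omega_{n-1}^{-1/(n-1)}P^{1/(n-1)}\!\int\!\lambda^{n-1}|D\varphi|^2\Big].
\]
The first bracket I would bound below by the sharp Sobolev inequality \cite[Prop.~5.1]{BHW12} (pulled back through a finite Riemannian cover $S^{n-1}\to\widehat\Sigma$ when $\widehat k=1$), and by H\"older's inequality when $\widehat k\in\{0,-1\}$ as in \cite[Lem.~5.3]{GWWX13}; this produces exactly the term $(n-1)\widehat k\,\omega_{n-1}^{1/(n-1)}P^{(n-2)/(n-1)}$. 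Multiplying through by $\big(|\Sigma_t|/\omega_{n-1}\big)^{-(n-2)/(n-1)}$, using $|\Sigma_t|=P(1+o(1))$, and letting $t\to\infty$ then delivers the claimed lower bound on $\liminf_{t\to\infty}\widetilde Q_2(t)$, with equality in the limit precisely for round slices $\varphi\equiv\mathrm{const}$.

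The main obstacle, and the only genuinely new feature compared with Proposition~\ref{tildeQ}, is the second bracket: because we expand a power of $|\Sigma_t|$ instead of differentiating a bulk integral, there appears the cross term $\tfrac{n}{2}\omega_{n-1}^{-1/(n-1)}P^{1/(n-1)}\int\lambda^{n-1}|D\varphi|^2$, which is of the same order $e^{\frac{n-2}{n-1}t}$ as the curvature term and carries the wrong sign. I expect this to be controlled by the H\"older deficit $T_1$: since $\lambda\sim e^{t/(n-1)}$ while $|D\varphi|^2=O(e^{-2t/(n-1)})$, the deficit $T_1$ outweighs all gradient corrections by a factor $\sim\lambda^2\to\infty$ (a Poincar\'e inequality on $\widehat\Sigma$ makes the comparison quantitative, uniformly as $\Sigma_t$ degenerates toward a slice), so the second bracket is nonnegative for $t$ large. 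Making this domination rigorous, together with the careful bookkeeping of all $O(e^{\frac{n-2}{n-1}t})$ contributions produced by the area expansion, is where the work lies.
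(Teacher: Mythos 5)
Your setup is right: the expansions of $\int_{\Sigma_t}VH\,d\mu$ and $|\Sigma_t|$ match what the paper derives from \eqref{integral1}, \eqref{V_expansion}, \eqref{improve1}--\eqref{improve2}, and you have correctly isolated the one new difficulty, namely the cross term of order $e^{\frac{n-2}{n-1}t}$ produced by the area power. But the mechanism you propose for killing it does not work. Expanding $|\Sigma_t|^{n/(n-1)}$ to first order around $P=\int\lambda^{n-1}$ weights the gradient correction by the \emph{averaged} radius $\Lambda=\omega_{n-1}^{-1/(n-1)}P^{1/(n-1)}$, so your second bracket is, at quadratic order in a perturbation $\lambda=\Lambda_0(1+\epsilon h)$ with $\int h=0$, of the form $\epsilon^2\Lambda_0^{n-2}\bigl(c\,\Lambda_0^{2}\int h^2-\tfrac12\int|Dh|^2\bigr)$. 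The Poincar\'e inequality bounds $\int h^2$ \emph{above} by $\int |Dh|^2$, i.e.\ it runs in the wrong direction, and the ratio $\int|Dh|^2/\int h^2$ is unbounded over oscillatory $h$; one checks that oscillations with frequency $N\sim t/\sqrt{\epsilon}$ and amplitude $\epsilon\sim t^2e^{-2t/(n-1)}$ are consistent with all the a priori bounds \eqref{improve1}--\eqref{improve2} yet make your second bracket negative. So the claimed domination of the cross term by the H\"older deficit $T_1$ ``by a factor $\lambda^2$'' is not justified, and this is precisely the step you yourself leave open.

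The paper closes this gap with a small but essential reorganization: instead of applying H\"older to $\int\lambda^{n-1}$ and Taylor-expanding the power of the area, it applies H\"older directly to the full density, $|\Sigma_t|^{\frac{n}{n-1}}\le\omega_{n-1}^{\frac{1}{n-1}}\int_{\widehat\Sigma}\lambda^{n}\bigl(1+\tfrac{n}{2(n-1)}|D\varphi|^2+O(e^{-\alpha r})\bigr)\,d\widehat\Sigma$. This replaces your average-weighted cross term $\tfrac{n}{2}\Lambda\int\lambda^{n-1}|D\varphi|^2$ by the pointwise-weighted one $\tfrac{n}{2}\int\lambda^{n}|D\varphi|^2$, which cancels against the $(n-1)\int\lambda^{n}|D\varphi|^2$ coming from $\int VH$ and leaves the nonnegative coefficient $\tfrac{n-2}{2}$ on $\int\lambda^{n-4}|D\lambda|^2$; the H\"older deficit and the cross term are thereby handled in one stroke, with no quantitative stability estimate needed. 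A second consequence you should note: the surviving gradient coefficient is $\tfrac{n-2}{2}$, not $\tfrac{n-1}{2}$, so the concluding Sobolev-type step is not \cite[Proposition~5.1]{BHW12} but the inequality (5.21) (Lemma~5.4) of \cite{GWWX13}, to which the finite-cover trick is applied when $\widehat k=1$.
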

\begin{proof}
   We have $|\Sigma_t| = \int_{\Sigma_t} d\sigma = \int_{\widehat\Sigma} \lambda^{n-1} \lt( 1 + \frac{1}{2} |D\vphi|^2_{\hat g} + O(e^{-\alpha r}) \rt) d\widehat\Sigma$ and by H\"{o}lder inequality
   \[|\Sigma_t|^{\frac{n}{n-1}} \le |\widehat\Sigma|^{\frac{1}{n-1}}\int_{\widehat\Sigma} \lambda^n \lt( 1 + \frac{n}{2(n-1)} |D\vphi|^2_{\hat g} + O(e^{-\alpha r}) \rt) d\widehat\Sigma. \]
Here $d\hat\Sigma$ is the volume form of $(\hat\Sigma, \hat g)$. On the other hand, we write $\int_{\Sigma_t} VH d\sigma = \int_{\Sigma_t} V (H-n+1) d\sigma + (n-1) \int_{\Sigma_t} V d\sigma$ and by \eqref{V_expansion}
\[ \int_{\Sigma_t} V d\sigma = \int_{\widehat\Sigma} \lt( \lambda + \frac{\widehat k}{2} \lambda^{-1} + o(e^{-r}) \rt) \lambda^{n-1} \lt( 1 + \frac{1}{2} |D\vphi|^2_{\hat g} + O(e^{-\alpha r}) \rt) d\widehat\Sigma. \]
Putting these together with \eqref{integral1}, we get
\[ \int_{\Sigma_t} VH d\sigma - (n-1) \omega_{n-1} \lt( \frac{|\Sigma_t|}{\omega_{n-1}}\rt)^{\frac{n}{n-1}} \ge \int_{\widehat\Sigma} (n-1)\widehat k \lambda^{n-2} + \frac{n-2}{2} \lambda^{n-4} |D\lambda|^2_{\hat g} d\widehat\Sigma + o(e^{-\frac{n-2}{n-1} t}). \]
The assertion follows from (5.21) of \cite{GWWX13}. (When $\widehat k=1$, apply the trick in the proof of the previous proposition to Lemma 5.4 of \cite{GWWX13})
\end{proof}
\section{Proof of Theorem \ref{minkowski}}
We have all the ingredients for proving Theorem \ref{minkowski}. First of all, we can apply the results in Section \ref{section regularity} and Section \ref{asymptotic behavior} because the decay assumption on $g$ in Definition \ref{alh_static} is stronger. Next, item (3) in Proposition \ref{no area-minimizing in interior} and assumption \eqref{topological assumption for genus control in IMCF} imply that assumptions (1) and (2) of Theorem \ref{main_regularity} are satisfied. Therefore, IMCF eventually becomes smooth and Proposition \ref{tildeQ} applies. By Theorem \ref{monotonicity obstacle} and assumption \eqref{topological assumption for monotonicity}, $Q(t)$ is non-increasing along IMCF and we obtain
\[ Q(0) \ge \liminf_{t \rw \infty} Q(t) = \liminf_{t\rw\infty} \tilde Q(t) \ge 2 \omega_2 \widehat k. \] This proves \eqref{minkowski2}.

Next, we address the equality case, that is,  inequality \eqref{minkowski2} is saturated only by slices $\widehat{\Sigma} \times \{ \rho \}$ of the Kottler manifolds \eqref{ads_schwarzschild}. The authors characterized the equality case of the analogous Minkowski inequality for asymptotically flat static manifolds in \cite{HW24}, \cite{HW24a}, and this was quite nontrivial. In the asymptotically locally hyperbolic setting, the approach taken in \cite{HW24}, \cite{HW24a}-- namely, the characterization of ``quasi-spherical" static metrics-- can be completely circumvented using the equality statement for the substatic Heintze-Karcher inequality.  

\begin{Theorem}[\cite{BFP24}, Theorem 1.1] \label{HK_rigidity}
Let $(M^{n},g,V)$ be a sub-static Riemannian manifold with connected, non-degenerate horizon boundary $\partial M= \{ V=0 \}$, and let $\Omega \subset (M^{n},g)$ be a domain with $\partial \Omega =\partial M \bigsqcup \Sigma$ for some smooth \underline{connected} mean-convex hypersurface $\Sigma$. Then equality holds in the Heintze-Karcher inequality \eqref{heintze_karcher} if and only if $(\Omega^{n},g)$ is isometric to

\begin{equation} \label{warped_product}
\left( (\rho_{m},\rho_{0}) \times \partial M, \frac{1}{V(\rho)^{2}} d\rho^{2} + \rho^{2} g_{\partial M} \right).
\end{equation}
In particular, $\Sigma$ is a level set of $V$ and $\pl M$ is connected.
\end{Theorem}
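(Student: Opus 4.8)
The plan is to track the equality case through the foliation proof of the Heintze--Karcher inequality \eqref{heintze_karcher}. First I would recall that this inequality is established by the inward normal--exponential flow $F(x,s)=\exp_x(-s\nu(x))$ emanating from $\Sigma$: its image exhausts $\Omega$ up to the horizon, and a pointwise comparison along each geodesic, integrated against the weight $V$, produces \eqref{heintze_karcher}. The comparison rests on three ingredients, each an inequality: (i) the Cauchy--Schwarz bound $|A|^2\ge\tfrac{1}{n-1}H^2$ for the shape operators of the equidistant leaves $\Sigma_s$ (equivalently $\mathring A=0$ at equality); (ii) the sub-static condition $V\mathrm{Ric}-\nabla^2V+(\Delta V)g\ge 0$ controlling the $V$-weighted Riccati curvature term along the flow; and (iii) the fact that the reachable set of $F$ covers $\Omega$ with Jacobian of a definite sign up to the first focal or cut time. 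Saturating \eqref{heintze_karcher} forces equality in all three simultaneously.

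Second, I would read off the rigidity these equalities impose. Equality in (i) at every point makes each leaf $\Sigma_s$ totally umbilic; equality in (ii) saturates the sub-static tensor in the radial and mixed directions along the flow; and equality in (iii) makes $F$ a diffeomorphism onto $\Omega$ with no interior focal points, so that $\Omega$ is smoothly foliated by the connected umbilic hypersurfaces $\{\Sigma_s\}_{s\in[0,\tau]}$, degenerating onto the horizon $\partial M$ as $s\to\tau$. Since $\Sigma=\Sigma_0$ is connected and the foliation is a product, every leaf --- in particular $\partial M=\Sigma_\tau$ --- is connected.

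Third, letting $\delta$ denote the signed flow-distance, umbilicity of its level sets is exactly the Hessian equation $\nabla^2\delta=\eta(\delta)\,(g-d\delta\otimes d\delta)$ for a scalar $\eta$, the standard characterization of a warped product over a single leaf. Applying the warped-product splitting then gives $(\Omega,g)\cong\big((\rho_m,\rho_0)\times\partial M,\;W(\rho)^2\,d\rho^2+\rho^2 g_{\partial M}\big)$, after reparametrizing the leaf coordinate by the area radius so that the warping function is $\rho$ itself. Finally I would feed the saturated sub-static equation together with the static vacuum equations \eqref{static_equations} into this ansatz: because $V$ and $W$ are functions of $\rho$ alone, the static system reduces to ODEs whose solution pins down $W=V^{-1}$ and identifies $V$ with $\sqrt{\widehat{k}+\rho^2-2m/\rho}$, yielding precisely \eqref{warped_product}; and since $V=V(\rho)$, the leaf $\Sigma=\{\rho=\rho_0\}$ is a level set of $V$.

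I expect the main obstacle to be the analysis at the horizon $\partial M=\{V=0\}$, where the potential degenerates and the flow-distance foliation collapses onto a totally geodesic leaf: one must show the foliation and the coefficient $\eta(\delta)$ extend smoothly up to $\partial M$, that the focal limit is a single smooth hypersurface (this is precisely where connectedness of $\partial M$ is genuinely used, via the connectedness of $\Sigma$), and that the warped-product rigidity can be applied up to the boundary. A secondary technical point is the equality-case bookkeeping for the flow when $\Sigma$ is merely mean-convex rather than everywhere $H>0$, so that the Jacobian sign and the focal-time estimates hold along almost every geodesic.
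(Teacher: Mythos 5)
A point of orientation first: the paper does not prove this statement at all. It is imported verbatim from \cite{BFP24} (Theorem 1.1 there), and the only original content surrounding it is Remark \ref{connected}, which observes that connectedness of $\partial M$ follows because the key step of \cite{BFP24} exhibits $g$ as \emph{conformal} to a warped product over the connected hypersurface $\Sigma$ (their equation (3.5)). So your task was really to reconstruct the argument of \cite{BFP24}, and your reconstruction takes a genuinely different route from theirs.

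The gap in your route is at the very first step. You propose to ``track equality through the foliation proof'' of \eqref{heintze_karcher}, i.e.\ through an inward normal-exponential flow and a Riccati comparison. But the inequality with the optimal multi-horizon constants $c_j$ of \eqref{c_j} is not proved that way: Brendle's flow argument \cite{B13} covers sub-static warped products, whereas the constants \eqref{c_j} --- which involve $\int_{\partial_j M}\frac{\partial V}{\partial\nu}\bigl(\frac{\Delta V}{V}-\frac{\nabla^{2}V(\nu,\nu)}{V}\bigr)\,d\sigma$ --- arise as boundary terms of a Reilly-type divergence identity applied to the solution of an auxiliary elliptic mixed boundary value problem; this is the mechanism of \cite{LX19} and \cite{FP22}. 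There is no foliation whose equality case you can track, because the inequality being saturated was never obtained from a foliation; to make your plan work you would first have to re-derive \eqref{heintze_karcher} with exactly these $c_j$ by a flow argument, and it is not clear that is possible. In \cite{BFP24} the rigidity instead comes from the equality case of the Reilly identity: saturation forces a Hessian equation for the auxiliary function, which yields the conformal warped-product structure; connectedness of $\partial M$ and the splitting \eqref{warped_product} then follow, exactly as in Remark \ref{connected}. Your subsequent steps (umbilic leaves give a warped product; the static ODEs pin down $V$) are fine in spirit, though the identification of $V$ with $\sqrt{\widehat{k}+\rho^{2}-2m/\rho^{n-2}}$ is not part of this statement but of the Lemma that follows it in the paper. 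Finally, the degeneration at the horizon $\{V=0\}$, which you correctly flag as ``the main obstacle,'' is precisely where the substance of \cite{BFP24} lies; flagging it is not resolving it.
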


\begin{rmk} \label{connected}
 The above theorem is in fact a special case of \cite[Theorem 1.1]{BFP24} as we assume $\Sigma$ is connected. The key step in their proof is to show that $g$ is conformal to a warped product, see (3.5) of \cite{BFP24}. This forces $\pl M$ to be connected since $\Sigma$ is.  
    \end{rmk}
From this, it is straightforward to show the following.
\begin{Lemma}
Suppose that, in addition to the hypotheses of the previous theorem, $(M^{n},g,V)$ solves the $n$-dimensional static vacuum Einstein equations

\begin{eqnarray}\label{n_static_eqs}
    D^{2} V &=& V \text{Ric} + (nV) g,  \\
    \Delta_g V &=& nV. \nonumber
\end{eqnarray}
Then $(\Omega^{n},g)$ is isometric to a region $\widehat{\Sigma}^{n-1} \times (\rho_{m},\rho_{0})$ of $n$-dimensional Kottler space

\begin{equation*}
    M^{n}= \widehat{\Sigma}^{n-1} \times (\rho_{m},\infty), \hspace{0.5cm} g_{m,\widehat{k}}= \left(\widehat{k} + \rho^{2} - \frac{2m}{\rho^{n-2}} \right)^{-1} d\rho^{2} + \rho^{2} \widehat{g}, \hspace{0.5cm} V_{m,\widehat{k}}(\rho) = \sqrt{\widehat{k} + \rho^{2} - \frac{2m}{\rho^{n-2}}}.
\end{equation*}
Here, $\widehat{g}$ is an Einstein metric on a closed manifold $\widehat{\Sigma}$ with Einstein constant $\widehat{k}$, that is

\begin{equation*}
    \widehat{\text{Ric}} = (n-2) \widehat{k} \widehat{g}.
\end{equation*}
\end{Lemma}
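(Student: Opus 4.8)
The plan is to take the warped-product description of $(\Omega^{n},g)$ supplied by Theorem \ref{HK_rigidity} and substitute it directly into the static vacuum equations \eqref{n_static_eqs}, splitting the resulting tensor identities into a scalar ODE that pins down the potential and a cross-sectional identity that forces the base to be Einstein. By the rigidity theorem we already know $\partial M=\widehat{\Sigma}$ is connected and, writing $\widehat{g}=g_{\partial M}$, that
\[
g=\frac{1}{V(\rho)^{2}}\,d\rho^{2}+\rho^{2}\widehat{g},\qquad V=V(\rho),
\]
on $\Omega=(\rho_{m},\rho_{0})\times\widehat{\Sigma}$, with $\Sigma$ a level set of $V$. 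It is convenient to introduce the arclength coordinate $t$ with $dt=V^{-1}d\rho$, so that $g=dt^{2}+\phi(t)^{2}\widehat{g}$ with $\phi=\rho$ and the crucial identity $\phi'=d\rho/dt=V$ (throughout, $'=d/dt$ until stated otherwise).

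First I would record the standard warped-product identities for this metric: for the $t$-dependent function $V$ one has $\nabla^{2}V=V''\,dt^{2}+\tfrac{\phi'}{\phi}V'(g-dt^{2})$ and $\Delta V=V''+(n-1)\tfrac{\phi'}{\phi}V'$, while $\text{Ric}(\partial_t,\partial_t)=-(n-1)\phi''/\phi$ and, on horizontal vectors, $\text{Ric}(X,Y)=\widehat{\text{Ric}}(X,Y)-\big(\phi\phi''+(n-2)(\phi')^{2}\big)\widehat{g}(X,Y)$. Substituting into the trace equation $\Delta V=nV$ and into the $(\partial_t,\partial_t)$-component of $\nabla^{2}V=V\,\text{Ric}+nVg$, and using $\phi'=V$ (hence $\phi''=V'$), both collapse to the single relation $V''=nV-(n-1)VV'/\rho$. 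Passing to the variable $\rho$ via $d/dt=V\,d/d\rho$ and setting $f=V^{2}$, this becomes the linear Euler-type equation
\[
f''+\frac{n-1}{\rho}\,f'=2n \qquad (\,'=d/d\rho\,),
\]
whose general solution is $f=\rho^{2}+A-2m\,\rho^{2-n}$ for constants $A$ and $m$. This already yields $V(\rho)=\sqrt{A+\rho^{2}-2m\rho^{2-n}}$.

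The remaining, and only genuinely structural, ingredient is the horizontal component of the Hessian equation. Writing it out and using $\phi=\rho$, $\phi'=V$, $\phi''=VV_{\rho}$ gives
\[
V\,\widehat{\text{Ric}}=\big(2\rho V^{2}V_{\rho}+(n-2)V^{3}-nV\rho^{2}\big)\,\widehat{g},
\]
an identity whose left-hand side depends only on the point of $\widehat{\Sigma}$ while the bracket depends only on $\rho$; dividing by $V>0$ therefore forces $\widehat{\text{Ric}}$ to be a constant multiple of $\widehat{g}$, so $\widehat{g}$ is Einstein. Inserting the explicit $f=V^{2}$ found above, the $\rho^{2}$ and $\rho^{2-n}$ contributions in the bracket cancel identically and the constant term leaves $\widehat{\text{Ric}}=(n-2)A\,\widehat{g}$. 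Thus $A$ is precisely the Einstein constant; relabeling $A=\widehat{k}$ identifies $(\Omega^{n},g)$ with the asserted region $\widehat{\Sigma}^{n-1}\times(\rho_{m},\rho_{0})$ of Kottler space, and a rescaling of $\widehat{g}$ together with the corresponding rescaling of $\rho$ and $m$ normalizes $\widehat{k}$ to $\{1,0,-1\}$ by its sign if one wishes.

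The computations here are routine once the framework is in place; the main thing to get right is the bookkeeping of the warped-product curvature identities under the substitution $\phi'=V$, which is what makes the trace and $(\partial_t,\partial_t)$ equations collapse to one ODE. The one conceptual step is the separation-of-variables observation extracting the Einstein condition from the tangential equation, and the only verification requiring care is the cancellation showing that the free constant $A$ in $V^{2}$ is exactly the Einstein constant $(n-2)\widehat{k}$, so that no independent constraint is lost or introduced.
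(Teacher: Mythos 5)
Your proof is correct, and it reaches the same key identity as the paper, namely $\widehat{\text{Ric}}=\bigl(2\rho V V_{\rho}+(n-2)V^{2}-n\rho^{2}\bigr)\widehat{g}$, from which the Einstein condition follows by separation of variables exactly as you argue. The difference is purely in how that identity is produced and how the ODE is organized: the paper works extrinsically, computing the second fundamental form $A=V\rho\,\widehat{g}$ of the slices $\widehat{\Sigma}\times\{\rho\}$, the tangential Ricci curvature from the static equations, and $\mathrm{Riem}*\nu*\nu$ from the radial variation of $A$, then feeds these into the Gauss equation; you instead use the intrinsic warped-product curvature formulas in the arclength gauge $g=dt^{2}+\phi^{2}\widehat{g}$ with $\phi'=V$. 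Likewise, the paper reads the Einstein-constant identity as a first-order ODE for $V$ with initial condition $V(\rho_{m})=0$ and integrates it, whereas you first solve the second-order (trace/radial) equation to get $V^{2}=\rho^{2}+A-2m\rho^{2-n}$ with two free constants and then identify $A$ with the Einstein constant by the cancellation in the tangential equation. Both routes buy the same thing; yours makes the role of the two integration constants slightly more transparent, while the paper's avoids introducing the geodesic coordinate. The only cosmetic omission is that you do not explicitly invoke $V(\rho_{m})=0$ (forced by $\partial M=\{V=0\}$) to tie $m$ to $\rho_{m}$ and $\widehat{k}$, but nothing in the conclusion depends on that.
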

\begin{proof}
Calling $\partial M =\widehat{\Sigma}$ and $\widehat{g}=g_{\partial M}$, we first show that $\widehat{g}$ is Einstein. We let $*$ denote tensor contraction, e.g. $(T*X)(Y)=T(X,Y)$ for a tensor $T$ and vectors $X$ and $Y$, and $T|_{\widehat{\Sigma} \times \{ \rho \}}$ to denote the restriction of a tensor to the tangent space of $\widehat{\Sigma} \times \{ \rho \}$.

Using the variation for the induced metric on $\widehat{\Sigma} \times \{ \rho \}$, the second fundamental form $A$ of a slice $\widehat{\Sigma} \times \{ \rho \}$ of \eqref{warped_product} is given by

\begin{equation*}
    A= \ \frac{V}{2} \partial_{\rho} (\rho^{2} \widehat{g}) = V \rho \widehat{g}.
\end{equation*}
From this, we compute

\begin{eqnarray} \label{H_slice}
   \sum_{i=1}^{n-1} (A*E_{i}) \otimes (A * E_{i}) - HA &=& (V^{2}\rho^{2}) \sum_{i=1}^{n-1} (\widehat{g} * E_{i}) \otimes (\widehat{g} * E_{i}) - (n-1) \frac{V}{\rho} (V\rho)\widehat{g} \\
   &=& -(n-2) V^{2} \widehat{g}, \nonumber
\end{eqnarray}
where $E_{i}$ is an orthonormal basis of $T_{x} (\Sigma \times \{ \rho \})$. Moreover, by the static equations \eqref{n_static_eqs}, the (extrinsic) Ricci curvature in directions tangent to $\widehat{\Sigma} \times \{ \rho \}$ is given by

\begin{eqnarray} \label{ric_tang}
     \text{Ric}|_{\widehat{\Sigma} \times \{ \rho \}} &=& \frac{1}{V} D^{2} V|_{\widehat{\Sigma} \times \{ \rho \}} - n g|_{\widehat{\Sigma} \times \{ \rho \}} = \frac{1}{V} \frac{\partial V}{\partial \nu} A - n g|_{\widehat{\Sigma} \times \{ \rho \}} \\
     &=& \left( \rho V V' - n \rho^{2} \right) \widehat{g}. \nonumber
\end{eqnarray}
Finally, the variation for the second fundamental form, c.f. Theorem 3.2(iv) in \cite{HP99}, implies that the Riemann tensor $\text{Riem} * \nu * \nu$ contracted in the first and third arguments is given by
\begin{eqnarray} \label{riem_nn}
    \text{Riem}*\nu*\nu&=& \sum_{i=1}^{n-1} (A*E_{i}) \otimes (A*E_{i}) - V \frac{\partial}{\partial \rho} (A) =  V^{2} \widehat{g} - V(V'\rho + V) \widehat{g}  \\
    &=& - \rho VV' \widehat{g}. \nonumber
\end{eqnarray}
Inputting \eqref{H_slice}, \eqref{ric_tang}, and \eqref{riem_nn} into the Gauss equation

\begin{equation*}
    \text{Ric}|_{\widehat{\Sigma} \times \{ \rho \}} = \widehat{\text{Ric}} + \text{Riem}*\nu*\nu + \sum_{i=1}^{n-1} (A* E_{i}) \otimes (A*E_{i})  - HA,
\end{equation*}
yields
\begin{equation*}
\widehat{\text{Ric}}= \left(2\rho V'V + (n-2) V^{2} - n\rho^{2} \right) \widehat{g}.
\end{equation*}
Hence $\widehat{g}$ is an Einstein metric with Einstein constant
\begin{equation*}
    \widehat{k} = \frac{2\rho}{n-2} V' V + V^{2} - \frac{n}{n-2} \rho^{2}.
\end{equation*}
Using the initial condition $V(\rho_{m})=0$, we get that $V$ solves the initial value problem 


\begin{eqnarray*}
V'(\rho) &=& \frac{(n-2)}{2\rho} \left( (\widehat{k} + \frac{n}{n-2}\rho^{2}) V^{-1} - V \right), \\
V(\rho_{m}) &=& 0.
\end{eqnarray*}
The solution is

\begin{equation*}
    V(\rho)= \sqrt{ \widehat{k} + \rho^{2} - \frac{2m}{\rho^{n-2}}}, \hspace{1cm} \text{where} \hspace{1cm} m= \frac{1}{2} (\rho_{m}^{n} + \widehat{k} \rho_{m}^{n-2}).
\end{equation*}

\end{proof}
This characterization would also yield rigidity for a higher-dimensional Minkowski inequality, but for now we focus on the case $n=3$.
\begin{Theorem}
$\Sigma^{2} \subset (M^{3},g)$ achieves equality in \eqref{minkowski} if and only if $\Sigma = \widehat{\Sigma} \times \{ \rho \}$ is a slice in Kottler space \eqref{ads_schwarzschild} with mass $m > m_{\text{crit}}$.
\end{Theorem}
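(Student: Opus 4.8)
The plan is to treat the two implications separately; the forward implication is a direct computation, while the reverse implication carries essentially all of the content and is extracted from the rigidity of the substatic Heintze--Karcher inequality.

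For the forward direction, suppose $(M^{3},g,V)$ is Kottler \eqref{ads_schwarzschild} with curvature $\widehat k$ and mass $m > m_{\text{crit}}$, and let $\Sigma = \widehat\Sigma \times \{\rho\}$ be a slice. Then $\nu = V\partial_\rho$ and $H = 2V/\rho$, so $\int_\Sigma VH\,d\sigma = 2w_2 V^{2}\rho = 2w_2(\widehat k\rho + \rho^{3} - 2m)$, while $\int_\Omega V\,d\Omega = \tfrac{1}{3}w_2(\rho^{3}-\rho_m^{3})$. Using Gauss--Bonnet in the form $2\pi\chi(\widehat\Sigma) = \widehat k w_2$, the values $\kappa_1 = (3\rho_m^{2}+\widehat k)/(2\rho_m)$ and $|\partial_1 M| = w_2\rho_m^{2}$, and the mass relation $2m = \rho_m^{3} + \widehat k\rho_m$, the left-hand side of \eqref{minkowski2} collapses to $\widehat k\rho$, which equals the right-hand side $\widehat k(|\Sigma|/w_2)^{1/2}$. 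Hence equality holds along every slice.

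For the reverse direction, the key reduction is that equality forces $Q(t)$ to be constant. Indeed, the proof of \eqref{minkowski2} produces the chain $Q(0) \geq \liminf_{t\to\infty}Q(t) \geq 2w_2\widehat k$; since $Q$ is monotone non-increasing by Theorem \ref{monotonicity obstacle} and equality means $Q(0) = 2w_2\widehat k$, we conclude $Q(t) \equiv 2w_2\widehat k$ along the whole flow. By Theorem \ref{main_regularity} the weak flow is smooth for large $t$, so on that range I may run the differential computation of Proposition \ref{monotonicity}; constancy of $Q$ forces every inequality there to be an equality. In particular the pointwise bound $H^{2}\leq 2|A|^{2}$ is saturated, so each $\Sigma_t$ is totally umbilic, and the Heintze--Karcher inequality \eqref{heintze_karcher} is saturated for $\Sigma_t = \partial\Omega_t$. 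I must also rule out genuine jumps: across a jump the replaced portion of the flow surface is minimal, so $\int VH$ strictly drops while the area is preserved, which would strictly decrease $Q$; hence no jumps occur.

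Now apply the rigidity of the substatic Heintze--Karcher inequality, Theorem \ref{HK_rigidity} of \cite{BFP24}: saturation on $\Omega_t$ forces $\partial M$ to be connected, so $J = K = 1$, and identifies $(\Omega_t,g)$ with the warped product \eqref{warped_product}, with $\Sigma_t$ a level set of $V$. The preceding Lemma then realizes this as a region $\widehat\Sigma\times(\rho_m,\rho_0(t))$ of three-dimensional Kottler space with $m = \tfrac{1}{2}(\rho_m^{3}+\widehat k\rho_m) > m_{\text{crit}}$, $\rho_m$ being the horizon radius. Letting $t\to\infty$, the nested regions $\Omega_t$ exhaust $M\setminus\partial M$ with compatible Kottler structures of the same $m$, so $(M^{3},g,V)$ is globally Kottler. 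Finally, $\Sigma = \Sigma_0$ is a slice: the $\Sigma_t$ are slices for large $t$, and since the flow is smooth and jump-free, running it back along the slice foliation of Kottler space (equivalently, applying Theorem \ref{HK_rigidity} directly to $\Omega_0$, using that a smooth outer-minimizing $\Sigma$ is mean-convex) identifies $\Sigma$ with $\widehat\Sigma\times\{\rho\}$.

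\textbf{Main obstacle.} I expect the delicate part to be the rigorous propagation of equality through the weak flow---ruling out jumps and handling the possibly non-strictly-mean-convex initial surface---together with the passage from the local warped-product structure on each $\Omega_t$ to a single global Kottler identification with consistent parameters. The cleanest organization is to first establish the global Kottler structure from the large-$t$ smooth regime, and only afterward revisit $t=0$ to conclude that $\Sigma$ itself is a slice.
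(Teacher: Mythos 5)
Your proposal is correct and follows essentially the same route as the paper: equality forces $Q(t)$ to be constant, which (using smoothness of the flow for large $t$ from Theorem \ref{main_regularity}) saturates the Heintze--Karcher inequality on $\Omega_t$, so Theorem \ref{HK_rigidity} together with the subsequent Lemma identifies $(\Omega_t,g)$ with a region of Kottler space, and uniqueness of IMCF then globalizes this and forces $\Sigma$ to be a slice. The only minor caveat is your parenthetical claim that a smooth outer-minimizing surface is mean-convex --- it only gives $H\geq 0$, not the strict mean-convexity required in Theorem \ref{HK_rigidity} --- but your primary argument via the smooth large-$t$ regime and uniqueness of the flow does not rely on this.
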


\begin{proof}
From the arguments in Section 4, it is clear that if $J > K$ then $Q(t)$ is necessarily non-constant under IMCF with obstacles, and so we must have $J=K$. From Section 5, $\Sigma_{t}$ is smooth, strictly mean-convex, and homeomorphic to $\widehat{\Sigma}$ for $t$ sufficiently large. Since $\Sigma_{t}$ saturates the Heintze-Karcher inequality, we must have that $K=1$ and that $(\Omega_{t},g)$ is isometric to a region of $\widehat{\Sigma} \times (\rho_{m},\rho_{0})$ of Kottler space with base $(\widehat{\Sigma}^{2},\widehat{g})$. In fact, the slices $\widehat{\Sigma}^{2} \times \{ \rho \} \subset (\Omega_{t},g)$ of Kottler are related to the IMCF of $\partial M$ by the change of variable $\rho= \rho_{m} e^{\frac{t}{2}}$ due to uniqueness of IMCF with given initial condition, and so taking the global solution with initial condition $\partial M$ yields an isometry to Kottler. 

\end{proof}
\begin{figure}\label{Topology of Horizons}
    \centering
 \textbf{Topological Censorship and Horizon Topologies}
    \vspace{1cm}


\begin{tikzpicture}
\begin{scope}[scale=0.7]
\begin{scope}[xscale=0.5, yscale=0.7, xshift=40]


\draw[blue, line width=2.5pt, domain=3:5, name path=C1] plot(\x, {(\x-5)^(2) -4 } );
\draw[blue, line width=2.5pt, domain=5:7, name path=c1] plot(\x, {(\x-5)^(2) -4 } );
\draw[blue, line width=2.5pt, domain=-4.4:-3, name path=A1] plot(\x, {-5*(-3 - \x)^(0.5)});
\draw[line width=2.5pt, domain=-6:-4.4] plot(\x, {-5*(-3 - \x)^(0.5)});
\draw[blue, line width=2.5pt, domain=13:14.4, name path=F1] plot(\x, {-5*(\x - 13)^(0.5)});
\draw[line width=2.5pt, domain=14.4:16] plot(\x, {-5*(\x - 13)^(0.5)});
\draw[blue] (14.6,-3) node{\Large{$\Omega$}};
\draw[blue] (14.4,-6) node[anchor=west]{\Large{$\Sigma$}};
\draw[blue, line width=2.5pt, domain=7:13, name path=D1]  plot(\x, {(1-(0.111*(\x-10)^(2)))^(0.5) } );
\draw[blue, line width=2.5pt, domain=0:3, name path=b1]  plot(\x, {(1-(0.111*\x^(2)))^(0.5) } );
\draw[blue, line width=2.5pt, domain=-3:3, name path=B1]  plot(\x, {(1-(0.111*(\x)^(2)))^(0.5) } );
\draw[blue, line width=2.5pt] (10,0) ellipse (3 and 1);
\draw[blue, line width=2.5pt] (0,0) ellipse (3 and 1);
\draw (0,1) node[anchor=south]{\Large{$g_{i}=g_{\infty}$}} ;
\draw (10.5,1) node[anchor=south]{\Large{$g_{j}=0, \hspace{0.1cm} j \neq i$}};
\draw[line width=2.5pt] (5,-11) node[anchor=north]{\Large{$\partial_{\infty} M= \widehat{\Sigma}$}};

\draw[line width=2.5pt] (16,-8.66) arc(0:180:11cm and 2cm); 
\draw[line width=2.5pt] (-6,-8.66) arc(180:360:11cm and 2cm); 
\draw[blue, line width=2.5pt] (14.4,-6) arc(0:180:9.4cm and 1.7cm); 
\draw[blue, line width=2.5pt, name path=A2] (-4.4,-6) arc(180:211.5:9.4cm and 1.7cm);
\draw[blue, line width=2.5pt, name path=B2] (-3,-6.87) arc(210:257:9.4cm and 1.7cm);
\draw[blue, line width=2.5pt, name path=F2] (14.4,-6) arc(360:330:9.4cm and 1.7cm);
\draw[blue, line width=2.5pt, name path=C2] (5, -7.7) arc(270:257.5:9.4cm and 1.7cm);
\draw[blue, line width=2.5pt, name path=c2] (5, -7.7) arc(270:282.5:9.4cm and 1.7cm);
\draw[blue, line width=2.5pt, name path=D2] (13.15,-6.87) arc(330:282.5:9.4cm and 1.7cm);
\tikzfillbetween[of=A1 and A2]{blue, opacity=0.1};
\tikzfillbetween[of=F1 and F2]{blue, opacity=0.1};
\tikzfillbetween[of=C1 and C2]{blue, opacity=0.1};
\tikzfillbetween[of=B1 and B2]{blue, opacity=0.1};
\tikzfillbetween[of=c1 and c2]{blue, opacity=0.1};
\tikzfillbetween[of=D1 and D2]{blue, opacity=0.1};
\end{scope}
\begin{scope}[xscale=0.3, yscale=0.5, yshift=-620, xshift=-30]


\draw[blue, line width=2.5pt, domain=3:5, name path=C1] plot(\x, {(\x-5)^(2) -4 } );
\draw[blue, line width=2.5pt, domain=5:7, name path=c1] plot(\x, {(\x-5)^(2) -4 } );
\draw[blue, line width=2.5pt, domain=-4.4:-3, name path=A1] plot(\x, {-5*(-3 - \x)^(0.5)});
\draw[line width=2.5pt, domain=-11:-4.4] plot(\x, {-5*(-3 - \x)^(0.5)});
\draw[blue, line width=2.5pt, domain=13:14.4, name path=F1] plot(\x, {-5*(\x - 13)^(0.5)});
\draw[line width=2.5pt, domain=14.4:16] plot(\x, {-5*(\x - 13)^(0.5)});
\draw[line width=2.5pt, domain=16:23.05] plot(\x, {0.4*(\x-18)^(2) -10.2});
\draw[line width=2.5pt] (29.05,0.001) arc(360:0:3cm and 1cm) ;
\draw[line width=2.5pt, domain=29.05:33]  plot(\x, {-7*(\x - 29.05)^(0.5) +0.001});
\draw (5,1) node[anchor=south]{\Large{$g_{i}=0, \hspace{0.2cm} i \leq K$}};
\draw (26.05,1) node[anchor=south]{\Large{$g_{J}=g_{\infty}$}};
\draw[line width=2.5pt] (11.1, -13.911) ellipse (21.9 and 3);
\draw[line width=2.5pt] (11,-17.5) node[anchor=north]{\Large{$\partial_{\infty} M=\widehat{\Sigma}$}};
\draw[blue] (14.4,-6) node[anchor=west] {\Large{$\Sigma$}};
\draw[blue] (14.8,-3) node{\Large{$\Omega$}};

\draw[blue, line width=2.5pt, domain=7:13, name path=D1]  plot(\x, {(1-(0.111*(\x-10)^(2)))^(0.5) } );
\draw[blue, line width=2.5pt, domain=0:3, name path=b1]  plot(\x, {(1-(0.111*\x^(2)))^(0.5) } );
\draw[blue, line width=2.5pt, domain=-3:3, name path=B1]  plot(\x, {(1-(0.111*(\x)^(2)))^(0.5) } );
\draw[blue, line width=2.5pt] (10,0) ellipse (3 and 1);
\draw[blue, line width=2.5pt] (0,0) ellipse (3 and 1);

\draw[blue, line width=2.5pt] (14.4,-6) arc(0:180:9.4cm and 1.7cm); 
\draw[blue, line width=2.5pt, name path=A2] (-4.4,-6) arc(180:211.5:9.4cm and 1.7cm);
\draw[blue, line width=2.5pt, name path=B2] (-3,-6.87) arc(210:257:9.4cm and 1.7cm);
\draw[blue, line width=2.5pt, name path=F2] (14.4,-6) arc(360:330:9.4cm and 1.7cm);
\draw[blue, line width=2.5pt, name path=C2] (5, -7.7) arc(270:257.5:9.4cm and 1.7cm);
\draw[blue, line width=2.5pt, name path=c2] (5, -7.7) arc(270:282.5:9.4cm and 1.7cm);
\draw[blue, line width=2.5pt, name path=D2] (13.15,-6.87) arc(330:282.5:9.4cm and 1.7cm);
\tikzfillbetween[of=A1 and A2]{blue, opacity=0.1};
\tikzfillbetween[of=F1 and F2]{blue, opacity=0.1};
\tikzfillbetween[of=C1 and C2]{blue, opacity=0.1};
\tikzfillbetween[of=B1 and B2]{blue, opacity=0.1};
\tikzfillbetween[of=c1 and c2]{blue, opacity=0.1};
\tikzfillbetween[of=D1 and D2]{blue, opacity=0.1};
\end{scope}
\end{scope}
\end{tikzpicture}

 \caption{From \cite{GSWW99}, the genera $g_{j}$ of outermost minimal surfaces $\partial_{j} M$ bounding an ALH manifold  $(M^{3},g)$ are related to the genus of $\partial_{\infty} M=\widehat{\Sigma}$ by
 \begin{equation} \label{genus inequality}
     \sum_{j=1}^{J} g_{j} \leq g_{\infty}.
 \end{equation}
Assumption (T1) in Theorem \ref{minkowski} requires $g_{i} \geq g_{\infty}$ for some horizon $\partial_{i} M$, so \eqref{genus inequality} then stipulates that $g_{i}=g_{\infty}$ and $g_{j}= 0$ for $j \neq i$. Furthermore, assumption (T2) that $g_{j} > 0$ for $j >K$ means the only horizon which can lie outside $\Omega$ is $\partial_{i} M$ itself, in which case the horizons that $\Sigma$ encloses are spheres. Although we do not assume these facts to prove Theorem \ref{minkowski}, they give motivation to remove assumption (T1) or (T2) from the theorem to obtain a further-reaching result in future work.
 }
    \label{topology}
\end{figure}
\begin{Remark} \label{high_dimension}
One possible future research direction is an extension of the Minkowski inequality to higher-dimensional ALH static spaces. The relevant quantity to consider for ALH static spaces in dimension $n$ is

\begin{eqnarray*}
Q(t)= \left( |\Sigma_{t}| \right)^{\frac{2-n}{n-1}} & & \left(  \int_{\Sigma_{t}} VH d\sigma - n(n-1) \int_{\Omega_{t}} V d\mbox{vol} \right. \\
& & \left. + 2 \frac{n-1}{n-2} \sum_{j=1}^{K} \frac{\int_{\partial_{j} M} R_{\partial_{j} M} d\sigma}{n(n-1)|\partial_{j} M| +  \int_{\partial_{j} M} R_{\partial_{j}M} d\sigma} \kappa_{j} |M_{j}| \right).
\end{eqnarray*}
Using the optimal Heintze-Karcher coefficients \eqref{c_j}, one may show that $Q(t)$ is monotone along weak IMCF for an initial domain $\Omega_{0}$ of the form \eqref{boundary_assumption}, provided $n < 8$ and the horizons not enclosed by $\Sigma_{0}$ have total scalar curvature $\int_{\partial_{j}M} R_{\partial_{j} M} d\sigma \leq 0$. The major obstacle is the asymptotic analysis. Specifically, the higher-dimensional Minkowski inequality would require an extension of Theorem \ref{main_regularity} on the regularity of weak IMCF in ALH $3$-manifolds. Many of the arguments of Section 5 are specific to $3$ dimensions, meaning this extension is likely difficult. It turns out that regularity of weak IMCF does hold higher-dimensional hyperbolic space, c.f. \cite{H24a}, but unfortunately the argument of \cite{H24a} does not easily translate to the ALH setting.
\end{Remark}

\section{Applications}
\subsection{Proof of Theorem \ref{bh_uniqueness_spherical} and Theorem \ref{toroidal_uniqueness}}
The black hole uniqueness theorems for spherical and toroidal infinities both come as immediate consequences of the rigidity statement in Theorem \ref{minkowski}. We begin with Theorem \ref{bh_uniqueness_spherical}. For $\Sigma = \pl_1 M$ (it is understood $\Omega = \emptyset$), \eqref{minkowski2} reduces to
\begin{align*}
    \frac{4\pi \chi(\pl_1 M) \kappa_1}{3|\pl_1 M| + 2\pi \chi(\pl_1 M)} \ge \lt( \frac{|\pl_1 M|}{w_2}\rt)^{-\frac{1}{2}}
\end{align*}
as $\hat k=1$. Recalling the denominator is positive (by \eqref{c_j} and \eqref{c_j2}), we see that $\chi(\pl_1 M) > 0$ and hence $\pl_1 M$ is homeomorphic to sphere. After some algebra ($w_2=4\pi$) we get
\begin{equation} \label{spherical_kappa}
    \kappa_{1} \geq \frac{1}{2} \left( 3 \left( \frac{|\partial_{1}M|}{4\pi} \right)^{\frac{1}{2}} + \left( \frac{|\partial_{1}M|}{4\pi} \right)^{-\frac{1}{2}} \right).
\end{equation}
The right-hand side of \eqref{spherical_kappa} is greater than or equal to $\sqrt{3}$ regardless of the value of $|\partial_{1} M|$, which gives us item (2) in Theorem \ref{bh_uniqueness_spherical}. Similarly, if $\kappa_{1}=\sqrt{3}$ then we neccessarily have saturation, implying that $(M^{3},g,V)$ is isometric to the AdS-Schwarzschild manifold with surface gravity $\kappa_{1}=\sqrt{3}$. For $\kappa_{1} > \sqrt{3}$, the lower bound \eqref{spherical_kappa} on $\kappa_{1}$ is equivalent to the two-sided area bound \eqref{area_bounds} in Theorem \ref{bh_uniqueness_spherical}.

Next, we consider flat ($\hat k=0$) toroidal infinity in Theorem \ref{toroidal_uniqueness}. Now \eqref{minkowski2} reduces to $\chi(\pl_i M) \ge 0$ for any component of the horizon. From the assumption that each $\partial_{j} M$ has genus at least one, we see that the equality saturates. Hence the horizon is connected ($J=1$)\footnote{We remark that the conclusion $J=1$ also follows from the assumption that $\chi(\partial_{j} M) \leq 0$ and \eqref{genus inequality}} and $(M^3,g,V)$ is isometric to Kottler space with $\hat k=0$. This completes the proof of Theorem \ref{toroidal_uniqueness}.

Let us close this subsection by explaining the Hawking-Page construction \cite{HP83} of Poincar\'e-Einstein metrics from ALH static system. Recall that $S^1(\lambda)$ denotes the circle with length $2\pi\lambda$.
\begin{Theorem}\label{Hawking-Page}
Let $(M^3,g,V)$ be an ALH static system with conformal infinity $(\hat\Sigma, \hat g)$. Suppose all components $\pl_i M$ have the same surface gravity $\kappa$. Consider the metric 
\begin{align*}
\mathrm g = V^2 d\theta^2 + g, 
\end{align*}
where $\theta \in [0,2\pi\lambda]$ is the coordinate of $S^1(\lambda)$. If $\lambda = \kappa^{-1}$, then $\mathrm g$ is a smooth 4-dimensional Poincar\'e-Einstein metric, $Ric(\mathrm g) = -3 \mathrm g$, whose conformal infinity is $S^1(\kappa^{-1}) \times\hat\Sigma$ equipped with product metric $d\theta^2 + \hat g$.
\end{Theorem}
\begin{proof}
By the static equations, $Ric(\mathrm g) = -3 \mathrm g$. From \eqref{tilde g} we see that $\mathrm g$ admits conformal compactification. It remains to show that $\mathrm g$ is smooth. The only problem is near $\pl M$. Let $r$ be the $g$-geodesic distance from a component, say $\pl_1 M$, of $\pl M$. In a tubular neighborhood of $S^1 \times \pl_1 M$, $\mathrm g = V^2 d\theta^2 + dr^2 + g_r$. Comparing $dr^2 + V^2 d\theta^2$ with the flat metric $d\mathbf{r}^2 + (\mathbf{r}/\lambda)^2 d (\lambda\boldsymbol{\theta})^2$, we see that if \[ \lambda^{-1} = \frac{\pl V}{\pl r}(r=0) = \kappa, \]
then $\mathrm g$ is free from conical singularity and hence $C^1$. Note that the topology near $r=0$ is now $D^2 \times \pl_1 M$. As $V$ extends smoothly to $\pl M$, $\mathrm g$ is smooth. 
\end{proof}

\subsection{The Reverse Penrose Inequality}
Note that the decay required for Chru\'{s}ciel-Herzlich mass to be well-defined, $\alpha > \frac{n}{2}$ is slower, equal, or faster than \eqref{ALH_intro} in dimension $3,4,$ or $n \ge 5$ respectively.

We will need the formula for the mass due to Herzlich, see Proposition 2.10 of \cite{MTX17} for example. 
\begin{Proposition}\label{Herzlich mass formula} Under the setup of Definition \ref{wang_mass}, we have
\begin{align*}
    \lim_{\rho \rw\infty} \int_{\widehat\Sigma \times\{ \rho\} } (\mbox{Ric}_g + (n-1) g )(\bar D f,\bar\nu) d\sigma_{\bar g} = -\frac{1}{(n-1)(n-2)\omega_{n-1}} m.
\end{align*}
\end{Proposition}
The following generalizes \cite[Corollary 4.4]{WW17}.
\begin{lem}\label{baby reverse Penrose} Let $(M^n,g,V)$ be an ALH static system
with decay $q = O_2(\rho^{-\alpha})$ and $V = f + O_1(\rho^{1-\alpha})$ for some $\alpha > \frac{n}{2}$. Then
\begin{align}\label{reverse Penrose}
m \le \frac{1}{(n-2)\omega_{n-1}} \sum_{j=1}^J \lt( 1- (n-1) c_j \rt) \kappa_j |\pl_j M|    
\end{align}
with $c_j$ given in \eqref{c_j}.
\end{lem}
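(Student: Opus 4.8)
The plan is to feed the Heintze--Karcher inequality \eqref{heintze_karcher} through the large coordinate slices $\Sigma_\rho = \widehat\Sigma \times \{\rho\}$ and let $\rho \to \infty$. These slices are mean-convex for large $\rho$ and enclose every horizon, so the bounding domain $\Omega_\rho$ has $\partial\Omega_\rho = \partial_1 M \sqcup \dots \sqcup \partial_J M \sqcup \Sigma_\rho$ (that is, $L=J$) and $\int_{\partial_j M}\frac{\partial V}{\partial\nu}\,d\sigma = \kappa_j|\partial_j M|$ since $\frac{\partial V}{\partial\nu}$ is constant on $\partial_j M$. A static vacuum system is sub-static (indeed $V\mathrm{Ric}-\nabla^2 V+(\Delta V)g=0$), so the Heintze--Karcher hypotheses hold on each $\Sigma_\rho$. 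First I would record two identities valid for all large $\rho$: the Heintze--Karcher inequality
\[
\int_{\Sigma_\rho}\frac{V}{H}\,d\sigma \ge \frac{n}{n-1}\int_{\Omega_\rho}V\,d\Omega + \sum_{j=1}^J c_j\,\kappa_j|\partial_j M|,
\]
and the divergence-theorem identity from $\Delta V = nV$ (as in the proof of Proposition \ref{monotonicity}),
\[
\int_{\Sigma_\rho}\frac{\partial V}{\partial\nu}\,d\sigma = n\int_{\Omega_\rho}V\,d\Omega + \sum_{j=1}^J \kappa_j|\partial_j M|.
\]
Eliminating $\int_{\Omega_\rho}V\,d\Omega$ and multiplying by $(n-1)$ gives, for every large $\rho$,
\[
\int_{\Sigma_\rho}\frac{\partial V}{\partial\nu}\,d\sigma - (n-1)\int_{\Sigma_\rho}\frac{V}{H}\,d\sigma \le \sum_{j=1}^J\bigl(1-(n-1)c_j\bigr)\kappa_j|\partial_j M|.
\]
The right-hand side is independent of $\rho$, so the lemma reduces to the limit identity
\begin{equation}
\lim_{\rho\to\infty}\left[\int_{\Sigma_\rho}\frac{\partial V}{\partial\nu}\,d\sigma - (n-1)\int_{\Sigma_\rho}\frac{V}{H}\,d\sigma\right] = (n-2)\,\omega_{n-1}\,m. \tag{$\star$}
\end{equation}

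To prove $(\star)$ I would use the static identity $\Delta_\Sigma V + H\frac{\partial V}{\partial\nu} + V\mathrm{Ric}(\nu,\nu)=0$ (the restriction of the static equations to $\Sigma_\rho$, already used in Proposition \ref{monotonicity}) to write
\[
\frac{\partial V}{\partial\nu} - (n-1)\frac{V}{H} = -\frac{\Delta_\Sigma V}{H} - \frac{V}{H}\bigl(\mathrm{Ric}(\nu,\nu)+(n-1)\bigr).
\]
Integrating the first term by parts on the closed surface $\Sigma_\rho$ yields $-\int_{\Sigma_\rho}\langle\nabla_\Sigma V,\nabla_\Sigma H^{-1}\rangle\,d\sigma$; since $V-f$ and its tangential derivative decay like $\rho^{1-\alpha}$ while $H^{-1}\to\frac1{n-1}$ with small tangential gradient, this is a product of two tangential gradients integrated over an area of order $\rho^{n-1}$, and vanishes as $\rho\to\infty$ precisely because $\alpha>\frac n2$. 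For the second term I would use $\frac{V}{H}\to\frac{\rho}{n-1}$ together with $\bar\nabla f=\rho\,\bar\nu$, the agreement $\nu=\bar\nu+O(\rho^{-\alpha})$ of the normals, and $d\sigma=d\mu_{\bar g}\bigl(1+O(\rho^{-\alpha})\bigr)$ to identify
\[
-\int_{\Sigma_\rho}\frac{V}{H}\bigl(\mathrm{Ric}(\nu,\nu)+(n-1)\bigr)\,d\sigma = -\frac{1}{n-1}\int_{\widehat\Sigma\times\{\rho\}}(\mathrm{Ric}_g+(n-1)g)(\bar\nabla f,\bar\nu)\,d\mu_{\bar g} + o(1).
\]
The integral on the right converges by the mass formula (the Proposition preceding this lemma), and matching constants against the Kottler model---where $(\mathrm{Ric}+(n-1)g)(\nu,\nu)=-(n-1)(n-2)m\rho^{-n}$ and $\frac{V}{H}=\frac{\rho}{n-1}$ combine to give exactly $(n-2)\omega_{n-1}m$---pins the limit to $(n-2)\omega_{n-1}m$, establishing $(\star)$. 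Combining $(\star)$ with the displayed inequality and dividing by $(n-2)\omega_{n-1}$ yields the asserted bound \eqref{reverse Penrose}.

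The main obstacle is the asymptotic step $(\star)$: one must rigorously justify replacing the physical normal, mean curvature, and area element by their reference counterparts and verify that every discarded term is genuinely $o(1)$ under the borderline decay $\alpha>\frac n2$, which is the exact threshold at which the Chru\'{s}ciel--Herzlich mass is defined. In particular, the cross term $\int_{\Sigma_\rho}\langle\nabla_\Sigma V,\nabla_\Sigma H^{-1}\rangle\,d\sigma$ and the corrections arising from $d\sigma-d\mu_{\bar g}$ and from $\nu-\bar\nu$ leave no room to spare at $\alpha=\frac n2$, so they must be estimated using the full $O_2$-decay of $q$ and the $O_1$-decay of $V-f$. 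A secondary but real point of care is the normalization of the constant: the finite part of $(\star)$ must land on precisely $(n-2)\omega_{n-1}m$, which I would verify by cross-checking the Ricci-flux computation against the exact Kottler slice rather than substituting an ambient normalization blindly.
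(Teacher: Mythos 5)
Your proposal is correct and follows essentially the same route as the paper: apply the Heintze--Karcher inequality to the large coordinate slices $\widehat\Sigma\times\{\rho\}$, use the divergence theorem with $\Delta V=nV$ to eliminate the bulk term, and identify $\lim_{\rho\to\infty}\bigl[\int_{\Sigma_\rho}\tfrac{\partial V}{\partial\nu}\,d\sigma-(n-1)\int_{\Sigma_\rho}\tfrac{V}{H}\,d\sigma\bigr]$ with $(n-2)\omega_{n-1}m$ via the static equations and the Herzlich Ricci-flux formula for the mass. The only (harmless) difference is in the algebraic decomposition: you keep the factor $1/H$ in front of $\Delta_\Sigma V$ and therefore must estimate the cross term $\int_{\Sigma_\rho}\langle\nabla_\Sigma V,\nabla_\Sigma H^{-1}\rangle\,d\sigma$, whereas the paper uses the constant coefficient $1/(n-1)$ so that $\int_{\Sigma_\rho}\Delta_\Sigma V\,d\sigma$ vanishes identically.
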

\begin{proof}
    For $\Sigma = \hat\Sigma \times \{\rho \}$, we write \begin{align}\label{temp1}
    \begin{split}
         (n-1) \frac{V}{H} - \frac{\pl V}{\pl\nu} &= \lt( \frac{n-1}{H} - 1 \rt) V + \lt( \frac{H}{n-1} -1 \rt) \frac{\pl V}{\pl \nu} \\
         &\quad + \frac{1}{n-1} \lt( -\Delta_g V + D^2 V(\nu,\nu) + (n-1) V + \Delta_\Sigma V  \rt).
         \end{split}
    \end{align}
    Since $\nu = \rho \frac{\pl}{\pl\rho} + O(\rho^{-\alpha})$ on $\widehat\Sigma \times \{ \rho \}$, we have
    \[ \frac{\pl V}{\pl\nu} = \rho + O(\rho^{1-\alpha}) \]
    and \[ V \nu = \bar\na f + O(\rho^{1-\alpha}). \]
    Moreover, the mean curvature of $\widehat\Sigma \times \{ \rho\}$ is given by $H = n-1 + \mathcal{H}$ where $\mathcal{H} = O(\rho^{-\alpha})$; the volume form on $\hat\Sigma \times \{ \rho\}$ with respect to $g$ and $\bar g$ are related by $d\sigma = (1 + O(\rho^{-\alpha}))d\sigma_{\bar g}$. Integrating \eqref{temp1} over $\hat\Sigma \times \{ \rho\}$ and using the static equations \eqref{n_static_eqs} and Proposition \ref{Herzlich mass formula}, we obtain
    \[ \int_{\widehat\Sigma \times \{ \rho\} } (n-1)\frac{V}{H} - \frac{\pl V}{\pl\nu} d\sigma = O(\rho^{1-2\alpha + n-1}) - (n-2) \omega_{n-1}m. \]
    Recalling that $\alpha > \frac{n}{2}$ and  applying Heintze-Karcher inequality to $\widehat\Sigma \times \{ \rho\}$, we get
    \[ \sum_{j=1}^J \lt( -1 + (n-1) c_j \rt) \kappa_j |\pl_j M| \le -(n-2)\omega_{n-1} m. \]\end{proof}

\begin{proof}[Proof of Theorem \ref{Reverse Penrose Inequality}]
    As  $\chi(\partial M) < 0$, we may divide inequality \eqref{surface_grav_ineq} by $\chi(\pl  M)$ to obtain the surface gravity upper bound
\begin{equation} \label{surface_grav_hyperbolic}
    \kappa \leq -\frac{1}{4\pi \chi(\partial M)} \left( 3 w_{2} \left( \frac{|\partial M|}{w_{2}} \right)^{\frac{1}{2}} + 2\pi \chi(\partial M) \left( \frac{|\partial M|}{w_{2}} \right)^{-\frac{1}{2}} \right).
\end{equation}
For a 3-dimensional ALH static system with connected boundary, \eqref{reverse Penrose} reads  \begin{align*}
    m \le \frac{|\pl M|}{\omega_2} \frac{|\pl M| + 2\pi\chi(\pl M)}{3|\pl M| + 2\pi\chi(\pl M)} \kappa.
\end{align*}
Note that the numerator $|\pl M| + 2\pi\chi(\pl M) \ge 0$ by the assumption $m \ge 0$ and we can thus insert \eqref{surface_grav_hyperbolic} to get \begin{equation*}
    m \leq \frac{1}{2}  \frac{\chi(\hat\Sigma)}{\chi(\partial M)} \left( \frac{|\partial M|}{w_{2}} \right)^{\frac{3}{2}} - \frac{1}{2} \left( \frac{|\partial M|}{w_{2}} \right)^{\frac{1}{2}}\\
    \le \frac{1}{2}  \left( \frac{|\partial M|}{w_{2}} \right)^{\frac{3}{2}} - \frac{1}{2} \left( \frac{|\partial M|}{w_{2}} \right)^{\frac{1}{2}},
\end{equation*}
where we have used the relation $ w_{2}=-2\pi \chi(\hat\Sigma)$ arising from Gauss-Bonnet in the first inequality and the assumption $\chi(\partial M) \leq \chi(\widehat{\Sigma}) < 0$ in both inequalities. Also note that if equality is achieved then \eqref{surface_grav_hyperbolic} must be saturated, which once again triggers the rigidity statement.
\end{proof}
\subsection{The Areal Minkowski and Graphical Penrose Inequalities}
To prove Corollary \ref{alh_graph}, we need an alternative Minkowski-type inequality to the main one \eqref{minkowski2}. In \cite{DG15}, De Lima-Girao discovered a quantity under IMCF in hyperbolic space distinct from the one in \cite{BHW12} which is either monotonically non-increasing or is bounded below by $Q(t)$ for all times. Ge-Wang-Wu-Xia \cite{GWWX13} later determined the corresponding quantity in Kottler space, and this quantity can be further generalized to ALH static systems.
\begin{Theorem}[Areal Minkowski Inequality] \label{area_minkowski}
Let $(M^{3},g,V)$ and $\Omega$ be as in Theorem \ref{minkowski}, and assume also that $J=K$, i.e. that $\Omega$ is bounded by all components of $\partial M$. Then the surface $\Sigma$ satisfies the total mean curvature lower bound
\begin{equation} \label{area_mink}
    \frac{1}{4 w_{2}} \int_{\Sigma} V H d\sigma \geq \frac{1}{2} \left( \widehat{k} \left(\frac{|\Sigma|}{w_{2}} \right)^{\frac{1}{2}} + \left( \frac{|\Sigma|}{w_{2}} \right)^{\frac{3}{2}} \right) - \frac{1}{w_{2}} \sum_{j=1}^{J} \frac{|\partial_{j} M| + 2\pi\chi(\partial_{j} M)}{3|\partial_{j}M| + 2\pi\chi(\partial_{j}M) } \kappa_{j} |\partial_{j}M|,
\end{equation}
with equality if and only if $(M^{3},g,V)$ is isometric to Kottler space \eqref{ads_schwarzschild} and $\Sigma=\widehat{\Sigma} \times \{ \rho \}$ is a slice.
\end{Theorem}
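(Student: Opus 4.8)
The plan is to build a second monotone-type functional adapted to the inverse mean curvature flow, in the spirit of de Lima--Girao \cite{DG15} and Ge--Wang--Wu--Xia \cite{GWWX13}, and to run it against the inequality \eqref{minkowski2} already established in Theorem \ref{minkowski}. Because $J=K$, the domain $\Omega$ is bounded by \emph{all} horizons, so the proper weak IMCF issued from the outer-minimizing surface $\Sigma=\Sigma_0$ encloses every $\partial_j M$ for all time and no obstacle restarts are needed. Writing $A_t=|\Sigma_t|/w_2$ and $S=\sum_{j=1}^{J}\frac{|\partial_j M|+2\pi\chi(\partial_j M)}{3|\partial_j M|+2\pi\chi(\partial_j M)}\kappa_j|\partial_j M|$, I would define the areal numerator and functional
\[ q_2(t)=\int_{\Sigma_t}VH\,d\sigma-2w_2A_t^{3/2}+4S,\qquad \hat Q_2(t)=|\Sigma_t|^{-\frac12}q_2(t), \]
so that the desired inequality \eqref{area_mink} is precisely the assertion $\hat Q_2(0)\ge 2\widehat k\sqrt{w_2}=:L$.

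The heart of the argument is a differential inequality tying $\hat Q_2$ to the functional $Q$ of Section 3. For smooth IMCF, the computation \eqref{integral_VH} together with $H^2\le 2|A|^2$ and the static identity gives $\frac{d}{dt}\int_{\Sigma_t}VH\,d\sigma\le \frac12\int_{\Sigma_t}VH\,d\sigma+2\int_{\Sigma_t}\frac{\partial V}{\partial\nu}\,d\sigma$, and the divergence theorem turns the last term into $6\int_{\Omega_t}V\,d\Omega+2\sum_{j=1}^J\kappa_j|\partial_j M|$. Letting $q(t)=\int_{\Sigma_t}VH-6\int_{\Omega_t}V+C_{\mathrm{main}}$ be the numerator of $Q$, with $C_{\mathrm{main}}=4\sum_j\frac{2\pi\chi(\partial_j M)}{3|\partial_j M|+2\pi\chi(\partial_j M)}\kappa_j|\partial_j M|$, a short computation using $\frac{d}{dt}A_t=A_t$ reduces the claim $q_2'\le \tfrac32 q_2-q$ to the algebraic identity $6S-C_{\mathrm{main}}=2\sum_{j=1}^J\kappa_j|\partial_j M|$, valid since $6(|\partial_j M|+2\pi\chi)-8\pi\chi=2(3|\partial_j M|+2\pi\chi)$. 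In terms of the functionals this reads $\hat Q_2'(t)\le \hat Q_2(t)-Q(t)$, which is exactly the de Lima--Girao dichotomy: at each time either $\hat Q_2$ decreases or $\hat Q_2\ge Q$. I would transport this to the weak flow through Lemma \ref{mc} and Proposition \ref{bulk}, which control the growth of the two numerators, observing that across a jump the total mean curvature drops while the area is preserved (Theorem \ref{properties}(v)), so $\hat Q_2$ can only decrease there.

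Since Theorem \ref{minkowski} gives $Q(t)\ge L$ for every $t>0$, the inequality upgrades to $\hat Q_2'\le \hat Q_2-L$, so $e^{-t}(\hat Q_2(t)-L)$ is non-increasing with jumps only helping. Once the flow is smooth and star-shaped for large $t$ by Theorem \ref{main_regularity}, Proposition \ref{tildeQ2} evaluates the limit and yields $\liminf_{t\to\infty}\hat Q_2(t)\ge L$. A barrier argument now closes the estimate: if $\hat Q_2(0)<L$ then $\hat Q_2(t)-L\le e^{t}(\hat Q_2(0)-L)\to-\infty$, contradicting the liminf bound, whence $\hat Q_2(0)\ge L$, i.e. \eqref{area_mink}.

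For the rigidity, equality $\hat Q_2(0)=L$ combined with $e^{-t}(\hat Q_2-L)$ non-increasing and $\liminf\hat Q_2\ge L$ forces $\hat Q_2\equiv L$; then $\hat Q_2'\equiv0\le \hat Q_2-Q$ gives $Q\le L$, and with $Q\ge L$ we conclude $Q(t)\equiv L$, i.e. \eqref{minkowski2} is saturated along the flow. The rigidity already proven for Theorem \ref{minkowski} then forces $(M^3,g,V)$ to be isometric to Kottler space \eqref{ads_schwarzschild} with each $\Sigma_t$, and hence $\Sigma$, a slice $\widehat\Sigma\times\{\rho\}$. The step I expect to be most delicate is the weak-flow bookkeeping: verifying that the integral growth estimates of Section 4 combine \emph{sharply}, with the horizon constants matching through $6S-C_{\mathrm{main}}=2\sum_j\kappa_j|\partial_j M|$, so that the relation $\hat Q_2'\le \hat Q_2-Q$ persists across jump times without slack. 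No new analytic ingredient beyond Section 4 and Proposition \ref{tildeQ2} should be required.
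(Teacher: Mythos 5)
Your proposal is correct, and it is built from exactly the same ingredients as the paper's proof: the functional you call $\hat Q_2$ is the paper's $P(t)$, the growth of $\int_{\Sigma_t}VH\,d\sigma$ comes from Lemma \ref{mc} plus the divergence theorem, the Heintze--Karcher inequality enters through the monotonicity of $Q$, and the asymptotic evaluation is Proposition \ref{tildeQ2}. The difference lies in how the two evolution inequalities are combined. The paper first proves a persistence statement: once the volume comparison \eqref{area_volume_ineq} holds at some time $t_0$ it holds for all later times (an ODE comparison using Heintze--Karcher), and this condition is precisely the condition under which the numerator of $P$ dominates that of $Q$. It then splits into two cases --- if $t_0<\infty$, then $P(0)\ge P(t_0)\ge Q(t_0)\ge 2\widehat{k}\,w_2^{1/2}$ since $P$ is non-increasing on $[0,t_0)$; if \eqref{area_volume_ineq} never holds, $P$ is non-increasing for all time and Proposition \ref{tildeQ2} gives the limit. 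You instead note that the \emph{unconditional} inequality $\hat Q_2'\le \hat Q_2-Q$ holds (your identity $6(1-2c_j)-4(1-3c_j)=2$ is exactly the cancellation of the horizon constants that makes this work), feed in $Q\ge 2\widehat{k}\,w_2^{1/2}$ from Theorem \ref{minkowski}, and integrate $e^{-t}(\hat Q_2-L)$; the liminf from Proposition \ref{tildeQ2} then rules out $\hat Q_2(0)<L$. This eliminates the case analysis and the persistence lemma while using nothing beyond Sections 4 and 6, and your rigidity argument (forcing $Q\equiv L$, hence saturation of Heintze--Karcher along the flow) is equivalent to the paper's. The caveats are the bookkeeping ones you already flagged: the differential inequality must be read in integral form across jump times, where $\int VH$ drops while $|\Sigma_t|$ is preserved so that $\hat Q_2$ only decreases, and the hypothesis $J=K$ is what ensures no obstacle restarts, so that $Q(t)\ge L$ is available for every $t$.
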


\begin{proof}
First of all, we have for the smooth IMCF $\Sigma_{t}$ of $\Sigma$ that
\begin{eqnarray*}
    \frac{d}{dt} \left( w_{2}^{\frac{1}{2}}|\Sigma_{t}|^{\frac{3}{2}} - 2 \sum_{j=1}^{J} c_{j} \kappa_{j} |\partial_{j}M| - 3 \int_{\Omega_{t}} V d\mbox{vol} \right) &=& \frac{3}{2} w_{2}^{\frac{1}{2}} |\Sigma_{t}|^{\frac{3}{2}} - 3 \int_{\Sigma_{t}} \frac{V}{H} d\sigma \\
    &\leq& \frac{3}{2} \left( w_{2}^{\frac{1}{2}} |\Sigma_{t}|^{\frac{3}{2}} - 2 \sum_{j=1}^{J}c_{j} \kappa_{j} |\partial_{j} M| -3 \int_{\Omega_{t}} V d\mbox{vol} \right), 
\end{eqnarray*}
where $c_{j}$ are the coefficients \eqref{c_j} of the Heintze-Karcher inequality. As a result, if 
\begin{equation} \label{area_volume_ineq}
     w_{2}^{\frac{1}{2}}|\Sigma_{t_{0}}|^{\frac{3}{2}} - 2 \sum_{j=1}^{J} c_{j} \kappa_{j} |\partial_{j}M|  \leq 3 \int_{\Omega_{t_{0}}} V d\mbox{vol}
\end{equation}
for some $t_{0} \geq 0$, then this inequality is preserved for times greater than $t_{0}$. Take $t_{0} \in [0,T)$ to be the infimum of times at which \eqref{area_volume_ineq} holds. Then for times $t \in [0,t_{0})$, we have
\begin{align}\label{P_ev}
&\frac{d}{dt} \left( \int_{\Sigma_{t}} VH d\sigma -  w_{2}^{-\frac{1}{2}} |\Sigma_{t}|^{\frac{3}{2}} + 4 \sum_{j=1}^{J} (1-2c_{j}) \kappa_{j} |\partial_{j} M|  \right) \\ 
&\leq \frac{1}{2} \int_{\Sigma_{t}} VHd\sigma + 2 \int_{\Sigma_{t}} \frac{\partial V}{\partial \nu} d\sigma \nonumber - \frac{3}{2} w_{2}^{-\frac{1}{2}} |\Sigma_{t}|^{\frac{3}{2}} \nonumber \\
&= \frac{1}{2} \int_{\Sigma_{t}} VH d\sigma + 6 \int_{\Omega_{t}} V d\mbox{vol} + 2\sum_{j=1}^{J} \kappa_{j}|\partial_{j}M| - \frac{3}{2} w_{2}^{-\frac{1}{2}}|\Sigma_{t}|^{\frac{3}{2}} \nonumber \\
&\leq \frac{1}{2} \left( \int_{\Sigma_{t}} VH d\sigma - w_{2}^{-\frac{1}{2}} |\Sigma_{t}|^{\frac{3}{2}} \right. \nonumber \left. \hspace{0.4cm} + 4 \sum_{j=1}^{J} (1-2c_{j}) \kappa_{j} |\partial_{j} M| \right), \nonumber    
\end{align} 
where we have used the differential inequality \eqref{integral_VH} for $\int_{\Sigma_{t}} VH d\sigma$ and \eqref{area_volume_ineq}. As a result, the quantity

\begin{equation}
    P(t)=|\Sigma_{t}|^{-\frac{1}{2}} \left( \int_{\Sigma_{t}} VHd\sigma - 2 w_{2}^{\frac{1}{2}} |\Sigma_{t}|^{\frac{3}{2}} + 4 \sum_{j=1}^{J} (1-2c_{j}) \kappa_{j} |\partial_{j} M| \right)
\end{equation}
is monotonically non-increasing under IMCF for times $t \in (0,t_{0})$ and is trivially bounded below by $Q(t)$ for times $t \in (t_{0},T)$. Furthermore, these properties directly carry over to the weak IMCF of $\Sigma_{t}$ by Lemma \ref{mc} and Proposition \ref{bulk}. To determine the asymptotic behavior of $P(t)$ under weak IMCF, we consider $2$ cases separately:\\
(I) If $t_{0}$ exists, i.e. if \eqref{area_volume_ineq} is achieved in finite time, then by Theorem \ref{minkowski},
\begin{equation} \label{P_0}
    P(0) \geq P(t_{0}) \geq Q(t_{0}) \geq 2 \widehat{k} w_{2}^{\frac{1}{2}}.
\end{equation}
(II) If \eqref{area_volume_ineq} is not achieved for any $t \in (0,\infty)$, then $P(t)$ is monotone for all times, and by Proposition \ref{tildeQ2},
\begin{equation*}
  P(0) \geq \liminf_{t \rightarrow \infty} P(t) \geq 2\widehat{k} w_{2}^{\frac{1}{2}}.
\end{equation*}
Substituting
\begin{equation} \label{cj_factor}
1- 2c_{j}= 1 - \frac{2|\partial_{j}M|}{3|\partial_{j}M| + 2\pi \chi(\partial_{j}M)} = \frac{|\partial_{j}M| + 2\pi \chi(\partial_{j}M)}{3|\partial_{j}M| + 2\pi \chi(\partial_{j}M)}   
\end{equation}
yields \eqref{area_mink}. Also note that if equality is achieved then the Heintze-Karcher inequality must be saturated 
\end{proof}
\begin{Remark}
The assumption that $K=J$ arises from the fact that the Minkowski inequality \eqref{minkowski2}, which stipulates that $\max_{j\in \{ K+1,\dots, J \}} \chi(\partial_{j} M) \leq 0$, is required in the proof, particularly in \eqref{P_0}. On the other hand, the factors \eqref{cj_factor} which appear in the evolution formula for $P(t)$ may be positive even if $\chi(\partial_{j} M) \leq 0$. Thus the contributions of $\partial_{K+1} M, \dots, \partial_{J} M$ to the variation formula \eqref{P_ev} are not negligible, and so we assume no additional horizons.
\end{Remark}

Inequality \eqref{area_mink} does not give new information about the surface gravity of $\partial_{1} M$. Instead, its utility is made apparent by the mass formulas from \cite{DG15}, \cite{GWWX13} for ALH graphs in Riemannian Kottler manifolds. One obtains the same mass formula if the Kottler background is replaced by a warped product with an ALH static system as its base. From here, we can readily apply \eqref{area_mink}.

\begin{Corollary}[Graphical Penrose-Type Inequality in Riemannian Warped Products]
Let $(N^{3},h,u)$ be an ALH static system satisfying hypothesis (T1) and with Chru\'{s}ciel-Herzlich mass $m_{0}$. Consider the Riemannian warped product manifold
\begin{equation} \label{general_warped_product}
    (N^{3} \times \mathbb{R}, u^{2}d\tau^{2} + h), 
\end{equation}
and let $M^{3} \subset  (N^{3} \times \mathbb{R}, u^{2}d\tau^{2} + h) $ be an ALH hypersurface satisfying the following:

\begin{enumerate}
    \item $\langle \frac{\partial}{\partial \tau}, \eta \rangle > 0$, where $\eta$ is the unit normal of $M^{3}$.
    \item $\partial M$ lies in a slice $\{ \tau=\tau_{0} \}$ of \eqref{general_warped_product}, and $\overline{M^{3}}$ intersects this slice orthogonally. 
    \item $\Sigma=\partial M \subset (\{ \tau=\tau_{0}\},h)$ is a connected, outer-minimizing surface enclosing all horizons $\partial_{1}N, \dots, \partial_{J} N$ of $N^{3}$. 
\end{enumerate}
Then assuming $R_{g} + 6$ is $L^{1}$ integrable on $M^{3} \subset  (N^{3} \times \mathbb{R}, u^{2}d\tau^{2} + h) $, the Chru\'{s}ciel-Herzlich mass $m$ of $(M^{3},g)$ has the following lower bound:

\begin{eqnarray} \label{mass_inequality}
    m &\geq& \frac{1}{4w_{2}} \int_{M^{3}} \langle \frac{\partial}{\partial \tau}, \eta \rangle (R_{g}+6) d\mbox{vol} + \frac{1}{2} \left(  \widehat{k} \left( \frac{|\Sigma|}{w_{2}} \right)^{\frac{1}{2}} + \left( \frac{|\Sigma|}{w_{2}} \right)^{\frac{3}{2}}  \right) \\
    & &+ \left(m_{0} - \frac{1}{w_{2}} \sum_{j=1}^{J} \frac{|\partial_{j}N| + 2\pi \chi(\partial_{j}N)}{3|\partial_{j}N| + 2\pi \chi(\partial_{j}N)}\kappa_{j} |\partial_{j} N| \right). \nonumber
\end{eqnarray}
As a result, if $R_{g} \geq -6$ on $M^{3}$ and $(N^{3},h,u)$ is Kottler space then the Riemannian Penrose inequality

\begin{equation*}
    m \geq \frac{1}{2} \left(  \widehat{k} \left( \frac{|\Sigma|}{w_{2}} \right)^{\frac{1}{2}} +  \left( \frac{|\Sigma|}{w_{2}} \right)^{\frac{3}{2}} \right)
\end{equation*}
holds for $(M^{3},g)$.
\end{Corollary}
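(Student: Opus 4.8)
The plan is to first establish a mass formula expressing the Chru\'{s}ciel--Herzlich mass $m$ of the graph $(M^{3},g)$ in terms of an interior scalar curvature integral, the weighted total mean curvature of $\Sigma$ measured \emph{inside} the base $(N^{3},h,u)$, and the mass $m_{0}$ of the base, and then to feed the Areal Minkowski inequality \eqref{area_mink} into that formula.

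First I would realize $M^{3}$ as a graph of a function $f$ over the exterior region $N^{3}\setminus\Omega_{\Sigma}$, where $\Omega_{\Sigma}$ is the compact region of $N^{3}$ bounded by $\Sigma$ and the horizons $\partial_{1}N,\dots,\partial_{J}N$, so that the induced metric is $g = h + u^{2}\,df\otimes df$. Following the scalar curvature computation for graphs in static warped products of de Lima--Gir\~{a}o \cite{DG15} and Ge--Wang--Wu--Xia \cite{GWWX13}, the static structure of the base --- in particular $\Delta_{h} u = 3u$, $\na^{2}_{h} u = u\,\Ric_{h} + 3u\,h$, and hence $R_{h}=-6$ --- yields a divergence identity whose bulk integrand is a positive multiple of $\langle \tfrac{\partial}{\partial\tau},\eta\rangle (R_{g}+6)$. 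Integrating over $M^{3}$ and applying the divergence theorem, the flux at infinity reproduces both masses $m$ and $m_{0}$ (the latter because the graph is asymptotic to the base, whose metric $h$ carries its own mass aspect), condition (1) guarantees the graph is everywhere non-vertical in the interior, and conditions (2)--(3) collapse the flux through $\Sigma$ to the single term $\tfrac{1}{4w_{2}}\int_{\Sigma} u H_{\Sigma}\,d\sigma$, where $H_{\Sigma}$ is the mean curvature of $\Sigma$ in $(N^{3},h)$. The orthogonality in (2) is precisely what kills the extra boundary contribution that would otherwise appear, while the decay $q=O_{2}(\rho^{-\alpha})$ with $\alpha>2$ controls all limits. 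The outcome is the identity
\begin{equation*}
    m = \frac{1}{4w_{2}}\int_{M^{3}} \langle \tfrac{\partial}{\partial\tau},\eta\rangle (R_{g}+6)\, dg + \frac{1}{4w_{2}}\int_{\Sigma} u H_{\Sigma}\, d\sigma + m_{0}.
\end{equation*}

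Next I would observe that $\Sigma=\partial M$, regarded inside $(N^{3},h,u)$, is exactly a connected outer-minimizing surface enclosing all horizons $\partial_{1}N,\dots,\partial_{J}N$, i.e. the hypothesis of Theorem \ref{area_minkowski} in the case $K=J$. Applying \eqref{area_mink} with potential $V=u$ bounds $\tfrac{1}{4w_{2}}\int_{\Sigma} u H_{\Sigma}\,d\sigma$ from below, and substituting into the mass formula produces exactly \eqref{mass_inequality}, the horizon sum merging with $m_{0}$ into the stated correction term. For the final assertion I would specialize to $(N^{3},h,u)$ equal to Kottler space, which has a single horizon $\partial N = \widehat{\Sigma}\times\{\rho_{m}\}$. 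Using $m_{0}=\tfrac12(\widehat{k}\rho_{m}+\rho_{m}^{3})$, $|\partial N| = w_{2}\rho_{m}^{2}$, the surface gravity $\kappa = \tfrac12(3\rho_{m}+\widehat{k}\rho_{m}^{-1})$, and $2\pi\chi(\partial N)=\widehat{k}\,w_{2}$ from Gauss--Bonnet, a direct computation gives
\begin{equation*}
    m_{0} - \frac{1}{w_{2}}\,\frac{|\partial N| + 2\pi\chi(\partial N)}{3|\partial N| + 2\pi\chi(\partial N)}\,\kappa\,|\partial N| = 0 .
\end{equation*}
Since $R_{g}\geq -6$ and $\langle \tfrac{\partial}{\partial\tau},\eta\rangle > 0$ force the scalar curvature integral to be nonnegative, inequality \eqref{mass_inequality} then collapses to the Riemannian Penrose inequality.

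The main obstacle is the mass formula of the first step. The claim that ``the same mass formula'' survives replacing the explicit Kottler background by a general ALH static base is the delicate point: one must verify that the de Lima--Gir\~{a}o/GWWX scalar curvature identity closes up using only the static equations \eqref{static_equations} for $u$ rather than the explicit warping function, that the asymptotic flux separates cleanly into the difference $m-m_{0}$ of the two Chru\'{s}ciel--Herzlich masses, and that the boundary flux at $\Sigma$ reduces to the single weighted mean curvature term under the orthogonality hypothesis (2). Once this identity is secured, the remaining steps are a direct substitution of \eqref{area_mink} and, for the Kottler specialization, the elementary cancellation displayed above.
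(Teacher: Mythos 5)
Your proposal is correct and follows essentially the same route as the paper: the paper likewise quotes the mass formula $m = m_{0} + \frac{1}{4w_{2}}\int_{M^{3}}\langle\partial_{\tau},\eta\rangle(R_{g}+6)\,dg + \frac{1}{4w_{2}}\int_{\Sigma}uH\,d\sigma$ from equation (1.13) of \cite{GWWX13} (noting that the derivation there generalizes verbatim from Kottler to an Einstein warped product over an ALH static base), then substitutes the Areal Minkowski inequality \eqref{area_mink}, and observes that the correction term vanishes for Kottler. Your explicit verification of the Kottler cancellation via $m_{0}=\tfrac12(\widehat{k}\rho_{m}+\rho_{m}^{3})$, $\kappa=\tfrac12(3\rho_{m}+\widehat{k}\rho_{m}^{-1})$, $|\partial N|=w_{2}\rho_{m}^{2}$, and $2\pi\chi(\partial N)=\widehat{k}w_{2}$ is accurate and matches what the paper asserts without computation.
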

\begin{Remark}
The static equations \eqref{static_equations} imply that the warped product \eqref{general_warped_product} is Einstein. Also, point (2) implies that $\partial M$ is a totally geodesic surface within $(M^{3},g)$.
\end{Remark}

\begin{Remark}
For the last term in \eqref{mass_inequality}, we have
\begin{equation*}
    m_{0} - \frac{1}{w_{2}} \sum_{j=1}^{K} \frac{|\partial_{j}N| + 2\pi \chi(\partial_{j}N)}{3|\partial_{j}N| + 2\pi \chi(\partial_{j}N)}\kappa_{j} |\partial_{j} N| = m_{0} - \frac{1}{w_{2}} \sum_{j=1}^{J} (1-2c_{j}) \kappa_{j} |\partial_{j}N| \leq 0.
\end{equation*}
as a consequence of Lemma \ref{baby reverse Penrose}. Therefore, \eqref{mass_inequality} does not necessarily imply the graphical Penrose inequality within a general Einstein warped product. In fact, we suspect that rigidity holds in \eqref{baby reverse Penrose}, which would mean that \eqref{mass_inequality} only produces the Penrose inequality for Kottler. 
\end{Remark}
\begin{proof}
\cite{GWWX13} equation (1.13), see also \cite{DG12}, provides the mass formula

\begin{equation*} 
    m = m_{0} + \frac{1}{4w_{2}} \int_{M^{3}} \langle \frac{\partial}{\partial \tau}, \eta \rangle (R_{g}+6) d\mbox{vol} + \frac{1}{4w_{2}} \int_{\Sigma} u H d\sigma,
\end{equation*}
for ALH hypersurfaces in Riemannian Kottler manifolds satisfying points (1)-(3). The proof immediately generalizes to a general Einstein background with an ALH static base, see pages 9-11 of \cite{GWWX13}. Inputting $R_{g} + 6 \geq 0$ and the areal Penrose inequality into the above formula, we obtain \eqref{mass_inequality}, and the last term vanishes if $(N^{3},h,u)$ is Kottler.
\end{proof}
\begin{proof}[Proof of Corollary \ref{alh_graph}]
If $(M^{3},g)$ embeds as a graph in Riemannian Kottler manifolds with the same infinity $\widehat{\Sigma}$ and $\partial M$ encloses the Kottler horizon, then we must have $\chi(\partial M) \leq \chi(\{ \rho=\rho_{m}\}) = \chi(\widehat{\Sigma})$. Therefore, \eqref{RPI} and \eqref{reverse_penrose} together imply uniqueness.
\end{proof}
\appendix
\section{Allard's Regularity Theorem}
We first recall Allard's regularity theorem from Simon \cite[Theorem 5.2]{S18}.
\begin{theorem}\label{Allard_Euclidean}
Let $V = \underline{v}(M,\theta)$ be a rectifiable $n$-varifold with generalized mean curvature $\underline{H}$ in $\R^{n+k}$. For every $p > n$ and every $\gamma \in (0,1)$ there is $\delta_0 = \delta_0 (n,k,p,\gamma) $ such that if
\begin{align}
&\theta \ge 1 \quad \mu-a.e., \quad 0 \in \mathrm{spt} V, \\
&\mu (B_\rho(0)) \le (1 + \delta)\omega_n \rho^{n}, \quad  \lt( \rho^{p-n} \int_{B_\rho(0)} |\underline{H}|^p d\mu \rt)^{1/p} \le \delta
\end{align}
holds, then there is an orthogonal transformation $Q$ of $\R^{n+k}$ and a vector-valued function $u = (u^1,\cdots,u^k) \in C^{1,1 - \frac{n}{p}}(B^n_{\gamma\rho}(0);\R^k)$ with $Du(0)=0$, \underline{$\mathrm{spt}V \cap B_{\gamma\rho}(0) = Q(\mathrm{graph} \, u) \cap B_{\gamma\rho}(0)$ }, and
\begin{align*}
\rho^{-1} \sup |u| &+ \sup |Du| \\
&+ \rho^{1 - \frac{n}{p}} \sup |x-y|^{-(1 - \frac{n}{p})} |Du(x)-Du(y)| \le C\delta^{\frac{1}{2n+2}}
\end{align*}
where $C = C(n,k,p,\gamma) > 0$.  
\end{theorem}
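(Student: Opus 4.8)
The statement is Allard's classical interior $\varepsilon$-regularity theorem for varifolds, so rather than inventing a new argument the plan is to reproduce the standard excess-decay scheme (as in Allard's original paper and Simon's lectures) and I will only lay out its architecture. Throughout write $\mu=\|V\|$ for the weight measure and $\Theta(\mu,\cdot)$ for the density. The first step is the monotonicity formula for a varifold with generalized mean curvature $\underline H\in L^p$, $p>n$: the quantity $\rho\mapsto e^{C\rho^{1-n/p}}\rho^{-n}\mu(B_\rho(x))$ is monotone, with the correction controlled by the scale-invariant integral $\bigl(\rho^{p-n}\int_{B_\rho}|\underline H|^p\,d\mu\bigr)^{1/p}$. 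This is exactly the hypothesis quantity, and it is here that the assumption $p>n$ enters, guaranteeing the correction term is subcritical. Combined with $\mu(B_\rho(0))\le(1+\delta)\omega_n\rho^n$ and $\theta\ge1$ a.e., monotonicity forces $\Theta(\mu,x)$ to be uniformly close to $1$ for $x$ near $0$ and yields two-sided density bounds on a fixed ball.

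Next I would introduce the tilt-excess relative to a candidate $n$-plane $T_0$,
\[
E(x,\rho,T_0) = \rho^{-n}\int_{B_\rho(x)} \|T_yV - T_0\|^2\, d\mu(y),
\]
and establish its two supporting estimates. The first is a Caccioppoli-type inequality bounding $E$ by the height-excess plus a mean-curvature term. The second is a Lipschitz approximation: away from a set of $\mu$-measure $O(E)$, $\operatorname{spt}V$ is contained in the graph of a Lipschitz function whose Lipschitz constant and deviation from affine are controlled by $E^{1/2}$ together with the mean-curvature integral.

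The heart of the proof, and the step I expect to be the genuine obstacle, is the excess-decay lemma: there exist $\theta\in(0,1)$ and $C$ such that
\[
E(0,\theta\rho,T_0) \le \tfrac12\,\theta^{2}\, E(0,\rho,T_0) + C\,\theta^{-n}\Bigl(\rho^{p-n}\!\!\int_{B_\rho}\!|\underline H|^p\,d\mu\Bigr)^{2/p},
\]
where the forcing term scales like $\rho^{2\alpha}$ with $\alpha=1-\tfrac np$. I would prove this by contradiction through a blow-up argument: assuming the lemma fails, rescale a sequence of varifolds whose excess tends to $0$, use the Lipschitz approximation to extract a limiting graph, and show — crucially using $p>n$ so the mean-curvature term disappears after rescaling — that the limit is the graph of a harmonic function. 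The quadratic tilt-decay of harmonic graphs then contradicts the assumed failure. The delicate analytic points (uniform estimates on the approximation, the passage to the limit of the varifolds, and keeping the mean-curvature term subcritical) all live in this step.

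Finally I would iterate the decay lemma over the dyadic scales $\theta^j\rho$, summing the resulting geometric series; because $2\alpha<2$ the inhomogeneous forcing dominates and one obtains $E(x,\rho,T_x)\le C(\rho/\rho_0)^{2\alpha}$ uniformly for $x$ near $0$, with the approximating tangent plane $T_x$ varying Hölder-continuously in $x$. Feeding this back into the Lipschitz approximation upgrades $\operatorname{spt}V\cap B_{\gamma\rho}(0)$ to a single $C^{1,1-n/p}$ graph over $T_0$ with $Du(0)=0$, and tracking the dependence of all constants through the monotonicity, Caccioppoli, and iteration steps yields the explicit bound $\le C\delta^{1/(2n+2)}$ in the stated $C^{1,1-n/p}$ norm.
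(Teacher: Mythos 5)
This statement is not proved in the paper at all: it is Allard's interior regularity theorem, quoted verbatim from Simon \cite[Theorem 5.2]{S18} as a black box (the appendix only supplies the short Nash-embedding argument for the Riemannian version, Theorem \ref{Allard_Riemannian}). Your outline — monotonicity with the $L^p$, $p>n$ mean-curvature correction, density bounds, tilt-excess with Caccioppoli and Lipschitz approximation, excess decay via blow-up to harmonic graphs, and dyadic iteration to the $C^{1,1-n/p}$ graph — is a faithful reproduction of the standard architecture of the proof in the cited source, so there is no divergence of approach to report. Do note, however, that as written it is a roadmap rather than a proof: the excess-decay lemma and the harmonic approximation step, which you yourself flag as the genuine obstacles, carry essentially all of the content, and the final constant $C\delta^{\frac{1}{2n+2}}$ is asserted by "tracking constants" rather than derived; for the purposes of this paper one would simply keep the citation.
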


It is well-known and pointed out by Allard in page 2 of \cite{A72} that the result holds in Riemannian manifolds using Nash's isometric embedding theorem. 
We formulate a version suitable for our purpose and outline the argument for the reader's convenience.

\begin{theorem}\label{Allard_Riemannian}
Let $U$ be an open set with compact closure in an $(n+k)$-dimensional Riemannian manifold $N$ (this includes the case $U=N$ is a closed Riemannian manifold). Let $V = \underline{v}(M,\theta)$ be a rectifiable $n$-varifold with generalized mean curvature $\underline{H}$ in $U$. For every $p > n$ and every $\gamma \in (0,1)$ there are $\delta, \rho_0$ such that if
\begin{align}
&\theta \ge 1 \quad \mu-a.e., \quad x \in \mathrm{spt} V, \\
&\mu (B_\rho(x)) \le (1 + \delta)\omega_n \rho^{n}, \quad  \lt( \rho^{p-n} \int_{B_\rho(x)} |\underline{H}|^p d\mu \rt)^{1/p} \le \delta \label{Allard assumption 2}
\end{align}
holds for $\rho < \rho_0$, $B_\rho(x) \subset U$, then there exists a $C^{1,1-\frac{n}{p}}$ function $u$ defined on $B^n_{\gamma\rho}(0) \subset T_x V$ satisfying the conclusion of Theorem \ref{Allard_Euclidean}. Here it is understood that $\rho_0$ is small so that $B_\rho(x)$ is contained in some normal coordinate neighborhood centered at $x$ and in the conclusion the norm is taken with respect to an orthonormal frame at $x$.
\end{theorem}
\begin{proof}
By Nash's isometric embedding theorem, there exists an isometric embedding $\iota: N \rw \R^L$. Let $\vec{H}_N$ denote mean curvature vector of $\iota(N)$ in $\R^L$ and \[ K = \max_{\iota(\bar U)} | \vec{H}_N |. \] The generalized mean curvature of $V$ in $\R^L$ is decomposed orthogonally $\vec{H} = \underline{H} + \vec{H}_N$ and we get
\begin{align*}
\rho^{p-n} \int_{B_\rho(x)} |\vec{H}|^p d\sigma &\le  2^{p-1} \lt( \rho^{p-n} \int_{B_\rho(x)} |\underline{H}|^p d\mu + (1+\delta) \omega_n \rho^p K^p \rt) \\
&\le \delta_0(L,n,\alpha,\gamma)^p.
\end{align*}
if $\delta$ and $\rho_0$ are sufficiently small.\footnote{$\delta = \delta(n,k,\alpha,\gamma)$ since $L$ is bounded above by a polynomial of $n+k$ but the dependence of $K$, hence $\rho_0$, on the geometry of $N$ is not given in Nash's theorem.} Applying Allard's regularity to $V$ in $\R^L$ gives the desired conclusion. 
\end{proof}
\textbf{Conflict of Interest Statement} On behalf of all authors, the corresponding author states that there is no conflict of interest. \\

\textbf{Data Availability Statement} Data sharing is not applicable to this article as no datasets were generated or analyzed during the current study.

\printbibliography[title={References}]

@article{CYZ25,
      title={On the Poincar\`e-Einstein manifolds with cylindrical conformal infinity}, 
      author={Chang, S.Y.A. and Yang, P. and Zhang, R.},
      year={2025},
      eprint={arXiv:2509.20325},
      archivePrefix={arXiv},
      primaryClass={math.DG},
      url={https://arxiv.org/abs/2509.20325}, 
}

@article {HP83,
    AUTHOR = {Hawking, S. W. and Page, Don N.},
     TITLE = {Thermodynamics of black holes in anti-de {S}itter space},
   JOURNAL = {Comm. Math. Phys.},
  FJOURNAL = {Communications in Mathematical Physics},
    VOLUME = {87},
      YEAR = {1982},
    NUMBER = {4},
     PAGES = {577--588},
      ISSN = {0010-3616,1432-0916},
   MRCLASS = {83C47 (53C80 82A99 85A40)},
  MRNUMBER = {691045},
MRREVIEWER = {M.\ J.\ Perry},
       URL = {http://projecteuclid.org/euclid.cmp/1103922135},
}

@article{BHW12,
    author = "Brendle, Simon and Hung, Pei-Ken and Wang, Mu-Tao",
    title = "{A Minkowski Inequality for Hypersurfaces in the Anti-de Sitter-Schwarzschild Manifold}",
    eprint = "1209.0669",
    archivePrefix = "arXiv",
    primaryClass = "math.DG",
    doi = "10.1002/cpa.21556",
    journal = "Commun. Pure Appl. Math.",
    volume = "69",
    pages = "124--144",
    year = "2016"
}

@article{B13,
     author = {Brendle, Simon},
     title = {Constant mean curvature surfaces in warped product manifolds},
     journal = {Publications Math\'ematiques de l'IH\'ES},
     pages = {247--269},
     publisher = {Springer-Verlag},
     volume = {117},
     year = {2013},
     doi = {10.1007/s10240-012-0047-5},
     mrnumber = {3090261},
     zbl = {1273.53052},
     language = {en},
     url = {http://www.numdam.org/articles/10.1007/s10240-012-0047-5/}
}

@article{SZ21,
author = {Shi, Yuguang and Zhu, Jintian},
year = {2021},
month = {05},
pages = {},
title = {Regularity of inverse mean curvature flow in asymptotically hyperbolic manifolds with dimension 3},
volume = {64},
journal = {Science China Mathematics},
doi = {10.1007/s11425-020-1860-6}
}

@article{WW17,
   title={Brendle’s Inequality on Static Manifolds},
   volume={28},
   ISSN={1559-002X},
   url={http://dx.doi.org/10.1007/s12220-017-9814-3},
   DOI={10.1007/s12220-017-9814-3},
   number={1},
   journal={The Journal of Geometric Analysis},
   publisher={Springer Science and Business Media LLC},
   author={Wang, Xiaodong and Wang, Ye-Kai},
   year={2017},
   month=mar, pages={152–169} }

@article{LX19,
author = {Junfang Li and Chao Xia},
title = {{An integral formula and its applications on sub-static manifolds}},
volume = {113},
journal = {Journal of Differential Geometry},
number = {3},
publisher = {Lehigh University},
pages = {493 -- 518},
year = {2019},
doi = {10.4310/jdg/1573786972},
URL = {https://doi.org/10.4310/jdg/1573786972}
}

@article{FP22,
title = {New integral estimates in substatic Riemannian manifolds and the Alexandrov Theorem},
journal = {Journal de Mathématiques Pures et Appliquées},
volume = {163},
pages = {299-317},
year = {2022},
issn = {0021-7824},
doi = {https://doi.org/10.1016/j.matpur.2022.05.007},
url = {https://www.sciencedirect.com/science/article/pii/S0021782422000605},
author = {Mattia Fogagnolo and Andrea Pinamonti},
keywords = {Alexandrov Theorem, Heintze-Karcher inequality, Substatic manifolds, Boundary value problem},
abstract = {We derive new integral estimates on substatic manifolds with boundary of horizon type, naturally arising in General Relativity. In particular, we generalize to this setting an identity due to Magnanini-Poggesi [24] leading to the Alexandrov Theorem in Rn and improve on a Heintze-Karcher type inequality due to Li-Xia [22]. Our method relies on the introduction of a new vector field with nonnegative divergence, generalizing to this setting the P-function technique of Weinberger [36].
Résumé
Nous obtenons des nouvelles inégalités intégrales sur des variétés sub-statique à bord horizon, issues de la relativité générale. Nos résultats généralisent une identité due à Magnanini et Poggesi [24], impliquant le théorème de Alexandrov en Rn, et améliorent une inégalité à la Heintze-Karcher due à Li et Xia [22]. Notre méthode est basée sur l'introduction d'un nouveau champ de vecteurs à divergence non-négative, qui généralise la technique des P-fonctions à la Weinberger dans notre contexte.}
}

@article{HK78,
     author = {Heintze, Ernst and Karcher, Hermann},
     title = {A general comparison theorem with applications to volume estimates for submanifolds},
     journal = {Annales scientifiques de l'\'Ecole Normale Sup\'erieure},
     pages = {451--470},
     publisher = {Elsevier},
     volume = {Ser. 4, 11},
     number = {4},
     year = {1978},
     doi = {10.24033/asens.1354},
     mrnumber = {80i:53026},
     zbl = {0416.53027},
     language = {en},
     url = {http://www.numdam.org/articles/10.24033/asens.1354/}
}

@article{W18,
   title={On the Minkowski-type inequality for outward minimizing hypersurfaces in Schwarzschild space},
   volume={57},
   ISSN={1432-0835},
   url={http://dx.doi.org/10.1007/s00526-018-1342-x},
   DOI={10.1007/s00526-018-1342-x},
   number={2},
   journal={Calculus of Variations and Partial Differential Equations},
   publisher={Springer Science and Business Media LLC},
   author={Wei, Yong},
   year={2018},
   month=mar }

@article{M18,
   title={On a Minkowski-like inequality for asymptotically flat static manifolds},
   volume={146},
   ISSN={1088-6826},
   url={http://dx.doi.org/10.1090/proc/14047},
   DOI={10.1090/proc/14047},
   number={9},
   journal={Proceedings of the American Mathematical Society},
   publisher={American Mathematical Society (AMS)},
   author={McCormick, Stephen},
   year={2018},
   month=apr, pages={4039–4046} }

@misc{HW24,
      title={Quasi-spherical metrics and the static Minkowski inequality}, 
      author={Brian Harvie and Ye-Kai Wang},
      year={2024},
      eprint={2403.06216},
      archivePrefix={arXiv},
      primaryClass={math.DG},
      url={https://arxiv.org/abs/2403.06216}, 
}

@misc{H24,
      title={On weak inverse mean curvature flow and Minkowski-type inequalities in hyperbolic space}, 
      author={Brian Harvie},
      year={2024},
      eprint={2404.08410},
      archivePrefix={arXiv},
      primaryClass={math.DG},
      url={https://arxiv.org/abs/2404.08410}, 
}

@article{HI99,
author = {Gerhard Huisken and Tom Ilmanen},
title = {{The Inverse Mean Curvature Flow and the Riemannian Penrose Inequality}},
volume = {59},
journal = {Journal of Differential Geometry},
number = {3},
publisher = {Lehigh University},
pages = {353 -- 437},
year = {2001},
doi = {10.4310/jdg/1090349447},
URL = {https://doi.org/10.4310/jdg/1090349447}
}

@article{HI08,
author = {Gerhard Huisken and Tom Ilmanen},
title = {{Higher regularity of the inverse mean curvature flow}},
volume = {80},
journal = {Journal of Differential Geometry},
number = {3},
publisher = {Lehigh University},
pages = {433 -- 451},
year = {2008},
doi = {10.4310/jdg/1226090483},
URL = {https://doi.org/10.4310/jdg/1226090483}
}

@article{W01,
author = {Xiaodong Wang},
title = {{The Mass of Asymptotically Hyperbolic Manifolds}},
volume = {57},
journal = {Journal of Differential Geometry},
number = {2},
publisher = {Lehigh University},
pages = {273 -- 299},
year = {2001},
doi = {10.4310/jdg/1090348112},
URL = {https://doi.org/10.4310/jdg/1090348112}
}

@article{CS01,
   title={Towards the classification of static vacuum spacetimes with negative cosmological constant},
   volume={42},
   ISSN={1089-7658},
   url={http://dx.doi.org/10.1063/1.1340869},
   DOI={10.1063/1.1340869},
   number={4},
   journal={Journal of Mathematical Physics},
   publisher={AIP Publishing},
   author={Chruściel, Piotr T. and Simon, Walter},
   year={2001},
   month=apr, pages={1779–1817} }

@article{LN13,
  title={The Penrose Inequality for Asymptotically Locally Hyperbolic Spaces with Nonpositive Mass},
  author={Dan A. Lee and Andre' Neves},
  journal={Communications in Mathematical Physics},
  year={2013},
  volume={339},
  pages={327-352},
  url={https://api.semanticscholar.org/CorpusID:118287029}
}

@article{BFP24,
author = {Borghini, Stefano and Fogagnolo, Mattia and Pinamonti, Andrea},
year = {2024},
month = {11},
pages = {},
title = {The Equality Case in the Substatic Heintze–Karcher Inequality},
volume = {248},
journal = {Archive for Rational Mechanics and Analysis},
doi = {10.1007/s00205-024-02022-7}
}

@article{HW24a,
    author = "Harvie, Brian and Wang, Ye-Kai",
    title = "{A rigidity theorem for asymptotically flat static manifolds and its applications}",
    eprint = "2305.08570",
    archivePrefix = "arXiv",
    primaryClass = "math.DG",
    doi = "10.1090/tran/9134",
    journal = "Trans. Am. Math. Soc.",
    volume = "377",
    number = "05",
    pages = "3599--3629",
    year = "2024"
}

@article{Q04,
   title={On the Uniqueness of AdS Space-Time
in Higher Dimensions},
   volume={5},
   ISSN={1424-0661},
   url={http://dx.doi.org/10.1007/s00023-004-0168-6},
   DOI={10.1007/s00023-004-0168-6},
   number={2},
   journal={Annales Henri Poincaré},
   publisher={Springer Science and Business Media LLC},
   author={Qing, Jie},
   year={2004},
   month=apr, pages={245–260} }

@article{W05,
  author = {Wang, X.},
  title = {On the uniqueness of the ads spacetime},
  journal = {Acta Mathematica Sinica, English Series},
  year = {2005},
  volume = {21},
  issue = {4},
  pages = {917-922},
  doi = {10.1007/s10114-004-0489-x}
}

@article{BGH84,
  title = {Uniqueness theorem for anti---de Sitter spacetime},
  author = {Boucher, W. and Gibbons, G. W. and Horowitz, Gary T.},
  journal = {Phys. Rev. D},
  volume = {30},
  issue = {12},
  pages = {2447--2451},
  numpages = {0},
  year = {1984},
  month = {12},
  publisher = {American Physical Society},
  doi = {10.1103/PhysRevD.30.2447},
  url = {https://link.aps.org/doi/10.1103/PhysRevD.30.2447}
}

@article{B22,
   title={Static Black Hole Uniqueness for nonpositive masses},
   volume={220},
   ISSN={0362-546X},
   url={http://dx.doi.org/10.1016/j.na.2022.112843},
   DOI={10.1016/j.na.2022.112843},
   journal={Nonlinear Analysis},
   publisher={Elsevier BV},
   author={Borghini, Stefano},
   year={2022},
   month=jul, pages={112843} }

@article{LW17,
   title={On inverse mean curvature flow in Schwarzschild space and Kottler space},
   volume={56},
   ISSN={1432-0835},
   url={http://dx.doi.org/10.1007/s00526-017-1160-6},
   DOI={10.1007/s00526-017-1160-6},
   number={3},
   journal={Calculus of Variations and Partial Differential Equations},
   publisher={Springer Science and Business Media LLC},
   author={Li, Haizhong and Wei, Yong},
   year={2017},
   month=apr }

@misc{S18,
  author        = {Leon Simon},
  title         = {Introduction to Geometric Measure Theory},
  year          = {2018},
  url           ={https://math.stanford.edu/~lms/ntu-gmt-text.pdf},
}

@misc{H24a,
      title={A Penrose-type inequality for static spacetimes}, 
      author={Brian Harvie},
      year={2025},
      eprint={2410.12203},
      archivePrefix={arXiv},
      primaryClass={gr-qc},
      url={https://arxiv.org/abs/2410.12203}, 
}

@article{I67,
  title = {Event Horizons in Static Vacuum Space-Times},
  author = {Israel, Werner},
  journal = {Phys. Rev.},
  volume = {164},
  issue = {5},
  pages = {1776--1779},
  numpages = {0},
  year = {1967},
  month = {12},
  publisher = {American Physical Society},
  doi = {10.1103/PhysRev.164.1776},
  url = {https://link.aps.org/doi/10.1103/PhysRev.164.1776}
}

@article{BM87,
  title={Nonexistence of multiple black holes in asymptotically Euclidean static vacuum space-time},
  author={Gary Bunting and A. K. M. Masood-ul-Alam},
  journal={General Relativity and Gravitation},
  year={1987},
  volume={19},
  pages={147-154}
}

@article{R21,
doi = {10.1088/1361-6382/abe4ec},
url = {https://dx.doi.org/10.1088/1361-6382/abe4ec},
year = {2021},
month = {3},
publisher = {IOP Publishing},
volume = {38},
number = {8},
pages = {085015},
author = {Simon Raulot},
title = {A spinorial proof of the rigidity of the Riemannian Schwarzschild manifold},
journal = {Classical and Quantum Gravity}
}

@article{BM18,
   title={On the mass of static metrics with positive cosmological constant: I},
   volume={35},
   ISSN={1361-6382},
   url={http://dx.doi.org/10.1088/1361-6382/aac081},
   DOI={10.1088/1361-6382/aac081},
   number={12},
   journal={Classical and Quantum Gravity},
   publisher={IOP Publishing},
   author={Borghini, Stefano and Mazzieri, Lorenzo},
   year={2018},
   month=may, pages={125001} }

@article{BM20,
   title={On the Mass of Static Metrics with Positive Cosmological Constant: II},
   volume={377},
   ISSN={1432-0916},
   url={http://dx.doi.org/10.1007/s00220-020-03739-8},
   DOI={10.1007/s00220-020-03739-8},
   number={3},
   journal={Communications in Mathematical Physics},
   publisher={Springer Science and Business Media LLC},
   author={Borghini, Stefano and Mazzieri, Lorenzo},
   year={2020},
   month=apr, pages={2079–2158} }

@misc{CCLP24,
      title={Uniqueness of static vacuum asymptotically flat black holes and equipotential photon surfaces in $n+1$ dimensions \`a la Robinson}, 
      author={Carla Cederbaum and Albachiara Cogo and Benedito Leandro and João Paulo dos Santos},
      year={2024},
      eprint={2403.14422},
      archivePrefix={arXiv},
      primaryClass={math.DG},
      url={https://arxiv.org/abs/2403.14422}, 
}

@article{BCM23,
   title={On the Uniqueness of Schwarz\-schild-de Sitter Spacetime},
   volume={247},
   ISSN={1432-0673},
   url={http://dx.doi.org/10.1007/s00205-023-01860-1},
   DOI={10.1007/s00205-023-01860-1},
   number={2},
   journal={Archive for Rational Mechanics and Analysis},
   publisher={Springer Science and Business Media LLC},
   author={Borghini, Stefano and Chruściel, Piotr T. and Mazzieri, Lorenzo},
   year={2023},
   month=mar }

@article{R78,
title={A Simple Proof of the Generalization of Israel's Theorem},
author={D.C. Robinson},
year={1977},
journal={General Relativity and Gravitation},
volume={8},
issue={8},
pages={695-698},
}

@article{HJM19,
  title={Mass Rigidity for Hyperbolic Manifolds},
  author={Lan-Hsuan Huang and Hyun Chul Jang and Daniel Martin},
  journal={Communications in Mathematical Physics},
  year={2019},
  volume={376},
  pages={2329-2349},
  url={https://api.semanticscholar.org/CorpusID:139104307}
}

@article{GWWX13,
author = {Ge, Yuxin and Wang, Guofang and Wu, Jie and Xia, Chao},
year = {2013},
month = {09},
pages = {},
title = {A Penrose inequality for graphs over Kottler space},
volume = {52},
journal = {Calculus of Variations and Partial Differential Equations},
doi = {10.1007/s00526-014-0732-y}
}

@article{AD98,
author = {Andersson, Lars and Dahl, Mattias},
year = {1998},
month = {02},
pages = {1-27},
title = {Scalar Curvature Rigidity for Asymptotically Locally Hyperbolic Manifolds},
volume = {16},
journal = {Annals of Global Analysis and Geometry},
doi = {10.1023/A:1006547905892}
}

@article{GW15,
    author = "Galloway, Gregory J. and Woolgar, Eric",
    title = "{On static Poincar{\'e}-Einstein metrics}",
    eprint = "1502.04646",
    archivePrefix = "arXiv",
    primaryClass = "gr-qc",
    doi = "10.1007/JHEP06(2015)051",
    journal = "JHEP",
    volume = "06",
    pages = "051",
    year = "2015"
}

@article{GSW03,
   title={On the Geometry and Mass of Static, Asymptotically AdS Spacetimes, and the Uniqueness of the AdS Soliton},
   volume={241},
   ISSN={1432-0916},
   url={http://dx.doi.org/10.1007/s00220-003-0912-7},
   DOI={10.1007/s00220-003-0912-7},
   number={1},
   journal={Communications in Mathematical Physics},
   publisher={Springer Science and Business Media LLC},
   author={Galloway, G.J. and Surya, S. and Woolgar, E.},
   year={2003},
   month=aug, pages={1–25} }

@article{CH03, 
title={The mass of asymptotically hyperbolic Riemannian manifolds}, 
volume={212}, 
url={http://dx.doi.org/10.2140/pjm.2003.212.231}, 
DOI={10.2140/pjm.2003.212.231}, 
number={2}, 
journal={Pacific Journal of Mathematics}, 
publisher={Mathematical Sciences Publishers}, 
author={Chruściel, Piotr T. and Herzlich, Marc}, 
year={2003}, 
month={12},
pages={231–264}, 
language={en} }

@article{W17,
   title={The rigid Horowitz-Myers conjecture},
   volume={2017},
   ISSN={1029-8479},
   url={http://dx.doi.org/10.1007/JHEP03(2017)104},
   DOI={10.1007/jhep03(2017)104},
   number={3},
   journal={Journal of High Energy Physics},
   publisher={Springer Science and Business Media LLC},
   author={Woolgar, Eric},
   year={2017},
   month=mar }

@article{Z16,
  title={Inverse Mean Curvature Flows in Warped Product Manifolds},
  author={Hengyu Zhou},
  journal={The Journal of Geometric Analysis},
  year={2016},
  volume={28},
  pages={1749-1772},
  url={https://api.semanticscholar.org/CorpusID:119580550}
}

@article{GSW02,
  title = {A Uniqueness Theorem for the Anti--de Sitter Soliton},
  author = {Galloway, G. J. and Surya, S. and Woolgar, E.},
  journal = {Phys. Rev. Lett.},
  volume = {88},
  issue = {10},
  pages = {101102},
  numpages = {4},
  year = {2002},
  month = {02},
  publisher = {American Physical Society},
  doi = {10.1103/PhysRevLett.88.101102},
  url = {https://link.aps.org/doi/10.1103/PhysRevLett.88.101102}
}

@article{BH24,
      title={Systolic inequalities and the Horowitz-Myers conjecture}, 
      author={S. Brendle and P. K. Hung},
      year={2024},
      eprint={2406.04283},
      archivePrefix={arXiv},
      primaryClass={math.DG},
      url={https://arxiv.org/abs/2406.04283}, 
}

@article{BH25,
      title={The rigidity statement in the Horowitz-Myers conjecture}, 
      author={S. Brendle and P. K. Hung},
      year={2025},
      eprint={2504.16812},
      archivePrefix={arXiv},
      primaryClass={math.DG},
      url={https://arxiv.org/abs/2504.16812}, 
}

@article{DG15,
   title={An Alexandrov–Fenchel-Type Inequality in Hyperbolic Space with an Application to a Penrose Inequality},
   volume={17},
   ISSN={1424-0661},
   url={http://dx.doi.org/10.1007/s00023-015-0414-0},
   DOI={10.1007/s00023-015-0414-0},
   number={4},
   journal={Annales Henri Poincaré},
   publisher={Springer Science and Business Media LLC},
   author={de Lima, Levi Lopes and Girão, Frederico},
   year={2015},
   month=may, pages={979–1002} }

@article{A15,
           pages = {767--783},
       publisher = {Springer},
            year = {2015},
           month = {07},
           title = {On perturbations of the Schwarzschild anti-de sitter spaces of positive mass},
          number = {2},
          volume = {337},
             doi = {10.1007/s00220-015-2360-6},
         journal = {Communications in Mathematical Physics},
                      author = {Ambrozio, Lucas},
             url = {http://dx.doi.org/10.1007/s00220-015-2360-6},
            issn = {0010-3616} }

@article{L10,
      title={The Graphs Cases of the Riemannian Positive Mass and Penrose Inequalities in All Dimensions}, 
      author={Mau-Kwong George Lam},
      year={2010},
      eprint={1010.4256},
      archivePrefix={arXiv},
      primaryClass={math.DG},
      url={https://arxiv.org/abs/1010.4256}, 
}

@article{DGS12,
   title={Penrose Type Inequalities for Asymptotically Hyperbolic Graphs},
   volume={14},
   ISSN={1424-0661},
   url={http://dx.doi.org/10.1007/s00023-012-0218-4},
   DOI={10.1007/s00023-012-0218-4},
   number={5},
   journal={Annales Henri Poincaré},
   publisher={Springer Science and Business Media LLC},
   author={Dahl, Mattias and Gicquaud, Romain and Sakovich, Anna},
   year={2012},
   month=nov, pages={1135–1168} }

@article{GSWW99,
author = {Galloway, G. and Schleich, Kristin and Witt, Donald and Woolgar, E.},
year = {1999},
month = {03},
pages = {},
title = {Topological Censorship and Higher Genus Black Holes},
volume = {60},
journal = {Physical Review D},
doi = {10.1103/PhysRevD.60.104039}
}

@article {MTX17,
    AUTHOR = {Miao, Pengzi and Tam, Luen-Fai and Xie, Naqing},
     TITLE = {Quasi-local mass integrals and the total mass},
   JOURNAL = {J. Geom. Anal.},
  FJOURNAL = {Journal of Geometric Analysis},
    VOLUME = {27},
      YEAR = {2017},
    NUMBER = {2},
     PAGES = {1323--1354},
      ISSN = {1050-6926,1559-002X},
   MRCLASS = {83C05 (53C20)},
  MRNUMBER = {3625154},
MRREVIEWER = {Jeffrey\ Jauregui},
       DOI = {10.1007/s12220-016-9721-z},
       URL = {https://doi.org/10.1007/s12220-016-9721-z},
}

@article {A72,
    AUTHOR = {Allard, William K.},
     TITLE = {On the first variation of a varifold},
   JOURNAL = {Ann. of Math. (2)},
  FJOURNAL = {Annals of Mathematics. Second Series},
    VOLUME = {95},
      YEAR = {1972},
     PAGES = {417--491},
      ISSN = {0003-486X},
   MRCLASS = {49F20},
  MRNUMBER = {307015},
MRREVIEWER = {M.\ Klingmann},
       DOI = {10.2307/1970868},
       URL = {https://doi.org/10.2307/1970868},
}

@ARTICLE{ACD02,
       author = {{Anderson}, Michael T. and {Chrusciel}, Piotr T. and {Delay}, Erwann},
        title = "{Non-trivial, static, geodesically complete, vacuum space-times with a negative cosmological constant}",
      journal = {Journal of High Energy Physics},
     keywords = {General Relativity and Quantum Cosmology, Mathematics - Differential Geometry},
         year = 2002,
        month = oct,
       volume = {2002},
       number = {10},
          eid = {063},
        pages = {063},
          doi = {10.1088/1126-6708/2002/10/063},
archivePrefix = {arXiv},
       eprint = {gr-qc/0211006},
 primaryClass = {gr-qc},
       adsurl = {https://ui.adsabs.harvard.edu/abs/2002JHEP...10..063A},
      adsnote = {Provided by the SAO/NASA Astrophysics Data System}
}

@article{A08,
    AUTHOR = {Anderson, Michael T.},
     TITLE = {Einstein metrics with prescribed conformal infinity on
              4-manifolds},
   JOURNAL = {Geom. Funct. Anal.},
  FJOURNAL = {Geometric and Functional Analysis},
    VOLUME = {18},
      YEAR = {2008},
    NUMBER = {2},
     PAGES = {305--366},
      ISSN = {1016-443X,1420-8970},
   MRCLASS = {53C25 (53A30 53C80)},
  MRNUMBER = {2421542},
MRREVIEWER = {Michael\ G.\ Eastwood},
       DOI = {10.1007/s00039-008-0668-5},
       URL = {https://doi.org/10.1007/s00039-008-0668-5},
}

@Inbook{HP99,
author="Gerhard Huisken
and Alexander Polden",
editor="Hildebrandt, Stefan
and Struwe, Michael",
title="Geometric evolution equations for hypersurfaces",
bookTitle="Calculus of Variations and Geometric Evolution Problems: Lectures given at the 2nd Session of the Centro Internazionale Matematico Estivo (C.I.M.E.) held in Cetraro, Italy, June 15--22, 1996",
year="1999",
publisher="Springer Berlin Heidelberg",
address="Berlin, Heidelberg",
pages="45--84",
isbn="978-3-540-48813-2",
doi="10.1007/BFb0092669",
url="https://doi.org/10.1007/BFb0092669"
}

@article{DG12,
author = {De Lima, Levi and Girão, Frederico},
year = {2012},
month = {01},
pages = {},
title = {Positive mass and Penrose type inequalities for asymptotically
hyperbolic hypersurfaces},
volume = {47},
journal = {General Relativity and Gravitation},
doi = {10.1007/s10714-015-1870-z}
}

@article{HM98,
   title={AdS-CFT correspondence and a new positive energy conjecture for general relativity},
   volume={59},
   ISSN={1089-4918},
   url={http://dx.doi.org/10.1103/PhysRevD.59.026005},
   DOI={10.1103/physrevd.59.026005},
   number={2},
   journal={Physical Review D},
   publisher={American Physical Society (APS)},
   author={Horowitz, Gary T. and Myers, Robert C.},
   year={1998},
   month=dec }

@article{M11,
    author = {Michel, B.},
    title = {Geometric invariance of mass-like asymptotic invariants},
    journal = {Journal of Mathematical Physics},
    volume = {52},
    number = {5},
    pages = {052504},
    year = {2011},
    month = {05},
    abstract = {We study coordinate-invariance of some asymptotic invariants, such as the Arnowitt-Deser-Misner mass or the Chruściel–Herzlich momentum, given by an integral over a “boundary at infinity.” When changing the coordinates at infinity, some terms in the change of integrand do not decay fast enough to have a vanishing integral at infinity; but they may be gathered in a divergence, thus having vanishing integral over any closed hypersurface. This fact could only be checked after direct calculation (and was called a “curious cancellation”). We give a conceptual explanation thereof.},
    issn = {0022-2488},
    doi = {10.1063/1.3579137},
    url = {https://doi.org/10.1063/1.3579137},
    eprint = {https://pubs.aip.org/aip/jmp/article-pdf/doi/10.1063/1.3579137/13884141/052504_1_online.pdf},
}

\begin{center}
\textnormal{ \large Center for Geometry and Topology \\
University of Copenhagen \\
Copenhagen, DE 2100 \\
e-mail: brdh@math.ku.dk}\\
\end{center}
\vspace{1cm}
\begin{center}
\textnormal{ \large Department of Applied Mathematics \\
National Yang-Ming Chiao Tung University \\
Hsinchu, Taiwan 30010 \\
e-mail: yekaiwang@nycu.edu.tw}\\
\end{center}
\end{document}